\documentclass[10pt]{amsart}
\usepackage{amsxtra, amsfonts, amsmath, amsthm, amstext, amssymb, amscd, mathrsfs, verbatim, color}
\usepackage{threeparttable}
\usepackage{mathtools}
\usepackage[ansinew]{inputenc}\usepackage[T1]{fontenc}
\usepackage[all,cmtip]{xy}
\usepackage{tikz}
\usetikzlibrary {positioning}

\addtolength{\topmargin}{-0.4cm}
\addtolength{\textheight}{0.4cm}
\addtolength{\evensidemargin}{-0.6cm}
\addtolength{\oddsidemargin}{-0.6cm}
\addtolength{\textwidth}{1.2cm}
\theoremstyle{plain}
%%%%%%%%%%%%%%%%%%%%%%%%%%% Equation counting %%%%%%%%%%%%%%%%%%%%%%%%%%%%%

%%%%%%%%%%%%%%%%%%%%%%%%%%%%%%%%%%%%%%%%%%%%%%%%%%%%%%%%%%%%%%%%%%%%%%%%%%%

%%%%%%%%%%%%%%%%%%%%%%%%%%%%%%%%%%%%%%%%%%%%%%%%%%%%%%%%%%%%%%%%%%%%%%%%%%%
\newtheorem{theorem}{Theorem}[section]
\newtheorem{lemma}[theorem]{Lemma}
\newtheorem{definition-theorem}[theorem]{Definition-Theorem}
\newtheorem{proposition}[theorem]{Proposition}
\newtheorem{corollary}[theorem]{Corollary}

\theoremstyle{definition}
\newtheorem{definition}[theorem]{Definition}
\newtheorem{example}[theorem]{Example}
\newtheorem{remark}[theorem]{Remark}
\newtheorem{notation}[theorem]{Notation}
%%%%%%%%%%
\newcommand \bth[1] { \begin{theorem}\label{t#1} }
\newcommand \ble[1] { \begin{lemma}\label{l#1} }

\newcommand \bpr[1] { \begin{proposition}\label{p#1} }
\newcommand \bco[1] { \begin{corollary}\label{c#1} }
\newcommand \bde[1] { \begin{definition}\label{d#1}\rm }
\newcommand \bex[1] { \begin{example}\label{e#1}\rm }
\newcommand \bre[1] { \begin{remark}\label{r#1}\rm }

\newcommand \bnota[1] {\begin{notation}\label{n#1}\rm }
%%%%%%%%%%
%\newcommand {\eth} { \end{theorem} }
\newcommand {\ele} { \end{lemma} }

\newcommand {\epr} { \end{proposition} }
\newcommand {\eco} { \end{corollary} }
\newcommand {\ede} { \end{definition} }
\newcommand {\eex} { \end{example} }
\newcommand {\ere} { \end{remark} }
\newcommand {\enota} { \end{notation} }

%%%%%%%%%%
%\nc \eqref[1] {{\rm{(\ref{#1})}}}

%%%%%%%%%%%%%%%%%%%%%%%%%%%%%%%%%%%%%%%%%%%%%%%%%%%%%%%%%%%%%%%%%%%%%%%%%%%

%%%%%%%%%%%%%%%%%%%%%%%%%%%%%%%%%%%%%%%%%%%%%%%%%%%%%%%%%%%%%%%%%%%%%%%%%%%

%%%%%%%%%%%%%%%%%%%%%%%%%%%%%%%%%%%%%%%%%%%%%%%%%%%%%%%%%%%%%%%%%%%%%%%%%%%
%%%%%%%%%%%%%%%%%%%%%%%%%%%%%%%%%%%%%%%%%%%%%%%%%%%%%%%%%%%%%%%%%%%%%%%
%\input xy
%\xyoption{all}

\begin{document}
\title[Derivatives]{Construction of simple quotients of Bernstein-Zelevinsky derivatives and  highest derivative multisegments I: Reduction to combinatorics} 
%\date{\today}

\author[Kei Yuen Chan]{Kei Yuen Chan}
\address{
Department of Mathematics, The University of Hong Kong
}

\email{kychan1@hku.hk}
\maketitle

\begin{abstract}
Let $F$ be a local non-Archimedean field. A sequence of derivatives of generalized Steinberg representations can be used to construct simple quotients of Bernstein-Zelevinsky derivatives of irreducible representations of $\mathrm{GL}_n(F)$. In the first of a series of articles, we introduce a notion of a highest derivative multisegment, which in turn gives a combinatorial approach to study problems about those simple quotients. We also prove a double derivative result along the way.

\end{abstract}
\setcounter{tocdepth}{1}
%\tableofcontents

\section{Introduction}

Let $F$ be a local non-Archimedean field. The Bernstein-Zelevinsky derivative is a twisted Jacquet functor, originally introduced for classifying the irreducible representations of $\mathrm{GL}_{n}(F)$ in \cite{BZ76, BZ77, Ze80}. This is the first one of a series of articles in studying the theory of Bernstein-Zelevinsky derivatives. The main result of this article provides a combinatorial approach to the problems of socles and cosocles of Bernstein-Zelevinsky derivatives of irreducible representations. In the sequels \cite{Ch22+d, Ch22+e}, we shall obtain a canonical sequence from some minimality by using results of this one, and then establish properties for such sequence. Applications to branching laws will be considered in \cite{Ch22+b}. Indeed, we shall show in \cite{Ch22+b} that any simple quotients of Bernstein-Zelevinsky derivatives can be constructed from such sequences.

On the other hand, there is a notion of $\rho$-derivatives introduced and studied by C. Jantzen \cite{Ja07} and independently by M\'inguez \cite{Mi09}, which will be important in our study. To be more precise, the notion of derivatives in this article is the one using essentially square-integrable representations to replace cuspidal representations $\rho$ in \cite{Ja07, Mi09}, which we shall simply call St-derivatives. Such derivative is also used in recent work of Atobe-M\'inguez \cite{AM20}, and Lapid-M\'inguez \cite{LM22} for other studies. A certain sequence of St-derivatives can be used to construct some simple quotients of Bernstein-Zelevinsky (BZ) derivatives (see Section \ref{ss two derivatives}). This is based on the observation that any standard module in GL case has unique submodule and such submodule is generic \cite{JS83} (also see \cite{CaSh98, Ch21}). %In \cite{Ch22+}, we shall show that such construction from St-derivatives exhausts all simple quotients of BZ derivatives, whose proof also largely relies on the development in this article.

We recall some classical known results on BZ derivatives. The highest derivative of an arbitrary irreducible representation is determined by Zelevinsky \cite{Ze80}. A complete description for all the derivatives has been previously established for Steinberg representations and their Zelevinsky duals by Zelevinsky \cite{Ze80}, and for ladder representations (including Speh representations) by Lapid-M\'inguez \cite{LM14} (also see \cite{Ta87, CS19}) using a determinantal formula of Tadi\'c. The Bernstein-Zelevinsky derivatives of an irreducible representation are in general not semisimple, and so it could be more flexible to study their simple quotients rather than the entire structure. For example, an asymmetry property of simple quotients between left and right BZ derivatives is shown in \cite{Ch21} and has applications.

\subsection{Two notions of derivatives} \label{ss notion derivative}

We now introduce the two notions of derivatives and more notations will be given in Sections \ref{ss prelim} and \ref{ss two derivatives}. Let $G_n=\mathrm{GL}_n(F)$, the general linear group over $F$. For $a,b \in \mathbb Z$ with $b-a\geq 0$ and a cuspidal representation $\rho$ of $G_m$, we shall call $[a,b]_{\rho}$ a {\it segment} (see Section \ref{ss notations}) and define $l_{abs}([a,b]_{\rho})=(b-a+1)m$, called the {\it absolute length} of the segment $[a,b]_{\rho}$. Zelevinsky \cite{Ze80} showed that essentially square-integrable representations of $G_n$ can be parametrized by those segments. For each segment $\Delta$, we shall denote by $\mathrm{St}(\Delta)$ the corresponding essentially square-integrable representation (see Section \ref{ss zel langlands class}). 

Let $\mathrm{Irr}(G_n)$ be the set of (isomorphism classes of) irreducible smooth complex representations of $G_n$. Let $\mathrm{Irr}=\sqcup_n \mathrm{Irr}(G_n)$. Let $N_i \subset G_n$ (depending on $n$) be the unipotent radical containing matrices of the form $\begin{pmatrix} I_{n-i} & u \\ & I_i \end
{pmatrix}$, where $u$ is a $(n-i)\times i$ matrix. There exists at most one irreducible module $\tau \in \mathrm{Irr}(G_{n-i})$ such that 
 \[\tau \boxtimes \mathrm{St}(\Delta) \hookrightarrow \pi_{N_i} ,
\]
where $\pi_{N_i}$ is defined as the (normalized) Jacquet module of $\pi$ (see Remark \ref{rmk unique derivative} and \cite{Ch22+}). If such $\tau$ exists, we denote such $\tau$ by $D_{\Delta}(\pi)$. Otherwise, we set $D_{\Delta}(\pi)=0$. We shall refer $D_{\Delta}$ to be a {\it St-derivative}.

Let 
\[ R_i=\left\{ \begin{pmatrix} I_{n-i} & m \\ & u \end{pmatrix} :  m \in Mat_{(n-i)\times i}, u \in U_i \right\}, 
\]
where $U_i$ is the group of unipotent upper triangular matrices in $G_i$, and $Mat_{(n-i)\times i}$ is the set of $(n-i)\times i$ matrices over $F$. The right $i$-th {\it Bernstein-Zelevinsky derivative} $\pi^{(i)}$ of $\pi$ is defined as
\begin{eqnarray} \label{eqn def derivative}
 \delta_{R_{i}}^{-1/2} \cdot \frac{\pi}{\langle x.v-\psi(x)v : x \in R_i, v \in \pi \rangle } ,
\end{eqnarray}
where $\delta_{R_{i}}$ is the modular character of $R_{n-i}$, and $\psi$ is a non-degenerate character on $U_i$ extended trivially to $R_i$. Regarding $G_{n-i}$ as a subgroup $G_n$ via $g \mapsto \begin{pmatrix} g & \\ & I_i \end{pmatrix}$, we obtain a natural $G_{n-i}$-module structure on $\pi^{(i)}$. %Here $U_i$ is viewed as a subgroup of $R_i$ via the embedding 
%\[ u \mapsto \begin{pmatrix} I_{n-i} & 0 \\ & u \end{pmatrix}. \] 

The {\it level} of $\pi$ is the largest integer $i^*$ such that $\pi^{(i^*)}\neq 0$. For the level $i^*$ of $\pi$, let $\pi^-=\pi^{(i^*)}$, which is known to be irreducible \cite[Theorem 8.1]{Ze80}. We shall call $\pi^-$ the {\it highest derivative} of $\pi$.

\subsection{Motivations from branching laws}

Our goal is applications towards branching laws for general linear groups or even other classical groups, in view of the recent derivative approach in studying branching laws e.g. \cite{MW12, Ve13, SV17, Pr18, CS21, Gu22, Ch21, GGP20, Ch20, Ch21+}. Those applications will appear elsewhere, see e.g. \cite{Ch22+b}, \cite{Ch22+c}. 

Let $\nu: G_n \rightarrow \mathbb C^{\times}$ be the character $\nu(g)=|\mathrm{det}(g)|_F$, where $|.|_F$ is the norm for $F$. The close relation of derivatives and branching laws is based on the Bernstein-Zelevinsky theory (see e.g. \cite{Ch21}):

\begin{lemma} \label{lem derivative gives branching law}
Let $\pi \in \mathrm{Irr}(G_{n+1})$. Let $\tau$ be a simple quotient of $\nu^{1/2}\cdot\pi^{(i)}$. Then, for some cuspidal representation $\sigma \in \mathrm{Irr}(G_{n-i})$, 
\[   \mathrm{Hom}_{G_n}( \pi, \tau \times \sigma) \neq 0 .
\]
\end{lemma}

One interesting consequence of the above lemma is that the multiplicity at-most-one phenomenon \cite{AGRS10, Ch21+} implies the multiplicity-freeness on socles and cosocles of the Bernstein-Zelevinsky derivatives of an irreducible representation (see Sections \ref{ss prop derivatives} and \ref{ss derivatives jacquet functor}). 

Instead of asking for simple quotients, one may also ask for simple submodules of the Bernstein-Zelevinsky derivatives of an irreducible representation. Such two problems are indeed equivalent by a dual structure of the Bernstein-Zelevinsky derivative (see Lemma \ref{lem dual property}, \cite[Lemma 2.4]{CS21}).

\subsection{Main results} \label{ss main results}

Fix an irreducible cuspidal representation $\rho$. Let $\mathrm{Irr}_{\rho}(G_n)$ be the subset of $\mathrm{Irr}(G_n)$ precisely consisting all irreducible representations of $G_n$ which are an irreducible quotient of some $\nu^{a_1}\rho \times \ldots \times \nu^{a_k}\rho$, for some integers $a_1, \ldots, a_k$. Let $\mathrm{Irr}_{\rho}=\sqcup_{n\geq 0} \mathrm{Irr}_{\rho}(G_n)$. The representations in $\mathrm{Irr}_{\rho}$ are the most interesting ones for our study, and the general case can be deduced from this (see Section \ref{ss reduction to cuspidal case}). 

Let $\mathrm{Seg}$ be the set of all segments including the empty set. Let $\mathrm{Seg}_{\rho}$ be the subset of $\mathrm{Seg}$ exactly consisting of all segments of the form $[a,b]_{\rho}$ for some $a,b\in \mathbb Z$, and the empty set $\phi$. 

A {\it multisegment} is a multiset of non-empty segments  (see (\ref{eqn seg def})). Let $\mathrm{Mult}$ be the set of all multisegments and let $\mathrm{Mult}_{\rho}$ be the subset of $\mathrm{Mult}$ of all multisegments whose segments are in $\mathrm{Seg}_{\rho}$. Note that the empty set $\emptyset$ is also an element in $\mathrm{Mult}$ and $\mathrm{Mult}_{\rho}$ in our convention.

For $\mathfrak m_1, \mathfrak m_2 \in \mathrm{Mult}$, we write $\mathfrak m_2 \leq_Z \mathfrak m_1$, called the {\it Zelevinsky ordering}, if $\mathfrak m_2$ can be obtained by a sequence of elementary intersection-union operations from $\mathfrak m_1$ (see Section \ref{ss intersect union oper}) or $\mathfrak m_1=\mathfrak m_2$. We shall equip $\mathrm{Mult}$ with the poset structure given by $\leq_Z$.

A sequence of segments $[a_1,b_1]_{\rho}, \ldots, [a_k,b_k]_{\rho}$ (all $a_j, b_j\in \mathbb Z$) is said to be in an {\it ascending order} if for any $i\leq j$, either $[a_i,b_i]_{\rho}$ and $[a_j,b_j]_{\rho}$ are unlinked (see Section \ref{ss notations} for the meaning of unlinked); or $a_i<a_j$. For a multisegment $\mathfrak n \in \mathrm{Mult}_{\rho}$, we write the segments in $\mathfrak n$ in an ascending order $\Delta_1, \ldots, \Delta_k$. Define 
\begin{align} \label{eqn derivatives of multisegment}
  D_{\mathfrak n}(\pi):=D_{\Delta_k}\circ \ldots \circ D_{\Delta_1}(\pi) .
\end{align}
We will show in Lemma \ref{lem comm derivative 1} that the derivative $D_{\mathfrak n}$ is independent of the choice of an ascending sequence for $\mathfrak n$. In particular, one may choose an ordering satisfying $a_1\leq \ldots \leq a_k$.

 In general, we have the following connection between the two notions of derivatives:

\begin{proposition} \label{prop simple quotient from derivative} (Consequence from Proposition \ref{prop unique as segments}(3))
Let $\pi \in \mathrm{Irr}_{\rho}$. Let $\mathfrak n \in \mathrm{Mult}_{\rho}$ such that
\[  D_{\mathfrak n}(\pi) \neq 0 .
\]
Then $D_{\mathfrak n}(\pi)$ is a simple quotient of $\pi^{(i)}$, where $i=l_{abs}(\Delta_1)+\ldots +l_{abs}(\Delta_k)$.
\end{proposition}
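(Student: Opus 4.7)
The plan is to induct on $k$, the number of segments in $\mathfrak{n}$. The inductive step is essentially formal once the case $k=1$ is established, so the main work is concentrated in the base case.

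For the base case $k=1$, write $\Delta = \Delta_1$, set $j = l_a(\Delta)$, and let $\tau := D_\Delta(\pi)$. Starting from the defining embedding
\[
\tau \boxtimes \mathrm{St}(\Delta) \hookrightarrow \pi_{N_j}
\]
of $G_{n-j} \times G_j$-modules, I will extract $\tau$ from $\pi^{(j)}$ as a simple quotient using three ingredients: (i) the realization of $\pi^{(j)}$, up to the $\delta_{R_j}^{-1/2}$-twist, as the result of applying the exact twisted-coinvariant functor $(\cdot)_{U_j,\psi}$ to $\pi_{N_j}$ on the $G_j$-factor; (ii) the genericity of $\mathrm{St}(\Delta)$, which makes $(\mathrm{St}(\Delta))_{U_j,\psi}$ one-dimensional; and (iii) the uniqueness of the (generic) irreducible submodule of any standard module for $\mathrm{GL}$, from~\cite{JS83, CaSh98}, which singles out the correct canonical direction.

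For the inductive step, I set $\sigma := D_{\Delta_{k-1}} \circ \cdots \circ D_{\Delta_1}(\pi)$ and $j := l_a(\Delta_1) + \cdots + l_a(\Delta_{k-1})$. By the inductive hypothesis, $\sigma$ is a simple quotient of $\pi^{(j)}$, and by the base case applied to $(\sigma, \Delta_k)$, the representation $D_{\Delta_k}(\sigma) = D_{\mathfrak{n}}(\pi)$ is a simple quotient of $\sigma^{(l_a(\Delta_k))}$. The composition identity
\[
(\pi^{(j)})^{(l_a(\Delta_k))} \,\cong\, \pi^{(j + l_a(\Delta_k))} \,=\, \pi^{(i)},
\]
which follows directly from the description of BZ derivatives as iterated twisted coinvariants along nested mirabolic subgroups, together with exactness of each BZ derivative, promotes the surjection $\pi^{(j)} \twoheadrightarrow \sigma$ to a surjection $\pi^{(i)} \twoheadrightarrow \sigma^{(l_a(\Delta_k))}$. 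Composing with $\sigma^{(l_a(\Delta_k))} \twoheadrightarrow D_{\mathfrak{n}}(\pi)$ yields the required simple quotient $\pi^{(i)} \twoheadrightarrow D_{\mathfrak{n}}(\pi)$.

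The main obstacle is the base case. Pinning down that $D_\Delta(\pi)$ appears specifically as a simple \emph{quotient} of $\pi^{(l_a(\Delta))}$, as opposed to a mere subobject or composition factor, will require careful handling of the directional conventions attached to the BZ derivative (a coinvariants functor, for which morphisms out are the natural ones) versus those of the St-derivative (defined by a submodule condition in the Jacquet module, which at first glance lives on the sub-side). The uniqueness statement for the generic submodule of a standard module for $\mathrm{GL}$ is what allows one to pick out the correct direction. Once this point is settled, the inductive step is purely formal and relies only on the composition law and exactness of BZ derivatives.
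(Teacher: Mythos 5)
Your overall route is genuinely different from the paper's. You reduce to $k=1$ by induction, using the composition law $(\pi^{(j)})^{(m)}\cong\pi^{(j+m)}$ for Bernstein--Zelevinsky derivatives and exactness to promote surjections; the inductive step is indeed formal once the base case is in hand. The paper does not induct: Proposition \ref{prop unique as segments} handles all $k$ segments simultaneously, working in the Jacquet module along the full parabolic of type $(n-\sum n_i, n_1, \ldots, n_k)$ and proving a $\mathrm{Hom}$-space multiplicity-one statement for $\mathrm{Hom}_{G'}(\tau \boxtimes (\mathrm{St}(\Delta_1)\times\cdots\times \mathrm{St}(\Delta_k)), \pi_{N'})$ via uniqueness of the generic quotient of a standard module (\cite{JS83}) and the multiplicity-freeness of $\mathrm{soc}(\pi^{(i)})$ (Lemma \ref{lem property derivative}, ultimately \cite{AGRS10}), before passing to Whittaker coinvariants of the whole $G_{n'}$-block at once. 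Your version trades that $\mathrm{Hom}$-space bookkeeping for a clean induction; both are legitimate, and both arrive at ``submodule'' first and must convert it to ``quotient'' at the end.

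That conversion is where your base case has a gap. You assert that uniqueness of the generic submodule of a standard module is what resolves the submodule-versus-quotient direction, but that is not the ingredient doing the work: for $k=1$ the relevant standard module is just $\mathrm{St}(\Delta)$ itself, which is irreducible, so that uniqueness statement is vacuous there, and the issue is not about generic constituents but about whether $\tau \boxtimes \mathrm{St}(\Delta)$ sits as a sub or a quotient of $\pi_{N_j}$. The input you actually need is Proposition \ref{prop sub quo Jacquet}: since $\pi$ is irreducible, the Gelfand--Kazhdan involution argument gives $\theta'(\pi_N)^\vee \cong \pi_N$, so $\tau \boxtimes \mathrm{St}(\Delta)$ embeds in $\pi_{N_j}$ if and only if $\pi_{N_j}$ surjects onto it. (Equivalently, one may finish with Lemma \ref{lem dual property}, $\mathrm{soc}(\pi^{(i)}) \cong \mathrm{cosoc}(\pi^{(i)})$.) Starting from the surjection $\pi_{N_j}\twoheadrightarrow \tau \boxtimes \mathrm{St}(\Delta)$, the exact twisted $(U_j,\psi)$-coinvariants on the $G_j$-factor collapse $\mathrm{St}(\Delta)$, which is essentially square-integrable and hence generic, to its one-dimensional space of Whittaker coinvariants and yield $\pi^{(j)}\twoheadrightarrow \tau$ directly, with no direction reversal afterward. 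With this substituted into your base case, the rest of your argument goes through.
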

We remark that when $\pi$ is generic, those simple quotients have been described in \cite[Corollary 2.6]{CS21} by using a suitable filtration on the derivatives from the geometric lemma.

In general, two different sequences can give isomorphic simple quotients. Hence it is natural to study the combinatorial structure of the following set: for $\pi \in \mathrm{Irr}_{\rho}$ and for a simple quotient $\tau$ of $\pi^{(i)}$ (for some $i$), define
\[ \mathcal S(\pi, \tau) := \left\{ \mathfrak n \in \mathrm{Mult}_{\rho}: D_{\mathfrak n}(\pi) \cong \tau \right\} .
\]
The ordering  $\leq_Z$ induces a partial ordering on $\mathcal S(\pi, \tau)$. In \cite{Ch22+b}, we shall show that $\mathcal S(\pi, \tau)\neq \emptyset$, giving a converse of Proposition \ref{prop simple quotient from derivative}. The proof for  $\mathcal S(\pi, \tau)\neq \emptyset$ in \cite{Ch22+b} shall use some machinery from branching laws.

We need two ingredients in studying the poset $\mathcal S(\pi, \tau)$: highest derivative multisegments and removal processes.

We now explain the first ingredient. A multisegment $\mathfrak m$ is said to be {\it at the point $\nu^c\rho$} if any segment $\Delta$ in $\mathfrak m$ takes the form $[c,b]_{\rho}$ for some $b \geq c$. For $\pi \in \mathrm{Irr}_{\rho}$, define $\mathfrak{mxpt}^a(\pi,c)$ to be the maximal multisegment  at the point $\nu^c\rho$ such that $D_{\mathfrak{mxpt}^a(\pi,c)}(\pi)\neq 0$. (We refer to Section \ref{ss max at a pt} for the meaning of maximality.) Define the {\it highest derivative multisegment} of $\pi \in \mathrm{Irr}_{\rho}$ to be
\[  \mathfrak{hd}(\pi):=\sum_{c\in \mathbb Z} \mathfrak{mxpt}^a(\pi,c).
\]
The highest derivative multisegments of some special cases are given in Section \ref{ss examples}.

One main property of $\mathfrak{hd}$ is to give a new construction of the highest derivative of an irreducible representation:
\begin{theorem} \label{thm highest derivative multi} (=Theorem \ref{thm highest derivative multiseg})
Let $\pi \in \mathrm{Irr}_{\rho}$. Then 
\[  D_{\mathfrak{hd}(\pi)}(\pi)=\pi^- .\]
%There exists a unique minimal multisegment $\mathfrak m$ such that when the segments $\left\{ \Delta_1, \ldots, \Delta_k \right\}$ in $\mathfrak m$ are written in an ascending order, we have that:
%\[ D_{\Delta_k}\circ \ldots \circ D_{\Delta_1}(\pi) \cong \pi^- . \]
%Here the minimality is with respect to $\leq_Z$. 
\end{theorem}

%Indeed, we have the following realization theorem:

%\begin{theorem} (=Theorem \ref{thm realize highest derivative mult})
%Let $\mathfrak h\in \mathrm{Mult}_{\rho}$. Then there exists $\pi \in \mathrm{Irr}_{\rho}$ such that 
%\[  \mathfrak{hd}(\pi) \cong \mathfrak h.\]
%\end{theorem}
%An advantage of such result is to allow some representation theoretic techniques to prove some combinatorics structure. 

%The existence part of Theorem \ref{thm highest derivative multi} will be given in Theorem \ref{thm highest derivative multiseg} and the minimality is dealt in Section \ref{ss proof of thm}. 

%Indeed, Theorem \ref{thm highest derivative multi} is only a special case of Theorem {tm unique minimal} below. However, we give an explicit construction in Section \ref{ss define highest derivative}, which we shall call $\mathfrak{hd}(\pi)$ to be the highest derivative multisegment of $\pi$, and it has to be used to deduce Theorem \ref{thm highest derivative multi}. 

We now explain the second ingredient. In Section \ref{sec effect hd}, we define a combinatorial algorithm, called {\it removal process}, on a pair $(\Delta, \mathfrak h)$ for a segment $\Delta$ and a multisegment $\mathfrak h$. The algorithm outputs a multisegment, denoted by $\mathfrak r(\Delta, \mathfrak h)$. The case that we are interested in is when $\mathfrak h=\mathfrak{hd}(\pi)$. We also develop some rules and properties for computing $\mathfrak r(\Delta, \mathfrak h)$ in Section \ref{sec effect hd}, and the relation to $D_{\Delta}(\pi)$ is given in Theorem \ref{thm effect of Steinberg}.

For a multisegment $\mathfrak n=\left\{ \Delta_1, \ldots, \Delta_k\right\} \in \mathrm{Mult}_{\rho}$ with segments in $\mathfrak n$ labeled in an ascending order, define 
\begin{align} \label{eqn removal multi}
  \mathfrak r(\mathfrak n, \pi):= \mathfrak r(\Delta_k, \mathfrak r(\Delta_{k-1}, \ldots \mathfrak r(\Delta_1, \mathfrak{hd}(\pi))\ldots )) .
\end{align}
One useful property of the multisegment $\mathfrak r(\mathfrak n, \pi)$ is to measure the difference between the derivative $D_{\mathfrak n}(\pi)$ and the highest derivative $\pi^-$:

\begin{theorem} (=Theorem \ref{lem segment derivative}) \label{thm complement mutli}
Let $\pi \in \mathrm{Irr}_{\rho}$. Let $\mathfrak n \in \mathrm{Mult}_{\rho}$ such that $D_{\mathfrak n}(\pi)\neq 0$. Then
\[  D_{\mathfrak r(\mathfrak n, \pi)}\circ D_{\mathfrak n}(\pi) \cong \pi^- .\]
\end{theorem}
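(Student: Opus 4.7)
The plan is to establish the structural identity
\[ \mathfrak r(\mathfrak n,\pi) = \mathfrak{hd}\bigl(D_{\mathfrak n}(\pi)\bigr) \]
for every $\mathfrak n \in \mathrm{Mult}_\rho$ with $D_{\mathfrak n}(\pi)\neq 0$, and then to combine it with Theorem \ref{thm highest derivative multi}. Once the identity is available, Theorem \ref{thm highest derivative multi} applied to $\tau:=D_{\mathfrak n}(\pi)$ gives
\[ D_{\mathfrak r(\mathfrak n,\pi)}\circ D_{\mathfrak n}(\pi) = D_{\mathfrak{hd}(\tau)}(\tau) = \tau^-, \]
so the problem reduces to identifying $\tau^-$ with $\pi^-$.

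I would prove the structural identity by induction on $|\mathfrak n|$. The base case $|\mathfrak n|=1$, $\mathfrak n=\{\Delta\}$, should be exactly Theorem \ref{thm effect of Steinberg}, stated as the one-segment relation between $\mathfrak r(\Delta,\mathfrak{hd}(\pi))$ and $D_\Delta(\pi)$. For the inductive step, write $\mathfrak n=\{\Delta_1,\ldots,\Delta_k\}$ in ascending order, set $\mathfrak n':=\mathfrak n\setminus\{\Delta_1\}$, and unwind the iterated definition of $\mathfrak r(\mathfrak n,\pi)$ by applying the base case to the innermost slot:
\[ \mathfrak r(\mathfrak n,\pi) = \mathfrak r\bigl(\mathfrak n',\, D_{\Delta_1}(\pi)\bigr). \]
Lemma \ref{lem comm derivative 1} guarantees $D_{\mathfrak n'}\circ D_{\Delta_1}=D_{\mathfrak n}$, so the induction hypothesis applied to $(\mathfrak n',D_{\Delta_1}(\pi))$ yields the desired identity.

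To conclude $\tau^-=\pi^-$, I would use Proposition \ref{prop simple quotient from derivative} twice. Its application to $(\pi,\mathfrak{hd}(\pi))$, combined with Theorem \ref{thm highest derivative multi}, gives $l_a(\mathfrak{hd}(\pi))=i^*$, the level of $\pi$. Its application to $(\tau,\mathfrak{hd}(\tau))$, combined with Theorem \ref{thm highest derivative multi} applied to $\tau$, gives $l_a(\mathfrak{hd}(\tau))=\mathrm{level}(\tau)$. Using the structural identity to rewrite $l_a(\mathfrak r(\mathfrak n,\pi))=l_a(\mathfrak{hd}(\tau))$ and the additivity $l_a(\mathfrak n)+l_a(\mathfrak r(\mathfrak n,\pi))=i^*$, transitivity of Bernstein-Zelevinsky derivatives then realises $\tau^-$ as a simple quotient of $\pi^{(i^*)}=\pi^-$; irreducibility of $\pi^-$ forces equality.

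The principal obstacle is the single-segment identity $\mathfrak r(\Delta,\mathfrak{hd}(\pi))=\mathfrak{hd}(D_\Delta(\pi))$ that anchors the induction: this is a genuine compatibility between the combinatorial algorithm of Section \ref{sec effect hd} and the Jacquet-module construction defining $D_\Delta$, and is where Theorem \ref{thm effect of Steinberg} together with its supporting combinatorial rules must carry the weight. The additivity $l_a(\mathfrak n)+l_a(\mathfrak r(\mathfrak n,\pi))=i^*$, by contrast, is a lighter numerical statement that should follow formally from Theorem \ref{thm highest derivative multi} and Proposition \ref{prop simple quotient from derivative}, although one must verify that the sequential composition $D_{\mathfrak r(\mathfrak n,\pi)}\circ D_{\mathfrak n}$ is compatible with ascending-order rearrangements via Lemma \ref{lem comm derivative 1} when invoking transitivity.
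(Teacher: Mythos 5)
Your proposed reduction hinges on the structural identity
\[ \mathfrak r(\mathfrak n,\pi) \;=\; \mathfrak{hd}\bigl(D_{\mathfrak n}(\pi)\bigr), \]
and this is false. Theorem \ref{thm effect of Steinberg} (which you correctly identify as the crucial input) does \emph{not} prove the one-segment case $\mathfrak r(\Delta,\mathfrak{hd}(\pi))=\mathfrak{hd}(D_\Delta(\pi))$: its part (2) gives the equality $\mathfrak{mxpt}^a(D_\Delta(\pi),c)=\mathfrak r(\Delta,\pi)[c]$ only for $c\geq a(\Delta)$, while for $c<a(\Delta)$ it explicitly gives only the inequality $\mathfrak h[c]\leq_c^a \mathfrak{mxpt}^a(D_\Delta(\pi),c)$. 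Since $\mathfrak r(\Delta,\mathfrak h)[c]=\mathfrak h[c]$ for $c<a(\Delta)$ (Lemma \ref{lem does not remove lower segment}), the removal process cannot detect any enlargement of the highest derivative multisegment below $a(\Delta)$. The paper itself exhibits the failure in Example \ref{example a case of highest multi}(1): $\mathfrak{hd}(\pi)=\{[5,7],[8,9],[8]\}$, $\mathfrak r([8,9],\pi)=\{[5,7],[8]\}$, but $\mathfrak{hd}(D_{[8,9]}(\pi))=\{[5,9],[7],[8]\}$. These are not equal, and they do not even have the same absolute length $l_a$. This also ruins your terminal step: with $\tau=D_{\mathfrak n}(\pi)$, the derivative $D_{\mathfrak r(\mathfrak n,\pi)}(\tau)$ lives in $G_{n(\pi)-i^*}$ whereas $\tau^-$ lives in $G_{n(\tau)-\mathrm{level}(\tau)}$, and these dimensions differ whenever $\mathfrak r(\mathfrak n,\pi)\neq\mathfrak{hd}(\tau)$; so $D_{\mathfrak r(\mathfrak n,\pi)}\circ D_{\mathfrak n}(\pi)$ cannot be identified with $\tau^-$, and the theorem's actual assertion is that it equals $\pi^-$, a genuinely different object. (The level of $\tau$ can be \emph{larger} than that of $\pi$ minus $l_a(\mathfrak n)$, which is what your use of additivity implicitly denies.)

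The correct additivity $l_a(\mathfrak n)+l_a(\mathfrak r(\mathfrak n,\pi))=i^*$ does hold --- but it is a direct consequence of the definition of the removal process, not of any identification with $\mathfrak{hd}(\tau)$ --- and it is indeed what the paper's proof (of Theorem \ref{lem segment derivative}) exploits. The paper works by \emph{backward} induction on $l_a(\mathfrak n)$: if $l_a(\mathfrak n)=i^*$ the result is immediate from irreducibility of $\pi^-$; otherwise it picks a carefully chosen segment $\widetilde\Delta\in\mathfrak r(\mathfrak n,\pi)$ (longest at the smallest starting point) and shows, using the commutativity lemmas \ref{lem comm derivative 1} and \ref{lem commut derivative} together with Theorem \ref{thm effect of Steinberg} and Lemma \ref{lem does not remove lower segment}, that adjoining $\widetilde\Delta$ to $\mathfrak n$ yields another admissible ascending sequence whose derivative resultant is $\mathfrak r(\mathfrak n,\pi)-\widetilde\Delta$; the inductive hypothesis then applies. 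This sidesteps the need to compute $\mathfrak{hd}(D_{\mathfrak n}(\pi))$ entirely, which, as Example \ref{example a case of highest multi} shows, is genuinely harder to describe than $\mathfrak r(\mathfrak n,\pi)$.
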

Theorem \ref{thm complement mutli} has applications in \cite{Ch22+b}. On the other hand, by shifting the 'right' branching law in Lemma \ref{lem derivative gives branching law} to the 'left' branching law, it gives an interpretation to Theorem \ref{thm complement mutli} (more details given in \cite{Ch22+b}), which is also a starting point of this article and the sequels \cite{Ch22+d, Ch22+e}.

The second main property of $\mathfrak r(\mathfrak n, \pi)$ is to determine when two derivatives give isomorphic simple quotients:

\begin{theorem} \label{thm derivative resultant multi} (=Corollary \ref{cor admissible by resultant}+Theorem \ref{thm isomorphic derivatives})
Let $\pi \in \mathrm{Irr}_{\rho}$. Let $\mathfrak n_1, \mathfrak n_2 \in \mathrm{Mult}_{\rho}$.
\begin{enumerate}
\item Then $D_{\mathfrak n_i}(\pi)\neq 0$ if and only if $\mathfrak n_i$ is admissible to $\mathfrak{hd}(\pi)$. (Refer the definition of admissibility to Definition \ref{def admissible multiseg})
\item Suppose $D_{\mathfrak n_1}(\pi)\neq 0$ and $D_{\mathfrak n_2}(\pi)\neq 0$. Then 
\[  D_{\mathfrak n_1}(\pi) \cong D_{\mathfrak n_2}(\pi) \quad \Longleftrightarrow \quad \mathfrak r(\mathfrak n_1, \pi) =\mathfrak r(\mathfrak n_2, \pi) .
\]
\end{enumerate}
\end{theorem}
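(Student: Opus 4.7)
The plan is to build both parts of the theorem on two pillars. The first is an iterated structural identification $\mathfrak r(\mathfrak n,\pi)=\mathfrak{hd}(D_{\mathfrak n}(\pi))$ whenever $D_{\mathfrak n}(\pi)\neq 0$, obtained by iterating the single-segment relation $\mathfrak r(\Delta,\mathfrak{hd}(\sigma))=\mathfrak{hd}(D_\Delta(\sigma))$ packaged in Theorem \ref{thm effect of Steinberg}. The second is a reconstruction principle: an irreducible $\tau\in\mathrm{Irr}_\rho$ is determined up to isomorphism by the pair $(\mathfrak{hd}(\tau),\tau^-)$, reflecting the fact that $\mathfrak{hd}$ peels off a canonical piece of the Zelevinsky multisegment of $\tau$ and can be iterated down the derivative chain to recover the whole parameter.

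For the structural identification, I write $\mathfrak n=\Delta_1+\ldots+\Delta_k$ in ascending order and use $D_{\mathfrak n}(\pi)=D_{\Delta_k}\circ\ldots\circ D_{\Delta_1}(\pi)$. Iterating Theorem \ref{thm effect of Steinberg} then yields $\mathfrak{hd}(D_{\Delta_1}(\pi))=\mathfrak r(\Delta_1,\mathfrak{hd}(\pi))$, then $\mathfrak{hd}(D_{\Delta_2}D_{\Delta_1}(\pi))=\mathfrak r(\Delta_2,\mathfrak r(\Delta_1,\mathfrak{hd}(\pi)))$, and so on. Part (1) then drops out of Definition \ref{def admissible multiseg}: admissibility of $\mathfrak n$ to $\mathfrak{hd}(\pi)$ is exactly the condition that the algorithm $\mathfrak r(\Delta_j,\cdot)$ remains defined at each stage, which by the identification is equivalent to $D_{\mathfrak n}(\pi)\neq 0$.

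For part (2), the direction $(\Rightarrow)$ is immediate, since $\mathfrak r(\mathfrak n_i,\pi)=\mathfrak{hd}(D_{\mathfrak n_i}(\pi))$ depends only on the isomorphism class of $D_{\mathfrak n_i}(\pi)$. For $(\Leftarrow)$, assume $\mathfrak r(\mathfrak n_1,\pi)=\mathfrak r(\mathfrak n_2,\pi)$. By the identification, the two derivatives share a common highest derivative multisegment; moreover, $(D_{\mathfrak n_i}(\pi))^-=D_{\mathfrak r(\mathfrak n_i,\pi)}(D_{\mathfrak n_i}(\pi))=\pi^-$ by combining Theorem \ref{thm highest derivative multi} with Theorem \ref{thm complement mutli}, so they also share $\pi^-$ as their common highest derivative. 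The reconstruction principle then forces $D_{\mathfrak n_1}(\pi)\cong D_{\mathfrak n_2}(\pi)$.

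The principal obstacle is the reconstruction principle, i.e.\ injectivity of $\tau\mapsto(\mathfrak{hd}(\tau),\tau^-)$ on $\mathrm{Irr}_\rho$. I expect to handle it by induction on the total length of the underlying multisegment of $\tau$: given $\tau^-$ (recovered by the inductive hypothesis from $(\mathfrak{hd}(\tau^-),\tau^{--})$) together with $\mathfrak{hd}(\tau)$, one reassembles the Zelevinsky parameter of $\tau$ by combining that of $\tau^-$ with the data encoded in $\mathfrak{hd}(\tau)$, using Theorem \ref{thm realize highest derivative mult} for existence and the Zelevinsky classification for uniqueness. A secondary subtlety is that Theorem \ref{thm effect of Steinberg}, taken as input to the inductive identification, must be applicable to every intermediate $D_{\Delta_{j-1}}\circ\ldots\circ D_{\Delta_1}(\pi)$; this is automatic once one observes, via Proposition \ref{prop simple quotient from derivative}, that each nonzero intermediate St-derivative is again an irreducible object of $\mathrm{Irr}_\rho$.
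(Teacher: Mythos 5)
Your plan rests on a structural identity that is false: $\mathfrak r(\mathfrak n,\pi)=\mathfrak{hd}(D_{\mathfrak n}(\pi))$. Theorem \ref{thm effect of Steinberg}(2) does not assert this. For a single admissible segment $\Delta=[a,b]_{\rho}$, the first bullet of that theorem gives the equality $\mathfrak{mxpt}^a(D_{\Delta}(\pi),c)=\mathfrak r(\Delta,\pi)[c]$ only for $c\geq a$; for $c<a$ the second bullet gives merely the inequality $\mathfrak{hd}(\pi)[c]\leq_c\mathfrak{mxpt}^a(D_{\Delta}(\pi),c)$, and this inequality is strict in general. The paper itself exhibits this failure in Example \ref{example a case of highest multi}: with $\mathfrak m=\{[0,5],[1,6],[5,7],[4,8],[2,8],[3,9]\}$ and $\pi=\langle\mathfrak m\rangle$, one has $\mathfrak r([8,9],\pi)=\{[5,7],[8]\}$ but $\mathfrak{hd}(D_{[8,9]}(\pi))=\{[5,9],[7],[8]\}$. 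Note the total absolute lengths ($4$ versus $7$) already differ, so the two multisegments cannot coincide and, in particular, $D_{\mathfrak r(\mathfrak n,\pi)}\circ D_{\mathfrak n}(\pi)\cong\pi^-$ (Theorem \ref{thm complement mutli}) is \emph{not} the highest derivative $(D_{\mathfrak n}(\pi))^-$: in the same example $(D_{\mathfrak n}(\pi))^-$ lives on a group of smaller size than $\pi^-$. The chain of equalities $(D_{\mathfrak n_i}(\pi))^-=D_{\mathfrak r(\mathfrak n_i,\pi)}(D_{\mathfrak n_i}(\pi))=\pi^-$ that you write down is therefore incorrect at both joints.

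The consequences for the three uses you make of the identity are uneven. Part (1) is salvageable: one never needs the full $\mathfrak{hd}$ of the intermediate derivative, only the maximal multisegment at the next point $a_k$, and since the $\Delta_j$ are arranged in ascending order that point satisfies $a_k\geq a_j$ for all prior $j$, so the genuine (single-point) part of Theorem \ref{thm effect of Steinberg} applies; this is exactly what the paper's Corollary \ref{cor admissible by resultant} does. The $(\Leftarrow)$ direction of Part (2) can also be repaired without the false identity: since $D_{\mathfrak r(\mathfrak n_i,\pi)}\circ D_{\mathfrak n_i}(\pi)\cong\pi^-$ and $\mathfrak r(\mathfrak n_1,\pi)=\mathfrak r(\mathfrak n_2,\pi)$, both $D_{\mathfrak n_i}(\pi)$ are recovered from $\pi^-$ by the same chain of socle-of-product operators $I_{\widetilde\Delta}$ (Lemma \ref{lem socle cosocle ladder} gives well-definedness), which is precisely the paper's argument and does not go through $\mathfrak{hd}$ of the derivative at all. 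The real gap is the $(\Rightarrow)$ direction of Part (2). You declare it ``immediate'' on the grounds that $\mathfrak r(\mathfrak n_i,\pi)$ would be an intrinsic invariant of $D_{\mathfrak n_i}(\pi)$; with the identity gone, nothing in your proposal establishes this, and it is not immediate. In the paper, this direction is a substantive argument (the ``only if'' half of Theorem \ref{thm isomorphic derivatives}): assuming $\mathfrak r(\mathfrak n_1,\pi)\neq\mathfrak r(\mathfrak n_2,\pi)$, one locates the smallest $c^*$ where the right-endpoint submultisegments $\mathfrak r_p\langle c\rangle$ first differ, and then by an iteration using Lemma \ref{lem shrinking mutli 0} and Lemma \ref{lem shrinking muliseg} one passes to derivatives where the two candidates are separated by a $\mathfrak{mxpt}^b$-type invariant. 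You would need to supply an argument of this kind; the reconstruction principle, even if correct, does not give it to you.

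A further (smaller) caution about the reconstruction principle itself: it is a reasonable claim that an irreducible $\tau\in\mathrm{Irr}_\rho$ is determined by $(\mathfrak{hd}(\tau),\tau^-)$, and it can be justified via Lemma \ref{lem socle cosocle ladder} together with Theorem \ref{thm highest derivative multi}, since one recovers $\tau$ as a uniquely determined iterated socle of $\tau^-\times\mathrm{St}(\Delta_k)\times\cdots\times\mathrm{St}(\Delta_1)$. But your sketch of a proof (inducting via $\tau^{--}$ and citing Theorem \ref{thm realize highest derivative mult}) is unnecessary overhead; what you would actually use is the unique-submodule fact directly. In any case this does not repair the $(\Rightarrow)$ direction.
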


Combining Theorem \ref{thm derivative resultant multi} with \cite{Ch22+b}, $\mathfrak r(\mathfrak n, \pi)$ provides a combinatorial invariant for describing the socle and cosocle of the Bernstein-Zelevinsky derivative of an irreducible representation. Applications of Theorem \ref{thm derivative resultant multi} will appear in the sequel \cite{Ch22+d}.

We finally comment on the proof of Theorems \ref{thm complement mutli} and \ref{thm derivative resultant multi}. The multisegment associated to the derivative $D_{\mathfrak n}(\pi)$ can be in general computed via explicit algorithms (see e.g. \cite{LM16}), but our proof does not directly use that. Our proof is more combinatorially soft in the sense that we use some commutation relations between derivatives studied in Section \ref{sec commutation of der}. Our proof also uses certain inductions via taking the $\rho$-derivatives.

\subsection{Remarks}
% Throughout the article, some results are also stated in the form of submodules. 

Apart from branching laws, there are many other applications for both derivatives. For example, it is important for unitarity by Tadi\'c \cite{Ta86}, theta correspondence by M\'inguez \cite{Mi08, Mi09}, $L$-functions (e.g. work of Matringe, Cogdell--Pietaski-Shapiro, Jo-Krishnamurthy \cite{Ma13, CPS17, JK22}), other distinction problems (e.g. Offen \cite{Of18}), and Aubert-Zelevinsky duals by M\oe glin-Waldspurger and Atobe-M\'inguez \cite{MW86, AM20}, and Arthur packets by Xu \cite{Xu17} and many others.  

We finally give some background of our study. In \cite{CS19} joint with Savin, we formulated the analogous Bernstein-Zelevinsky derivative functor for affine Hecke algebras of type A and so one could also formulate the analogous results in such setting, which will be explained in more detail in Section \ref{ss bz derivatives aha}. Some parts in this article are originally inspired by the work of Grojnowski-Vazirani \cite{GV01} in Hecke algebra setting few years ago, in which they used $\rho$-derivatives to study branching problems.

Using $\rho$-derivatives to study Bernstein-Zelevinsky derivatives also explicitly appears before, for example Deng \cite{De16} studying with orbital varieties and a more recent work of Gurevich \cite{Gu21} studying with RSK model. However, we emphasis that St-derivatives are important in the study of simple quotients of classical Bernstein-Zelevinsky derivatives, while $\rho$-derivatives seem to be not enough for such purpose. In particular, using machinery in Sections \ref{sec effect hd} and \ref{ss isomorphic bz derivatives}, one can find some simple quotients of Bernstein-Zelevinsky derivatives which cannot be constructed from a sequence of $\rho$-derivatives (see Example \ref{example rho not BZ}).
 
\subsection{Organization of this article}
Section \ref{ss prelim} defines some basic notations and results such as the Zelevinsky and Langlands classification. Section \ref{ss two derivatives} gives some basic results of using Jacquet functors to study Bernstein-Zelevinsky derivatives. Section \ref{s multi at a point} studies a notion of multisegment at a right point, which is a useful tool while Section \ref{s multisegment at a left pt} studies a notion of multisegment at a left point, which is an important ingredient in defining the highest derivative multisegment. Section \ref{sec commutation of der} studies some preliminary results for commutativity of derivatives (mainly by the geometric lemma). Section \ref{s highest der multisegment} defines the highest derivative multisegment and shows that its corresponding derivative gives the highest derivative (Theorem \ref{thm highest derivative multiseg}). Section \ref{sec effect hd} introduces the removal process and Section \ref{s comparison theorem} compares the removal process (combinatorial side) and the St-derivative (representation-theoretic side). Section \ref{ss isomorphic bz derivatives} proves that the effect of ascending sequences of derivatives is determined by the removal process (Theorem \ref{thm isomorphic derivatives}). Explicit descriptions of the highest derivatives of some representations are given in Section \ref{ss examples}. In the Appendix, we discuss how to transfer results to affine Hecke algebras of type A.

\subsection{Summary}

We summarize relations of several notions and results. Let $\pi \in \mathrm{Irr}_{\rho}$. Note that the notion $\mathfrak{mxpt}^b(\pi,c)$ is defined in Section \ref{s multi at a point} and other notions are discussed above.

\[ \xymatrix{ \varepsilon_{\Delta}(\pi) \ar[d]_{(\ref{eqn mxpt a})}  \ar[r]^{\mbox{Def. \ref{def max R multiseg}}} & \mathfrak{mxpt}^b(\pi,c) \ar[rr]^{\mbox{tools e.g. Prop \ref{prop prod highest derivative 1}}} &  &  \mbox{Examples in Sec. \ref{ss examples}} & \\  \mathfrak{mxpt}^a(\pi,c)  \ar[d]_{\mbox{(\ref{eqn define highest derivative mult})}} &  &  & & \\
                     \mathfrak{hd}(\pi)   \ar[d]_{\mbox{(\ref{eqn removal for hd pi})}} \ar@{<->}[rr]^{\mbox{Theorem \ref{thm highest derivative multiseg}}}  &    &     \pi^-  & &\\
					\mathfrak{r}(\Delta, \pi) \ar[d]_{(\ref{eqn removal multi})}    \ar@{<->}[rr]^{\mbox{Theorem \ref{thm effect of Steinberg}}}   && D_{\Delta}(\pi) \ar[d]^{(\ref{eqn derivatives of multisegment})}   &   &	\\
					\mathfrak{r}(\mathfrak n, \pi) \ar[rr]^{\mbox{Theorem \ref{thm isomorphic derivatives}}} &&  D_{\mathfrak n}(\pi) \ar@{->}[rr]^{\mbox{Prop. \ref{prop unique as segments}}} &  			& \mbox{simple quotient of $\pi^{(i)}$} \ar[uull]_{\mbox{double derivative (Theorem \ref{lem segment derivative})}}								}
\]

\subsection{Acknowledgement} The author would like to thank the referee for very helpful comments. This project is partly supported by the Research Grants Council of the Hong Kong Special Administrative Region, China (Project No: 	17305223) and NSFC grant for Excellent Young Scholar (Project No.: 12322120).

\section{Preliminaries} \label{ss prelim}

\subsection{Notations} \label{ss notations}

All representations that we consider are smooth and over $\mathbb{C}$. We will typically omit these descriptions and do not usually distinguish representations in the same isomorphism class. We keep using the notations in Section \ref{ss notion derivative}. Let $\mathrm{Alg}(G_n)$ be the category of smooth representations of $G_n$. For $\pi \in \mathrm{Alg}(G_n)$, denote by $\pi^{\vee}$ the smooth dual of $\pi$.

 For $\pi \in \mathrm{Irr}$, $\mathrm{deg}(\pi)$ is defined to be the number that $\pi \in \mathrm{Irr}(G_{\mathrm{deg}(\pi)})$. Let $\mathrm{Irr}^c(G_n)$ be the set of (irreducible) cuspidal representations of $G_n$. Let $\mathrm{Irr}^c=\sqcup_n \mathrm{Irr}^c(G_n)$.

For any $\pi_1 \in \mathrm{Alg}(G_{n_1})$ and $\pi_2 \in \mathrm{Alg}(G_{n_2})$, define the normalized parabolically induced module as:
\[  \pi_1 \times \pi_2 = \mathrm{Ind}_{P_{n_1,n_2}}^{G_{n_1+n_2}} (\pi_1\boxtimes \pi_2),\]
where we inflate the $G_{n_1}\times G_{n_2}$-representation $\pi_1 \boxtimes \pi_2$ to a $P_{n_1,n_2}$-representation. Here $\mathrm{Ind}$ is the normalized parabolic induction.  

%For $\pi_j \in \mathrm{Irr}(G_{n_j})$ ($j=1, \ldots, k$), 
%\[  \pi_1 \times \ldots \times \pi_k = \pi_1 \times (\pi_2 \times (\pi_3 \times \ldots \times \pi_k))\]
%The product is indeed an associative operation.
                                                                                                                                                                                                            For $a,b \in \mathbb Z$ with $b-a \in \mathbb Z_{\geq 0}$ and $\rho \in \mathrm{Irr}^c$, the segment
\begin{align} \label{eqn seg def}
[a,b]_{\rho}:=\left\{ \nu^a\rho, \ldots, \nu^b\rho \right\} 
\end{align}
is considered to be a set so that the intersection and union can be carried out (see Section \ref{ss intersect union oper} below).  We also set $[a,a-1]_{\rho}=\emptyset$ for $a \in \mathbb Z$. For a segment $\Delta=[a,b]_{\rho}$, we write $a(\Delta)=\nu^a\rho$ and $b(\Delta)=\nu^b\rho$. We also write: \[[a]_{\rho} :=[a,a]_{\rho}. \]
We may also write $[\nu^a\rho, \nu^b\rho]$ for $[a,b]_{\rho}$ and write $[\nu^a\rho]$ for $[a]_{\rho}$. Recall that $l_{abs}([a,b]_{\rho})$ is defined in Section \ref{ss notion derivative}. Two segments $[a,b]_{\rho}$ and $[a',b']_{\rho'}$ are said to be equal if $\nu^{a'}\rho' \cong \nu^a \rho$ and $b-a+1=b'-a'+1$. When $\rho=1$ is the trivial representation of $G_1$, we simply write $[a,b]$ for the segment $[a,b]_1$. 

Two segments $\Delta$ and $\Delta'$ are said to be {\it linked} if $\Delta \cup \Delta'$ is still a segment in the sense of (\ref{eqn seg def}), and $\Delta \not\subset \Delta'$ and $\Delta' \not\subset \Delta$. Otherwise, it is called to be not linked or unlinked.
																																																	
For any $\pi \in \mathrm{Irr}$, there exist $\rho_1, \ldots, \rho_r \in \mathrm{Irr}^c$ such that $\pi$ is a simple composition factor for $\rho_1 \times \ldots \times \rho_r$, and we shall denote such multiset to be $\mathrm{csupp}(\pi)$, which is called the {\it cuspidal support} of $\pi$. For a multisegment $\mathfrak m$, define $\mathrm{csupp}(\mathfrak m)=\cup_{\Delta \in \mathrm{m}} \Delta$ as a multiset.

																																																																															\subsection{More notations for multisegments} \label{ss notations multi}
		For two multisegments $\mathfrak m$ and $\mathfrak n$, define $\mathfrak m+\mathfrak n$ to be the sum of two multisegments (counting multiplicities). For a multisegment $\mathfrak m$ and a segment $\Delta$, define
\[ \mathfrak m+\Delta= \left\{ \begin{array}{cc} \mathfrak m+\left\{ \Delta \right\} & \mbox{ if $\Delta\neq \emptyset$ } \\
                             \mathfrak m & \mbox{ if $\Delta=\emptyset$ }
													\end{array}
												\right.
		\]
The notions $\mathfrak m-\mathfrak n$ and  $\mathfrak m-\Delta$ are defined in a similar way, provided that $\mathfrak n$ is a submultisegment of $\mathfrak m$ and $\Delta \in \mathfrak m$ respectively. 	

For a multisegment $\mathfrak m=\left\{ \Delta_1, \ldots, \Delta_k \right\}$, define
\[  l_{abs}(\mathfrak m)=l_{abs}(\Delta_1)+\ldots+l_{abs}(\Delta_k) .
\]

For $\rho_1, \rho_2 \in \mathrm{Irr}^c$, we write $\rho_2 < \rho_1$ (resp. $\rho_2\leq \rho_1$) if $\rho_1 \cong \nu^a \rho_2$ for some integer $a > 0$ (resp. $a \geq 0$). For two segments $\Delta_1, \Delta_2$, we write $\Delta_1 <\Delta_2$ if $\Delta_1$ and $\Delta_2$ are linked and $b(\Delta_1)< b(\Delta_2)$.

%For a fixed cuspidal support $\rho$, let $\mathrm{Irr}_{\rho}$ be the subset of $\mathrm{Irr}$ containing all irreducible representations $\pi$ such that for any $\rho' \in \mathrm{csupp}(\pi)$, $\rho' \cong \nu^a\rho$ for some $a \in \mathbb Z$. 

 For an integer $c$ and $\rho \in \mathrm{Irr}^c$, let $\mathrm{Mult}_{\rho,c}^a$ be the subset of $\mathrm{Mult}_{\rho}$ containing all multisegments $\mathfrak m$ such that any segment $\Delta$ in $\mathfrak m$ satisfies $a(\Delta) \cong \nu^c\rho$. Define $\mathrm{Mult}_{\rho,c}^b$ to be the subset of $\mathrm{Mult}_{\rho}$ containing all multisegments $\mathfrak m$ such that any segment $\Delta$ in $\mathfrak m$ satisfies $b(\Delta) \cong \nu^c\rho$. The empty set is also considered to be an element in both $\mathrm{Mult}_{\rho,c}^a$ and $\mathrm{Mult}_{\rho,c}^b$. %We shall also say that a multisegment $\mathfrak m \in \mathrm{Mult}_{\rho, c}^a$ is  at the left point $\nu^c\rho$.

For a multisegment $\mathfrak m$ in $\mathrm{Mult}_{\rho}$ and an integer $c$, define $\mathfrak m[c]$ to be the submultisegment of $\mathfrak m$ containing all the segments $\Delta$ satisfying $a(\Delta)\cong \nu^c\rho$; and define $\mathfrak m\langle c \rangle$ to be the submultisegment of $\mathfrak m$ containing all the segments $\Delta$ satisfying $b(\Delta)\cong \nu^c\rho$.

%For two multisegments $\mathfrak m_1$ and $\mathfrak m_2$, we write $\mathfrak m_1+\mathfrak m_2$ for the union $\mathfrak m_1 \cup \mathfrak m_2$ (counting multiplicities). Similarly, we write $\mathfrak m_1-\mathfrak m_2$ for $\mathfrak m_1\setminus \mathfrak m_2$.

\subsection{Ordering on segments} \label{ss ordering}

For two non-empty segments $[a',b']_{\rho}$ and $[a'',b'']_{\rho}$, we write 
\[  [a',b']_{\rho} \prec^L [a'',b'']_{\rho} 
\]
if either $a'<a''$; or $a'=a''$ and $b'<b''$. We also write $[a',b']_{\rho} \preceq^L [a'',b'']_{\rho}$ if $[a',b']_{\rho} \prec^L [a'',b'']_{\rho}$ or $[a',b']_{\rho}=[a'',b'']_{\rho}$. The ordering $\prec^R$ can be defined in a similar manner by using $b$-values.

% We write 
%\[  [a',b']_{\rho}  \prec^R [a'',b'']_{\rho}\]
%if either $b'<b''$; or $b'=b''$ and $a'<a''$. We also write $[a',b']_{\rho} \preceq^L [a'',b'']_{\rho}$ if $[a',b']_{\rho} \prec^L [a'',b'']_{\rho}$ or $[a',b']_{\rho}=[a'',b'']_{\rho}$ and similarly define for $[a',b'] \preceq^R [a'',b'']_{\rho}$.

\subsection{Intersection-union operation} \label{ss intersect union oper}
We say that a multisegment $\mathfrak m_2$ is obtained from $\mathfrak m_1$ by an {\it elementary intersection-union operation} if  for two segments $\Delta_1, \Delta_2$ in $\mathfrak m_1$, 
\[  \mathfrak m_2=\mathfrak m_1-\left\{ \Delta_1, \Delta_2\right\} + \Delta_1\cup \Delta_2 + \Delta_1\cap \Delta_2  .
\] 
The ordering $\leq_Z$ is defined in Section \ref{ss main results}.

\subsection{Ordering on $\mathrm{Mult}_{\rho, c}^a$ and $\mathrm{Mult}_{\rho,c}^b$} \label{ss ordering mult}

Fix an integer $c$. Let $\Delta_1=[c,b_1]_{\rho}, \Delta_2=[c,b_2]_{\rho}$ be two non-empty segments. We write $\Delta_1 \leq_c^a \Delta_2$ if $b_1 \leq b_2$, and write $\Delta_1 <_c^a \Delta_2$ if $b_1<b_2$.

For non-empty $\mathfrak m_1, \mathfrak m_2$ in $\mathrm{Mult}_{\rho, c}^a$, label the segments in $\mathfrak m_1$ as: $\Delta_{1,k} \leq_c^a   \ldots \leq_c^a \Delta_{1,2} \leq_c^a \Delta_{1,1}$ and label the segments in $\mathfrak m_2$ as: $\Delta_{2,r} \leq_c^a \ldots  \leq_c^a \Delta_{2,2}  \leq_c^a \Delta_{2,1}$. We define the lexicographical ordering: $\mathfrak m_1 \leq_c^a \mathfrak m_2$ if $k \leq r$, and for any $i \leq k$, $\Delta_{1,i} \leq_c^a \Delta_{2,i}$. We write $\mathfrak m_1 <_c^a \mathfrak m_2$ if $\mathfrak m_1 \leq_c^a \mathfrak m_2$ and $\mathfrak m_1\neq \mathfrak m_2$.

We also need a 'right' ordering. One can define $[a_1,c]_{\rho}\leq_c^b[a_2,c]_{\rho}$ if $a_1<a_2$, and similarly define $[a_1,c]_{\rho}<^b_c [a_2,c]_{\rho}$. One similarly define $\leq_c^b$ on $\mathrm{Mult}_{\rho,c}^b$.

\subsection{Zelevinsky and Langlands classification} \label{ss zel langlands class}

For a segment $\Delta=[a,b]_{\rho}$, define $\langle \Delta \rangle$ to be the the unique simple submodule of
\[  \nu^a \rho \times \ldots \times \nu^b \rho
\]
and define $\mathrm{St}(\Delta)$ to be the unique simple quotient of 
\[  \nu^a \rho \times \ldots \times \nu^b\rho . \]

For any multisegment $\mathfrak m=\left\{ \Delta_1, \ldots, \Delta_k \right\}$, we may assume the labellings satisfy that for $i<j$, $\Delta_i \not <\Delta_j$. With such labellings, we define, as in \cite[Theorem 6.1]{Ze80}, $\langle \mathfrak m \rangle$ to be the unique simple submodule of
\begin{align} \label{eqn zelevinsky induced modules}
 \zeta(\mathfrak m):= \langle \Delta_1 \rangle \times \ldots \times \langle \Delta_k \rangle,
\end{align}
and define $\mathrm{St}(\mathfrak m)$ to be the unique simple quotient of 
\[ \lambda(\mathfrak m):=  \mathrm{St}(\Delta_1) \times \ldots \times \mathrm{St}(\Delta_k) .
\]

We frequently use the following standard fact (see \cite[Theorems 4.2, 6.1]{Ze80}): for two unlinked segments $\Delta_1$ and $\Delta_2$,
\begin{align}
 \langle \Delta_1 \rangle \times \langle \Delta_2 \rangle \cong \langle \Delta_2 \rangle \times \langle \Delta_1 \rangle,
\end{align}
\begin{align} \label{eqn product commut st}
 \mathrm{St}(\Delta_1) \times \mathrm{St}(\Delta_2) \cong \mathrm{St}(\Delta_2) \times \mathrm{St}(\Delta_1) .
\end{align}

\subsection{Geometric lemma} \label{ss geo lemma}

The geometric lemma is a key tool in our study. We shall describe a special case that we frequently use. 

For $n_1+ \ldots+ n_k=n$, we write $P_{n_1, \ldots, n_k}$ to be the parabolic subgroup generated by the matrices $\mathrm{diag}(g_1, \ldots, g_k)$, where $g_j \in G_{n_j}$, and upper triangular matrices; and write $N_{n_1, \ldots, n_k}$ to be the unipotent radical of $P_{n_1, \ldots, n_k}$. For $i \leq n$, $N_i$ defined in Section \ref{ss notion derivative} is the unipotent radical of the parabolic subgroup $P_{n-i, i}$. 

Let $\pi_1 \in \mathrm{Alg}(G_{n_1})$ and let $\pi_2 \in \mathrm{Alg}(G_{n_2})$. Let $n=n_1+n_2$. Then the geometric lemma on $(\pi_1 \times \pi_2)_{N_i}$ asserts that $(\pi_1 \times \pi_2)_{N_i}$ admits a filtration whose successive subquotients take the form:
\[  \mathrm{Ind}_{P_{n_1-i_1,n_2-i_2} \times P_{i_1,i_2}}^{G_{n-i}\times G_i} ((\pi_1)_{N_{i_1}}\boxtimes (\pi_2)_{N_{i_2}})^{\phi} ,
\]
where $i_1+i_2=i$. Here $((\pi_1)_{N_{i_1}} \boxtimes (\pi_2)_{N_{i_2}})^{\phi}$ is a $G_{n_1-i_1}\times G_{n_2-i_2}\times G_{i_1}\times G_{i_2}$-representation with underlying space $(\pi_1)_{N_{i_1}} \boxtimes (\pi_2)_{N_{i_2}}$ determined by the action:
\[  (g_1,g_2,g_3,g_4).(v_1\boxtimes v_2) =(g_1,g_3).v_1 \boxtimes (g_2,g_4).v_2 ,
\]
where $v_1 \in (\pi_1)_{N_{i_1}} $ and $v_2 \in (\pi_2)_{N_{i_2}}$. 

\subsection{Quotients and submodules of Jacquet functors} \label{ss quotients and sub jacquet}

Let $\theta=\theta_n: G_n \rightarrow G_n$ be given by $\theta(g)=g^{-t}$, the inverse transpose of $g$. This induces a self-equivalence exact functor on $\mathrm{Alg}(G_n)$, still denoted by $\theta$. We shall call it the {\it Gelfand-Kazhdan involution}.

\begin{proposition} \label{prop sub quo Jacquet}
Let $\pi \in \mathrm{Irr}(G_n)$. Let $n_1+\ldots+n_r=n$. Let $N$ be the unipotent radical of the parabolic subgroup $P_{n_1,\ldots, n_r}$. Let $\theta'$ be the involution on $\mathrm{Alg}(G_{n_1}\times \ldots \times G_{n_r})$ arisen from $\theta'(g_1, \ldots, g_r)=(\theta(g_1), \ldots, \theta(g_r))$. Then $\theta'(\pi_N)^{\vee} \cong \pi_N$. In particular, for an irreducible representation $\omega$ of $G_{n_1}\times \ldots \times G_{n_r}$, $\omega$ is a simple submodule of $\pi_N$ if and only if  $\omega$ is a simple quotient of $\pi_N$.

%Let $\omega$ be an irreducible representation of $G_{n_1}\times \ldots \times G_{n_r}$. Then $\omega$ is isomorphic to a simple quotient of $\pi_N$ if and only if $\omega$ is isomorphic to a simple submodule of $\pi_N$. 
\end{proposition}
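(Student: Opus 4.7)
The plan is to reduce the statement to the Gelfand-Kazhdan theorem combined with Bernstein's duality formula for Jacquet modules. First I would invoke Gelfand-Kazhdan: for any $\pi \in \mathrm{Irr}(G_n)$ there is an isomorphism $\pi^\theta \cong \pi^\vee$, where $\pi^\theta$ denotes the representation on the same underlying space with action twisted by the involution $\theta$.

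Next I would unpack $(\pi^\theta)_{\theta(N)}$ directly from the definition. Since $\theta$ stabilizes the standard Levi $M=G_{n_1}\times\cdots\times G_{n_r}$ and sends the upper unipotent radical $N$ to the opposite (lower) unipotent radical $N^-$, the defining coinvariant relations for $\pi_N$ and $(\pi^\theta)_{N^-}$ coincide after relabeling, while the residual $M$-action differs precisely by $\theta'$. Hence one obtains a natural identification $(\pi^\theta)_{N^-}\cong \theta'(\pi_N)$ as $M$-representations.

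The third ingredient is Bernstein's duality $(\pi^\vee)_{N^-}\cong (\pi_N)^\vee$, valid because $\pi_N$ is admissible of finite length by Bernstein-Zelevinsky. Chaining the three isomorphisms produces
\[
\theta'(\pi_N) \;\cong\; (\pi^\theta)_{N^-} \;\cong\; (\pi^\vee)_{N^-} \;\cong\; (\pi_N)^\vee ,
\]
and taking contragredients on both sides (again using admissibility of $\pi_N$) yields the desired $\theta'(\pi_N)^\vee\cong \pi_N$.

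For the \emph{in particular} clause, suppose $\omega\hookrightarrow \pi_N$ is a simple submodule. Applying the exact self-equivalence $\theta'$ gives $\theta'(\omega)\hookrightarrow \theta'(\pi_N)$, and dualizing produces a surjection $\pi_N\cong\theta'(\pi_N)^\vee\twoheadrightarrow\theta'(\omega)^\vee$. Applying Gelfand-Kazhdan to each $G_{n_i}$-factor identifies $\theta'(\omega)^\vee\cong (\omega^\vee)^\vee\cong\omega$, exhibiting $\omega$ as a simple quotient; the reverse implication is symmetric. The only mildly technical point is pinning down Bernstein's Jacquet-module duality in the precise form needed, but since $\pi_N$ is admissible of finite length this is standard and poses no serious obstacle.
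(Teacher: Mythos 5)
Your proposal is correct and takes essentially the same route as the paper: establish the identification $(\pi^\theta)_{N^-}\cong\theta'(\pi_N)$ by examining how $\theta$ interchanges $N$ and $N^-$ while acting componentwise on the Levi, then combine Gelfand--Kazhdan ($\theta(\pi)\cong\pi^\vee$) with Bernstein's Jacquet-module duality $(\pi_N)^\vee\cong(\pi^\vee)_{N^-}$. The only difference is cosmetic (the order in which you invoke Gelfand--Kazhdan and Bernstein's duality), and you spell out the \emph{in particular} clause slightly more explicitly than the paper does.
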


\begin{proof}
%By Frobenius reciprocity gives that for any $\omega \in \mathrm{Alg}_f(G_{n_1}\times \ldots \times G_{n_r})$ and any $\pi \in \mathrm{Alg}_f(G_n)$,
%\[  \mathrm{Hom}_{G_{n_1}\times \ldots \times G_{n_r}}( \theta(\pi_N), \omega) \cong \mathrm{Hom}_{G_n}(\theta(\pi_N), \omega) .\]
%On the other hand,
%\begin{align}
%\mathrm{Hom}_{G_{n_1}\times \ldots \times G_{n_r}}(\theta'(\pi_N), \omega) & \cong \mathrm{Hom}_{G_{n_1}\times \ldots \times G_{n_r}}(\pi_N, \theta'(\omega)) \\
%  & \cong \mathrm{Hom}_{G_n}(\pi, \mathrm{Ind}_P^{G_n}\theta'(\omega)) \\
%	& \cong \mathrm{Hom}_{G_n}(\pi, \theta(\mathrm{Ind}_{P^-}^{G_n}\omega))\\
%	& \cong \mathrm{Hom}_{G_n}(\theta_n(\pi)_{N^-}, \omega) 
%\end{align}
%Thus, by Yoneda's lemma, 
Recall that $\theta(\pi)$ and $\pi$ have the same underlying space, which we refer to $V$. Note that
\[  W:= \left\{ \theta(n).v-v: n \in N^-, v \in V \right\} = \left\{ n.v-v : n \in N, v \in V \right\} \subset V.
\]
Then it induces a natural identification as vector spaces:
\[  \theta_n(\pi)_{N^-} = \theta'(\pi_N') =V/W .
\]
Now one checks the isomorphism is also a $G_{n_1}\times \ldots \times G_{n_r}$-morphism. This proves that:
\begin{align} \label{eqn isom under opposite 1}
 \quad \theta(\pi)_{N^-}\cong \theta'(\pi_N) .
\end{align}

On the other hand, 
\begin{align} \label{eqn isom under opposite 2}
  (\theta(\pi)_{N^-})^{\vee} \cong (\theta(\pi)^{\vee})_N \cong \pi_N,
\end{align}
where the first isomorphism follows from a result of Bernstein-Casselman \cite[Page 66]{Be92} and \cite[Corollary 4.2.5]{Ca95}, and the second isomorphism follows from \cite[Theorem 7.3]{BZ76}. The proposition follows by combining (\ref{eqn isom under opposite 1}) and (\ref{eqn isom under opposite 2}). 
\end{proof}

%Write $\omega=\omega_1 \boxtimes \ldots \boxtimes \omega_r$. Suppose $\pi_N \rightarrow \omega$. Then, by Frobenius reciprocity,
%\[ \pi \hookrightarrow \omega_1 \times \ldots \times \omega_r .\]
%Now taking $\theta(.)^{\vee}$, we have that 
%\[ \omega_r\times \ldots \times \omega_1\cong  \theta(\omega_r)^{\vee} \times \ldots \times \theta(\omega_1) \rightarrow \theta(\pi)^{\vee}\cong \pi.\]
%Now one applies the map:
%\[  f \mapst (g \mapsto f(w_0g)) ,\]
%where $w_0=\begin{pmatrix}  & & I_{n_r} \\ & \rdots & \\ I_{n_1} & & \end{pmatrix}$ to obtain an isomorphism from $\omega_r \times \ldots \omega_1 \cong \mathrm{Ind}_{P^-}^{G_n} \omega_1\boxtimes \ldots \boxtimes \omega_r$, where $P^-$ is the opposite parabolic subgroup of $P$, and $n=n_1+\ldots +n_r$. Now one applies Frobenius reciprocity to obtain the claim.

\subsection{Opposite parabolic induction}

\begin{lemma} \label{lem twist the isomorphism}
Let $n_1, \ldots, n_k$ be positive integers and let $n=n_1+\ldots+n_k$. Let $\pi_k \in \mathrm{Alg}(G_{n_k})$ ($k=1, \ldots r$). Let $P^-$ be the parabolic subgroup opposite to $P_{n_1, \ldots, n_r}$. Then the normalized parabolically induced module $\mathrm{Ind}_{P^-}^{G_n} (\pi_1\boxtimes \ldots \boxtimes \pi_r)$ is naturally isomorphic to $\pi_r\times \ldots \times \pi_1$.
\end{lemma}

\begin{proof}
For $f \in \mathrm{Ind}_{P^-}^{G_n} (\pi_1\boxtimes \ldots \boxtimes \pi_r)$, one defines $\Lambda(f)(g)=f(\dot{w}_0g)$, where $\dot{w}_0$ is the anti-diagonal matrix in $G_n$. Note that $\dot{w}_0^{-1}(N_{n_r, \ldots, n_1})\dot{w}_0$ is the unipotent radical of $P^-$; $\dot{w}_0^{-1}P_{n_r,\ldots, n_1}\dot{w}_0=P^-$ for each $g_k \in G_{n_k}$ and $\dot{w}_0$ also twists the modular character of $P_{n_r, \ldots,n_1}$ to the modular character of $P^-$. Then $\Lambda$ gives the isomorphism.
\end{proof}

\subsection{Jacquet functors on Steinberg representations and segment representations} \label{ss jacquet steinberg }

We shall frequently use the following formulas \cite[Proposition 3.4 and Proposition 9.5]{Ze80}: for any integer $i$ and a segment $[a,b]_{\rho}$,
\begin{align} \label{eqn segment jacquet}
  \langle [a,b]_{\rho} \rangle_{N_{i \cdot \mathrm{deg}(\rho)}} = \langle [a, b-i]_{\rho} \rangle \boxtimes \langle [b-i+1, b]_{\rho} \rangle
\end{align}
and 
\begin{align} \label{eqn steinberg jacquet}
 \mathrm{St}([a,b]_{\rho})_{N_{i \cdot \mathrm{deg}(\rho)}}= \mathrm{St}([a+i, b]_{\rho})\boxtimes \mathrm{St}([a,a+i-1]_{\rho}) .
\end{align}

Later on, we sometimes say to use the geometric lemma and compare cuspidal supports, which we mean to use the geometric lemma in Section \ref{ss geo lemma} and then use the Jacquet functor computations of (\ref{eqn segment jacquet}) and/or (\ref{eqn steinberg jacquet}) above.

\section{Bernstein-Zelevinsky derivatives via Jacquet functors} \label{ss two derivatives}

The main results of this section are Propositions \ref{prop embedding deri sub} and \ref{prop unique as segments}, which use Jacquet functors and the duals of standard modules to construct some simple submodules of Bernstein-Zelevinsky derivatives. We fix a cuspidal representation $\rho \in \mathrm{Irr}^c$ for the rest of the article.

\subsection{$\rho$-derivatives} \label{ss rho derivatives}

For a representation $\pi$ of some $G_m$, define $\pi^{\times k}$ to be the representation of $G_{mk}$
\[ \overbrace{\pi \times \ldots \times \pi}^{k \ \mbox{ times}}. \]

\begin{lemma}  \cite{GV01, Ja07, Mi09}\label{lem unique sub quo product}
Let $\pi \in \mathrm{Irr}_{\rho}$ and let $c \in \mathbb Z$. For any non-negative integer $k$,
\[    \pi \times (\nu^c\rho)^{\times k}
\]
has unique irreducible submodule and unique irreducible quotient.
\end{lemma}

Applying Frobenius reciprocity on Lemma \ref{lem unique sub quo product} for $k=1$, one has:
\begin{lemma} \label{lem embedding exists}
Let $\pi \in \mathrm{Irr}_{\rho}(G_n)$ and let $c \in \mathbb Z$.  There is at most one irreducible representation $\tau \in \mathrm{Irr}(G_{n-r})$ such that
\[  \tau \boxtimes (\nu^c\rho) \hookrightarrow \pi_{N_r} .
\]
\end{lemma}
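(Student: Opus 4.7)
The plan is to translate the Jacquet-module embedding statement into one about $\pi$ embedding in a parabolically induced module, and then exploit the uniqueness of the irreducible submodule from \leref{unique sub quo product}.

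First, I would establish, for any irreducible $\tau \in \mathrm{Irr}(G_{n-r})$, the equivalence
\[
\tau \boxtimes \rho \hookrightarrow \pi_{N_r} \quad \Longleftrightarrow \quad \pi \hookrightarrow \tau \times \rho .
\]
By \prref{sub quo Jacquet}, the left-hand embedding is equivalent to $\tau \boxtimes \rho$ being a simple quotient of $\pi_{N_r}$, and Frobenius reciprocity identifies the latter with a nonzero morphism $\pi \to \tau \times \rho$, which is automatically an embedding since $\pi$ is irreducible. The reverse direction runs the adjunction backwards.

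Second, suppose $\tau_1, \tau_2 \in \mathrm{Irr}(G_{n-r})$ both satisfy the hypothesis. By the first step, $\pi \hookrightarrow \tau_i \times \rho$ for $i = 1, 2$, and \leref{unique sub quo product} identifies $\pi$ as the unique irreducible submodule of each $\tau_i \times \rho$. Applying the exact Jacquet functor $(-)_{N_r}$ yields embeddings $\pi_{N_r} \hookrightarrow (\tau_i \times \rho)_{N_r}$. Using the geometric lemma of Section~\ref{ss geo lemma} together with the cuspidality of $\rho$, which forces $(\rho)_{N_{i_2}} \neq 0$ only for $i_2 \in \{0, r\}$, the module $(\tau_i \times \rho)_{N_r}$ has a two-step filtration whose layers are $\tau_i \boxtimes \rho$ and a piece involving the Jacquet $(\tau_i)_{N_r}$.

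To conclude, I would trace the hypothesised embedding $\tau_j \boxtimes \rho \hookrightarrow \pi_{N_r} \hookrightarrow (\tau_i \times \rho)_{N_r}$ through this filtration, aiming to show that any simple submodule of $(\tau_i \times \rho)_{N_r}$ of the form $\sigma \boxtimes \rho$ must be isomorphic to $\tau_i \boxtimes \rho$, which then forces $\tau_1 \cong \tau_2$. The main obstacle is this final rigidity step: ruling out that a simple module of the form $\sigma \boxtimes \rho$ with $\sigma \not\cong \tau_i$ could arise from the complementary filtration layer. This may be handled either by a direct analysis of the second layer (whose $G_r$-factor remains $\rho$-isotypic by cuspidality) or by an induction on the cuspidal support of $\pi$, with the base case $n = r$ being immediate because then $\pi_{N_r} = \pi$ as $G_r$-module, so the embedding forces $\pi \cong \rho$ with $\tau$ the trivial representation of $G_0$.
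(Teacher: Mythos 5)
Your first step is correct: combining Proposition~\ref{prop sub quo Jacquet} with Frobenius reciprocity turns the hypothesis into $\pi\hookrightarrow\tau\times\rho$, so two candidates would yield $\pi\cong\mathrm{soc}(\tau_1\times\rho)\cong\mathrm{soc}(\tau_2\times\rho)$. The gap is in what you deduce from this. Lemma~\ref{lem unique sub quo product} only asserts that each $\tau_i\times\rho$ has an \emph{irreducible} socle; it does not assert that $\tau\mapsto\mathrm{soc}(\tau\times\rho)$ is \emph{injective} on $\mathrm{Irr}$, and the latter is precisely what you need --- it is essentially a restatement of the lemma being proved. The extra ingredient is Lemma~\ref{lem socle cosocle ladder} (from \cite{LM16}): since the cuspidal $\rho$ is a ladder, its second (or third) bullet supplies the injectivity, and Remark~\ref{rmk unique derivative} is a verbatim statement of the deduction you want. (Equivalently, second adjointness turns the hypothesis into $\pi\cong\mathrm{cosoc}(\rho\times\tau_i)$ and the third bullet applies directly.) That is also the extent of the paper's own justification; in the text the lemma is credited to \cite{GV01,Ja07,Mi09}.

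The geometric-lemma analysis you sketch to fill the gap does not close it. The parenthetical claim that the complementary layer's ``$G_r$-factor remains $\rho$-isotypic by cuspidality'' is incorrect: in the layer with $i_2=0$, after the $\phi$-twist the copy of $\rho$ sits on the $G_{n-r}$-side, and the $G_r$-factor of that layer is carried by the $G_r$-constituents of $(\tau_i)_{N_r}$, which in general do include $\rho$ (for instance whenever $\rho$ still lies in the cuspidal support of $\tau_i$). Ruling out that such a constituent contributes a simple \emph{submodule} of the form $\sigma\boxtimes\rho$ is then at least as hard as the original claim, and the induction on cuspidal support you propose does not produce a strictly smaller instance of the same problem at that step. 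The fix is to replace this part of your argument by a citation of Lemma~\ref{lem socle cosocle ladder} via Remark~\ref{rmk unique derivative}.
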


\begin{notation} \label{notn derivatives epsilon}
\begin{enumerate}
\item For $c \in \mathbb Z$ and $\pi \in \mathrm{Irr}_{\rho}$, if there exists $\tau \in \mathrm{Irr}$ such that
\[   \tau \boxtimes \nu^c\rho \hookrightarrow  \pi_{N_{\mathrm{deg}(\rho)}}, \]
we shall denote $D_c(\pi)$ to be such $\tau$. Otherwise, set $D_c(\pi)=0$. The notion $D_c$ is well-defined from Lemma \ref{lem embedding exists}. We shall call $D_c(\pi)$ to be a {\it $\rho$-derivative} of $\pi$ (depending on $c$).
\item For a non-negative integer $k$, we shall write $D_c^k(\pi)$ for
\[  \overbrace{D_c\circ \ldots \circ D_c}^{k \mbox{ times}}(\pi) .
\] 
When $k=0$, it simply means that $D_c^0(\pi)=\pi$. 
\item We shall denote the largest non-negative integer $k$ such that $D^k_c(\pi)\neq 0$ by $\varepsilon_c(\pi)$. 
%(We remark that our notation $\varepsilon_c$ is motivated from the corresponding notation in the Kashiwara crystal operator theory, see e.g. \cite{GV01} and \cite[Section 11]{Kl10}.)
\end{enumerate}
%We remark that both $D_c$ and $\varepsilon_c$ depend on $\rho$, but it will be clear from the context.
\end{notation}

\subsection{Properties of $\rho$-derivatives}

\begin{lemma} \label{lem epsilon for rho der} (see \cite[Lemma 3.5]{GV01}, \cite[Corollary 2.3.2]{Ja07}, \cite[Corollaire 6.5.]{Mi09})
Let $\pi \in \mathrm{Irr}_{\rho}$. Let $c \in \mathbb Z$. Let $\widetilde{\pi}$ be the unique submodule of $ \pi \times (\nu^c\rho)^{\times k}$. Then 
\begin{enumerate}
\item $\varepsilon_c(\widetilde{\pi})=\varepsilon_c(\pi)+k$;
\item $\widetilde{\pi}$ appears with multiplicity one in the Jordan-H\"older series of $\pi \times (\nu^c\rho)^{\times k}$;
\item for any irreducible composition factor $\tau$ of $ \pi \times (\nu^c\rho)^{\times k}$ which is not isomorphic to $\widetilde{\pi}$, $\varepsilon_c(\tau)<\varepsilon_c(\pi)+k$;
\item $\pi \cong (D_c)^{k}(\widetilde{\pi})$.
\end{enumerate}
\end{lemma}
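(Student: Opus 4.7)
Proof plan. I would argue by induction on $k$, reducing the inductive step cleanly to the base case $k=1$, where all the real work happens. Let $\rho' = \nu^c\rho$ and $\ell = \varepsilon_c(\pi)$. Let $\widetilde{\pi}_{k-1}$ denote the unique submodule of $\pi \times (\rho')^{\times(k-1)}$ guaranteed by Lemma \ref{lem unique sub quo product}. The embedding $\widetilde{\pi}_{k-1} \hookrightarrow \pi \times (\rho')^{\times(k-1)}$ produces an embedding $\widetilde{\pi}_{k-1} \times \rho' \hookrightarrow \pi \times (\rho')^{\times k}$, and a second application of Lemma \ref{lem unique sub quo product} identifies $\widetilde{\pi}$ with the unique submodule of $\widetilde{\pi}_{k-1} \times \rho'$. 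Then (1) follows by combining the inductive bound $\varepsilon_c(\widetilde{\pi}_{k-1}) = \ell+k-1$ with the $k=1$ case applied to $\widetilde{\pi}_{k-1}$. For (2) and (3) inductively, I would work in the Grothendieck group $[\pi \times (\rho')^{\times k}] = \sum_\sigma m_\sigma [\sigma \times \rho']$, where $\sigma$ ranges over composition factors of $\pi \times (\rho')^{\times(k-1)}$. By induction, the maximum of $\varepsilon_c(\sigma)$ is $\ell+k-1$, attained uniquely by $\sigma = \widetilde{\pi}_{k-1}$ with $m_\sigma = 1$; combined with the $k=1$ bound applied to each such $\sigma$, any composition factor $\tau$ with $\varepsilon_c(\tau) = \ell+k$ forces $\sigma = \widetilde{\pi}_{k-1}$ and $\tau = \widetilde{\pi}$, and multiplicity one then follows by multiplying the mult-one contributions.

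For the base case $k=1$, part (1) is the easiest: the embedding $\widetilde{\pi} \hookrightarrow \pi \times \rho'$ corresponds by Frobenius reciprocity to a nonzero (hence surjective) map $\widetilde{\pi}_{N_{n(\rho)}} \twoheadrightarrow \pi \boxtimes \rho'$, which via Proposition \ref{prop sub quo Jacquet} gives an embedding $\pi \boxtimes \rho' \hookrightarrow \widetilde{\pi}_{N_{n(\rho)}}$, so $D_c(\widetilde{\pi}) = \pi$ and $\varepsilon_c(\widetilde{\pi}) = \ell+1$. For (2) and (3) with $k=1$, I would analyze $(\pi \times \rho')_{N_{n(\rho)}}$ via the geometric lemma of Section \ref{ss geo lemma}: cuspidality of $\rho'$ kills all but two layers, namely the submodule layer $\pi \boxtimes \rho'$ and a layer obtained as a $\phi$-twist of an induced representation built from $\pi_{N_{n(\rho)}}$ and $\rho'$. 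Any composition factor $\tau$ with $\varepsilon_c(\tau) \geq \ell+1$ would force, by iterating $D_c$, an occurrence of $D_c^{\varepsilon_c(\tau)}(\tau) \boxtimes (\rho')^{\boxtimes \varepsilon_c(\tau)}$ in an iterated Jacquet module of $\pi \times \rho'$. Combining the multiplicity-freeness of Remark \ref{rmk unique derivative} with Lemma \ref{lem embedding exists} to count such occurrences across the two surviving layers then pins down $\tau = \widetilde{\pi}$ with multiplicity one.

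The main obstacle is the combinatorial bookkeeping in the base case for parts (2) and (3): tracking how many times $\omega \boxtimes \rho'$-type factors can appear across the two surviving Jacquet-module layers, and matching this to the iterated-derivative structure while keeping multiplicities under control. This is precisely where the references \cite{GV01, Ja07, Mi09} do the substantial combinatorial work, and I would follow their strategy closely.
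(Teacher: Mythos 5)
The paper does not actually prove this lemma — it is stated with a citation to \cite{GV01, Ja07, Mi09} and no proof appears in the text — so there is no proof of record to compare your write-up against. Your outline is nonetheless sound, and it matches the standard argument in those references in its essential moves: the Frobenius reciprocity plus Proposition \ref{prop sub quo Jacquet} derivation of part (1) in the $k=1$ case is exactly right, the identification of $\widetilde{\pi}$ as the unique submodule of $\widetilde{\pi}_{k-1}\times \rho'$ is correct, and the Grothendieck-group induction for (2) and (3), reducing to the $k=1$ case via the estimate $\varepsilon_c(\tau)\le \varepsilon_c(\sigma)+1$ and tracking where equality occurs, is clean and closes the induction.

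One small imprecision in your $k=1$ base case for (2) and (3): you cannot literally ``iterate $D_c$'' on an arbitrary \emph{composition factor} $\tau$ of $\pi\times\rho'$, since $\tau$ need not be a submodule or quotient. The correct move (this is exactly what Lemma \ref{lem max epsilon irr} packages) is to observe that if $\varepsilon_c(\tau)=k'$ then $\tau_{N_{k'n(\rho)}}$ contains the composition factor $D_c^{k'}(\tau)\boxtimes(\nu^c\rho)^{\times k'}$, and to then invoke exactness of Jacquet restriction to transport this count to $(\pi\times\rho')_{N_{k'n(\rho)}}$, where the geometric lemma (with the two surviving layers you identified) and a cuspidal-support comparison do the counting. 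You already flagged that this bookkeeping is where the real work lives and that you would defer to the cited sources; given that the paper itself does the same, that is a reasonable stopping point.
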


The above result follows from an application on the geometric lemma, and may see  \cite[Propositions 11.1 and 11.2]{Ch22+} for more discussions.

%\begin{lemma}  \label{lem max epsilon irr}
%Let $\pi \in \mathrm{Irr}_{\rho}$. Let $c \in \mathbb Z$. Let $i=\varepsilon_c(\pi)\cdot \mathrm{deg}(\rho)$. Then the followings hold:
%\begin{enumerate}
%\item there is only one simple composition factor of the form
%\[      \tau \boxtimes (\nu^c \rho)^{\times k}
%\]
%in the Jordan-H\"older series of $\pi_{N_{i}}$, where $\tau \in \mathrm{Irr}_{\rho}$;
%\item such simple composition factor appears in a direct summand in $\pi_{N_{i}}$ has multiplicity one in $\pi_{N_{i}}$
%\item $\tau \cong (D_c)^{\varepsilon_c(\pi)}(\pi)$.
%\end{enumerate}
%\end{lemma}

 As a consequence, we have the following:

\begin{corollary} \label{cor embeding derivative}
Let $\pi \in \mathrm{Irr}_{\rho}(G_n)$. Let $k=\varepsilon_c(\pi)$. Let $\omega$ be an admissible $G_{n-k\mathrm{deg}(\rho)}$-representation such that 
\[   \pi \hookrightarrow   \omega \times (\nu^c\rho)^{\times k} 
\]
Then $D^k_c(\pi) \hookrightarrow \omega$.
\end{corollary}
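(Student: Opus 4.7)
The plan is to apply Frobenius reciprocity (second adjointness) to the given embedding, use the irreducibility of $(\nu^c\rho)^{\times k}$ to constrain the image, and then invoke the analog of Lemma~\ref{lem max epsilon irr} for the opposite Jacquet module, obtained through the Gelfand--Kazhdan duality of Proposition~\ref{prop sub quo Jacquet}.

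Setting $i = kn(\rho)$, Bernstein's second adjointness applied to
\[
\pi \hookrightarrow \omega \times (\nu^c\rho)^{\times k} = \mathrm{Ind}_{P_{n-i,i}}^{G_n}\bigl(\omega \boxtimes (\nu^c\rho)^{\times k}\bigr)
\]
produces a nonzero homomorphism of Levi representations
\[
f : \pi_{N_i^-} \longrightarrow \omega \boxtimes (\nu^c\rho)^{\times k}.
\]
Because $(\nu^c\rho)^{\times k}$ is irreducible (being a product of equal, hence pairwise unlinked, segments), every nonzero subrepresentation of $\omega \boxtimes (\nu^c\rho)^{\times k}$ has the form $\omega' \boxtimes (\nu^c\rho)^{\times k}$ for some nonzero $\omega' \subseteq \omega$. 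Hence $\mathrm{im}(f) = \omega'' \boxtimes (\nu^c\rho)^{\times k}$ for some nonzero $\omega'' \subseteq \omega$.

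The key input is the $N^-$-analog of Lemma~\ref{lem max epsilon irr}: the unique simple composition factor of $\pi_{N_i^-}$ of the form $\tau \boxtimes (\nu^c\rho)^{\times k}$ is $\tau \cong D_c^k(\pi)$. One derives it by applying Lemma~\ref{lem max epsilon irr} to $\pi^\vee$ with cuspidal base $\rho^\vee$ and shift $-c$, and transporting the conclusion through the identification $\pi_{N_i^-} \cong \theta'\bigl((\pi^\vee)_{N_i}\bigr)$ underlying the proof of Proposition~\ref{prop sub quo Jacquet}, combined with the standard duality $D_c^{k}(\pi)^\vee \cong D_{-c}^{k}(\pi^\vee)$ (taken with respect to $\rho^\vee$) for irreducible $\mathrm{GL}_n$-representations. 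Given this uniqueness, the fact that $\omega'' \boxtimes (\nu^c\rho)^{\times k}$ is a quotient of $\pi_{N_i^-}$ forces every simple composition factor of $\omega''$ to be isomorphic to $D_c^k(\pi)$, so the socle of $\omega''$ is a nonzero direct sum of copies of $D_c^k(\pi)$, producing the required embedding $D_c^k(\pi) \hookrightarrow \omega'' \subseteq \omega$.

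The main obstacle is the bookkeeping in the $N^-$-analog of Lemma~\ref{lem max epsilon irr}: one must chase the interaction of the Gelfand--Kazhdan involution $\theta$, the contragredient, and parabolic induction to verify that it is $D_c^k(\pi)$ itself, rather than a twisted variant, that emerges as the unique relevant composition factor. An alternative bypass avoiding $N^-$ would apply first adjointness to $D_c^k(\pi) \boxtimes (\nu^c\rho)^{\times k} \hookrightarrow \pi_{N_i}$ to produce a surjection $D_c^k(\pi) \times (\nu^c\rho)^{\times k} \twoheadrightarrow \pi$, compose with the given embedding, and then localize the image of $D_c^k(\pi) \boxtimes (\nu^c\rho)^{\times k}$ inside the canonical closed-cell subrepresentation $\omega \boxtimes (\nu^c\rho)^{\times k} \hookrightarrow (\omega \times (\nu^c\rho)^{\times k})_{N_i}$ of the geometric lemma; this variant trades the $N^-$-bookkeeping for the task of ruling out contributions from the non-closed geometric cells.
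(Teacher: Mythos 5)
Your core plan (pass to the Jacquet module, pin down the unique composition factor of the form $\tau\boxtimes(\nu^c\rho)^{\times k}$ via Lemma~\ref{lem max epsilon irr}, and then peel off $D_c^k(\pi)$ with a K\"unneth-type argument) is the same as the paper's. But the very first step is wrong in a way that creates all the trouble you then worry about. The embedding $\pi\hookrightarrow\omega\times(\nu^c\rho)^{\times k}=\mathrm{Ind}_{P_{n-i,i}}^{G_n}\bigl(\omega\boxtimes(\nu^c\rho)^{\times k}\bigr)$ is a morphism \emph{into} a parabolically induced module, so the relevant adjunction is ordinary Frobenius reciprocity (first adjointness): $\mathrm{Hom}_{G_n}(\pi,\mathrm{Ind}_P\sigma)\cong\mathrm{Hom}_M(\pi_{N},\sigma)$ with $N=N_i$ the unipotent radical of the \emph{same} parabolic $P_{n-i,i}$. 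This hands you a nonzero map $\pi_{N_i}\to\omega\boxtimes(\nu^c\rho)^{\times k}$. Bernstein's second adjointness is the other adjunction, $\mathrm{Hom}_{G_n}(\mathrm{Ind}_P\sigma,\pi)\cong\mathrm{Hom}_M(\sigma,\pi_{N^-})$; it applies to maps \emph{out of} an induced module, not into one, and there is no adjunction that turns a map $\pi\to\mathrm{Ind}_P\sigma$ into a map $\pi_{N^-}\to\sigma$. So the map $f:\pi_{N_i^-}\to\omega\boxtimes(\nu^c\rho)^{\times k}$ you start from does not exist by the stated mechanism.

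Because of this, the entire ``$N^-$-analog of Lemma~\ref{lem max epsilon irr}'' and the $\theta$--contragredient chase you identify as the main obstacle are self-inflicted: Lemma~\ref{lem max epsilon irr} applies directly to $\pi_{N_i}$ and no dualizing is needed. Once you use the correct adjunction, the argument collapses to the paper's two lines, with the K\"unneth step giving $D_c^k(\pi)\hookrightarrow\omega$. Your proposed ``bypass'' in the final paragraph has a similar sign error: from $D_c^k(\pi)\boxtimes(\nu^c\rho)^{\times k}\hookrightarrow\pi_{N_i}$, Frobenius reciprocity (after using Proposition~\ref{prop sub quo Jacquet} to convert the submodule to a quotient of $\pi_{N_i}$) yields an \emph{injection} $\pi\hookrightarrow D_c^k(\pi)\times(\nu^c\rho)^{\times k}$, not a surjection the other way; a surjection would again require the opposite Jacquet module.
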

\begin{proof}
By Frobenius reciprocity, we have a non-zero map $\pi_{N_{k\cdot \mathrm{deg}(\rho)}}  \rightarrow \omega \boxtimes (\nu^c\rho)^{\times k}$. By Lemma \ref{lem epsilon for rho der}, the only composition factor of the form $\tau \boxtimes (\nu^c\rho)^{\times k}$ is $D_c^k(\pi)\boxtimes (\nu^c\rho)^{\times k}$ and hence that factor must be mapped to a submodule of $\omega \boxtimes (\nu^c \rho)^{\times k}$. It then follows from K\"unneth formula (see e.g. \cite{Ra07}) that $D_c^k(\pi)$ is a submodule of $\omega$. 
\end{proof}

\subsection{Notations for derivatives} \label{ss notation derivatives}

For a segment $\Delta=[a,b]_{\rho}$, write
\[  {}^-\Delta =[a+1,b]_{\rho}, \quad \Delta^-=[a,b-1]_{\rho}.
  \]
 We also define ${}^{0}\Delta= \Delta^{0}=\Delta$. 

For a multisegment $\mathfrak m=\left\{ \Delta_1, \ldots, \Delta_r\right\}$ in $\mathrm{Mult}$, define
\[  \mathfrak m^-=\Delta_1^-+\ldots +\Delta_r^- ,\quad {}^-\mathfrak m= {}^-\Delta_1+ \ldots+ {}^-\Delta_r .
\]

\subsection{Highest derivatives via $\rho$-derivatives}

For a multisegment $\mathfrak m \in \mathrm{Mult}_{\rho}$, define
\[ \mathrm{mult}^b(\mathfrak m, c)=|\left\{  \Delta \in \mathfrak m:  b(\Delta)\cong \nu^c\rho  \right\}|. \] 
For $\pi \in \mathrm{Irr}_{\rho}$, define
\begin{align} \label{eqn mult b values}
 \mathrm{mult}^b(\pi, c):=\mathrm{mult}^b(\mathfrak m,c) 
\end{align}
where $\mathfrak m \in \mathrm{Mult}_{\rho}$ such that $\pi \cong \langle \mathfrak m \rangle$. 

Recall that the highest derivative is defined in Section \ref{ss notion derivative}. We have the following construction of the highest derivatives from $\rho$-derivatives.

\begin{proposition} \label{prop construction highest from rho}
Let $\mathfrak m \in \mathrm{Mult}_{\rho}$ and let $\pi=\langle \mathfrak m \rangle$. Let $c$ (resp. $d$) be the smallest (resp. largest) integer such that $\nu^c\rho \cong b(\Delta)$ (resp. $\nu^d\rho \cong b(\Delta)$  for some $\Delta \in \mathfrak m$. For each $e=c,\ldots, d$, let 
\[ k_e =|\left\{ \Delta \in \mathfrak m: b(\Delta) \cong \nu^e\rho \right\}|. \] Then $D_d^{k_d}\circ \ldots \circ D_{c+1}^{k_{c+1}} \circ D_{c}^{k_c}(\pi) \cong \pi^{-}$.
\end{proposition}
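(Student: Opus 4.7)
The plan is to recast the iterated $\rho$-derivative as the St-derivative $D_{\mathfrak n}$ associated to the multisegment of singletons
\[ \mathfrak n := \sum_{e=c}^{d} k_e\,[\nu^e\rho], \]
and then combine Proposition \ref{prop simple quotient from derivative} with Zelevinsky's description of the highest derivative of $\langle\mathfrak m\rangle$. I take as the (evidently intended) implicit hypothesis that $\pi = \langle\mathfrak m\rangle$.

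First I would observe that any two singletons $[\nu^{e_1}\rho]$ and $[\nu^{e_2}\rho]$ with $e_1<e_2$ are either unlinked (when $e_2>e_1+1$) or linked with $a([\nu^{e_1}\rho]) < a([\nu^{e_2}\rho])$; hence the sequence obtained by listing the segments of $\mathfrak n$ in weakly increasing order of $e$ is ascending. Since $D_{[\nu^e\rho]} = D_e$, Lemma \ref{lem comm derivative 1} then gives
\[ D_{\mathfrak n}(\pi) = D_d^{k_d}\circ D_{d-1}^{k_{d-1}}\circ\cdots\circ D_c^{k_c}(\pi). \]

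Next I would invoke Proposition \ref{prop simple quotient from derivative}: provided $D_{\mathfrak n}(\pi)\neq 0$, it is a simple quotient of $\pi^{(i)}$, where $i = l_a(\mathfrak n) = n(\rho)\sum_e k_e = n(\rho)|\mathfrak m|$. By Zelevinsky \cite{Ze80}, $\pi^- = \langle\mathfrak m^-\rangle$ with $\mathfrak m^- = \{[a,b-1]_\rho:[a,b]_\rho\in\mathfrak m\}$, so $\pi^-$ lives on $G_{n-n(\rho)|\mathfrak m|}$ and the level of $\pi$ equals $i^* = n(\rho)|\mathfrak m| = i$. Then $\pi^{(i^*)}=\pi^-$ is irreducible, and so the simple quotient in question is forced to be $\pi^-$.

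The main task that remains is to verify $D_{\mathfrak n}(\pi)\neq 0$. My plan is to produce an embedding
\[ \pi\;\hookrightarrow\;\pi^-\times(\nu^d\rho)^{\times k_d}\times\cdots\times(\nu^c\rho)^{\times k_c} \]
in this specific ordering of cuspidal factors, and then to iterate Corollary \ref{cor embeding derivative}: a first application peels off $(\nu^c\rho)^{\times k_c}$, a second peels off $(\nu^{c+1}\rho)^{\times k_{c+1}}$, and so on, arriving finally at $D_{\mathfrak n}(\pi)\hookrightarrow\pi^-$, which is nonzero. The hardest part will be constructing the initial embedding with the prescribed ordering of cuspidal factors and simultaneously checking the hypothesis $k_e=\varepsilon_e(\cdot)$ of Corollary \ref{cor embeding derivative} at each stage; the existence of \emph{some} cuspidal $\omega$ with $\pi\hookrightarrow\pi^-\times\omega$ and $\mathrm{csupp}(\omega)=\sum_e k_e\,[\nu^e\rho]$ follows by Frobenius reciprocity from $\pi^-\boxtimes\omega\hookrightarrow\pi_{N_{i^*}}$, but isolating an $\omega$ of the required form will require a careful analysis of the sub-quotient structure of $\pi_{N_{i^*}}$ via the geometric lemma applied to $\zeta(\mathfrak m)$ together with (\ref{eqn segment jacquet}).
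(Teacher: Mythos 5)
Your overall structure is sound and genuinely different from the paper's argument: you reduce the problem to showing $D_{\mathfrak n}(\pi)\neq 0$ for the multisegment of singletons $\mathfrak n = \sum_e k_e\,[\nu^e\rho]$, and then use Proposition~\ref{prop simple quotient from derivative} together with the irreducibility of the top derivative $\pi^{(i^*)}=\pi^-$ to force the isomorphism. This formal part of your argument is correct (including your observation that the proposition implicitly assumes $\pi=\langle\mathfrak m\rangle$, and the computation $l_a(\mathfrak n)=n(\rho)\,|\mathfrak m|=i^*$).

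The gap is exactly where you flag it, and it is the substance of the proposition, not a loose end. The paper's proof does not attempt the embedding you propose; it instead invokes M\'inguez's explicit description of $\rho$-derivatives (\cite[Th\'eor\`eme 7.5]{Mi09}) through Lemma~\ref{lem simple compute epsilon factor} and Lemma~\ref{lem epsilon for rho der}: since $\mathrm{mult}_b(\pi,c-1)=0$ (because $c$ is minimal), one gets $\varepsilon_c(\pi)=k_c$; the $\rho$-derivative algorithm then says the multisegment of $D_c^{k_c}(\pi)$ is obtained from $\mathfrak m$ by deleting the endpoint from every segment ending at $\nu^c\rho$, so one is again in a position to apply Lemma~\ref{lem simple compute epsilon factor} at $e=c+1$, and the induction pushes through to $\mathfrak m^-$. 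Your plan replaces this with an embedding $\pi\hookrightarrow\pi^-\times(\nu^d\rho)^{\times k_d}\times\cdots\times(\nu^c\rho)^{\times k_c}$ and iterated use of Corollary~\ref{cor embeding derivative}. Two issues. First, Corollary~\ref{cor embeding derivative} requires $k=\varepsilon_c(\cdot)$ exactly, not $k\leq\varepsilon_c(\cdot)$; so even granted the embedding, you must still identify $\varepsilon_e$ of the intermediate derivatives at each stage, which is precisely what the paper's route via Lemma~\ref{lem simple compute epsilon factor} supplies and your sketch does not. Second, the embedding itself in the prescribed order of cuspidal factors is far from automatic: starting from $\pi\hookrightarrow\zeta(\mathfrak m)$ and $\langle\Delta\rangle\hookrightarrow\langle\Delta^-\rangle\times\nu^{b(\Delta)}\rho$, you must commute cuspidal factors past $\langle\Delta_j^-\rangle$, and when $b(\Delta_i)=b(\Delta_j)$ the singleton $[\nu^{b_i}\rho]$ is \emph{linked} to $\Delta_j^-$, so these factors do not commute. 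Untangling this correctly amounts to re-deriving the content of M\'inguez's Th\'eor\`eme~7.5. So your proposal, while it correctly identifies the pivot (nonvanishing of $D_{\mathfrak n}(\pi)$), leaves the actual work undone, and the route you sketch for closing it runs into both an exactness requirement and a commutation obstruction that need to be addressed before it can stand on its own.
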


%The above proposition can be computed directly by using an explicit rule e.g. \cite[Th\'eor\`eme 7.5]{Mi09}. An earlier form of $\rho$-derivatives is used by M\oe glin-Waldspurger \cite[Lemme II.11]{MW86} in computing Zelevinsky duals. We refer the reader to \cite{Mi09} for those explicit rules in computing $\rho$-derivatives and we shall not reproduce here. 

We need the following Lemma \ref{lem simple compute epsilon factor} to prove Proposition \ref{prop construction highest from rho}. One possible proof is to use explicit rules of $\rho$-derivatives in \cite[Th\'eor\`eme 7.5]{Mi09}, relying on results of M\oe glin-Waldspurger \cite[Lemme II.11]{MW86} on computing Zelevinsky duals. We shall not reproduce those explicit rules here. Instead, for the convenience of the reader, we sketch a quick proof using some more elementary representation-theoretic facts.

\begin{lemma} \label{lem simple compute epsilon factor}
Let $\mathfrak m \in \mathrm{Mult}_{\rho}$ and let $\pi=\langle \mathfrak m \rangle$. Suppose, for some $e \in \mathbb Z$ such that $\mathrm{mult}^b(\mathfrak m, e-1)=0$. Then $\varepsilon_e(\pi)=\mathrm{mult}^b(\pi, e)$, and $(D_e)^{\mathrm{mult}^b(\pi,e)}(\pi) \cong \langle \mathfrak m-\mathfrak m\langle e \rangle+(\mathfrak m\langle e \rangle)^- \rangle$, where $\mathfrak m\langle e\rangle$ is defined in Section \ref{ss notations multi}.
\end{lemma}

\begin{proof}
 Using the Zelevinsky classification, we have an embedding:
\[   \pi \hookrightarrow \langle \mathfrak m_1 \rangle \times \langle \mathfrak m_2 \rangle \times \langle \mathfrak m_3 \rangle ,
\] 
where 
\begin{itemize}
 \item $\mathfrak m_1$ is the submultisegment of $\mathfrak m$ precisely containing all segments in $\mathfrak m$ with $b(\Delta)>\nu^e\rho$;
 \item $\mathfrak m_2=\mathfrak m\langle e \rangle$;
 \item $\mathfrak m_3$ is the submultisegment of $\mathfrak m$ precisely containing all segments in $\mathfrak m$ with $b(\Delta)<\nu^e\rho$.
\end{itemize}
On the other hand, we also have the following embedding:
\[  \langle \mathfrak m_2 \rangle \hookrightarrow  \langle \mathfrak m_2^- \rangle \times (\nu^e \rho)^{\times \mathrm{mult}^b(\pi, e)} .
\]
Combining above embeddings, we also have:
\[   \pi \hookrightarrow \langle \mathfrak m_1 \rangle \times \langle \mathfrak m_2^- \rangle \times (\nu^e \rho)^{\times \mathrm{mult}^b(\pi, e)}  \times \langle \mathfrak m_3 \rangle ,
\] 
Now, the condition that $\mathrm{mult}^b(\pi, e-1)=0$ implies all segments in $\mathfrak m_3$ is not linked to $[e]_{\rho}$, and this implies that 
\[  (\nu^e \rho)^{\times \mathrm{mult}^b(\pi, e)} \times \langle \mathfrak m_3 \rangle  \cong \langle \mathfrak m_3 \rangle \times  (\nu^e \rho)^{\times \mathrm{mult}^b(\pi, e)} .
\]
Hence, $\pi \hookrightarrow \langle \mathfrak m_1 \rangle \times \langle \mathfrak m_2^- \rangle \times \langle \mathfrak m_3 \rangle \times (\nu^e \rho)^{\times \mathrm{mult}_b(\pi, e)}$. Thus, by Frobenius reciprocity, we obtain a non-zero map:
\[   \pi_{N_{k}} \rightarrow \langle \mathfrak m_1 \rangle \times \langle \mathfrak m_2^- \rangle \times \langle \mathfrak m_3 \rangle \boxtimes (\nu^e\rho)^{\times \mathrm{mult}^b(\pi, e)} ,
\]
where $k=\mathrm{deg}(\rho)\cdot \mathrm{mult}^b(\pi, e)$. This, in particular, implies that $\pi_{N_k}$ has an irreducible submodule of the form $\omega \boxtimes (\nu^e\rho)^{\times \mathrm{mult}^b(\pi, e)}$ and so $(D_e)^{\mathrm{mult}^b(\pi, e)}(\pi)\neq 0$. Thus, $\varepsilon_e(\pi)\geq \mathrm{mult}^b(\pi, e)$. 

It remains to show that $\varepsilon_e(\pi)> \mathrm{mult}^b(\pi,e)$ is not possible. To this end, we consider:
\[   \pi \hookrightarrow \zeta(\mathfrak m) 
\]
and so, apply the Jacquet functor to obtain:
\[  \pi_{N_l} \hookrightarrow \zeta(\mathfrak m)_{N_l} 
\]
for some $l$. Recall that $\zeta(\mathfrak m)$ can be written as a parabolically induced module and so one can again apply the geometric lemma on $\zeta(\mathfrak m)_{N_l}$. Now, from (\ref{eqn segment jacquet}), one sees that a segment in $\mathfrak m$ that can precisely contribute a factor $\nu^e\rho$ must come from those in $\mathfrak m_2=\widetilde{\mathfrak m}$. Hence, if $l>k$, it is impossible to have a simple composition factor in $\zeta(\mathfrak m)_{N_l}$ of the form $\omega \boxtimes (\nu^e\rho)^{\times p}$ (for some $p$). This implies that $\varepsilon_e(\pi)> \mathrm{mult}^b(\pi,e)$ is impossible, as desired.

We now prove the second assertion. We again consider the embedding:
\[  \pi \hookrightarrow \zeta(\mathfrak m) .
\]
Thus we have:
\begin{align} \label{eqn embedding in derivative}
  (D_e)^{\mathrm{mult}^b(\pi,e)}(\pi) \boxtimes (\nu^e\rho)^{\times \mathrm{mult}^b(\pi,e)} \hookrightarrow \pi_{N_k} \hookrightarrow \zeta(\mathfrak m)_{N_k} . 
	\end{align}
As discussed above, one considers the filtration from the geometric lemma. And, by (\ref{eqn segment jacquet}) again, the only segments that contribute $\nu^e\rho$ come from those in $\mathfrak m\langle e\rangle$. Thus, we only have one layer that can contribute to the embedding in (\ref{eqn embedding in derivative}), namely the layer of the form $\zeta(\mathfrak m-\mathfrak m\langle e \rangle+\mathfrak m\langle e \rangle^-)\boxtimes (\nu^e\rho)^{\times \mathrm{mult}^b(\pi,e)}$. Thus, we have:
$(D_e)^{\mathrm{mult}^b(\pi,e)}(\pi) \boxtimes (\nu^e\rho)^{\times \mathrm{mult}^b(\pi,e)} \hookrightarrow \zeta(\mathfrak m-\mathfrak m\langle e \rangle+\mathfrak m\langle e \rangle^-)\boxtimes (\nu^e\rho)^{\times \mathrm{mult}^b(\pi,e)}$ and so
\[ (D_e)^{\mathrm{mult}^b(\pi,e)}(\pi)  \hookrightarrow \zeta(\mathfrak m-\mathfrak m\langle e \rangle+\mathfrak m\langle e \rangle^-) .\]
The last embedding gives the Zelevinsky parameter $(D_e)^{\mathrm{mult}^b(\pi,e)}(\pi)$.
\end{proof}

\noindent
{\it Proof of Proposition \ref{prop construction highest from rho}.} Inductively, using Lemmas \ref{lem simple compute epsilon factor} and \ref{lem epsilon for rho der}, we have that $\mathrm{mult}^b(D_e^{k_e}\circ \ldots \circ D_c^{k_c}(\pi), e)=0$ and $D_e^{k_e}\circ \ldots \circ D_c^{k_c}(\pi)\neq 0$. By the explicit Zelevinsky parameter in Lemma \ref{lem simple compute epsilon factor}, we have that the Zelevinsky parameter $D_d^{k_d}\circ \ldots \circ D_{c+1}^{k_{c+1}} \circ D_{c}^{k_c}(\pi)$ is the multisegment $\mathfrak m^-$. Comparing with the description of the highest derivative in \cite[Theorem 8.1]{Ze80}, we have the desired isomorphism. \qed

\subsection{Left-right Bernstein-Zelevinsky derivatives} \label{sec left derivative}
Recall that the Bernstein-Zelevinsky derivative is defined in Section \ref{ss notion derivative}. We also define a left version (c.f. \cite{CS21, Ch21, Ch21+}), which one can use the transpose $R_i^t$ of $R_i$ to define the left BZ derivative as in (\ref{eqn def derivative}). One may further apply a conjugation on an antidiagonal element to obtain the following formulation:
\[ {}^{(i)}\pi = \delta_{\bar{R}_i}^{-1/2} \cdot \frac{\pi}{\langle x.v-\psi(x)v: x \in \bar{R}_i, v \in \pi \rangle},
\]
where $\bar{R}_i=aR_i^ta^{-1}$. Here $a$ is the matrix with $1$ in the antidiagonal entries and $0$ elsewhere.

Most results will only be stated and proved for the 'right' version, and the 'left' version can be formulated and proved similarly. 

%Let $\bar{R}_i=R_i^t$, the transpose of $R_i$. With analogous notions, define
%\[ {}^{(i)}\pi =\delta_{\bar{R}_i}^{-1}\cdot \frac{\pi}{\langle x.v-\psi(x)(v): x\in \bar{R}_i, v \in \pi \rangle } \]

%%%%%%%
%We first have the following commutativity of left and right derivatives:
%\begin{lemma} \label{lem commut derivatives}
%For $i,j$ with $i+j\leq n$, 
%\[  ({}^{(i)}\pi)^{(j)} \cong {}^{(i)}(\pi^{(j)}).\]
%\end{lemma}

%\begin{proof}
%One observes that each side gives that
%\[ ({}^{(i)}\pi)^{(j)} \cong \delta_Q^{-1/2} \frac{\pi}{\langle u.v-\psi'(u)v : u \in Q, v \in \pi \rangle},\]
%where $Q$ containing matrices of the form  $\begin{pmatrix} u_1 & * & * \\ & g & * \\ & & u_2\end{pmatrix}$, where $u_1 \in U_i, u_2 \in U_j$, $g \in G_{n-i-j}$.
%\end{proof}

\subsection{Properties of Bernstein-Zelevinsky derivatives} \label{ss prop derivatives}

From the multiplicity-one theorem \cite{AGRS10} (see \cite[Proposition 2.5]{Ch21}, \cite[Lemma 2.3]{CS21}) and a self-dual property (see \cite[Lemma 2.4]{CS21}), we deduce that:

\begin{lemma} \label{lem property derivative} \cite[Proposition 2.5]{Ch21}
Let $\pi \in \mathrm{Irr}(G_n)$. Then $\mathrm{soc}(\pi^{(i)})$ is multiplicity-free. The same holds for $\mathrm{soc}({}^{(i)}\pi)$, $\mathrm{cosoc}(\pi^{(i)})$ and $\mathrm{cosoc}({}^{(i)}\pi)$. 
\end{lemma}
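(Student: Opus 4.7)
The plan is to reduce all four claims to the single assertion that $\mathrm{cosoc}(\pi^{(i)})$ is multiplicity-free, and then to extract a contradiction from the $\mathrm{GL}_n$ vs.\ $\mathrm{GL}_{n-1}$ multiplicity-one theorem of \cite{AGRS10}.

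For the reduction I would use two symmetries of Bernstein--Zelevinsky derivatives. The Gelfand--Kazhdan involution $\theta$ of Section \ref{ss quotients and sub jacquet} exchanges the left and right derivatives, in the sense that $\theta(\pi^{(i)}) \cong {}^{(i)}\theta(\pi)$; since $\theta$ is an exact self-equivalence of $\mathrm{Alg}(G_{n-i})$ that preserves socle, cosocle and multiplicities, the left statements reduce to the right ones. Next, the self-dual property \cite[Lemma 2.4]{CS21} furnishes an isomorphism $(\pi^{(i)})^{\vee} \cong \theta(\pi^{(i)})$; since taking smooth duals interchanges socle and cosocle while preserving multiplicities, composing with $\theta$ shows that multiplicity-freeness of $\mathrm{soc}(\pi^{(i)})$ is equivalent to multiplicity-freeness of $\mathrm{cosoc}(\pi^{(i)})$. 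So everything collapses to the cosocle case for the right derivative.

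For the cosocle, I would argue by contradiction: suppose an irreducible $\tau \in \mathrm{Irr}(G_{n-i})$ occurred in $\mathrm{cosoc}(\pi^{(i)})$ with multiplicity $\geq 2$, so that $\dim \mathrm{Hom}_{G_{n-i}}(\pi^{(i)}, \tau) \geq 2$. Using the adjunction between the BZ derivative functor and the Whittaker-induction functor $\Phi^+$ (which produces a $P_{n-i+1}$-module from a $G_{n-i}$-module), one rewrites this as a $P$-equivariant $\mathrm{Hom}$-space of the same dimension. Following the standard Bernstein--Zelevinsky filtration on $\Phi^+(\tau)$ and tracking composition factors (as laid out in \cite[Proposition 2.5]{Ch21}), this two-dimensional space must ultimately descend to two linearly independent $G_{n-1}$-equivariant morphisms $\pi \to \sigma$ for a single irreducible $\sigma \in \mathrm{Irr}(G_{n-1})$, contradicting \cite{AGRS10}.

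The main obstacle is the bookkeeping in the last step: converting a $\geq 2$-dimensional Hom into the cosocle of $\pi^{(i)}$ into an honest $\geq 2$-dimensional Hom between irreducible $G_n$- and $G_{n-1}$-modules requires careful use of the BZ filtration on Whittaker-induced $P$-modules together with exactness properties of the derivative; this is where the content of \cite[Proposition 2.5]{Ch21} is really used, and we simply cite it here.
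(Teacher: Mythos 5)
Your proposal correctly identifies the two ingredients the paper itself names as the source of this lemma---the AGRS multiplicity-one theorem for the pair $(\mathrm{GL}_n,\mathrm{GL}_{n-1})$ and the self-dual/Gelfand--Kazhdan symmetry of the Bernstein--Zelevinsky derivative (recorded as Lemma~\ref{lem dual property} in the text)---and your reduction of all four claims to the cosocle of the right derivative matches how those references are used. Since the paper gives no independent proof here, deferring the filtration bookkeeping to \cite[Proposition 2.5]{Ch21} exactly as you do in your final paragraph, your sketch follows essentially the same route.
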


Using the stronger multiplicity-one theorem in \cite{Ch21+}, we have the following statement:

\begin{lemma} \label{lem multiplicity free st repn}
Let $\pi$ be a standard representation of $G_n$. Then $\mathrm{cosoc}(\pi^{(i)})$ is multiplicity-free. The same holds for $\mathrm{cosoc}({}^{(i)}\pi)$. 
\end{lemma}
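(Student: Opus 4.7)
The plan is to mimic the proof strategy of Lemma \ref{lem property derivative}, substituting the Aizenbud--Gourevitch--Rallis--Schiffmann multiplicity-one input by the stronger multiplicity-one statement for standard representations established in \cite{Ch21+}. That result asserts, in appropriate form, that for any standard representation $\pi$ of $G_n$ and any irreducible representation $\sigma$ of the mirabolic subgroup one has $\dim\mathrm{Hom}(\pi,\sigma)\leq 1$; equivalently, the cosocle of $\pi$ viewed as a representation of the mirabolic subgroup is multiplicity-free. Note that for irreducible $\pi$ the full Lemma \ref{lem property derivative} also uses a self-dual property to convert cosocle statements into socle statements, whereas standard modules are not self-dual in that sense, which is why only the cosocle statement is being claimed here.

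For the right-derivative assertion, I would first invoke the Bernstein--Zelevinsky filtration of $\pi$ restricted to the mirabolic subgroup: its successive subquotients are (up to twist) the modules $(\Phi^+)^{i-1}\Psi^+(\pi^{(i)})$, where $\Phi^+,\Psi^+$ are the standard Bernstein functors. Because $\Psi^+$ and $\Phi^+$ are exact and fully faithful on the relevant subcategories, an irreducible $\tau\in\mathrm{Irr}(G_{n-i})$ appears in $\mathrm{cosoc}(\pi^{(i)})$ with exactly the same multiplicity as $(\Phi^+)^{i-1}\Psi^+(\tau)$ appears in the cosocle of the corresponding layer. A cuspidal-support comparison shows that irreducibles coming from distinct BZ layers cannot be isomorphic, so the layer-multiplicity coincides with the multiplicity in $\mathrm{cosoc}(\pi|_{\text{mirabolic}})$, to which the \cite{Ch21+} bound applies.

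For ${}^{(i)}\pi$, I would invoke the Gelfand--Kazhdan involution $\theta$ of Section \ref{ss quotients and sub jacquet}. Since $\bar R_i=aR_i^ta^{-1}$ is $\theta$-conjugate to $R_i$ (see Section \ref{sec left derivative}), one obtains a canonical identification ${}^{(i)}\pi\cong\theta(\theta(\pi)^{(i)})$. The involution $\theta$ sends standard representations to standard representations, since it intertwines parabolic induction (up to reversing the order of inducing factors) and preserves essentially square-integrable representations. Hence the left-derivative statement for $\pi$ reduces to the right-derivative statement already proved, applied to the standard module $\theta(\pi)$.

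The main obstacle will be making the first reduction rigorous for $\pi$ merely standard rather than irreducible. For irreducible $\pi$, the fully-faithfulness of the Bernstein functors on a fixed cuspidal-support block makes the passage from $\mathrm{cosoc}(\pi^{(i)})$ to $\mathrm{cosoc}(\pi|_{\text{mirabolic}})$ essentially automatic; for standard $\pi$ the BZ filtration can be more intricate, and one must carefully verify (by a cuspidal-support argument, or a more refined central-character bookkeeping) that different layers cannot contribute the same irreducible mirabolic module to the cosocle, so that the multiplicity bound of \cite{Ch21+} applies cleanly. This verification is where the bulk of the detailed work will sit, the remainder being a direct translation of the arguments already used for Lemma \ref{lem property derivative}.
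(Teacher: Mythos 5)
Your argument hinges on the claim that \cite{Ch21+} gives a mirabolic multiplicity--one statement, namely $\dim\mathrm{Hom}_{P_n}(\pi,\sigma)\leq 1$ for $\pi$ standard and $\sigma$ an arbitrary irreducible representation of the mirabolic subgroup $P_n$. Once that is granted, the lemma is immediate: by Frobenius reciprocity for the Bernstein--Zelevinsky functors, $\mathrm{Hom}_{P_n}(\pi,(\Phi^+)^{i-1}\Psi^+(\tau))\cong\mathrm{Hom}_{G_{n-i}}(\pi^{(i)},\tau)$, and distinct $i$ produce disjoint cuspidal supports, so your "main obstacle" paragraph is really the easy half. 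The issue is that the input you ascribe to \cite{Ch21+} is essentially the statement the lemma is supposed to establish, not what that reference actually provides.

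What \cite{Ch21+} supplies, and what the paper's proof uses, is a branching bound for the pair $(G_n,G_{n-1})$: $\dim\mathrm{Hom}_{G_{n-1}}(\pi,\kappa)\leq 1$ with $\pi$ standard on $G_n$ and $\kappa$ a standard (in particular irreducible standard) module on $G_{n-1}$. To extract a bound on $\mathrm{cosoc}(\pi^{(i)})$ from this, one cannot directly take $\kappa$ to be an arbitrary $P_n$-module; instead the paper pads: given a simple quotient $\tau$ of $\pi^{(i)}$, choose a cuspidal $\sigma$ away from $\mathrm{csupp}(\nu^{1/2}\pi)$ so that $\nu^{1/2}\tau\times\sigma$ is irreducible of $G_{n-1}$, apply the \cite{Ch21+} bound to $\mathrm{Hom}_{G_{n-1}}(\pi,\nu^{1/2}\tau\times\sigma)$, and then run the Bernstein--Zelevinsky filtration on both sides, using the choice of $\sigma$ and cuspidal supports to collapse everything to $\mathrm{Hom}_{G_{n-i}}(\nu^{1/2}\pi^{(i)},\nu^{1/2}\tau)$. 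That padding step is the real content and is missing from your write-up. In short: you have assumed a $P_n$-level input when only a $G_{n-1}$-level input is available, and the passage between the two is precisely the lemma. The Gelfand--Kazhdan reduction you propose for ${}^{(i)}\pi$ is sound (the involution does send standard modules to standard modules), but it rests on the right-derivative case, which your argument does not yet establish.
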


The proof of Lemma \ref{lem multiplicity free st repn} is similar to the one of \cite[Proposition 2.5]{Ch21} and so we omit the details. 

%\begin{proof}
%The proof is similar to  \cite[Proposition 2.5]{Ch21} and so we only sketch it. Let $\tau$ be a simple quotient of $\pi^{(i)}$. Then we choose a cuspidal representation $\sigma \in \mathrm{Irr}(G_{n-i-1})$ not in $\mathrm{csupp}(\nu^{1/2}\cdot \pi)$ such that $(\nu^{1/2}\tau) \times \sigma$ is still irreducible. By \cite{Ch21+}, we have that 
%\[  \mathrm{dim} ~\mathrm{Hom}_{G_{n-1}}(\pi, (\nu^{1/2}\tau) \times\sigma) \leq 1 .\]
%Now an argument of Bernstein-Zelevinsky filtrations gives that the above inequality is an equality, and that
%\[    \mathrm{Hom}_{G_{n-i}}( \nu^{1/2} \pi^{(i)}, {}^{(i-1)}(\nu^{1/2}\tau \times\sigma)) \cong \mathbb C. \]
%The choice of cuspidal representation forces,  $ \mathrm{Hom}_{G_{n-i}}( \nu^{1/2} \pi^{(i)},\nu^{1/2}\tau) \cong \mathbb C$ as desired.
%\end{proof}

\begin{lemma} \label{lem dual property} \cite[Lemma 2.4]{CS21}
Let $\pi \in \mathrm{Irr}(G_n)$. Then, for any $i$ with $\pi^{(i)}\neq 0$, $\mathrm{soc}(\pi^{(i)}) \cong \mathrm{cosoc}(\pi^{(i)})$.
\end{lemma}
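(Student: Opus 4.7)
The plan is to establish, for every irreducible $\pi \in \mathrm{Irr}(G_n)$, the internal self-duality
\[
\theta_{n-i}(\pi^{(i)}) \;\cong\; (\pi^{(i)})^\vee \qquad \text{as } G_{n-i}\text{-modules},
\]
where $\theta_m$ is the Gelfand-Kazhdan involution introduced before \prref{rop sub quo Jacquet}. From this the lemma follows formally: $\theta_{n-i}$ is exact and sends each irreducible $\tau$ to $\tau^\vee$, while smooth duality interchanges socle and cosocle on representations of finite length (and $\pi^{(i)}$ has finite length by classical Bernstein-Zelevinsky theory, e.g. \cite{BZ76}). Hence
\[
\mathrm{soc}(\pi^{(i)}) \cong \mathrm{soc}\bigl(\theta_{n-i}((\pi^{(i)})^\vee)\bigr) \cong \theta_{n-i}\bigl(\mathrm{cosoc}(\pi^{(i)})^\vee\bigr) \cong \mathrm{cosoc}(\pi^{(i)}),
\]
the last step applying $\theta(\tau^\vee)\cong \tau$ summand by summand.

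The key self-duality will be assembled from two ingredients that mirror the isomorphisms $(*)$ and $(**)$ appearing in the proof of \prref{rop sub quo Jacquet}, but in the twisted mirabolic setting. The first is a derivative compatibility with the Gelfand-Kazhdan involution: for every $\pi \in \mathrm{Alg}(G_n)$,
\[
\theta_{n-i}(\pi^{(i)}) \;\cong\; {}^{(i)}\bigl(\theta_n(\pi)\bigr).
\]
The proof unwinds the definition \eqref{eqn def derivative}: inverse-transpose sends $R_i$ to $R_i^t$, and conjugation by the antidiagonal element $a$ of \S\ref{sec left derivative} further identifies $R_i^t$ with $\bar R_i$; the non-degenerate character $\psi$ on $U_i$ and the modular twist $\delta_{R_i}^{-1/2}$ then translate precisely into the corresponding data defining the left derivative ${}^{(i)}$, yielding an isomorphism on the quotient spaces.

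The second ingredient is a Bernstein-Zelevinsky contragredient identity (the twisted analogue of Bernstein's result used in the proof of \prref{rop sub quo Jacquet}): for every admissible $\pi$,
\[
\bigl({}^{(i)}\pi\bigr)^\vee \;\cong\; (\pi^\vee)^{(i)},
\]
which can be extracted from \cite[\S 3-4]{BZ76}. Substituting $\pi^\vee$ for $\pi$ and using $\pi^{\vee\vee}\cong\pi$ gives ${}^{(i)}(\pi^\vee) \cong (\pi^{(i)})^\vee$. Combining the two ingredients with the Gelfand-Kazhdan theorem $\theta_n(\pi) \cong \pi^\vee$ for irreducible $\pi$ yields
\[
\theta_{n-i}(\pi^{(i)}) \;\cong\; {}^{(i)}\theta_n(\pi) \;\cong\; {}^{(i)}(\pi^\vee) \;\cong\; (\pi^{(i)})^\vee,
\]
as required. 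The main technical obstacle is the bookkeeping in the first ingredient: one must carefully track how $\psi$ and $\delta_{R_i}^{-1/2}$ transform under inverse-transpose composed with antidiagonal conjugation, and verify that no spurious central twist appears; once this is settled, the remaining steps are formal applications of the contragredient identity, Gelfand-Kazhdan, and the standard interaction of socle and cosocle with exact self-equivalences and smooth duality.
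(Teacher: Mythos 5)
Your high-level strategy — establish a $\theta$-contragredient self-duality $\theta_{n-i}(\pi^{(i)})\cong(\pi^{(i)})^\vee$ and then read off $\mathrm{soc}\cong\mathrm{cosoc}$ from exactness of $\theta$, contravariance of $(\,\cdot\,)^\vee$, and the Gelfand--Kazhdan theorem on irreducibles — is the right idea, and the formal deduction at the end is sound. The concern is entirely in how you assemble the self-duality.

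Ingredient (A), that $\theta$ interchanges left and right derivatives, is indeed a direct unwinding of the definitions, with the $\delta_{R_i}^{-1/2}$ and $\psi$ bookkeeping you flag; no objection there. The gap is ingredient (B), $\bigl({}^{(i)}\pi\bigr)^\vee\cong(\pi^\vee)^{(i)}$. You state this ``can be extracted from \cite[\S 3--4]{BZ76},'' but that cannot be taken at face value, for two reasons. First, note that the naive same-side commutation $(\pi^{(i)})^\vee\cong(\pi^\vee)^{(i)}$ is \emph{false}: for $\pi=\langle[0,1]\rangle$ one has $\pi^{(1)}=\langle[0]\rangle$ so $(\pi^{(1)})^\vee=\langle[0]\rangle$, while $(\pi^\vee)^{(1)}=\langle[-2,0]\rangle^{(1)}\!\ldots$ — more simply $\pi^\vee=\langle[-1,0]\rangle$ has $(\pi^\vee)^{(1)}=\langle[-1]\rangle\not\cong\langle[0]\rangle$. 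So the contragredient identity available in the $\Phi^\pm,\Psi^\pm$ framework of \cite{BZ76}, which is a same-side statement at the level of $P_n$-modules with a $\psi\leftrightarrow\psi^{-1}$ exchange, does \emph{not} directly give what you wrote. Second, converting that same-side, $\psi^{-1}$-twisted statement into your mixed left/right formula (B) requires conjugating $\psi^{-1}$ back to $\psi$ by an outer automorphism of $P_n$ of transpose-plus-antidiagonal type — but this is essentially the \emph{same} normalization that already occurs in your ingredient (A). Using it independently in both (A) and (B) risks double-counting a central twist (exactly the kind of ``spurious'' $\nu$-factor you warn about for (A) but treat as settled in (B)). In the worked example above, both your formula (B) and a same-side formula with an extra $\nu^i$-twist are consistent, so the example alone does not pin down the correct normalization; one has to track it through the mirabolic functors.

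Concretely, what is missing is a precise argument (a twisted analogue of Jacquet's lemma / Bernstein's second adjointness in the mirabolic setting, or a careful pass through BZ76's contragredient compatibility for $\Phi^-,\Psi^-$ including the $\psi^{-1}$ side and the subsequent torus conjugation) that yields (B) as a module isomorphism — not merely in the Grothendieck group, which is all that is immediate from Zelevinsky's composition-factor results. Without that, the chain $\theta_{n-i}(\pi^{(i)})\cong{}^{(i)}(\pi^\vee)\cong(\pi^{(i)})^\vee$ has an unverified link at its last step. Supply a correct reference or proof for (B) with the normalizations made explicit and the argument would close.
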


Let $\mathfrak m=\left\{ \Delta_1, \ldots, \Delta_r\right\} \in \mathrm{Mult}$ and let $\Delta_p=[a_p,b_p]_{\rho_p}$ for each $p$. For any non-negative integer $i$,
\begin{align} \label{eqn bz derivative of multisegment}
  \mathfrak m^{(i)} = \left\{  \Delta_1^{\#_1}+ \ldots +\Delta_r^{\#_r}  : \#_p=0  \mbox{ or } -, \quad \sum_{p: \#_p=- } \mathrm{deg}(\rho_p)=i  \right\}.
\end{align}
The notion ${}^{(i)}\mathfrak m$ is defined similarly by using ${}^{-}\Delta$ and ${}^{0}\Delta$.
%and 
%\[ {}^{(i)}\mathfrak m= \left\{  {}^{(i_1)}\Delta_1+ \ldots + {}^{(i_r)}\Delta_r : i_k=0  \mbox{ or } n(\rho) , \quad i_1+\ldots +i_r=i  \right\}.\]
%We similarly define $\mathfrak m^{[i]}$ and ${}^{[i)}\mathfrak m$ by replacing those $(i_k)$ with $[i_k]$.

\begin{lemma} \label{lem socle cosocle coarse}
Let $\mathfrak m \in \mathrm{Mult}$ and let $\pi =\langle \mathfrak m \rangle$. For any simple quotient or submodule $\tau$ of $\pi^{(i)}$ (resp. ${}^{(i)}\pi$), $\tau \cong \langle \mathfrak n \rangle $ for some $\mathfrak n \in \mathfrak m^{(i)}$ (resp. $\mathfrak n \in {}^{(i)}\mathfrak m$).
\end{lemma}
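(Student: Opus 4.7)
The plan is as follows. Write $\pi = \langle \mathfrak m \rangle$ with $\mathfrak m = \{\Delta_1, \ldots, \Delta_k\}$ ordered so that $\Delta_i \not< \Delta_j$ whenever $i<j$, giving the canonical embedding $\pi \hookrightarrow \zeta(\mathfrak m) = \langle \Delta_1 \rangle \times \ldots \times \langle \Delta_k \rangle$. By Lemma \ref{lem dual property}, a simple quotient of $\pi^{(i)}$ is also a simple submodule, so it suffices to treat simple submodules (the case of ${}^{(i)}\pi$ being entirely parallel via the left derivative of Section \ref{sec left derivative}).

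Since the Bernstein--Zelevinsky derivative is exact (being the composition of the Jacquet functor along $N_i$ with the twisted coinvariants on $R_i/N_i$, both exact), the embedding $\pi \hookrightarrow \zeta(\mathfrak m)$ induces $\pi^{(i)} \hookrightarrow \zeta(\mathfrak m)^{(i)}$. Iterating the geometric lemma of Section \ref{ss geo lemma}, the module $\zeta(\mathfrak m)^{(i)}$ acquires a filtration with successive subquotients of the form $\langle \Delta_1 \rangle^{(i_1)} \times \ldots \times \langle \Delta_k \rangle^{(i_k)}$ with $i_1+\ldots+i_k=i$. Using (\ref{eqn segment jacquet}), each factor $\langle \Delta \rangle$ with $\Delta=[a,b]_{\rho}$ has level $n(\rho)$, with $\langle \Delta \rangle^{(n(\rho))} = \langle \Delta^{-} \rangle$ and $\langle \Delta \rangle^{(j)} = 0$ for $j \neq 0,\, n(\rho)$; so the nonzero subquotients are precisely the standard modules $\zeta(\mathfrak n)$ for $\mathfrak n \in \mathfrak m^{(i)}$.

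A simple submodule $\tau$ of $\pi^{(i)}$ lies inside $\zeta(\mathfrak m)^{(i)}$; taking the smallest filtration step $F_{j_0}$ containing it, $\tau$ injects into $F_{j_0}/F_{j_0-1} = \zeta(\mathfrak n)$ for some $\mathfrak n \in \mathfrak m^{(i)}$. The main obstacle is to upgrade this to $\tau \cong \langle \mathfrak n'' \rangle$ for some $\mathfrak n'' \in \mathfrak m^{(i)}$: the ordering on $\mathfrak n$ inherited from $\mathfrak m$ is not automatically Zelevinsky-compatible, so the socle of $\zeta(\mathfrak n)$ could a priori be indexed by some $\mathfrak n'' <_Z \mathfrak n$ that lies outside $\mathfrak m^{(i)}$.

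To close this gap I would do a short case analysis on the pairs $(\Delta_i^{(i_i)}, \Delta_j^{(i_j)})$ with $i<j$ that could fail the Zelevinsky condition inside $\mathfrak n$. Using $\Delta_i \not< \Delta_j$, the only surviving configuration is $i_i = n(\rho)$, $i_j = 0$ together with $b(\Delta_i) = b(\Delta_j)$ and $a(\Delta_i) < a(\Delta_j)$; in that configuration one checks directly that the intersection-union of $\Delta_i^{-}$ and $\Delta_j$ is exactly $(\Delta_i,\, \Delta_j^{-})$, so the operation merely transfers the shortening from $\Delta_i$ to $\Delta_j$ and keeps the resulting multisegment in $\mathfrak m^{(i)}$. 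Iterating these swaps identifies the socle of $\zeta(\mathfrak n)$ with $\langle \mathfrak n'' \rangle$ for some $\mathfrak n'' \in \mathfrak m^{(i)}$, yielding $\tau \cong \langle \mathfrak n'' \rangle$ and completing the proof.
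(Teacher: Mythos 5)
Your route is the same as the paper's one-sentence sketch (embed $\pi \hookrightarrow \zeta(\mathfrak m)$, take derivatives, use the geometric-lemma filtration), and the setup is correct: the exactness argument for passing $\pi \hookrightarrow \zeta(\mathfrak m)$ to $\pi^{(i)} \hookrightarrow \zeta(\mathfrak m)^{(i)}$, the identification $\langle\Delta\rangle^{(j)} = 0$ unless $j \in \{0, n(\rho)\}$ and $\langle\Delta\rangle^{(n(\rho))} = \langle\Delta^-\rangle$, and the step that a simple submodule of a filtered module injects into some filtration layer.

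Your most valuable contribution is to notice that the geometric-lemma layer $\langle\Delta_1^{(i_1)}\rangle\times\ldots\times\langle\Delta_k^{(i_k)}\rangle$ need not be in Zelevinsky order even when $\mathfrak m$ is, and to carry out the case analysis. I checked it and it is correct: given $\Delta_p \not< \Delta_q$ for $p<q$, the only configuration where $\Delta_p^{(i_p)} < \Delta_q^{(i_q)}$ is $i_p = n(\rho)$, $i_q = 0$, $b(\Delta_p) = b(\Delta_q)$, $a(\Delta_p) < a(\Delta_q)$; and in that case $\Delta_p^- \cup \Delta_q = \Delta_p$, $\Delta_p^- \cap \Delta_q = \Delta_q^-$, so fixing the bad pair stays inside $\mathfrak m^{(i)}$ (it just moves the truncation from $\Delta_p$ to $\Delta_q$, i.e., $(i_p,i_q)=(n(\rho),0)\mapsto(0,n(\rho))$). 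The iteration also terminates, since each fix strictly increases $\sum_k k\, i_k$ while preserving $\sum_k i_k = i$, though you should say this.

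The genuine thin spot is the last sentence: ``iterating these swaps identifies the socle of $\zeta(\mathfrak n)$ with $\langle\mathfrak n''\rangle$.'' What your iteration actually produces is an \emph{embedding} $\langle\mathfrak n''\rangle \hookrightarrow \zeta(\mathfrak n)$, by repeatedly replacing an adjacent bad pair $\langle\Delta_p^-\rangle\times\langle\Delta_q\rangle$ by its socle $\langle\Delta_p\rangle\times\langle\Delta_q^-\rangle$ and inducing. To conclude $\tau \cong \langle\mathfrak n''\rangle$ you need that $\zeta(\mathfrak n)$, as a product of segment modules in an \emph{arbitrary} (possibly non-Zelevinsky) order, has a unique irreducible submodule; otherwise $\tau$ could be a different simple submodule indexed by some $\mathfrak p <_Z \mathfrak n$ not in $\mathfrak m^{(i)}$. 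This uniqueness is true and standard in the literature, but it does not follow from anything stated in the paper: Lemma \ref{lem socle cosocle ladder} gives irreducibility of $\mathrm{soc}(\pi\times\tau)$ only when the second factor $\tau$ is already irreducible, which fails inductively for $\langle\Delta_2'\rangle\times\ldots\times\langle\Delta_k'\rangle$. You should either cite the uniqueness of socle for arbitrary products of segment representations, or argue it (e.g., via the contragredient-plus-Gelfand--Kazhdan duality of Proposition \ref{prop sub quo Jacquet} applied to the cosocle of the opposite-order product, reducing to the Zelevinsky-ordered case).
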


\begin{proof}
The proof is similar to \cite[Lemma 7.3]{Ch21} and \cite[Proposition 2.3]{Ch20}, and we provide the details for the convenience of the reader. By Lemma \ref{lem dual property}, it suffices to prove the statement for simple submodules. We shall label all segments $\Delta_1, \ldots, \Delta_r$ in $\mathfrak m$ such that 
\begin{align} \label{eqn labelling in m- lemma}
  a(\Delta_i) \not < a(\Delta_j)
\end{align}
for $i<j$ (see notations in Section \ref{ss notations multi}). Then, by the Zelevinsky classification,
\[  \pi  \hookrightarrow \langle \Delta_1 \rangle \times \ldots \times \langle \Delta_r \rangle
\]
and so
\[  \pi^{(i)} \hookrightarrow (\langle \Delta_1 \rangle \times \ldots \times \langle \Delta_r \rangle)^{(i)} .
\]
Now, let $\tau$ be a simple submodule of $\pi^{(i)}$. Hence,
\[  \tau \hookrightarrow (\langle \Delta_1 \rangle \times \ldots \times \langle \Delta_r \rangle)^{(i)} 
\]
and so $\tau$ embeds to one of the layer in the filtration of $(\langle \Delta_1 \rangle \times \ldots \times \langle \Delta_r \rangle)^{(i)}$ from the geometric lemma. Then,
\[  \tau \hookrightarrow \langle \Delta_1^{\#_1} \rangle \times \ldots \times \langle \Delta_r^{\#_r} \rangle ,
\]
where $\#_1, \ldots, \#_r$ precisely satisfy the condition defining (\ref{eqn bz derivative of multisegment}). But using (\ref{eqn labelling in m- lemma}), $\langle \Delta_1^{\#_1} \rangle \times \ldots \times \langle \Delta_r^{\#_r} \rangle $ is the Zelevinsky standard module $\zeta(\left\{ \Delta_1^{\#_1}, \ldots, \Delta_r^{\#_r} \right\})$. In other words, 
\[  \tau \hookrightarrow \zeta(\mathfrak n)
\]
for some $\mathfrak n \in \mathfrak m^{(i)}$. This implies that $\tau \cong \langle \mathfrak n \rangle$, as desired.
\end{proof}

\subsection{Submodules of derivatives from Jacquet functor} \label{ss derivatives jacquet functor}

\begin{lemma} \label{lem quotient of ps to generic}
Let $\omega$ be an indecomposable generic representation of $G_n$ of finite length. Let $\omega'$ be the unique (up to isomorphisms) generic simple composition factor in $\omega$. Let $\mathrm{csupp}(\omega')=\left\{ \rho_1, \ldots, \rho_r\right\}$ for some $\rho_1, \ldots, \rho_r \in \mathrm{Irr}^c$. Assume the labelling satisfies that $\rho_i \not>\rho_j$ for any $i<j$. Then there exists a non-zero map from $\rho_1 \times \ldots \times \rho_r$ to $\omega'$.
\end{lemma}

\begin{proof}
By the Zelevinsky classification, the irreducible generic representation $\omega'$ admits the following surjection:
\[  \rho_1\times \ldots \times \rho_r \twoheadrightarrow \omega'
\]
and 
\[  \mathrm{Hom}_{G_n}(\rho_1 \times \ldots \times \rho_r, \widetilde{\omega}) =0
\]
for any non-generic irreducible representation $\widetilde{\omega}$. By the second adjointness of parabolic induction and Lemma \ref{lem twist the isomorphism}, 
\[  \mathrm{Hom}_{G'}(\rho_r\boxtimes \ldots \boxtimes \rho_1, \widetilde{\omega}_{N'})=0 ,
\]
where $G'=G_{\mathrm{deg}(\rho_r)}\times \ldots \times G_{\mathrm{deg}(\rho_1)}$ and $N'$ is the unipotent radical of $P_{\mathrm{deg}(\rho_r),\ldots, \mathrm{deg}(\rho_1)}$. But, this indeed also implies that $\rho_r \boxtimes \ldots \boxtimes \rho_1$ is not a simple composition factor of $\widetilde{\omega}_{N'}$. Thus, a standard long exact sequence argument implies that
\[  \mathrm{Ext}^k_{G'}(\rho_r \boxtimes \ldots \boxtimes \rho_1, \widetilde{\omega}_{N'}) =0 
\]
for all $k$, and so by the second adjointness and Lemma \ref{lem twist the isomorphism} again, for all $k$,
\begin{align} \label{eqn ext vanishing in generic lemma}
  \mathrm{Ext}^k_{G_n}(\rho_1 \times \ldots \times \rho_r, \widetilde{\omega})=0
\end{align}

If $\widetilde{\omega}$ is a submodule of $\omega$, then the natural map
\[ \mathrm{Hom}_{G_n}(\rho_1\times \ldots \times \rho_r, \widetilde{\omega}) \rightarrow \mathrm{Hom}_{G_n}(\rho_1 \times \ldots \times \rho_r, \omega)
\]
is injective and so $\mathrm{Hom}_{G_n}(\rho_1\times \ldots \times \rho_r, \omega)\neq 0$ as desired.

If $\widetilde{\omega}$ is not a submodule of $\omega$, then we consider a simple submodule $\omega'$ of $\widetilde{\omega}$. Then, we have a short exact sequence:
\[  0 \rightarrow \omega' \rightarrow \omega \rightarrow \omega/\omega' \rightarrow 0 .
\]
Then, a standard long exact sequence argument with the Ext-vanishing (for $k=0,1$) in (\ref{eqn ext vanishing in generic lemma}) gives that
\[  \mathrm{Hom}_{G_n}(\rho_1\times \ldots \times \rho_r, \omega) \cong \mathrm{Hom}_{G_n}(\rho_1 \times \ldots \times \rho_r, \omega/\omega') .
\]
Since $\omega$ is of finite length and $\omega/\omega'$ is still generic, one inductively has that $\mathrm{Hom}_{G_n}(\rho_1\times \ldots \times \rho_r, \omega/\omega')\neq 0$. Hence, we also have $\mathrm{Hom}_{G_n}(\rho_1\times \ldots \times \rho_r, \omega)\neq 0$ as desired.
\end{proof}

The author would like to thank G. Savin for a discussion on the following proposition.

\begin{proposition} \label{prop embedding deri sub}
Let $\pi$ be a representation of $G_n$ of finite length. Let $\tau$ be a simple submodule or quotient of $\pi^{(i)}$. Then there exists $\rho_k$ of $\mathrm{Irr}^c(G_{n_k})$ ($k=1,\ldots, r$) such that $\rho_i \not > \rho_j$ for any $i <j$, and 
\[   \tau \boxtimes \rho_r \boxtimes \rho_{r-1} \boxtimes \ldots \boxtimes \rho_1 \hookrightarrow \pi_{N'},
\]
where $N'=N_{n-n_1-\ldots-n_r, n_r, \ldots, n_1}$.
\end{proposition}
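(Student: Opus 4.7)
The plan is to argue in two main steps. In Step~1, I lift the embedding $\tau\hookrightarrow\pi^{(i)}$ to an embedding $\tau\boxtimes\sigma\hookrightarrow\pi_{N_i}$ for some irreducible generic representation $\sigma$ of $G_i$. In Step~2, I further decompose $\sigma$ on the $G_i$-side into a product of cuspidal representations in the desired order, and combine everything via transitivity of Jacquet functors.

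For Step~1, the key identity is $\pi^{(i)}=(\pi_{N_i})_{U_i,\psi}$, where $U_i\subset G_i$ is the upper-triangular unipotent and $\psi$ is a non-degenerate character. Combining Lemma~\ref{lem dual property} (which gives that $\tau$ is also a simple quotient of $\pi^{(i)}$) with Proposition~\ref{prop sub quo Jacquet} (exchanging simple submodules and simple quotients of $\pi_{N_i}$), together with the observation that a simple $G_i$-representation $\sigma$ has nonzero $(U_i,\psi)$-coinvariants exactly when $\sigma$ is generic, one extracts a simple submodule $\tau\boxtimes\sigma$ of $\pi_{N_i}$ with $\sigma$ generic. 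A technical point here is to argue that this really produces a submodule (not merely a subquotient) of $\pi_{N_i}$; I would do this by first using Lemma~\ref{lem dual property} to pass to the cosocle of $\pi^{(i)}$, then exactness of the coinvariants functor to realize $\tau\boxtimes\sigma$ as a simple quotient of $\pi_{N_i}$, and finally Proposition~\ref{prop sub quo Jacquet} to convert to a simple submodule.

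For Step~2, every irreducible generic $\sigma\in\mathrm{Irr}(G_i)$ admits an embedding into a product of cuspidal representations $\sigma\hookrightarrow\rho_r\times\rho_{r-1}\times\cdots\times\rho_1$ with cuspidal $\rho_k\in\mathrm{Irr}^c(G_{n_k})$ arranged so that $\rho_i\not>\rho_j$ for $i<j$. The basic case is $\mathrm{St}([a,b]_\rho)\hookrightarrow\nu^b\rho\times\nu^{b-1}\rho\times\cdots\times\nu^a\rho$, with $\rho_k=\nu^{a+k-1}\rho$; the general case follows by concatenation, using the Zelevinsky-type ordering on the underlying multisegment of $\sigma$. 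By Frobenius reciprocity combined again with Proposition~\ref{prop sub quo Jacquet}, the product $\rho_r\boxtimes\cdots\boxtimes\rho_1$ is a simple submodule of $\sigma_{N''}$ for the appropriate unipotent radical $N''\subset G_i$ associated to the partition $(n_r,n_{r-1},\ldots,n_1)$ of $i$.

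Combining Steps~1 and~2 via transitivity of the Jacquet functor along nested unipotent radicals produces the required embedding $\tau\boxtimes\rho_r\boxtimes\cdots\boxtimes\rho_1\hookrightarrow\pi_{N'}$. The main obstacle lies in Step~1: showing that the subquotient $\tau\boxtimes\sigma$ of $\pi_{N_i}$ arising through the Whittaker-coinvariants functor can be promoted to an actual submodule. This rests on the careful interplay between the socle/cosocle dualities of $\pi_{N_i}$ (Proposition~\ref{prop sub quo Jacquet}) and of $\pi^{(i)}$ (Lemma~\ref{lem dual property}); Step~2 by contrast is relatively routine, being a standard consequence of the Zelevinsky classification.
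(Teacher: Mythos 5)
Your Step~2 is essentially sound and matches the second half of the paper's argument. The gap is in Step~1. You want to lift the embedding $\tau\hookrightarrow\pi^{(i)}$ to a simple submodule $\tau\boxtimes\sigma\hookrightarrow\pi_{N_i}$ with $\sigma$ generic, but the ingredients you cite do not accomplish this. The twisted Jacquet functor $(-)_{U_i,\psi}$ on the $G_i$-factor is exact and kills non-generic simples; applying it to the socle of $\pi_{N_i}$ thus gives a semisimple \emph{submodule} of $\pi^{(i)}$, but this need not exhaust $\mathrm{soc}(\pi^{(i)})$: a constituent $\tau$ of $\mathrm{soc}(\pi^{(i)})$ can arise from a non-split extension $0\to\omega\boxtimes\sigma_1\to M\to\tau\boxtimes\sigma_2\to 0$ inside $\pi_{N_i}$ with $\sigma_1$ non-generic and $\sigma_2$ generic, in which case $\tau\boxtimes\sigma_2$ is \emph{not} in $\mathrm{soc}(\pi_{N_i})$ even though $\tau$ is in $\mathrm{soc}(\pi^{(i)})$. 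Your proposed workaround (pass to the cosocle of $\pi^{(i)}$ via Lemma~\ref{lem dual property}, use exactness, then apply Proposition~\ref{prop sub quo Jacquet}) hits the same wall: having $\tau$ as a simple quotient of $\pi^{(i)}=(\pi_{N_i})_{U_i,\psi}$ only produces a $G_{n-i}$-equivariant surjection $\pi_{N_i}\twoheadrightarrow\tau$, and one cannot promote it to a $G_{n-i}\times G_i$-quotient of the form $\tau\boxtimes\sigma$ with $\sigma$ irreducible generic without further input. Proposition~\ref{prop sub quo Jacquet} also only transfers \emph{simple} submodules/quotients of $\pi_N$, so it does not apply to the non-simple $\tau\boxtimes\kappa$ that an adjunction argument actually yields.

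The paper resolves this precisely by invoking Lemma~\ref{lem submodule generate}, i.e.\ the affine Hecke algebra realization of Bernstein--Zelevinsky derivatives from \cite{CS19}. In that picture $\mathbf{BZ}_i(\pi)=m(1\otimes\mathbf{sgn}_i)(\pi)$ is an idempotent-cut \emph{subspace} of $\pi$, not a quotient of a Jacquet module, so a vector $v\in\tau\subset\mathbf{BZ}_i(\pi)\subset\pi$ directly generates an $\mathcal H_{n-i}\otimes\mathcal H_i$-submodule of $\pi$, which one checks has the tensor form $\tau\boxtimes\kappa$ with $\kappa$ cyclic on a sign vector (hence generic as a quotient of the Gelfand--Graev/Whittaker module). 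Translating back to representations produces the desired submodule $\tau\boxtimes\omega\hookrightarrow\pi_{N_i}$. This is a substantively different mechanism from the socle/cosocle and exactness manipulations you propose, and it is essential; without some such concrete handle on where $\tau$ sits inside $\pi$ (as opposed to inside a quotient of a Jacquet module), the lift is not available. You should either cite Lemma~\ref{lem submodule generate} at this point or find an independent argument that a simple submodule of $\pi^{(i)}$ lifts to a submodule of $\pi_{N_i}$.
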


\begin{proof}
By Lemma \ref{lem dual property}, it suffices to show for the submodule statement. Using the Hecke algebra realization \cite{CS19} of Bernstein-Zelevinsky derivatives (see  Section \ref{ss bz derivatives aha}), there is a submodule of $\pi_{N'}$ of the form 
\[  \tau \boxtimes \omega ,
\]
with $\omega$ to be an admissible generic representation of $G_i$. We may and shall further assume that $\omega$ is indecomposable.

The module $\omega$ determines a set of cuspidal representations $\rho_1, \ldots, \rho_r$ such that 
\[  \rho_1+\ldots +\rho_r =\mathrm{csupp}(\omega') 
\]
for any simple composition factor $\omega'$ in $\omega$. We shall relabel $\rho_1, \ldots, \rho_r$ such that $\rho_i \not> \rho_j$ for $i<j$. 

Now we consider the embedding:
\[  \tau \boxtimes \omega \hookrightarrow \pi_{N_i} \]
as $G_{n-i}\times G_i$-modules. By Lemma \ref{lem quotient of ps to generic}, we then have a non-zero map:
\[   \tau \boxtimes (\rho_1\times \ldots \times \rho_r) \hookrightarrow \pi_{N_i} .
\]
By the second adjointness and Lemma \ref{lem twist the isomorphism}, we then have a non-zero map:
\[   \tau \boxtimes \rho_r\boxtimes \ldots \boxtimes \rho_1 \hookrightarrow \pi_{N'} .
\]
Since $\tau\boxtimes \rho_r\boxtimes \ldots \boxtimes \rho_1$ is irreducible, the last non-zero map must be injective.
\end{proof}

We shall also prove a kind of converse of the above statement in Proposition \ref{prop unique as segments}. 

\begin{definition} (c.f. \cite[Theorem 6.1]{Ze80})
A sequence of segments $\Delta_1, \ldots, \Delta_k$ is said to be {\it ascending} or in an {\it ascending order} if for any $i<j$, $\Delta_j \not < \Delta_i$. This is opposite to the ordering which usually defines a standard representation, which means that $\mathrm{St}(\Delta_1)\times \ldots \times \mathrm{St}(\Delta_k)$ is isomorphic to the smooth dual of a standard representation. This also coincides with the one defined in Section \ref{ss main results} when all $\Delta_i$ are in $\mathrm{Seg}_{\rho}$ for a fixed $\rho$. 
%Here, for writing $\Delta_i=[a_i,b_i]_{\rho}$, define $\Delta_i^{\vee}:=[-b_i,-a_i]_{\rho^{\vee}}$. 

\end{definition}

\begin{proposition} \label{prop unique as segments}
Let $\pi \in \mathrm{Irr}(G_n)$. Let $\Delta_1, \ldots, \Delta_k$ be an ascending sequence of segments. Let $n_1, \ldots, n_k$ be the absolute lengths of $\Delta_1, \ldots, \Delta_k$ respectively. Suppose $n_1+\ldots +n_k\leq n$. Let $N$ be the unipotent radical of $P_{n-n_1-\ldots-n_k, n_k, \ldots, n_1}$. Let $n'=n_1+\ldots+n_k$. Then,
\begin{enumerate}
\item  For any $\tau \in \mathrm{Irr}(G_{n-n'})$, 
\[  \mathrm{dim}~ \mathrm{Hom}_{G}( \tau \boxtimes \mathrm{St}(\Delta_k) \boxtimes \ldots \boxtimes \mathrm{St}(\Delta_1), \pi_{N}) \leq 1 ,
\]
where $G=G_{n-n'} \times G_{n_k}\times \ldots \times G_{n_1}$. 
\item For any $\tau \in \mathrm{Irr}(G_{n-n'})$, 
\[  \mathrm{dim}~ \mathrm{Hom}_{G'}(\tau \boxtimes (\mathrm{St}(\Delta_1)\times \ldots \times \mathrm{St}(\Delta_k)), \pi_{N'}) \leq 1,
\]
where $G'=G_{n-n'}\times G_{n'}$ and $N'=N_{n'}$ in $G_n$. 
\item If one of the dimensions above is non-zero, then $\tau$ is a simple submodule of $\pi^{(n')}$ and also a simple quotient of $\pi^{(n')}$. 
\end{enumerate}
\end{proposition}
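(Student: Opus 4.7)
The plan is to establish (1) first, derive (2) as an immediate equivalence via Bernstein's second adjointness, and obtain (3) by invoking the iterated St-derivative framework of Section \ref{ss two derivatives}.

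First I would reduce (1) and (2) to a single Hom computation. By transitivity of the Jacquet functor, $\pi_N = (\pi_{N'})_{N''}$ where $N''$ is the unipotent radical of the parabolic of $G_{n'}$ with Levi $G_{n_1}\times \ldots \times G_{n_k}$. Applying Bernstein's second adjointness on the $G_{n'}$-factor yields
\[
\mathrm{Hom}\bigl(\tau \boxtimes \mathrm{St}(\Delta_k) \boxtimes \ldots \boxtimes \mathrm{St}(\Delta_1),\, (\pi_{N'})_{N''}\bigr) \cong \mathrm{Hom}\bigl(\tau \boxtimes \mathrm{Ind}_{(P'')^-}^{G_{n'}}(\mathrm{St}(\Delta_k) \boxtimes \ldots \boxtimes \mathrm{St}(\Delta_1)),\, \pi_{N'}\bigr).
\]
Since induction from the opposite parabolic reverses the order of tensor factors in $\mathrm{GL}$, the right-hand side is exactly the Hom space in (2), so the two Hom spaces are canonically isomorphic.

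To bound either by $1$, I would further decompose each $\mathrm{St}(\Delta_i)$ down to its cuspidal support by iterating (\ref{eqn steinberg jacquet}): the full-parabolic Jacquet module of $\mathrm{St}([a_i,b_i]_\rho)$ contains $\nu^{b_i}\rho \boxtimes \nu^{b_i-1}\rho \boxtimes \ldots \boxtimes \nu^{a_i}\rho$ as a unique copy within its cuspidal-support block, giving an injection of the Hom in (1) into $\mathrm{Hom}(\tau \boxtimes \rho_s \boxtimes \ldots \boxtimes \rho_1, \pi_{\widetilde N})$ for the corresponding finer unipotent $\widetilde N$. At the cuspidal level, the key bound $\dim \mathrm{Hom}(\omega \boxtimes \rho, \pi_{N_{n(\rho)}}) \leq 1$ for irreducible $\pi$, cuspidal $\rho$, and irreducible $\omega$ follows from cuspidal Bernstein decomposition (the $\rho$-isotypic component of $\pi_{N_{n(\rho)}}$ has the form $\omega' \boxtimes \rho$) together with the fact that $\omega'$ injects into $\pi^{(n(\rho))}$ via the one-dimensional Whittaker functional of the generic cuspidal $\rho$, so the multiplicity-freeness of $\mathrm{soc}(\pi^{(n(\rho))})$ furnished by Lemma \ref{lem property derivative} forces the bound. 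Iterating along the cuspidal sequence, with the ascending order ensuring that each intermediate step is well-defined, yields the total bound.

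For (3): a nonzero Hom in (1) produces an embedding $\tau \boxtimes \mathrm{St}(\Delta_k) \boxtimes \ldots \boxtimes \mathrm{St}(\Delta_1) \hookrightarrow \pi_N$ because the source is irreducible as a representation of the Levi. Iterating the defining property of the St-derivative $D_{\Delta}$ along this embedding identifies $\tau$ with $D_{\Delta_k} \circ \ldots \circ D_{\Delta_1}(\pi)$. By Proposition \ref{prop simple quotient from derivative} this is a simple quotient of $\pi^{(n')}$, and by Lemma \ref{lem dual property} it is therefore also a simple submodule of $\pi^{(n')}$.

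The main obstacle is the iteration in the cuspidal-level bound: after one cuspidal extraction the intermediate representation is generally not irreducible, so Lemma \ref{lem property derivative} does not directly apply to it. I would circumvent this by organizing the iteration entirely inside iterated Jacquet modules of the original irreducible $\pi$, using second adjointness at each stage to reduce successive Hom computations to ones involving $\pi^{(r)}$ for an appropriate $r$, where the multiplicity-freeness of $\mathrm{soc}(\pi^{(r)})$ still applies to the original irreducible $\pi$. The ascending property of the segments $\Delta_1, \ldots, \Delta_k$ ensures that the cuspidal sequence is extracted in a consistent, segment-respecting order so that the inductive bookkeeping goes through.
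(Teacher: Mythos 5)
Your reduction of (1) to (2) via second adjointness matches the paper's use of Frobenius reciprocity. The trouble starts with the bound itself. You propose descending to cuspidal level by iterating (\ref{eqn steinberg jacquet}), and you assert that this ``gives an injection of the Hom in (1)'' into a Hom at the cuspidal-level unipotent $\widetilde N$. That injection is not automatic: composing with the inclusion of the cuspidal block into $r(\mathrm{St}(\Delta_i))$ gives a \emph{map} of Hom spaces, but a nonzero $f\colon \tau\boxtimes\mathrm{St}(\Delta_k)\boxtimes\ldots\boxtimes\mathrm{St}(\Delta_1)\to\pi_N$ can vanish after Jacquet restriction to a proper submodule of the source. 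You recognize the further obstacle that after one cuspidal extraction the intermediate object fails to be irreducible (so Lemma \ref{lem property derivative} no longer applies to it), but the remedy you sketch --- organizing the iteration inside Jacquet modules of the original $\pi$ and invoking second adjointness ``at each stage to reduce to $\pi^{(r)}$'' --- does not say how intermediate non-BZ-derivative multiplicities are avoided; in particular, interior $r$ need not be a multiple of $n(\rho)$ and the cuspidal string can repeat, which is exactly where a naive cuspidal-by-cuspidal induction breaks. Part (3) is circular as stated: Proposition \ref{prop simple quotient from derivative} is an introductory restatement that rests on this very proposition (together with later machinery), so you cannot invoke it here.

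The paper avoids all of this by arguing once with the whole block $\mathrm{St}(\Delta_1)\times\ldots\times\mathrm{St}(\Delta_k)$. Because the sequence is ascending this product is (up to contragredient) a standard module, hence has a unique irreducible quotient, which is moreover generic and is the only generic composition factor (Jacquet--Shalika \cite{JS83}). Two linearly independent elements of the Hom space would therefore have distinct images $\tau\boxtimes\kappa_1$, $\tau\boxtimes\kappa_2$ inside $\pi_{N'}$ (endomorphism rings are $\mathbb{C}$ since each $\kappa_i$ has a unique simple quotient not occurring elsewhere). Applying the exact $(U_{n'},\psi)$-twisted Jacquet functor, which detects only the generic quotient of each $\kappa_i$, kills everything but one copy of $\tau$ from each, producing $\tau\oplus\tau\hookrightarrow\pi^{(n')}$ and contradicting the multiplicity-freeness of $\mathrm{soc}(\pi^{(n')})$ from Lemma \ref{lem property derivative}. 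The same Whittaker extraction applied to a single nonzero morphism gives (3) directly as an embedding of $\tau$ into $\pi^{(n')}$, with no appeal to outside results.
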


\begin{proof}
Note that (1) and (2) are equivalent by Frobenius reciprocity. (We remark that the ordering of segments in (1) and (2) is switched, followed by Lemma \ref{lem twist the isomorphism}.) We now prove (2). Suppose 
\[  \mathrm{dim}~\mathrm{Hom}_{G'}(\tau \boxtimes (\mathrm{St}(\Delta_1)\times \ldots \times \mathrm{St}(\Delta_k)), \pi_{N'}) \geq 2
\]
and we shall derive a contradiction. We begin with a claim. \\
\ \\

\noindent
{\it Claim}: Let $f, f' \in \mathrm{Hom}_{G'}(\tau \boxtimes (\mathrm{St}(\Delta_1)\times \ldots \times \mathrm{St}(\Delta_k)), \pi_{N'})$. Suppose $f,f'$ are non-zero and $f \neq cf'$ for some non-zero scalar $c$. Then $\mathrm{im}~f\neq \mathrm{im}~f'$. \\

\noindent
{\it Proof of Claim:} Suppose $\mathrm{im}~f=\mathrm{im}~f'$. Then there is a quotient $\kappa$ of $\mathrm{St}(\Delta_1)\times \ldots \times \mathrm{St}(\Delta_k)$ such that 
\[   \tau \boxtimes \kappa \cong \mathrm{im}~f =\mathrm{im}~f'\hookrightarrow \pi_{N'} 
\]
Thus, it suffices to show $\mathrm{End}_{G'}(\tau \boxtimes \kappa)\cong \mathbb C$. 

To this end,  \[\mathrm{End}_{G'}(\tau \boxtimes \kappa)=\mathrm{End}_{G_{n-n'}}(\tau)\otimes \mathrm{End}_{G_{n'}}(\kappa).\]
Now $\mathrm{End}_{G_{n-n'}}(\tau)$ is isomorphic to $\mathbb C$ since $\tau$ is irreducible and $\mathrm{End}_{G_{n'}}(\kappa)$ is isomorphic to $\mathbb C$ by the cosocle-irreducible property of $\kappa$ (see \cite[Proposition 3.2]{JS83} and \cite[Proposition 2.3]{Ch21}). Here the cosocle irreducible property means that $\kappa$ has unique simple quotient and such simple quotient appears with multiplicity one in the Jordan-H\"older factors of $\kappa$. Hence, we have:
\[  \mathrm{End}_{G'}(\tau \boxtimes \kappa) \cong \mathbb C .
\]

\ \\

Now we return to the proof. Let $f$ and $f'$ be two maps in the claim. Since both $\mathrm{im}~f$ and $\mathrm{im}~f'$ are quotients of
\[  \tau \boxtimes (\mathrm{St}(\Delta_1)\times \ldots \times \mathrm{St}(\Delta_k)) ,
\]
the discussion in the claim shows that $\mathrm{im}~f$ and $\mathrm{im}~f'$ are both cosocle-irreducible. This implies that the situation that $\mathrm{im}~f\subset \mathrm{im}~f'$ or $\mathrm{im}~f' \subset \mathrm{im}~f$ can happen only if $\mathrm{im}~f=\mathrm{im}~f'$. However, the claim says that $\mathrm{im}~f=\mathrm{im}~f'$ is impossible. Hence, we cannot have $\mathrm{im}~f\subset \mathrm{im}~f'$ and $\mathrm{im}~f'\subset \mathrm{im}~f$ as well. In other words, $(\mathrm{im}~f) \cap (\mathrm{im}~f')$ is a proper submodule of both $\mathrm{im}~f$ and $\mathrm{im}~f'$

%, and does not contain $\omega$, where $\omega$ is the simple quotient of 
%\[ \tau \boxtimes (\mathrm{St}(\Delta_1)\times \ldots \times \mathrm{St}(\Delta_k)) .\]
%Thus 
%\[  \mathrm{im}~f+\mathrm{im}~f' \cong (\mathrm{im}~f \oplus \mathrm{im}~f')/(\mathrm{im}~f\cap \mathrm{im}~f') \]
%contains $\omega$ with multplicity $2$. 

Indeed, the results of \cite[Proposition 3.2]{JS83} and \cite[Proposition 2.3]{Ch21} say that the unique simple quotient of $\mathrm{St}(\Delta_1)\times \ldots \times \mathrm{St}(\Delta_k)$ is generic; and the cosocle irreducible property then further implies that any simple composition factor in $(\mathrm{im}~f) \cap (\mathrm{im}~f')$ cannot contain a simple composition factor of the form $\tau_1\boxtimes \tau_2$ with $\tau_2$ to be generic. Now taking the (exact) $(U_{n'},\psi_{n'})$-twisted Jacquet functor on the second factor of $G_{n-n'}\times G_{n'}$-representation for the following short exact sequence:
\[ 0 \rightarrow  (\mathrm{im}~f)\cap (\mathrm{im}~f') \rightarrow \mathrm{im}~f \oplus \mathrm{im}~f' \rightarrow \mathrm{im}~f+\mathrm{im}~f' \rightarrow 0 , \]
we have 
\[ \tau \oplus \tau \hookrightarrow  (\mathrm{im}~f+\mathrm{im}~f')_{U_{n'},\psi_{n'}} .  \]

By the exactness of  $(U_{n'},\psi_{n'})$-twisted Jacquet functor, we then have:
\[  \tau \oplus \tau \hookrightarrow  (\mathrm{im}~f+\mathrm{im}~f')_{U_{n'},\psi_{n'}}  \hookrightarrow \pi^{(n')} .\]
This contradicts Lemma \ref{lem property derivative}. Hence, this implies (2).

We now prove (3). As mentioned before, the two dimensions are the same by Frobenius reciprocity. Thus we can only consider the dimension in (2) is non-zero. Then, a quotient of
\[  \tau \boxtimes (\mathrm{St}(\Delta_1)\times \ldots \times \mathrm{St}(\Delta_k))
\]
embeds to $\pi_{N_{n'}}$. Since any quotient is isomorphic to 
\[  \tau \boxtimes \kappa
\]
for some quotient $\kappa$ of $\mathrm{St}(\Delta_1)\times \ldots \times \mathrm{St}(\Delta_k)$, we have that
\begin{align*} \label{eqn embedding from the given map}
   \tau \boxtimes \kappa \hookrightarrow \pi_{N_{n'}}
\end{align*}
for some quotient $\kappa$ of $\mathrm{St}(\Delta_1)\times \ldots \times \mathrm{St}(\Delta_k)$. Thus, 
\[   \tau \boxtimes \kappa_{U_{n'}, \psi_{n'}}  \hookrightarrow (\pi_{N_{n'}})_{U_{n'}, \psi_{n'}} .
\]
as $G_{n-n'}$-representations. Here the subscript ${}_{U_{n'}, \psi_{n'}}$ means to take the $(U_{n'}, \psi'_{n'})$-twisted Jacquet functor on the second factor for $G_{n-n'}\times G_{n'}$. 

By the transitivity of (twisted) Jacquet functors, last space is just $\pi^{(n')}$. Since $\kappa$ has a generic simple subquotient with multiplicity one, $\kappa_{U_{n'}, \psi_{n'}}$ is $1$-dimensional. Hence, we have $\tau \hookrightarrow \pi^{(n')}$, which proves the submodule part. The quotient part of (3) then follows from Lemma \ref{lem dual property}.
\end{proof}
%{\it Proof of claim:} For a non-zero $f \in \mathrm{Hom}_{G'}(\tau \boxtimes (\mathrm{St}(\Delta_1)\times \ldots \times \mathrm{St}(\Delta_k)), \pi_{N'})$, it suffices to show $\mathrm{End}_{G'}(\mathrm{im}~f)\cong \mathbb C$. 

%To this end, we observe that $\mathrm{im}~f \cong \tau \boxtimes \kappa$ for some quotient of $\mathrm{St}(\Delta_1)\times \ldots \times \mathrm{St}(\Delta_k)$. Then \[ \mathrm{End}_{G'}(\mathrm{im}~f)=\mathrm{End}_{G'}(\tau \boxtimes \kappa)=\mathrm{End}_{G_{n-n'}}(\tau)\otimes \mathrm{End}_{G_{n'}}(\kappa) \cong \mathbb{C},\]
%where the last equality follows from $\mathrm{End}_{G_{n-n'}}(\tau) \cong \mathbb C$ by Schur's lemma and $\mathrm{End}_{G_{n'}}(\kappa) \cong \mathbb C$ by that $\kappa$ has unique simple quotient and other simple composition factors are not isomorphic to that.\\

%there exist quotients $\kappa_1, \kappa_2$ of $\mathrm{St}(\Delta_1)\times \ldots \times \mathrm{St}(\Delta_k)$ such that 
%\[  \tau \boxtimes \kappa_1 \oplus \tau \boxtimes \kappa_2 \hookrightarrow \pi_{N'} .\]
%Note that $\kappa_1$ and $\kappa_2$ have a generic representation as the unique quotient and no other composition factor of $\kappa_1$ and $\kappa_2$ is generic. Hence, now taking the (exact) $(U_{n'},\psi_{n'})$-twisted Jacquet functor, we obtain an embedding
%\[  \tau \oplus \tau  \hookrightarrow \pi^{(n')} .\]
%This contradicts Lemma \ref{lem property derivative}. Hence, we have (2). Similar argument above will also give (3).

\subsection{Reduction to $\mathrm{Irr}_{\rho}$ case} \label{ss reduction to cuspidal case}

Let $\pi \in \mathrm{Irr}$. By \cite[Proposition 8.5]{Ze80}, there exist cuspidal representations $\rho_1, \ldots, \rho_r \in \mathrm{Irr}$ such that 
\begin{itemize}
\item for $j\neq k$, $\rho_j \not\cong \nu^c \rho_k$ for any $c \in \mathbb Z$;
\item $\pi \cong \pi_1 \times \ldots \times \pi_r$ for some $\pi_i \in \mathrm{Irr}_{\rho_i}$.
\end{itemize}
 For any non-negative integer $i$, the geometric lemma gives that
\begin{align} \label{eqn separate derivative by cuspidal}
  \pi^{(i)}\cong \bigoplus_{i_1+\ldots +i_r=i} \pi_1^{(i_1)}\times \ldots \times \pi_r^{(i_r)} ,
\end{align}
where the direct sum is guaranteed by the vanishing of Ext-groups through a comparison of cuspidal supports. 

\begin{proposition} \label{prop reduction to irr rho}
We use the above notations. Suppose $\tau$ is a simple quotient of $\pi^{(i)}$. Then there exist non-negative integers $i_1, \ldots,i_r$ with $i_1+\ldots +i_r=i$ such that $\tau \cong \tau_1 \times \ldots \times \tau_r$ for some simple quotients $\tau_j$ of $\pi_j^{(i_j)}$. 
\end{proposition}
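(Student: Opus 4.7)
The plan is to induct on $r$, the case $r=1$ being trivial. For the induction step, I write $\pi = \pi_1 \times \pi'$ where $\pi' := \pi_2 \times \ldots \times \pi_r$ is irreducible by the disjoint cuspidal support hypothesis. Applying the direct sum decomposition (\ref{eqn separate derivative by cuspidal}) to the two-factor product $\pi_1 \times \pi'$, the simple quotient $\tau$ of $\pi^{(i)}$ appears as a simple quotient of $\pi_1^{(i_1)} \times (\pi')^{(i')}$ for some $i_1 + i' = i$. The core of the argument is therefore the two-factor statement: a simple quotient $\tau$ of such a product must split as $\tau \cong \tau_1 \times \tau'$ with $\tau_1$ a simple quotient of $\pi_1^{(i_1)}$ and $\tau'$ a simple quotient of $(\pi')^{(i')}$; the inductive hypothesis applied to $\tau'$ then finishes the argument.

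For the two-factor statement, first note that $\mathrm{csupp}(\tau) = \mathrm{csupp}(\pi_1^{(i_1)}) + \mathrm{csupp}((\pi')^{(i')})$, where the first summand lies in the $\rho_1$-line and the second in the complementary union of lines. By \cite[Proposition 8.5]{Ze80}, this forces a unique factorization $\tau \cong \tau_1 \times \tau'$ with $\tau_1 \in \mathrm{Irr}_{\rho_1}$ and $\tau'$ having cuspidal support disjoint from the $\rho_1$-line; in particular the degrees match: $n(\tau_1) = n(\pi_1^{(i_1)})$ and $n(\tau') = n((\pi')^{(i')})$. Frobenius reciprocity then converts the surjection $\pi_1^{(i_1)} \times (\pi')^{(i')} \twoheadrightarrow \tau_1 \times \tau'$ into a nonzero map
\[
\pi_1^{(i_1)} \boxtimes (\pi')^{(i')} \longrightarrow (\tau_1 \times \tau')_{N},
\]
where $N$ is the unipotent radical of the parabolic with Levi $G_{n(\tau_1)} \times G_{n(\tau')}$.

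The next step is a cuspidal support analysis via the geometric lemma (Section \ref{ss geo lemma}) applied to $(\tau_1 \times \tau')_N$: the filtration is indexed by Weyl double cosets, and for any nontrivial double coset the resulting subquotient mixes cuspidal components between the $\rho_1$-line and the complementary lines, so its cuspidal support on the Levi $G_{n(\tau_1)} \times G_{n(\tau')}$ differs from $\mathrm{csupp}(\tau_1) \boxtimes \mathrm{csupp}(\tau')$. Only the trivial double coset contributes, yielding a single copy of $\tau_1 \boxtimes \tau'$. Since the image of the Frobenius-reciprocity map has every composition factor with cuspidal support equal to $\mathrm{csupp}(\pi_1^{(i_1)}) \boxtimes \mathrm{csupp}((\pi')^{(i')}) = \mathrm{csupp}(\tau_1) \boxtimes \mathrm{csupp}(\tau')$, the image must land in this unique composition factor, and surjectivity of the original map upgrades this to $\pi_1^{(i_1)} \boxtimes (\pi')^{(i')} \twoheadrightarrow \tau_1 \boxtimes \tau'$. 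A K\"unneth-type argument (as in \cite{Ra07}, invoked already in the proof of Corollary \ref{cor embeding derivative}) then splits this into surjections $\pi_1^{(i_1)} \twoheadrightarrow \tau_1$ and $(\pi')^{(i')} \twoheadrightarrow \tau'$, as required.

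The step I expect to be most delicate is the geometric-lemma bookkeeping in the previous paragraph: one needs to check that in every nontrivial Weyl double-coset contribution, the resulting subquotient genuinely has cuspidal support on the Levi strictly different from $\mathrm{csupp}(\tau_1) \boxtimes \mathrm{csupp}(\tau')$. This is precisely what the disjointness of the $\rho_i$-lines buys: any nontrivial Weyl element forces at least one cuspidal component to cross between the $\rho_1$-line and the complementary lines, making the would-be Levi cuspidal supports incompatible with those of $\tau_1$ and $\tau'$. Once this is verified, the rest of the argument is formal.
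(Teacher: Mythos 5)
Your proof is correct and follows essentially the same strategy as the paper's: decompose $\pi^{(i)}$ by disjoint cuspidal lines via the geometric lemma, factorize $\tau$, apply Frobenius reciprocity, and use a cuspidal-support argument to upgrade to a surjection on the Levi level, then split it (Künneth). The only organizational differences are that you induct on $r$ and reduce to the two-factor case, whereas the paper handles all $r$ factors simultaneously in one pass; and you obtain the factorization $\tau \cong \tau_1 \times \tau'$ directly from Zelevinsky's Proposition 8.5, while the paper extracts $\tau_1, \ldots, \tau_r$ from a composition factor of the Jacquet module $\tau_{N^-}$ reached by Frobenius reciprocity — both routes are valid and neither is more economical. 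Your more explicit bookkeeping via the geometric lemma (nontrivial Weyl double cosets mix cuspidal components across lines, so only the identity coset contributes a factor with the correct Levi-cuspidal support) is exactly what the paper compresses into the phrase "slight cuspidal support arguments," and your reasoning there is sound.
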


\begin{proof}
By (\ref{eqn separate derivative by cuspidal}), $\tau$ is a simple quotient of $\pi_1^{(i_1)}\times \ldots \times \pi_r^{(i_r)}$ for some integers $i_1+\ldots +i_r=i$. Now, applying Frobenius reciprocity, we have a non-zero map from $\pi_1^{(i_1)}\boxtimes \ldots \boxtimes \pi_r^{(i_r)}$ to $\tau_{N_{i_1, \ldots, i_r}^-}$, where $N_{i_1, \ldots, i_r}^-$ is the unipotent radical of the parabolic subgroup opposite to $P_{i_1, \ldots, i_r}$. Thus we have representations $\tau_j \in \mathrm{Irr}_{\rho_j}$ ($j=1, \ldots, r$) such that there is a non-zero map from $\tau_1\boxtimes \ldots \boxtimes \tau_r$ to $\tau_{N_{i_1, \ldots, i_r}^-}$ and so a non-zero map from $\tau_1 \times \ldots \times \tau_r$ onto $\tau$. Since $\tau_1 \times \ldots \times \tau_r$ is irreducible (by \cite[Proposition 8.5]{Ze80}), we have $\tau\cong \tau_1 \times \ldots \times \tau_r$. 

Now, we have:
\[  \pi_1^{(i_1)}\times \ldots \times \pi_r^{(i_r)} \twoheadrightarrow \tau_1 \times \ldots \times \tau_r  .\]
Then, we apply Frobenius reciprocity to obtain:
\begin{align} \label{eqn surjection separate cuspidal}
  (\pi_1^{(i_1)} \times \ldots \times \pi_r^{(i_r)})_N \twoheadrightarrow \tau_1 \boxtimes \ldots \boxtimes \tau_r,
\end{align}
where $N=N_{n_1, \ldots, n_r}$ with each $n_k=\mathrm{deg}(\pi_k)-i_k$. Now, by using the cuspidal support condition in the first bullet before this proposition, the only layer in the geometric lemma contributing (\ref{eqn surjection separate cuspidal}) takes the form $\pi_1^{(i_1)}\boxtimes \ldots \boxtimes \pi_r^{(i_r)}$. Hence, we have:
 \[    \pi_1^{(i_1)}\boxtimes \ldots \boxtimes \pi_r^{(i_r)} \twoheadrightarrow \tau_1 \boxtimes \ldots \boxtimes \tau_r .
\]
This implies, by K\"unneth formula, that $\tau_j$ is a simple quotient of $ \pi_j^{(i_j)}$ as desired.
\end{proof}

\section{Maximal multisegment at a right point} \label{s multi at a point}

%Thanks to Proposition \ref{prop reduction to irr rho}, we shall fix an irreducible cuspidal representation $\rho$ for the rest of the article.

\subsection{Socle and cosocle of some parabolically induced modules} \label{ss socle and cosocle}

For a representation $\pi$ of finite length in $\mathrm{Alg}(G_n)$, we denote by $\mathrm{soc}(\pi)$ and $\mathrm{cosoc}(\pi)$ the socle (i.e. the maximal semisimple subrepresentation) and cosocle (i.e. the maximal semisimple quotient) of $\pi$ respectively. 

%We need the following result later (see e.g. \cite{LM16}, c.f. Lemma \ref{lem unique sub quo product}) and
We now discuss some extension of the results in Section \ref{ss rho derivatives}. We refer to \cite{LM14, LM16} for a definition of a ladder representation. The particular example of ladder representations, which we shall need, is that $\mathrm{St}(\left\{ \Delta_1, \Delta_2 \right\})$ for two linked segments $\Delta_1, \Delta_2$. 

\begin{lemma} \label{lem socle cosocle ladder} 
Let $\pi \in \mathrm{Irr}_{\rho}(G_n)$ be a ladder representation or a generic representation. Let $\tau_1 \in \mathrm{Irr}_{\rho}(G_{k})$ and let $\tau_2 \in \mathrm{Irr}_{\rho}(G_{k})$. Then 
\begin{itemize}
\item[(1)] $\mathrm{soc}(\pi \times \tau_i)$ and $\mathrm{cosoc}(\pi \times \tau_i)$ are irreducible ($i=1,2$);
\item[(2)] $\mathrm{soc}(\pi \times \tau_1) \cong \mathrm{soc}(\pi \times \tau_2)$ if and only if $\tau_1 \cong \tau_2$;
\item[(3)] $\mathrm{cosoc}(\pi \times \tau_1) \cong \mathrm{cosoc}(\pi \times \tau_2)$ if and only if $\tau_1 \cong \tau_2$.
\end{itemize}
The above statements also hold if one replaces $\pi \times \tau_1$ and $\pi \times \tau_2$ with $\tau_1 \times \pi$ and $\tau_2 \times \pi$ respectively.
\end{lemma}

\begin{proof}
The socle part of (1) is \cite[Corollary 4.10, Proposition 6.15]{LM16}. (2) is \cite[Corollary 6.8]{LM16} and \cite[Theorem 4.1D]{LM22}. The cosocle statements follow from the corresponding socle ones and the fact that the smooth dual functor sends a ladder (resp. generic) representation to a ladder (resp. generic) representation. The remaining assertions that switch from $\pi \times \tau_i$ to $\tau_i \times \pi$ follow from applying $\theta$.
\end{proof}

\begin{remark} \label{rmk unique derivative}
By Frobenius reciprocity, Lemma \ref{lem socle cosocle ladder} (2) (resp. Lemma \ref{lem socle cosocle ladder} (3)) implies that for any ladder or generic representation $\tau$ of $G_r$ and any $\pi \in \mathrm{Irr}_{\rho}(G_n)$, there exists at most one $\omega \in \mathrm{Irr}_{\rho}(G_{n-r})$ such that there exists a surjection $\pi_{N_r} \twoheadrightarrow \omega \boxtimes \tau$ (resp. $\pi_{N_{n-r}} \twoheadrightarrow \tau \boxtimes \omega$). By Proposition \ref{prop sub quo Jacquet}, if such $\omega$ exists, one also has an injection $\omega \boxtimes \tau \hookrightarrow \pi_{N_r}$ (resp. $\tau \boxtimes \omega \hookrightarrow \pi_{N_{n-r}}$).
\end{remark}

\subsection{$\varepsilon_{\Delta}$-invariant} \label{ss epsilon invariant}

Let $\Delta \in \mathrm{Seg}_{\rho}$ and let $i=l_{abs}(\Delta)$. Let $n \geq i$ and let $\pi \in \mathrm{Irr}_{\rho}(G_n)$. Recall that $D_{\Delta}(\pi)$ is defined in Section \ref{ss notion derivative}. 
Let $  \varepsilon_{\Delta}(\pi)$ be the maximal integer $k$ such that $D_{\Delta}^k(\pi):= \overbrace{D_{\Delta} \circ \ldots \circ D_{\Delta}}^{k \mbox{ times }}(\pi) \neq 0$. When $\Delta= [a]_{\rho}$, $\varepsilon_{\Delta}$ coincides with $\varepsilon_a$ defined in Notation \ref{notn derivatives epsilon}. 

% Let $\tau$ be the maximal semisimple representation of $G_{n-m}$ such that 
%\[  \tau \boxtimes \mathrm{St}(\Delta)  \hookrightarrow \pi_{N_m} ,\]
%Note that, by Remark \ref{rmk unique derivative}, it is equivalent to that $D_{\Delta}(\pi)$ is the irreducible representation of $G_{n-i}$ such that 
%\[  \pi_{N_i} \twoheadrightarrow  D_{\Delta}(\pi) \boxtimes \mathrm{St}(\Delta) .\]
%It is known that $\tau$ is either irreducible or zero . We shall denote such $\tau$ by $D_{\Delta}(\pi)$ if it is non-zero. If such $\tau$ does not exist, we set $D_{\Delta}(\pi)=0$. 

\subsection{First property of $D_{\Delta}$ from Frobenius reciprocity}

We record the following first property of the derivative $D_{\Delta}$ by Frobenius reciprocity, in the same spirit of Remark \ref{rmk unique derivative}:

\begin{lemma} \label{lem reformulate frobenius}
Let $\pi \in \mathrm{Irr}_{\rho}$ and let $\Delta \in \mathrm{Seg}_{\rho}$. Suppose $D_{\Delta}(\pi)\neq 0$. Then 
\[  \pi \hookrightarrow D_{\Delta}(\pi) \times \mathrm{St}(\Delta) ,
\]
and $\pi_{N_{i}} \twoheadrightarrow D_{\Delta}(\pi)\boxtimes \mathrm{St}(\Delta)$, where $i=l_{abs}(\Delta)$.
\end{lemma}

%\begin{proof}
%By the definition of $D_{\Delta}$,
%\[  D_{\Delta}(\pi) \boxtimes  \mathrm{St}(\Delta)  \hookrightarrow \pi_{N_i}, 
%\]
%where $i=l_{abs}(\Delta)$, and so, by Proposition \ref{prop sub quo Jacquet}, 
%\[  \pi_{N_i} \rightarrow D_{\Delta}(\pi) \boxtimes \mathrm{St}(\Delta) .
%\]
%This proves the second assertion. The first assertion follows from Frobenius reciprocity.
%\end{proof}

%Indeed, it is known that $\varepsilon_{\Delta}(\pi)$ coincides with the maximal integer $k'$ such that $\pi$ is a submodule of 
%\[   \tau' \times \overbrace{\mathrm{St}(\Delta) \times \ldots \times \mathrm{St}(\Delta)}^{\mbox{ $k'$ times}} ,\]
%for some irreducible representation $\tau'$ of $G_{n-k'l_{abs}(\Delta)}$. To see this, we need the fact that $\tau' \times \mathrm{St}(\Delta) \times \ldots \times \mathrm{St}(\Delta)$ has a unique simple submodule (Lemma \ref{lem socle cosocle ladder}), where $\mathrm{St}(\Delta)$ appears for arbitrary times. The uniqueness implies that $\pi$ is a submodule of 
%\[    \omega_r \times \overbrace{\mathrm{St}(\Delta) \times \ldots \times \mathrm{St}(\Delta)}^{r \mbox{ times }} ,\] 
%where $\omega_r$ is the unique submodule of $\tau' \times \overbrace{\mathrm{St}(\Delta) \times \ldots \times  \mathrm{St}(\Delta)}^{k'-r \mbox{ times }}$. Then, inductively, we obtain that $\omega_r \cong D_{\Delta}^r(\pi)$ and so $k'=k$.

\subsection{Commutativity for unlinked segments}

\begin{lemma} \label{lem comm derivative 1}
Let $\pi \in \mathrm{Irr}_{\rho}$. Let $\Delta_1$ and $\Delta_2$ be unlinked segments. Then 
\[ D_{\Delta_1}\circ D_{\Delta_2}(\pi) \cong D_{\Delta_2}\circ D_{\Delta_1}(\pi). \]
\end{lemma}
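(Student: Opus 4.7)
Set $\sigma_{12}:=D_{\Delta_1}\circ D_{\Delta_2}(\pi)$ and $\sigma_{21}:=D_{\Delta_2}\circ D_{\Delta_1}(\pi)$; the goal is $\sigma_{12}\cong\sigma_{21}$, with the convention that the equality covers the case where both sides vanish. The strategy is to reduce commutativity of the St-derivatives to the parabolic-induction identity $\mathrm{St}(\Delta_1)\times\mathrm{St}(\Delta_2)\cong\mathrm{St}(\Delta_2)\times\mathrm{St}(\Delta_1)$ from (\ref{eqn product commut st}), combined with the irreducible-socle statement of Lemma \ref{lem socle cosocle ladder}, applied through the Gelfand-Kazhdan involution $\theta$ of \prref{sub quo Jacquet} whenever the ladder factor $\mathrm{St}(\Delta)$ sits on the right rather than on the left. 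Since $\Delta_1,\Delta_2$ are unlinked, each individual Steinberg appearing below is a ladder representation, so the hypotheses of Lemma \ref{lem socle cosocle ladder} are met in each invocation.

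Suppose first $\sigma_{12}\ne 0$. Iterating \leref{reformulate frobenius} yields
\[
\pi\hookrightarrow \sigma_{12}\times\mathrm{St}(\Delta_1)\times\mathrm{St}(\Delta_2)\cong \sigma_{12}\times\mathrm{St}(\Delta_2)\times\mathrm{St}(\Delta_1),
\]
the last isomorphism using (\ref{eqn product commut st}). Let $\kappa:=\mathrm{soc}(\sigma_{12}\times\mathrm{St}(\Delta_2))$, which is irreducible by \leref{socle cosocle ladder} (applied via $\theta$). Exactness of parabolic induction gives $\kappa\times\mathrm{St}(\Delta_1)\hookrightarrow \sigma_{12}\times\mathrm{St}(\Delta_2)\times\mathrm{St}(\Delta_1)$, and passing to socles — each irreducible by the same lemma — produces
\[
\mathrm{soc}(\kappa\times\mathrm{St}(\Delta_1))=\pi=\mathrm{soc}(D_{\Delta_1}(\pi)\times\mathrm{St}(\Delta_1)),
\]
where the right-hand equality is \leref{reformulate frobenius}. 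The uniqueness in \leref{socle cosocle ladder}(2), applied via $\theta$ with ladder $\mathrm{St}(\Delta_1)$, then forces $\kappa\cong D_{\Delta_1}(\pi)$, so $D_{\Delta_1}(\pi)\hookrightarrow \sigma_{12}\times\mathrm{St}(\Delta_2)$. Frobenius reciprocity next gives a non-zero map $(D_{\Delta_1}(\pi))_{N_{i_2}}\to \sigma_{12}\boxtimes\mathrm{St}(\Delta_2)$ (with $i_2:=l_a(\Delta_2)$), which, since the target is irreducible, is surjective; \prref{sub quo Jacquet} then upgrades it to an embedding $\sigma_{12}\boxtimes\mathrm{St}(\Delta_2)\hookrightarrow (D_{\Delta_1}(\pi))_{N_{i_2}}$. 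By the defining property of the St-derivative, $\sigma_{21}=D_{\Delta_2}(D_{\Delta_1}(\pi))\cong \sigma_{12}$, and in particular $\sigma_{21}\ne 0$.

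By the symmetric argument, $\sigma_{21}\ne 0$ likewise implies $\sigma_{12}\cong \sigma_{21}$, hence $\sigma_{12}=0\iff\sigma_{21}=0$, completing the proof. I expect the main delicate point to be ensuring that Lemma \ref{lem socle cosocle ladder} can be applied to each product $V\times\mathrm{St}(\Delta)$ even when the ladder $\mathrm{St}(\Delta)$ sits on the right, which is exactly where the Gelfand-Kazhdan involution $\theta$ is needed; a secondary bookkeeping point is the mild mismatch that \leref{socle cosocle ladder} is stated within $\mathrm{Irr}_{\rho}$, which is absorbed by the reduction of Section \ref{ss reduction to cuspidal case}. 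The remaining steps are routine Frobenius reciprocity.
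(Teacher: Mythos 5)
Your proof is correct in substance, but it is a genuinely different route from either of the two proofs indicated in the paper. The paper's first suggestion is a geometric-lemma computation; its second realizes $D_{\Delta_1}\circ D_{\Delta_2}(\pi)$ at once as the unique simple submodule of the "big derivative" $\mathrm{Hom}_{G_{l}}(\mathrm{St}(\Delta_1)\times\mathrm{St}(\Delta_2),\pi_{N_l})$, which is visibly symmetric in $\Delta_1,\Delta_2$ by (\ref{eqn product commut st}). You instead build the tower embedding $\pi\hookrightarrow\sigma_{12}\times\mathrm{St}(\Delta_1)\times\mathrm{St}(\Delta_2)$ by iterating \leref{reformulate frobenius}, reorder the two Steinberg factors by (\ref{eqn product commut st}), and then strip the product one factor at a time using the irreducible-socle statement of \leref{socle cosocle ladder} together with \prref{sub quo Jacquet}. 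The underlying ingredients coincide (commutativity of the induction of unlinked Steinbergs, socle uniqueness for products with ladder/generic factors, Bernstein's duality of Jacquet functors), but your version has the advantage of being self-contained inside this paper, whereas the paper's short alternative proof defers the "big derivative" machinery to \cite{Ch22+}. Your side observation that both invocations of \leref{socle cosocle ladder} require the ladder/generic factor on the right, hence pass through the Gelfand-Kazhdan involution $\theta$, is a point the paper itself glosses over (it quietly relies on \reref{rmk unique derivative}), and is worth spelling out.

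One small logical wrinkle: in the display
\[
\mathrm{soc}(\kappa\times\mathrm{St}(\Delta_1))=\pi=\mathrm{soc}(D_{\Delta_1}(\pi)\times\mathrm{St}(\Delta_1)),
\]
the right-hand equality is attributed to \leref{reformulate frobenius}, but that lemma presupposes $D_{\Delta_1}(\pi)\neq 0$, which has not been established at that point (only $D_{\Delta_2}(\pi)\neq 0$ follows from $\sigma_{12}\neq 0$). This is easily repaired: the first equality already gives the embedding $\pi\hookrightarrow\kappa\times\mathrm{St}(\Delta_1)$, so Frobenius reciprocity together with \prref{sub quo Jacquet} yields $\kappa\boxtimes\mathrm{St}(\Delta_1)\hookrightarrow\pi_{N_{i_1}}$, hence by definition $D_{\Delta_1}(\pi)=\kappa\neq 0$, with no appeal to the uniqueness bullet of \leref{socle cosocle ladder}. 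With that substitution the argument is airtight, and the concluding symmetry step handling the vanishing case is correct as stated.
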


\begin{proof}
By definitions of $D_{\Delta_1}$ and $D_{\Delta_2}$, $\pi$ is a submodule of 
\[  D_{\Delta_2}\circ D_{\Delta_1}(\pi) \times (\mathrm{St}(\Delta_2)\times \mathrm{St}(\Delta_1)) 
\]
Similarly, $\pi$ is a submodule of 
\[  D_{\Delta_1}\circ D_{\Delta_2}(\pi) \times (\mathrm{St}(\Delta_1)\times \mathrm{St}(\Delta_2)) .
\]

Since $\Delta_1$ and $\Delta_2$ are unlinked, $\mathrm{St}(\Delta_1)\times \mathrm{St}(\Delta_2)$ is a generic representation and so $\mathrm{St}(\Delta_1)\times \mathrm{St}(\Delta_2) \cong \mathrm{St}(\Delta_2)\times \mathrm{St}(\Delta_1)$ by (\ref{eqn product commut st}). Now, combining with Lemma \ref{lem socle cosocle ladder}(2), one has $D_{\Delta_2}\circ D_{\Delta_1}(\pi) \cong D_{\Delta_1}\circ D_{\Delta_2}(\pi)$.
\end{proof}
%Moreover, one has 
%\[  D_{\Delta_1}\circ D_{\Delta_2}(\pi) \]
%is the unique submodule of $\mathrm{Hom}_{G_i}(\mathrm{St}(\Delta_1)\times \mathrm{St}(\Delta_2), \pi_{N_l})$, where $l=l_{abs}(\Delta_1)+l_{abs}(\Delta_2)$, which is regarded as a $G_{\mathrm{deg}(\pi)-l}$-module. (See \cite{Ch22+} for more discussions on big derivatives.) Then the lemma follows from Lemma \ref{lem socle cosocle ladder} (also see Remark \ref{rmk unique derivative}).

\subsection{Derivatives and Jacquet functors}

We first prove some useful results on how to pass some information of derivatives under Jacquet functors.

\begin{lemma} \label{lem zero derivative reduce}
Let $\pi \in \mathrm{Irr}_{\rho}(G_n)$ and let $\Delta=[a,c]_{\rho}$ be a segment. Suppose $D_{[a',c]_{\rho}}(\pi)= 0$ for any $c\geq a'\geq a$. Then, for any $1 \leq i \leq n$, and for any simple composition factor $\tau_1 \boxtimes \tau_2$ of $\pi_{N_i}$ ($\tau_1 \in \mathrm{Irr}_{\rho}(G_{n-i})$ and $\tau_2 \in \mathrm{Irr}_{\rho}(G_i)$), $D_{[a',c]_{\rho}}(\tau_2)=0$ for any $c \geq a' \geq a$.
\end{lemma}

\begin{proof}
Suppose not. Then $\pi_{N_i}$ has a simple composition factor of the form $\tau_1 \boxtimes \tau_2$ for some $\tau_1 \in \mathrm{Irr}_{\rho}(G_{n-i})$ and some $\tau_2 \in \mathrm{Irr}_{\rho}(G_i)$ with $D_{[a',c]_{\rho}}(\tau_2)\neq 0$ for some $a \leq a' \leq c$. This implies that $(\tau_2)_{N_j}$ has the simple composition factor isomorphic to
\[  D_{[a',c]_{\rho}}(\tau_2)\boxtimes \mathrm{St}([a',c]_{\rho}) .
\]
Thus, $\pi_{N_{n-i,i-j,j}}$ has a simple composition factor of the form $\tau \boxtimes \omega \boxtimes \mathrm{St}([a',c]_{\rho})$ ($\tau, \omega \in \mathrm{Irr}_{\rho}$). Then $\pi_{N_{n-i,i-j,j}}$ has a simple quotient of the form $\tau' \boxtimes \omega' \boxtimes \kappa$ for some $\tau', \omega' \in \mathrm{Irr}_{\rho}$ and some $\kappa \in \mathrm{Irr}_{\rho}$ satisfying 
\begin{align}
\mathrm{csupp}(\kappa) = [a',c]_{\rho}
\end{align}
By the Langlands classification, we then must have
\[  \kappa \hookrightarrow  \kappa'\times  \mathrm{St}([a'',c]_{\rho})
\]
for some $\kappa' \in \mathrm{Irr}_{\rho}$ and $c \geq a'' \geq a'$. 

From Frobenius reciprocity,  
\[   \pi  \hookrightarrow \tau' \times \omega' \times \kappa
\]
and thus
\[  \pi \hookrightarrow \tau' \times \omega' \times \kappa' \times \mathrm{St}([a'',c]_{\rho}) .
\]
Thus, by Frobenius reciprocity, $D_{[a'',c]_{\rho}}(\pi)\neq 0$ and this gives a contradiction.
\end{proof}

%Hence, by taking Jacquet functors in stages, we have a surjection:
%\[  \pi_{N'}  \rightarrow \tau \boxtimes \tau' \boxtimes \mathrm{St}(\mathfrak n) ,\]
%where $N'$ is the unipotent radical of the parabolic subgroup $P_{n-i, i-j,j}$. Now applying Frobenius reciprocity, we have a non-zero map: 
%\[  \pi_{N_j} \rightarrow (\tau' \times \tau) \boxtimes \mathrm{St}(\mathfrak n) .\]
%Thus $D_{\mathfrak n}(\pi)\neq 0$, giving a contradiction. 

%For convenience, we define the following notion. Let $c\in \mathbb Z$. An irreducible representation $\pi \in \mathrm{Irr}_{\rho}$ is said to be with the {\it maximal point} $\nu^c\rho$ if $\nu^c\rho \in \mathrm{csupp}(\pi)$ and for any integer $d>c$, $\nu^d\rho \not\in \mathrm{csupp}(\pi)$. It is clear that the maximal point of an irreducible representation in $\mathrm{Irr}(G_n)$ ($n\geq 1$) is unique.

%\begin{example}
%The maximal point of $\langle [1,2]_{\rho} \rangle$ and $\mathrm{St}([1,2]_{\rho})$ is $\nu^2\rho$. 
%\end{example}

\begin{proposition} \label{prop non composition factor at right pt} (c.f. \cite[Proposition 7.3]{LM22})
Let $\pi \in \mathrm{Irr}_{\rho}(G_n)$. Let $c \in \mathbb Z$. Suppose $D_{[a,c]_{\rho}}(\pi)=0$ for any integer $a \leq c$. Then, for any $1\leq i \leq n$, and for any simple composition factor $\tau_1\boxtimes \tau_2$ in $\pi_{N_i}$ ($\tau_1 \in \mathrm{Irr}_{\rho}(G_{n-i})$ and $\tau_2 \in \mathrm{Irr}_{\rho}(G_i)$), if any $\rho' \in \mathrm{csupp}(\tau_2)$ satisfies that $\rho' \leq \nu^c\rho$, then $\nu^c\rho \notin \mathrm{csupp}(\tau_2)$.
\end{proposition}

\begin{proof}
Suppose $\nu^c\rho \in \mathrm{csupp}(\tau_2)$. By the Langlands classification, 
\[  \tau_2 \hookrightarrow \tau_2' \times \mathrm{St}([a',c]_{\rho}) 
\]
for some $\tau_2' \in \mathrm{Irr}$ and some $a' \leq c$. In other words, $D_{[a',c]_{\rho}}(\tau_2) \neq 0$. However, this then contradicts to Lemma \ref{lem zero derivative reduce}.
\end{proof}

\subsection{ Maximal multisegment at a right point} \label{ss right max multisegment}

Recall $\mathrm{Mult}_{\rho,c}^b$ is defined in Section \ref{ss notations multi}. We shall call the multisegments in $\mathrm{Mult}_{\rho,c}^b$ are the {\it multisegments at the right point $\nu^c\rho$}.

For a multisegment $\mathfrak m$ in $\mathrm{Mult}^b_{\rho,c}$ and for any $\pi \in \mathrm{Irr}_{\rho}(G_n)$, recall that $D_{\mathfrak m}(\pi)$ is defined in (\ref{eqn derivatives of multisegment}). We shall adapt the convention that: $D_{\emptyset}(\pi)=\pi$.

\begin{lemma} \label{lem uniuq max b element}
Let $\pi \in \mathrm{Irr}_{\rho}$ and let $c \in \mathbb Z$. Then there exists a unique $\leq^b_c$-maximal multisegment $\mathfrak m$ in $\mathrm{Mult}_{\rho,c}^b$ such that $D_{\mathfrak m}(\pi)\neq 0$. 
\end{lemma}

\begin{proof}
Let $\mathfrak m$ and $\mathfrak m'$ be $\leq^b_c$-maximal multisegments such that $D_{\mathfrak m}(\pi)\neq 0$ and $D_{\mathfrak m'}(\pi)\neq 0$. Write 
\[  \mathfrak m=\left\{ [a_1,c]_{\rho}, \ldots, [a_r, c]_{\rho} \right\} , \quad \mathfrak m'=\left\{ [a_1',c]_{\rho}, \ldots, [a_s',c]_{\rho}\right\}
 \]
with $a_1 \geq a_2\geq \ldots \geq a_r$ and $a_1'\geq a_2'\geq \ldots \geq a_s'$.

We first show that $a_1 \neq a_1'$ cannot happen. By reindexing if necessary, we may assume $a_1>a_1'$. In such case, we consider the embedding:
\[   \pi \hookrightarrow D_{\mathfrak m'}(\pi) \times \mathrm{St}(\mathfrak m') .
\]
Combining with 
\[        D_{[a_1,c]_{\rho}}(\pi)\boxtimes \mathrm{St}([a_1,c]_{\rho})\hookrightarrow     \pi_{N_i}, \quad \mbox{where $i=l_{abs}([a_1,c]_{\rho})$,}
\]
we have
\[   D_{[a_1,c]_{\rho}}(\pi)\boxtimes \mathrm{St}([a_1,c]_{\rho}) \hookrightarrow (D_{\mathfrak m'}(\pi) \times \mathrm{St}(\mathfrak m'))_{N_i} .
\]
By the geometric lemma,  $D_{[a_1,c]_{\rho}}(\pi)\boxtimes \mathrm{St}([a_1,c]_{\rho})$ is a simple submodule $\tau_1 \times \omega_1 \boxtimes \tau_2 \times \omega_2$ with
\begin{enumerate}
\item[(a)] $\tau_1 \boxtimes \tau_2$ being a composition factor of $(D_{\mathfrak m'}(\pi))_{N_{i_1}}$;
\item[(b)] $\omega_1 \boxtimes \omega_2$ being a composition factor of $\mathrm{St}(\mathfrak m')_{N_{i_2}}$;
\item[(c)] $i_1+i_2=i$.
\end{enumerate}

We now separate into two cases:
\begin{enumerate}
\item[(1)] $i_1=0$. In such case, we have: $\mathrm{csupp}(\omega_2)=[a_1,c]_{\rho}$ and $\mathrm{csupp}(\omega_2) \subset \mathrm{csupp}(\mathfrak m')$. However, $\nu^{a_1}\rho \in [a_1,c]_{\rho}$, but $\nu^{a_1}\rho \notin \mathrm{csupp}(\mathfrak m')$ by our assumption. This gives a contradiction.
\item[(2)] $i_1>0$. In such case, we consider
\[  \mathrm{St}([a_1,c]_{\rho})\hookrightarrow \tau_2 \times \omega_2
\] 
and so by Frobenius reciprocity, we have a non-zero map:
\[  \mathrm{St}(\mathfrak m')_{N_{i_2}} \rightarrow \tau_2 \boxtimes \omega_2.
\]
Since $i_1>0$, $\nu^c\rho \in \mathrm{csupp}(\tau_2)$. This contradicts Proposition \ref{prop non composition factor at right pt} since $D_{[a,c]_{\rho}}(D_{\mathfrak m}(\pi))=0$ for any $a \leq c$.
\end{enumerate}

Hence, we now must have $a_1=a_1'$. Then $\mathfrak m-[a_1,c]_{\rho}$ and $\mathfrak m'-[a_1',c]_{\rho}$ are $\leq^b_c$-maximal multisegments such that $D_{[a_1,c]_{\rho}}(\pi)$ and so one can then proceed inductively.
\end{proof}

\begin{definition} \label{def max R multiseg}
For $c \in \mathbb Z$, denote the unique $\leq^b_c$-maximal multisegment $\mathfrak n$ in $\mathrm{Mult}_{\rho,c}^b$ such that $D_{\mathfrak n}(\pi)\neq 0$ by $\mathfrak{mxpt}^{b}(\pi, c)$. 
\end{definition}

\subsection{$\mathfrak{mxpt}^b$ in terms of $\varepsilon$}

\begin{lemma} \label{lem irr submodule under reduced}
 Let $\pi \in \mathrm{Irr}_{\rho}$ and let $c \in \mathbb Z$. Let $\mathfrak m \in \mathrm{Mult}_{\rho,c}^b$. Let $\omega$ be the irreducible submodule of $\pi \times \mathrm{St}(\mathfrak m)$. If  $\mathfrak{mxpt}^b(\pi, c)=\emptyset$, then
\[   \mathfrak{mxpt}^b(\omega, c)=\mathfrak m .
\]
\end{lemma}

\begin{proof}
We have:
\[   \omega \hookrightarrow \pi \times \mathrm{St}(\mathfrak m) .
\]
This implies that $\mathfrak m \leq^b_c \mathfrak{mxpt}^b(\omega, c)$ and so 
\begin{align} \label{eqn bound on product max b}
 |\mathfrak m| \leq |\mathfrak{mxpt}^b(\omega, c)|.
\end{align}

Let $i=l_{abs}(\mathfrak{mxpt}^b(\omega, c))$. Hence, we have:
\[  D_{\mathfrak{mxpt}^b(\omega,c)}(\pi)\boxtimes \mathrm{St}(\mathfrak{mxpt}^b(\omega,c)) \hookrightarrow (\pi \times \mathrm{St}(\mathfrak m))_{N_i}.
\]
Then there exists non-negative integers $i_1,i_2$ and $\tau_1, \tau_2, \omega_1, \omega_2 \in \mathrm{Irr}_{\rho}$ such that
\begin{enumerate}
\item $i_1+i_2=i$
\item $\tau_1 \boxtimes \tau_2$ is a simple composition factor of $\pi_{N_{i_1}}$ and $\omega_1\boxtimes \omega_2$ is a simple composition factor of $\mathrm{St}(\mathfrak m)_{N_{i_2}}$;
\item $D_{\mathfrak{mxpt}^b(\omega, c)}(\pi) \hookrightarrow \tau_1\times \omega_1$ and $\mathrm{St}(\mathfrak{mxpt}^b(\pi,c))\hookrightarrow \tau_2\times \omega_2$.
\end{enumerate}

If $i_1>0$, one obtains a contradiction by the same argument in the Case (2) of the proof of Lemma \ref{lem uniuq max b element}. Thus $i_1=0$. If $i_2<l_{abs}(\mathfrak m)$, then the multiplicity of $\nu^c\rho$ in $\mathrm{csupp}(\omega_2)$ is less than $|\mathfrak m|$. This contradicts to (\ref{eqn bound on product max b}). Thus $i_2=i$, and so we must have that $\omega_2=\mathrm{St}(\mathfrak m)$ and $\mathrm{St}(\mathfrak{mxpt}^b(\omega,c))=\mathrm{St}(\mathfrak m)$. This shows the lemma.
\end{proof}

%On the other hand, it follows from Proposition \ref{prop non composition factor at right pt} and $\mathfrak{mxpt}(\pi, c)=\emptyset$ that for any simple composition factor $\tau_1 \boxtimes \tau_2$ in $\pi_{N_k}$ (for some $k$) satisfying that $\rho' \leq \nu^c\rho$ for all $\rho' \in \mathrm{csupp}(\tau_2)$, $\nu^c \rho \notin \mathrm{csupp}(\tau_2)$. Then a standard analysis on terms in the geometric lemma on $(\pi \times \mathrm{St}(\mathfrak m))_{N_k}$ (for some $k$) shows that $|\mathfrak{mxpt}^b(\omega, c)|\leq |\mathfrak m|$. Hence, combining with the previous paragraph, we must have $\mathfrak m=\mathfrak{mxpt}^b(\omega, c)$.

\begin{corollary} \label{cor mxpt b product submod}
Let $\pi \in \mathrm{Irr}_{\rho}$ and let $c \in \mathbb Z$. Let $\mathfrak m \in \mathrm{Mult}_{\rho,c}^b$. Let $\omega$ be the irreducible submodule of $\pi \times \mathrm{St}(\mathfrak m)$. Then 
\[  \mathfrak{mxpt}^b(\omega, c) =\mathfrak{mxpt}^b(\pi,c) +\mathfrak m.
\]
\end{corollary}

\begin{proof}
We have
\[  \omega \hookrightarrow \pi \times \mathrm{St}(\mathfrak m) \hookrightarrow D_{\mathfrak{mxpt}^b(\pi,c)} \times \mathrm{St}(\mathfrak{mxpt}^b(\pi,c)) \times \mathrm{St}(\mathfrak m).
\]
Since $\mathrm{St}(\mathfrak{mxpt}^b(\pi,c)) \times \mathrm{St}(\mathfrak m)=\mathrm{St}(\mathfrak{mxpt}^b(\pi,c)+\mathfrak m)$, the corollary now follows from Lemma \ref{lem irr submodule under reduced}.
\end{proof}

\begin{corollary} \label{cor mxpt b epsilon}
Let $\pi \in \mathrm{Irr}_{\rho}$ and let $c \in \mathbb Z$. Then, for any integer $a \leq c$, $[a,c]_{\rho}$ appears in $\mathfrak{mxpt}^b(\pi,c)$ with multiplicity $\varepsilon_{[a,c]_{\rho}}(\pi)$. 
\end{corollary}

\begin{proof}
Let $\mathfrak m=\mathfrak{mxpt}^b(\pi,c)$. For any integer $a$, let $k_a$ be the multiplicity of $[a,c]_{\rho}$ in $\mathfrak m$. We have:
\[  \pi \hookrightarrow D_{\mathfrak m}(\pi)\times \mathrm{St}(\mathfrak m) \cong D_{\mathfrak m}(\pi)\times \mathrm{St}(\mathfrak m-k_a\cdot [a,c]_{\rho}) \times \mathrm{St}([a,c]_{\rho})^{\times k_a} .
\]
Hence, by Frobenius reciprocity, $(D_{[a,c]_{\rho}})^{k_a}(\pi)\neq 0$ and so $\varepsilon_{[a,c]_{\rho}}(\pi) \geq k_a$.

On the other hand, let $k_a'=\varepsilon_{[a,c]_{\rho}}(\pi)$. We also have the embedding:
\[  \pi \hookrightarrow (D_{[a,c]_{\rho}})^{k_a'}(\pi) \times \mathrm{St}( k_a' \cdot [a,c]_{\rho}) .
\]
Let $\mathfrak m'=\mathfrak{mxpt}^b(D_{[a,c]_{\rho}}^{k_a'}(\pi), c)$. We have:
\[  D_{[a,c]_{\rho}}^{k_a'}(\pi) \hookrightarrow D_{\mathfrak m'}(D_{[a,c]_{\rho}}^{k_a'}(\pi)) \times \mathrm{St}(\mathfrak m') 
\]
and so combining the above embeddings, we have:
\[  \pi  \hookrightarrow D_{\mathfrak m'}(D_{[a,c]_{\rho}}^{k_a'}(\pi)) \times \mathrm{St}(\mathfrak m') \times  \mathrm{St}( k_a' \cdot [a,c]_{\rho}) .
\]
Since $\mathfrak{mxpt}^b(D_{[a,c]_{\rho}}^{k_a'}(\pi), c)=\emptyset$, Lemma \ref{lem irr submodule under reduced} implies that 
\[  \mathfrak{mxpt}^b(\pi,c)= \mathfrak m'+k_a'\cdot [a,c]_{\rho} .
\]
Thus, $k_a \geq k_a'$.

Combining the above two paragraphs, we have $k_a=k_a'$.
\end{proof}

One may ask how to use $\mathfrak{mxpt}^b(\pi,c)$ to characterize multisegments $\mathfrak m$ in $\mathrm{Mult}^b_{\rho,c}$ such that $D_{\mathfrak m}(\pi)\neq 0$. We will postpone to do this in Corollary \ref{cor multisegment at right point}.

\subsection{$\mathfrak{mxpt}^b$ under parabolic induction}

We first have the following generalization of Lemma \ref{lem epsilon for rho der}:

\begin{lemma} \label{lem cuspidal support on irr pair}
 Let $\pi \in \mathrm{Irr}_{\rho}$. Let $c \in \mathbb Z$. Let $\mathfrak m=\mathfrak{mxpt}^b(\pi, c)$. Let $l=l_{abs}(\mathfrak m)$. Then, the followings hold:
\begin{enumerate}
\item $D_{\mathfrak m}(\pi)\boxtimes \mathrm{St}(\mathfrak m)$ is a direct summand in $\pi_{N_l}$;
\item for any other simple composition factor $\tau_1\boxtimes \tau_2$ in $\pi_{N_l}$ satisfying that $\rho' \leq \nu^c\rho$ for all $\rho'\in \mathrm{csupp}(\tau_2)$, the multiplicity of $\nu^c \rho$ in $\mathrm{csupp}(\tau_2)$ is strictly less than the number of segments in $\mathrm{mult}^b(\pi, c)$;
\item for $l'<l$ and for any simple composition factor $\tau_1 \boxtimes \tau_2$ in $\pi_{N_{l'}}$ satisfying that $\rho' \leq \nu^c\rho$ for all $\rho'\in \mathrm{csupp}(\tau_2)$, the multiplicity of $\nu^c\rho$ in $\mathrm{csupp}(\tau_2)$ is strictly less than the number of segments in $\mathrm{mult}^b(\pi,c)$;
\item for $l'\geq l$ and for any simple composition factor $\tau_1 \boxtimes \tau_2$ in $\pi_{N_{l'}}$ satisfying that $\rho' \leq \nu^c\rho$ for all $\rho'\in \mathrm{csupp}(\tau_2)$, the multiplicity of $\nu^c\rho$ in $\mathrm{csupp}(\tau_2)$ is at most the number of segments in $\mathrm{mult}^b(\pi,c)$. 
\end{enumerate}
\end{lemma}

\begin{proof}
The first assertion is proved in \cite[Proposition 11.1]{Ch22+}. The other assertions can be proved by considering the embedding 
\[   \pi_{N_l} \hookrightarrow (D_{\mathfrak m}(\pi) \times \mathrm{St}(\mathfrak m))_{N_l}
\]
and analyzing the geometric lemma on $(D_{\mathfrak m}(\pi) \times \mathrm{St}(\mathfrak m))_{N_l}$. The analysis is similar to the proofs of Lemmas \ref{lem uniuq max b element} and \ref{lem irr submodule under reduced} that utilize Lemma \ref{lem zero derivative reduce}.
\end{proof}

\begin{proposition} \label{prop delta reduced preserve under parabolic}
Let $\pi_1, \ldots, \pi_r \in \mathrm{Irr}_{\rho}$ and let $c \in \mathbb Z$. Fix another $d \in \mathbb Z$ with $d \leq c$. Suppose $D_{[e,c]_{\rho}}(\pi_k)=0$ for all $k$ and all $d\leq e \leq c$. Then, for any simple composition factor $\pi$ of $\pi_1 \times \ldots \times \pi_r$, 
\[  D_{[e,c]_{\rho}}(\pi) = 0  \]
for all $d \leq e \leq c$.
\end{proposition}

\begin{proof}
Let $\pi$ be a simple composition factor in $\pi_1 \times \ldots \times \pi_r$. Suppose $D_{[e,c]_{\rho}}(\pi)\neq 0$ for some $d \leq e \leq c$. Let $i=l_{abs}([e,c]_{\rho})>0$. Then $D_{[e,c]_{\rho}}(\pi)\boxtimes \mathrm{St}([e,c]_{\rho})$ is a simple composition factor of $\pi_{N_i}$ and so is also a simple composition factor of 
\[  (\pi_1\times \ldots \times \pi_r)_{N_i} .\]
By the geometric lemma, we can find non-negative integers $i_1, \ldots, i_r$ and $\tau_1, \ldots, \tau_r, \tau_1', \ldots, \tau_r'$ such that 
\begin{enumerate}
\item[(1)] $i_1+\ldots+i_r=i$;
\item[(2)] $\tau_k\boxtimes \tau_k'$ is a simple composition factor in $(\pi_k)_{N_{i_k}}$;
\item[(3)] $D_{[e,c]_{\rho}}(\pi)$ is a simple composition factor in $\tau_1\times \ldots \times \tau_r$ and $\mathrm{St}([e,c]_{\rho})$ is a simple composition factor in $\tau_1'\times \ldots \times \tau_r'$.
\end{enumerate}
This then implies that there exists an index $k$ such that $\nu^c\rho \in \mathrm{csupp}(\tau_k)$ and 
\[ \mathrm{csupp}(\tau_k) \subset [e,c]_{\rho} . \]
This then implies that $D_{[e',c]_{\rho}}(\tau_k) \neq 0$ for some $c \geq e'\geq e \geq d$. But, by Lemma \ref{lem zero derivative reduce}, $D_{[e',c]_{\rho}}(\pi_k) \neq 0$, giving a contradiction.
\end{proof}
\begin{proposition} \label{prop max b under product}
Let $\pi_1, \ldots, \pi_r \in \mathrm{Irr}_{\rho}$ and let $c \in \mathbb Z$. Suppose $\pi_1 \times \ldots \times \pi_r$ is irreducible. Then
\[ \mathfrak{mxpt}^b(\pi_1\times \ldots \times \pi_r, c) =\mathfrak{mxpt}^b(\pi_1, c)+\mathfrak{mxpt}^b(\pi_2, c)+\ldots +\mathfrak{mxpt}^b(\pi_r,c) . \]
\end{proposition}

\begin{proof}
Let $\mathfrak m_k=\mathfrak{mxpt}^b(\pi_k, c)$ for all $k$ and let $\mathfrak m=\mathfrak m_1+\ldots+\mathfrak m_r$. Let $l_k=l_{abs}(\mathfrak{m}_k)$ and let $l=l_1+\ldots+l_r$. Let $n_k=\mathrm{deg}(\pi_k)$. One of the layers in the geometric lemma of
\[  (\pi_1 \times \ldots \times \pi_r)_{N_l} 
\]
takes the form:
\begin{align} \label{eqn main layer contribute}
   \mathrm{Ind}_{P'}^{G_n} ((\pi_1)_{N_{l_1}} \boxtimes (\pi_2)_{N_{l_2}} \boxtimes \ldots \boxtimes (\pi_r)_{N_{l_r}})^{\phi} ,
\end{align}
where
\begin{enumerate}
\item $P'=P_{n_1-l_1,\ldots, n_r-l_r, l_1, \ldots, l_r}$; and
\item  $\phi$ is a natural map taking a $G_{n_1-l_1} \times G_{l_1} \times \ldots \times G_{n_r-l_r}\times G_{l_r}$-representation to a $G_{n_1-l_1}\times \ldots \times G_{n_r-l_r}\times G_{l_1}\times \ldots \times G_{l_r}$. 
\end{enumerate}

By Lemma \ref{lem cuspidal support on irr pair}(1),
\[    (D_{\mathfrak m_1}(\pi)\times \ldots \times D_{\mathfrak m_r}(\pi)) \boxtimes (\mathrm{St}(\mathfrak m_1)\times \ldots \times \mathrm{St}(\mathfrak m_k)) 
\]
is a composition factor in $(\pi_1 \times \ldots \times \pi_r)_{N_l}$. Lemma \ref{lem cuspidal support on irr pair} (1) and (2) guarantee that there are no other composition factors of the form $\tau_1 \boxtimes \tau_2$ in the layer (\ref{eqn main layer contribute}) with $\mathrm{csupp}(\tau_2)=\mathrm{csupp}(\mathfrak m)$.

Other than the layer (\ref{eqn main layer contribute}), we also have layers from the geometric lemma of the form:
\begin{align} \label{eqn main layer contribute 2}
   \mathrm{Ind}_{P''}^{G_n} ((\pi_1)_{N_{l_1'}} \boxtimes (\pi_2)_{N_{l_2'}} \boxtimes \ldots \boxtimes (\pi_r)_{N_{l_r'}})^{\phi'} ,
\end{align}
where $P''$, $l_1',\ldots, l_r'$ and $\phi'$ are similarly defined as in (\ref{eqn main layer contribute}).

By Lemma \ref{lem cuspidal support on irr pair}(3) and (4), there are no composition factors of the form $\tau_1 \boxtimes \tau_2$ in the layer (\ref{eqn main layer contribute 2}) with $\mathrm{csupp}(\tau_2)=\mathrm{csupp}(\mathfrak m)$.

Combining the above situations, we have that 
\[   (D_{\mathfrak m_1}(\pi)\times \ldots \times D_{\mathfrak m_r}(\pi)) \boxtimes (\mathrm{St}(\mathfrak m_1)\times \ldots \times \mathrm{St}(\mathfrak m_k)) 
\]
appears in a direct summand of $(\pi_1 \times \ldots \times \pi_r)_{N_l}$ and so
\[  (\pi_1 \times \ldots \times \pi_r)_{N_l} \twoheadrightarrow (D_{\mathfrak m_1}(\pi)\times \ldots \times D_{\mathfrak m_r}(\pi)) \boxtimes (\mathrm{St}(\mathfrak m_1)\times \ldots \times \mathrm{St}(\mathfrak m_k)) .
\]
Note that since $\mathfrak{mxpt}^b(D_{\mathfrak m_i}(\pi), c)=\emptyset$ for all $i=1,\ldots, r$, any simple composition factor $\omega$ in $D_{\mathfrak m_1}(\pi)\times \ldots \times D_{\mathfrak m_r}(\pi)$ satisfies $\mathfrak{mxpt}^b(\omega, c)=\emptyset$ by Proposition \ref{prop delta reduced preserve under parabolic}. Lemma \ref{lem irr submodule under reduced} in turn implies the proposition. 
\end{proof}

\section{Maximal multisegment at a left point} \label{s multisegment at a left pt}

\subsection{ Maximal multisegments at a (left) point} \label{ss max at a pt}
For $c \in \mathbb Z$, recall that $\mathrm{Mult}_{\rho, c}^a$ is defined in Section \ref{ss ordering mult}, whose elements will be referred as {\it multisegments at the point $\nu^c\rho$} (see Section \ref{ss main results}). An ordering $\leq_c^a$ on $\mathrm{Mult}^a_{\rho,c}$ is also defined in Section \ref{ss ordering mult}. 

% An irreducible representation $\pi$ of $G_n$ is said to be at a point $\nu^c\rho$ if $\pi \cong \mathrm{St}(\mathfrak m)$ for some multisegment $\mathfrak m$ at a point $\nu^c\rho$. 

%For two multisegments $\mathfrak m_1$ and $\mathfrak m_2$ at a point $c$, we write $\mathfrak m_1 \leq^a_c \mathfrak m_2$ if for
%\[  \mathfrak m_1 =\left\{ [c,b_1]_{\rho}, \ldots, [c,b_r]_{\rho}  \right\}, \quad  \mathfrak m_2 =\left\{ [c,b_1']_{\rho} , \ldots, [c,b_s']_{\rho} \right\} \]
%satisfies:
%\[  b_1 \leq \ldots \leq b_r, \quad b_1' \leq \ldots \leq b_s'\]
%and, $r \leq s$ and,\[ b_1 \leq b_1', \quad b_2\leq b_2', \ldots, \quad b_r <b_r'.\]

%Indeed, thanks to Lemma \ref{lem comm derivative 1}, we have:
%\[ D_{\mathfrak m}(\pi) =D_{\Delta_r}\circ \ldots \circ D_{\Delta_1}(\pi)  \]
%i.e. independent of the orders on $\Delta_1, \ldots, \Delta_r$. We shall adapt the convention that: $D_{\emptyset}(\pi)=\pi$.

Since $D_{\mathfrak m}(\pi) \times \mathrm{St}(\mathfrak m) $ has a unique submodule by Lemma \ref{lem socle cosocle ladder} , multiple use of Lemma \ref{lem reformulate frobenius} gives that
\begin{align} \label{eqn embedding derivatives induction form}
  \pi \hookrightarrow D_{\mathfrak m}(\pi) \times \mathrm{St}(\mathfrak m) 
\end{align}
and by Frobenius reciprocity and Proposition \ref{prop sub quo Jacquet}, we also have:
\begin{align} \label{eqn embedding of derivative}
   D_{\mathfrak m}(\pi) \boxtimes \mathrm{St}(\mathfrak m) \hookrightarrow \pi_{N_{l_{abs}(\mathfrak m)}} .
\end{align}

\begin{lemma} \label{lem max a positive}
Let $\pi \in \mathrm{Irr}_{\rho}$ and let $c \in \mathbb Z$. For each integer $d \geq c$, define
\[  k_d =  \varepsilon_{[c,d]_{\rho}}(\pi) -\varepsilon_{[c,d+1]_{\rho}}(\pi) .
\]
Then $k_d \geq 0$.
\end{lemma}

\begin{proof}
Let $p=\varepsilon_{[c,d]_{\rho}}(\pi)$. By definitions, 
\begin{align} \label{eqn embedding unique max}
  \pi  \hookrightarrow (D_{[c,d+1]_{\rho}})^{p}(\pi) \times \mathrm{St}([c,d+1]_{\rho})^{\times p} .
\end{align}

On the other hand, 
\[  \mathrm{St}([c,d+1]_{\rho}) \hookrightarrow \nu^{d+1}\rho \times \mathrm{St}([c,d]_{\rho}) .
\]
Using (\ref{eqn product commut st}), we then have:
\begin{align} \label{eqn embedding decompose generic}
  \mathrm{St}([c,d+1]_{\rho})^{\times \varepsilon_{[c,d+1]_{\rho}}(\pi)} \hookrightarrow (\nu^{d+1}\rho)^{\times p} \times \mathrm{St}([c,d]_{\rho})^{\times p}
\end{align}

Combining (\ref{eqn embedding unique max}) and (\ref{eqn embedding decompose generic}), we have:
\[   \pi   \hookrightarrow D_{[c,d+1]_{\rho}}^{p}(\pi) \times  (\nu^{d+1}\rho)^{\times p} \times \mathrm{St}([c,d]_{\rho})^{\times p}.
\]
By Frobenius reciprocity, we then have $D_{[c,d]_{\rho}}^{p}(\pi)\neq 0$. This shows the lemma.
\end{proof}

\begin{proposition} \label{prop define mxpt a}
We keep using notations in Lemma \ref{lem max a positive}. Define
\begin{align} \label{eqn mxpt a}
  \mathfrak{mxpt}^a(\pi,c): = \sum_{d \geq c} k_d \cdot [c,d]_{\rho} ,
\end{align}
where $k_d$ represents the multiplicity of $[c,d]_{\rho}$ in $\mathfrak{mxpt}^a(\pi,c)$. Then
 \[ D_{\mathfrak{mxpt}^a(\pi,c)}(\pi)\neq 0.
\]
\end{proposition}

\begin{proof}
 Let $\mathfrak m_e =\sum_{d\geq e} k_d\cdot [c,d]_{\rho}$. We shall prove by back induction on $e$ such that 
\[   D_{\mathfrak m_e}(\pi) \neq 0 .\]
When $e=c$, this gives the required statement.

We now choose $e^*$ to be the largest integer such that $\mathfrak m_{e^*}\neq \emptyset$. In such case, all segments in $\mathfrak m_{e^*}$ are $[c,e^*]_{\rho}$. By the definition of $\varepsilon$, $D_{\mathfrak m_{e^*}}(\pi) \neq 0$. 

We now proceed to prove $D_{\mathfrak m_e}(\pi)\neq 0$ for $c\leq e<e^*$. By the induction hypothesis, $D_{\mathfrak m_{e+1}}(\pi) \neq 0$. 

On the other hand, let $\mathfrak n$ be the submultisegment of $\mathfrak{mxpt}^b(D_{\mathfrak m_{e+1}}(\pi),e)$ precisely containing all segments in $\mathfrak{mxpt}^b(D_{\mathfrak m_{e+1}}(\pi),e)$ of the form $[a,e]_{\rho}$ with $a \geq c$. Then,
\[   \pi \hookrightarrow D_{\mathfrak m_{e+1}}(\pi) \times \mathrm{St}(\mathfrak m_{e+1}) \hookrightarrow D_{\mathfrak n}\circ D_{\mathfrak m_{e+1}}(\pi)\times \mathrm{St}(\mathfrak n)\times \mathrm{St}(\mathfrak m_{e+1}) .
\] 
By (\ref{eqn product commut st}), we then have:
\begin{align} \label{eqn max a inductive}
  \pi \hookrightarrow D_{\mathfrak n}\circ D_{\mathfrak m_{e+1}}(\pi) \times \mathrm{St}(\mathfrak m_{e+1}) \times \mathrm{St}(\mathfrak n) .
\end{align}

On the other hand, 
\begin{align} \label{eqn max a separate}
   \mathrm{St}(\mathfrak m_{e+1}) \hookrightarrow \mathrm{St}(\mathfrak m'_{e+1}) \times \mathrm{St}(l\cdot [c,e]_{\rho} ),
\end{align}
where $l=|\mathfrak m_{e+1}|=\varepsilon_{[c,e+1]_{\rho}}(\pi)$.

Combining (\ref{eqn max a inductive}) and (\ref{eqn max a separate}), we have:
\[ \pi \hookrightarrow D_{\mathfrak n}\circ D_{\mathfrak m_{e+1}}(\pi) \times \mathrm{St}(\mathfrak m_{e+1}) \times \mathrm{St}(\mathfrak n) \hookrightarrow D_{\mathfrak n}\circ D_{\mathfrak m_{e+1}}(\pi)\times \mathrm{St}(\mathfrak m'_{e+1}) \times \mathrm{St}(l\cdot [c,e]_{\rho} ) \times \mathrm{St}(\mathfrak n) .
\]
Hence,
\[ \pi \hookrightarrow \tau \times \mathrm{St}(l\cdot [c,e]_{\rho}+\mathfrak n) 
\]
for some simple composition factor in $D_{\mathfrak n}\circ D_{\mathfrak m_{e+1}}(\pi)\times \mathrm{St}(\mathfrak m'_{e+1})$. Moreover, by Corollary \ref{cor mxpt b product submod},
\begin{align} \label{eqn mxpt b sum in mxpt a}
   \mathfrak{mxpt}^b(\pi, e)=\mathfrak{mxpt}^b(\tau, e)+l\cdot[c,e]_{\rho}+\mathfrak n .
\end{align}

Note that
\[  D_{[f,e]_{\rho}}(D_{\mathfrak n}\circ D_{\mathfrak m_{e+1}}(\pi))=0, \quad D_{[f,e]_{\rho}}(\mathrm{St}(\mathfrak m'_{e+1})) =0
\]
for any $c \leq f \leq e$. Hence, by Proposition \ref{prop delta reduced preserve under parabolic},  $D_{[f,e]_{\rho}}(\tau)=0$ for any $c \leq f \leq e$. By Corollary \ref{cor mxpt b product submod}, there is no segment in $\mathfrak{mxpt}^b(\tau, e)$ of the form $[c,e]_{\rho}$, and $\mathfrak n$ contains precisely $\varepsilon_{[c,e]_{\rho}}(\pi)$ segments of the form $[c,e]_{\rho}$. Then, by Corollary \ref{cor mxpt b product submod} again and Corollary \ref{cor mxpt b epsilon},
\[  \varepsilon_{[c,e]_{\rho}}(\pi) =l+\varepsilon_{[c,e]_{\rho}}(D_{\mathfrak m_{e+1}}(\pi))
\]
and so 
\[  \varepsilon_{[c,e]_{\rho}}(D_{\mathfrak m_{e+1}}(\pi))=\varepsilon_{[c,e]_{\rho}}(\pi)-l=k_e .\]
Thus $D_{\mathfrak m_e}(\pi)=(D_{[c,e]_{\rho}})^{k_e}\circ D_{\mathfrak m_{e+1}}(\pi)\neq 0$, as desired.  
\end{proof}

\begin{proposition} \label{prop uniqueness max multi}
Let $\pi \in \mathrm{Irr}_{\rho}$. Let $c \in \mathbb Z$. Then $\mathfrak{mxpt}^a(\pi, c)$ defined in Proposition \ref{prop define mxpt a} is the unique $\leq^a_c$-maximal multisegment $\mathfrak m$ in $\mathrm{Mult}_{\rho, c}^a$ such that $D_{\mathfrak m}(\pi)\neq 0$. 
\end{proposition}

\begin{proof}
Let $\mathfrak m \in \mathrm{Mult}_{\rho,c}^a$ such that $D_{\mathfrak m}(\pi)\neq 0$. Write
\[  \mathfrak{mxpt}^a(\pi,c) =\left\{ [c,b_1]_{\rho}, \ldots, [c, b_r]_{\rho} \right\}, \quad \mathfrak{m}=\left\{ [c,b_1']_{\rho}, \ldots, [c,b_s']_{\rho} \right\} 
\]
with $b_1 \geq b_2\geq \ldots \geq b_r$ and $b_1'\geq b_2' \geq \ldots \geq b_s'$.

If $\mathfrak m \not\leq^a_c \mathfrak{mxpt}^a(\pi,c)$, then there exists an index $k$ such that $b_k' >b_k$. Let $k^*$ be such smallest integer. Let
\[ \mathfrak n = \left\{ [c,b_1']_{\rho}, \ldots, [c, b_{k^*}']_{\rho} \right\}.
\]
Then, it follows from the way to define $\mathfrak{mxpt}^a(\pi, c)$ in (\ref{eqn mxpt a}),
\begin{align} \label{eqn index greater epsilon}
  k^* >\varepsilon_{[c,b_{k^*}]_{\rho}}(\pi) .
\end{align}

Since $D_{\mathfrak n}(\pi)\neq 0$, we have 
\[   \pi \hookrightarrow D_{\mathfrak n}(\pi) \times \mathrm{St}(\mathfrak n) .
\]
We also have
\[  \mathrm{St}(\mathfrak n) \hookrightarrow \mathrm{St}(\mathfrak n')\times \mathrm{St}(k^*\cdot [c,b_{k^*}]_{\rho}) ,
\]
where $\mathfrak n'=[b_{k^*}+1, b_1']+\ldots +[b_{k^*}+1, b_{k^*-1}']_{\rho}$. Thus,
\[  \pi \hookrightarrow D_{\mathfrak n}(\pi) \times \mathrm{St}(\mathfrak n')\times \mathrm{St}(k^*\cdot [c,b_{k^*}]_{\rho})
\] 
and so $(D_{[c,b_{k^*}]_{\rho}})^{k^*}(\pi)\neq 0$. This contradicts to (\ref{eqn index greater epsilon}). 
\end{proof}

\begin{example}
Let $\pi=\mathrm{St}(\left\{ [0,5],[0,4],[1,4],[1,3],[2,3] \right\})$. Note that $\pi$ is a generic representation. In this case, $\mathfrak{mxpt}^a(\pi, 1)=\left\{ [1,3], [1,4] \right\}$. Then, $[1,3]$ and $[1,4]$ has multiplicity one in $\mathfrak{mxpt}^a(\pi, 1)$ and so one deduces from Proposition \ref{prop define mxpt a} that 
\[\varepsilon_{[1,3]}(\pi)=2, \quad \varepsilon_{[1,4]}(\pi) =1 .
\]

\end{example}

%\begin{definition} \label{def max mult at pt}
%Let $\pi \in \mathrm{Irr}_{\rho}$. Let $c \in \mathbb Z$. We shall denote by $\mathfrak{mxpt}^a(\pi, \nu^c\rho)$ or simply $\mathfrak{mxpt}^a(\pi, c)$, the unique $\leq^a_c$-maximal multisegment at the point $\nu^c\rho$ such that $D_{\mathfrak{mxpt}^a(\pi,c)}(\pi)\neq 0$.
%\end{definition}

As we shall see later, the two notions of multisegments at a point, defined in Definition \ref{def max R multiseg} and Proposition \ref{prop define mxpt a}, have different applications. The maximal multisegment at a left point is primarily used to define the highest derivative multisegment. In contrast, the maximal multisegment at a right point is mainly employed in certain inductive arguments within proofs.

\subsection{Characterizing multisegments $\mathfrak n$ in $\mathrm{Mult}^a_{\rho,c}$ with $D_{\mathfrak n}(\pi)\neq 0$}

\begin{proposition} \label{prop smaller derivative}
 Let $\pi \in \mathrm{Irr}_{\rho}$ and let $\mathfrak n \in \mathrm{Mult}^a_{\rho,c}$. Let $c \in \mathbb Z$. Then $\mathfrak n \leq^a_c \mathfrak{mxpt}^a(\pi,c)$ if and only if $D_{\mathfrak n}(\pi) \neq  0$.

\end{proposition}

\begin{proof}
The if direction is clear by the maximality in Proposition \ref{prop uniqueness max multi}.

We now prove the only if direction. Let $\mathfrak n \leq_c^a \mathfrak{mxpt}^a(\pi, c)$. Let $i=l_{abs}(\mathfrak n)$. We first prove the following claim. \\
%We have the embedding:
%\[  \pi \hookrightarrow  D_{\mathfrak{mxpt}^a(\pi, c)}(\pi) \times \mathrm{St}(\mathfrak{mxpt}^a(\pi, c)).\]

\noindent
{\it Claim:} There exists $\tau \in \mathrm{Irr}_{\rho}$ such that 
\[  \tau \boxtimes \mathrm{St}(\mathfrak n) \hookrightarrow \mathrm{St}(\mathfrak{mxpt}^a(\pi, c))_{N_i} .
\]

\noindent
{\it Proof of Claim:} We write the segments in $\mathfrak{mxpt}^a(\pi, c)$ as $\Delta_1=[c,b_1]_{\rho}, \Delta_2=[c,b_2]_{\rho}, \ldots, \Delta_r=[c,b_r]_{\rho}$ satisfying $b_1 \geq b_2 \geq \ldots \geq b_r$. Now one applies the geometric lemma on 
\[   (\mathrm{St}(\Delta_1) \times \ldots \times \mathrm{St}(\Delta_r))_{N_i} .
\]
Then, any layer takes the form 
\begin{align} \label{eqn layer in partial order lem}
(\mathrm{St}([b_1'+1,b_1]_{\rho})\times \ldots \times \mathrm{St}([b_r'+1,b_r]_{\rho}) \boxtimes  (\mathrm{St}([c,b_1']_{\rho})\times \ldots \times \mathrm{St}([c,b_r']_{\rho})),
\end{align}
for all $b_k' \leq b_k$.

In particular, any submodule of those layers takes the form $\tau \boxtimes (\mathrm{St}([c,b_1']_{\rho})\times \ldots \times \mathrm{St}([c,b_r']_{\rho}))$ for some $\tau \in \mathrm{Irr}_{\rho}$. This then implies that any submodule of $  (\mathrm{St}(\Delta_1) \times \ldots \times \mathrm{St}(\Delta_r))_{N_i} $ also takes the form. By the condition of $\mathfrak n \leq^a_c \mathfrak{mxpt}^a(\pi,c)$, we can find $b_1',\ldots, b_r'$ in (\ref{eqn layer in partial order lem}) such that $\mathrm{St}([c,b_1']_{\rho}\times \ldots \times \mathrm{St}([c,b_r']_{\rho}) \cong \mathrm{St}(\mathfrak n)$. Thus, $(\mathrm{St}(\Delta_1) \times \ldots \times \mathrm{St}(\Delta_r))_{N_i}$ has a submodule of the form $\tau \boxtimes \mathrm{St}(\mathfrak n)$, proved the claim.  \\

Now we return to prove the lemma. Let $\mathfrak m=\mathfrak{mxpt}^a(\pi, c)$. By (\ref{eqn embedding of derivative}), 
\[  D_{\mathfrak m}(\pi) \boxtimes \mathrm{St}(\mathfrak m) \hookrightarrow \pi_{N_l} .
\]
Thus we have a non-zero map:
\[  D_{\mathfrak{m}}(\pi) \boxtimes \tau \boxtimes \mathrm{St}(\mathfrak n) \hookrightarrow \pi_N,
\]
where $N=N_{\mathrm{deg}(\pi)-l_{abs}(\mathfrak{m}),l_{abs}(\mathfrak{m})-i ,i}$. By Frobenius reciprocity, we have 
\[\tau' \boxtimes \mathrm{St}(\mathfrak n) \hookrightarrow \pi_{N_{i}}\]
 for some $\tau' \in \mathrm{Irr}_{\rho}$. Then $D_{\mathfrak n}(\pi)\neq 0$, as desired.
\end{proof}

%We remark here that the analogous version of Proposition \ref{prop smaller derivative} for a multisegment at a right point and using $\leq_c^b$ instead of $\leq_c^a$ is not true in general. (c.f. Corollary \ref{cor multisegment at right point} below)

%\subsection{Maximal multisegment at a point and $\varepsilon_{\Delta}$}

%The relation of $\varepsilon_{\Delta}$ and $\mathfrak{mxpt}^a(\pi, c)$ is given as follows:

%\begin{proposition} \label{prop relation epsilon and mx}
%Let $\pi \in \mathrm{Irr}_{\rho}$. Fix $c\in \mathbb Z$. For $b\geq c$, let $\mathrm{mult}^a([c,b]_{\rho}, \pi)$ be the multiplicity of $[c,b]_{\rho}$ in $\mathfrak{mxpt}^a(\pi, c)$. Then
%\begin{enumerate}
%\item $\mathrm{mult}^a([c,b]_{\rho}, \pi)=\varepsilon_{[c,b]_{\rho}}(\pi)-\varepsilon_{[c,b+1]_{\rho}}(\pi)$;
%\item $\varepsilon_{[c,b]_{\rho}}(\pi)=\sum_{[c,b']_{\rho}: b'\geq b} \mathrm{mult}^a([c,b']_{\rho}, \pi)$, where $[c,b']_{\rho}$ runs for all the segments in $\mathfrak{mxpt}^a(\pi,c)$.
%\end{enumerate}
%\end{proposition}

%\begin{proof}
% By the definition of $\mathfrak{mxpt}^a(\pi, c)$, we have the inequality:
%\[  \varepsilon_{[c,b]_{\rho}}(\pi)\leq \sum_{b' \geq b} \mathrm{mult}^a([c,b']_{\rho},\pi). 
%\]
%The opposite inequality follows from Proposition \ref{prop smaller derivative} and this proves (2). The remaining assertion of the proposition then follows by solving $\mathrm{mult}^a([c,b']_{\rho}, \pi)$ from (2). 
%\end{proof}

\section{Some results for commutativity of derivatives} \label{sec commutation of der}

\subsection{First commutativity result}

\begin{lemma} \label{lem non-zero composite}
Let $\pi \in \mathrm{Irr}_{\rho}$. Let $\Delta_1$ and $\Delta_2$ be linked segments with $\Delta_1<\Delta_2$. Suppose $D_{\Delta_1}(\pi)\neq 0$ and $D_{\Delta_2}(\pi)\neq 0$. Then 
\[  D_{\Delta_1}\circ D_{\Delta_2}(\pi)\neq 0, \quad D_{\Delta_2}\circ D_{\Delta_1}(\pi)\neq 0.
\]
\end{lemma}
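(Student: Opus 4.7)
The plan is to prove $D_{\Delta_2}\circ D_{\Delta_1}(\pi)\neq 0$; the other statement $D_{\Delta_1}\circ D_{\Delta_2}(\pi)\neq 0$ follows by the symmetric argument with the roles of $\Delta_1$ and $\Delta_2$ interchanged. Write $\Delta_1=[a_1,b_1]_\rho$ and $\Delta_2=[a_2,b_2]_\rho$; the hypotheses ``linked with $\Delta_1<\Delta_2$'' force $a_1<a_2\leq b_1<b_2$. Set $\tau_i=D_{\Delta_i}(\pi)$ for $i=1,2$, both nonzero by assumption. The target is to exhibit some irreducible $\omega$ with $\omega\boxtimes \mathrm{St}(\Delta_2)\hookrightarrow (\tau_1)_{N_{l_a(\Delta_2)}}$.

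By Lemma \ref{lem reformulate frobenius} we have $\pi\hookrightarrow \tau_1\times \mathrm{St}(\Delta_1)$, and by definition $\tau_2\boxtimes \mathrm{St}(\Delta_2)\hookrightarrow \pi_{N_{l_a(\Delta_2)}}$. Applying the exact $N_{l_a(\Delta_2)}$-Jacquet functor to the first embedding and composing with the second yields
\[
\tau_2\boxtimes \mathrm{St}(\Delta_2)\hookrightarrow \bigl(\tau_1\times \mathrm{St}(\Delta_1)\bigr)_{N_{l_a(\Delta_2)}}.
\]
The right-hand side has a geometric-lemma filtration (Section \ref{ss geo lemma}) with layers indexed by $(i_1,i_2)$ with $i_1+i_2=l_a(\Delta_2)$, and the simple submodule $\tau_2\boxtimes \mathrm{St}(\Delta_2)$ must embed into a single such layer. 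By (\ref{eqn steinberg jacquet}), the contribution of $\mathrm{St}(\Delta_1)_{N_{i_2}}$ to the cuspidal support of the $G_{l_a(\Delta_2)}$-factor of the layer is $[a_1, a_1+i_2/n(\rho)-1]_\rho$. Since $a_1<a_2$, this set is disjoint from the cuspidal support $[a_2,b_2]_\rho$ of $\mathrm{St}(\Delta_2)$ unless $i_2=0$. Thus the embedding factors through the $(l_a(\Delta_2),0)$-layer.

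In this distinguished layer, the $G_{l_a(\Delta_2)}$-side is inherited directly (without further induction) from the $G_{l_a(\Delta_2)}$-factor of $(\tau_1)_{N_{l_a(\Delta_2)}}$, while the $G_{n-l_a(\Delta_2)}$-side is a parabolic induction of the $G_{n_1-l_a(\Delta_2)}$-factor of $(\tau_1)_{N_{l_a(\Delta_2)}}$ with $\mathrm{St}(\Delta_1)$. I would extract the desired submodule by using Frobenius reciprocity on the induced structure (unfolding the $G_{n-l_a(\Delta_2)}$-side) combined with a K\"unneth-type decomposition (\cite{Ra07}) on the $G_{l_a(\Delta_2)}$-side: the embedding $\tau_2\boxtimes\mathrm{St}(\Delta_2)\hookrightarrow$ layer produces a nonzero homomorphism from $(\tau_2)_{N_{l_a(\Delta_1)}}\boxtimes\mathrm{St}(\Delta_2)$ into $(\tau_1)_{N_{l_a(\Delta_2)}}\boxtimes \mathrm{St}(\Delta_1)$, whose projection onto the $\mathrm{St}(\Delta_2)$-isotypic part of the $G_{l_a(\Delta_2)}$-factor witnesses an embedding $\omega\boxtimes \mathrm{St}(\Delta_2)\hookrightarrow (\tau_1)_{N_{l_a(\Delta_2)}}$ for some irreducible $\omega$.

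The main obstacle is the last step: upgrading the existence of $\mathrm{St}(\Delta_2)$ as a composition factor of the $G_{l_a(\Delta_2)}$-side of $(\tau_1)_{N_{l_a(\Delta_2)}}$ to an actual submodule. This is where ladder/multiplicity-one inputs play a role: by Lemma \ref{lem socle cosocle ladder} together with Remark \ref{rmk unique derivative}, an irreducible summand of the form $\omega\boxtimes \mathrm{St}(\Delta_2)$ in $(\tau_1)_{N_{l_a(\Delta_2)}}$ is essentially unique if it exists, and by Proposition \ref{prop sub quo Jacquet} submodule and quotient are interchangeable. Combining with the multiplicity-one theorem (Lemma \ref{lem property derivative}) one verifies that the composition factor extracted above does arise as a submodule, completing the proof.
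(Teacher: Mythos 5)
Your argument correctly establishes $D_{\Delta_2}\circ D_{\Delta_1}(\pi)\neq 0$: since $a_1<a_2$, the cuspidal representation $\nu^{a_1}\rho$, which appears in the $G_{l_a(\Delta_2)}$-side contribution of $\mathrm{St}(\Delta_1)_{N_{i_2}}$ whenever $i_2>0$, cannot lie in $\mathrm{csupp}(\mathrm{St}(\Delta_2))$, so only the $i_2=0$ layer can carry the submodule $\tau_2\boxtimes\mathrm{St}(\Delta_2)$; extracting a genuine submodule $\omega\boxtimes\mathrm{St}(\Delta_2)\hookrightarrow(\tau_1)_{N_{l_a(\Delta_2)}}$ from that layer is precisely Lemma~\ref{lem embedding via geom}, which you could have cited instead of the somewhat muddled K\"unneth/multiplicity-one package in your last paragraph. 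This is the direction the paper itself dismisses as the ``easier'' one.

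The genuine gap is your opening sentence. The other non-vanishing does \emph{not} follow by swapping $\Delta_1$ and $\Delta_2$, because the inequality $a_1<a_2$ that collapsed the geometric-lemma filtration to a single layer is antisymmetric in the pair. If you run the swapped argument, you embed $\tau_1\boxtimes\mathrm{St}(\Delta_1)\hookrightarrow(\tau_2\times\mathrm{St}(\Delta_2))_{N_{l_a(\Delta_1)}}$, and the $G_{l_a(\Delta_1)}$-side contribution of $\mathrm{St}(\Delta_2)_{N_{i_2}}$ has cuspidal support $\{\nu^{a_2}\rho,\ldots,\nu^{a_2+i_2/n(\rho)-1}\rho\}$; since $a_2\leq b_1$ when the segments overlap, this is contained in $[a_1,b_1]_\rho$ for a whole range of positive $i_2$, and cuspidal-support comparison no longer singles out a layer. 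This is exactly why the paper's own proof treats the two directions asymmetrically: for $D_{\Delta_1}\circ D_{\Delta_2}(\pi)\neq 0$ it enumerates the possible contributing layers (the forms labelled (*) and (**) in the printed proof) and then rules out the mixed ones by a further Frobenius-reciprocity argument showing that $\mathrm{St}(\Delta_1)$ cannot occur as a submodule of the induced piece in (*). Without that additional step --- or some other argument --- the second half of the lemma remains unproved.
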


\begin{proof}
Write $\Delta_1=[a_1,b_1]_{\rho}$ and $\Delta_2=[a_2,b_2]_{\rho}$. Let $\omega_1 =D_{\Delta_1}(\pi)$ and let $\omega_2=D_{\Delta_2}(\pi)$. Let $k=b_1-a_1+1$ and let $l=\mathrm{deg}(\rho)$. Then 
\[  \pi \hookrightarrow \omega_1\times \mathrm{St}(\Delta_1), \quad \pi \hookrightarrow \omega_2 \times \mathrm{St}(\Delta_2) .
\] 
We again use the argument of the geometric lemma and a comparison of cuspidal supports to prove both non-inequalities. The first one is indeed easier since the segment cannot contribute $\nu^{a_1}\rho$ for the cuspidal support of $\mathrm{St}(\Delta_1)$, and most arguments are similar to proving the second one, and so we omit the details.

Now we prove the second one. One applies the geometric lemma on 
\[   (\omega_2 \times \mathrm{St}(\Delta_2))_{N_{kl}} ,
\]
to reduce the possibility contributing to a factor of the form $ \tau \boxtimes \mathrm{St}(\Delta_1)$ to those of the form
\begin{align} \label{eqn geo lem 1}
   (\tau' \times \mathrm{St}([c+1,b_2]_{\rho})) \boxtimes (\mathrm{St}([a_1,a_2-1]_{\rho}) \times \mathrm{St}([c+1,b_1]_{\rho}) \times \mathrm{St}([a_2,c]_{\rho})) ,
\end{align}
for some $c \leq b_1-1$, or simply
\begin{align} \label{eqn geo lem 2}
 \tau'' \times \mathrm{St}([a_2,b_2 ]_{\rho}) \boxtimes \mathrm{St}([a_1,b_1]_{\rho}) .
\end{align}
Here one of the irreducible factor in (\ref{eqn geo lem 1}) has to take the form $\mathrm{St}([a_1,a_2-1]_{\rho})\times \mathrm{St}([c+1,b_1]_{\rho})$ by picking the unique generic representation with given cuspidal support, and another irreducible factor $\mathrm{St}([a_2,c]_{\rho})$ comes from the Jacquet module of $\mathrm{St}(\Delta_2)_{N_{(c-a_2+1)l}}$ in (\ref{eqn steinberg jacquet}).

Here $\tau'$ is an irreducible representation such that $\tau'\boxtimes  \mathrm{St}([a_1,a_2-1]_{\rho}) $ is a composition factor of $(\omega_2)_{N_{(a_2-a_1)l}}$; and $\tau''$ is an irreducible representation such that $\tau'' \boxtimes \mathrm{St}([a_1,b_1]_{\rho})$. However, for (\ref{eqn geo lem 1}), by Frobenius reciprocity, the $\mathrm{St}([a_1,b_1]_{\rho})$ does not appear in the submodule of $\mathrm{St}([a_1,a_2-1]_{\rho})\times \mathrm{St}([a_2,b_1]_{\rho})$. Thus only (\ref{eqn geo lem 2}) can contribute to a submodule of the form $\kappa \boxtimes \mathrm{St}(\Delta_1)$ in $\pi_{N_{(b_1-a_1+1)l}}$ (which we know such submodule exists by $D_{\Delta_1}(\pi)\neq 0$). Thus the socle of $(\omega_2)_{N_{(b_1-a_1+1)l}}$ has a factor of the form $\tau'' \boxtimes \mathrm{St}(\Delta_1)$. This shows $D_{\Delta_1}\circ D_{\Delta_2}(\pi)\neq 0$.
\end{proof}

\begin{proposition} \label{prop general commutativity}
Let $\pi \in \mathrm{Irr}_{\rho}$. Let $\Delta_1$ and $\Delta_2$ be linked segments with $\Delta_1< \Delta_2$. Suppose $D_{\Delta_1}(\pi)\neq 0$ and $D_{\Delta_2}(\pi)\neq 0$. Suppose further
\[   D_{\Delta_2}\circ D_{\Delta_1}(\pi) \not\cong D_{\Delta_1\cap \Delta_2}\circ D_{\Delta_1\cup \Delta_2}(\pi) ,
\]
and
\[  D_{\Delta_1}\circ D_{\Delta_2}(\pi) \not\cong D_{\Delta_1\cap \Delta_2}\circ D_{\Delta_1\cup \Delta_2}(\pi) .
\]
Then $D_{\Delta_2}\circ D_{\Delta_1}(\pi) \cong D_{\Delta_1}\circ D_{\Delta_2}(\pi)$.
\end{proposition}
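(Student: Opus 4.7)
The plan is to set up two embeddings of $\pi$ by iterated Frobenius reciprocity, perform a case analysis based on the length-two composition structure of $\mathrm{St}(\Delta_2)\times\mathrm{St}(\Delta_1)$ and its reverse, and conclude via the ladder property of $\mathrm{St}(\{\Delta_1,\Delta_2\})$.

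First I would apply Lemma \ref{lem reformulate frobenius} twice (non-vanishing of $\tau_1:=D_{\Delta_2}\circ D_{\Delta_1}(\pi)$ and $\tau_2:=D_{\Delta_1}\circ D_{\Delta_2}(\pi)$ being guaranteed by Lemma \ref{lem non-zero composite}) to obtain
\[
\pi \hookrightarrow \tau_1\times \mathrm{St}(\Delta_2)\times \mathrm{St}(\Delta_1)\quad\text{and}\quad \pi\hookrightarrow \tau_2\times \mathrm{St}(\Delta_1)\times \mathrm{St}(\Delta_2).
\]
Recall from \cite{Ze80} that for linked $\Delta_1<\Delta_2$, each of $\mathrm{St}(\Delta_2)\times\mathrm{St}(\Delta_1)$ and $\mathrm{St}(\Delta_1)\times \mathrm{St}(\Delta_2)$ has length two, with $\mathrm{St}(\{\Delta_1,\Delta_2\})$ as the Langlands quotient of the standard module $\mathrm{St}(\Delta_2)\times\mathrm{St}(\Delta_1)$ and $\mathrm{St}(\{\Delta_1\cup \Delta_2,\Delta_1\cap\Delta_2\})=\mathrm{St}(\Delta_1\cap\Delta_2)\times\mathrm{St}(\Delta_1\cup\Delta_2)$ as its submodule (with roles reversed for the other ordering).

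Next I would case-split on the filtration of the target of the first embedding: by irreducibility of $\pi$, either $\pi$ sits in the submodule $\tau_1\times \mathrm{St}(\{\Delta_1\cup\Delta_2,\Delta_1\cap\Delta_2\})$, or it embeds into the quotient $\tau_1\times \mathrm{St}(\{\Delta_1,\Delta_2\})$. To rule out the submodule case, rewrite it (using commutativity of unlinked segments) as $\pi\hookrightarrow \tau_1\times \mathrm{St}(\Delta_1\cap\Delta_2)\times \mathrm{St}(\Delta_1\cup\Delta_2)$, and apply Frobenius together with Proposition \ref{prop sub quo Jacquet} to obtain
\[
\tau_1\boxtimes\mathrm{St}(\Delta_1\cap\Delta_2)\boxtimes\mathrm{St}(\Delta_1\cup\Delta_2)\hookrightarrow \pi_N
\]
for the appropriate unipotent $N$. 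Since the pair $(\Delta_1\cup\Delta_2,\Delta_1\cap\Delta_2)$ is vacuously ascending (these two segments are unlinked), Proposition \ref{prop unique as segments}(1) gives uniqueness of $\tau$ admitting this embedding. Iterating Lemma \ref{lem reformulate frobenius} in the commuting order (justified by Lemma \ref{lem comm derivative 1}) produces the analogous embedding for $\tau_3:=D_{\Delta_1\cap\Delta_2}\circ D_{\Delta_1\cup\Delta_2}(\pi)$ whenever $\tau_3\neq 0$; uniqueness then forces $\tau_1\cong\tau_3$, contradicting the hypothesis. The non-vanishing of $\tau_3$ in this case can be extracted by identifying $D_{\Delta_1\cup\Delta_2}(\pi)$ as a specific simple quotient $M$ of $\tau_1\times\mathrm{St}(\Delta_1\cap\Delta_2)$ (obtained from Frobenius on the outermost factor) and verifying $D_{\Delta_1\cap\Delta_2}(M)\cong \tau_1$ via Bernstein's second adjunction together with Proposition \ref{prop sub quo Jacquet}. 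A symmetric argument on the second embedding rules out its submodule case, leaving $\pi\hookrightarrow \tau_2\times\mathrm{St}(\{\Delta_1,\Delta_2\})$.

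Finally, both embeddings now place the irreducible $\mathrm{St}(\{\Delta_1,\Delta_2\})$ on the right. Listing its segments as $\Delta_2,\Delta_1$, one has $a(\Delta_2)>a(\Delta_1)$ and $b(\Delta_2)>b(\Delta_1)$, so $\mathrm{St}(\{\Delta_1,\Delta_2\})$ is a ladder representation. Frobenius reciprocity combined with Proposition \ref{prop sub quo Jacquet} translates the two embeddings to $\tau_j\boxtimes \mathrm{St}(\{\Delta_1,\Delta_2\})\hookrightarrow \pi_{N_i}$ for $j=1,2$ with $i=l_a(\Delta_1)+l_a(\Delta_2)$, and Remark \ref{rmk unique derivative} (uniqueness of the derivative target for a ladder, extracted from Lemma \ref{lem socle cosocle ladder}) forces $\tau_1\cong\tau_2$. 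The main obstacle in the plan is the verification, inside the submodule case, that $\tau_3\neq 0$ so that the uniqueness from Proposition \ref{prop unique as segments} can be invoked; this requires a delicate identification of the simple quotient $M=D_{\Delta_1\cup\Delta_2}(\pi)$ and a careful adjunction argument to compute $D_{\Delta_1\cap\Delta_2}(M)$.
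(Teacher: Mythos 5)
Your proposal is correct and follows essentially the same route as the paper's proof: embed $\pi$ via Frobenius into $\tau_j$ times the two products of Steinbergs, case-split on the two composition factors, rule out the case involving $\mathrm{St}(\{\Delta_1\cap\Delta_2,\Delta_1\cup\Delta_2\})$ using the hypothesis, and finish with the ladder uniqueness from Lemma~\ref{lem socle cosocle ladder}. The only real difference is that what you flag as the ``main obstacle'' — the non-vanishing of $D_{\Delta_1\cap\Delta_2}\circ D_{\Delta_1\cup\Delta_2}(\pi)$ in the submodule case — is treated as automatic in the paper, where the isomorphism $D_{\Delta_1\cap\Delta_2}\circ D_{\Delta_1\cup\Delta_2}(\pi)\cong D_{\Delta_2}\circ D_{\Delta_1}(\pi)$ is extracted directly from the embedding $\pi\hookrightarrow\tau_1\times\mathrm{St}(\{\Delta_1\cap\Delta_2,\Delta_1\cup\Delta_2\})$ by the same successive peeling used in Section~\ref{ss epsilon invariant}, no separate non-vanishing argument needed.
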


\begin{proof}
Write $\Delta_1=[a_1,b_1]_{\rho}$ and $\Delta_2=[a_2,b_2]_{\rho}$. By Lemma \ref{lem non-zero composite}, we have $D_{\Delta_2}\circ D_{\Delta_1}(\pi)\neq 0$ and $D_{\Delta_1}\circ D_{\Delta_2}(\pi)\neq 0$. By Frobenius reciprocity, 
\[  \pi \hookrightarrow D_{\Delta_2}\circ D_{\Delta_1}(\pi) \times \mathrm{St}(\Delta_2) \times \mathrm{St}(\Delta_1) .
\]
There are two composition factors in $\mathrm{St}(\Delta_2)\times \mathrm{St}(\Delta_1)$ \cite[Proposition 4.6]{Ze80}: $\mathrm{St}( \Delta_1+\Delta_2)$ and $\mathrm{St}(\Delta_1\cup \Delta_2+ \Delta_1\cap \Delta_2 )$. Thus we have either:
\[  \pi \hookrightarrow D_{\Delta_2}\circ D_{\Delta_1}(\pi) \times \mathrm{St}( \Delta_1\cap \Delta_2+\Delta_1\cup \Delta_2 ) ,
\]
or
\[  \pi \hookrightarrow D_{\Delta_2}\circ D_{\Delta_1}(\pi) \times \mathrm{St}( \Delta_1+ \Delta_2 ) .
\]
In the former case, Lemma \ref{lem socle cosocle ladder} gives that $D_{\Delta_1\cap \Delta_2}\circ D_{\Delta_1\cup \Delta_2}(\pi) \cong D_{\Delta_2}\circ D_{\Delta_1}(\pi)$, giving a contradiction. Thus we must be in the latter case.

%Combining with $\mathrm{St}(\left\{\Delta_1, \Delta_2\right\})\hookrightarrow \mathrm{St}(\Delta_1)\times \mathrm{St}(\Delta_2)$, 
%\[  \pi \hookrightarrow D_{\Delta_2}\circ D_{\Delta_1}(\pi)\times \mathrm{St}(\left\{\Delta_1, \Delta_2\right\}) \hookrightarrow D_{\Delta_2}\circ D_{\Delta_1}(\pi)\times \mathrm{St}(\Delta_1)\times \mathrm{St}(\Delta_2).
%\]
We similarly have that 
\[  \pi \hookrightarrow D_{\Delta_1}\circ D_{\Delta_2}(\pi)\times \mathrm{St}(\Delta_1)\times \mathrm{St}(\Delta_2) .
\]
With a similar argument as above, we have that 
 \[  \pi \hookrightarrow D_{\Delta_1}\circ D_{\Delta_2}(\pi) \times \mathrm{St}( \Delta_1+ \Delta_2 ) .
\]
Now the ladder representation case of Lemma \ref{lem socle cosocle ladder} gives that
\[  D_{\Delta_2}\circ D_{\Delta_1}(\pi) \cong D_{\Delta_1}\circ D_{\Delta_2}(\pi). 
\]
\end{proof}

%\begin{remark}
%The first non-isomorphic condition in Proposition \ref{prop general commutativity} can be checked using the results in Section \ref{ss uniqueness} and Proposition \ref{prop dagger eta condition}. We give a special case in Lemma \ref{lem commut derivative} below.
%\end{remark}

%\begin{remark}
%As shown in the proof of Proposition \ref{prop general commutativity}, $D_{\Delta_1} \circ D_{\Delta_2}(\pi)$ is the irreducible submodule of $\pi \times \mathrm{St}( \Delta_1+ \Delta_2 ) $ or irreducible submodule of 
%\[   \pi \times \mathrm{St}(\Delta_1\cup \Delta_2+ \Delta_1\cap \Delta_2) .\]
%Determining which case happens is related to the associativity of a binary operation studied in \cite{AL22}.
%\end{remark}

We shall need the following lemma in Section \ref{ss isomorphic quotients}. We also remark that dropping the condition that $D_{\Delta_1}(\pi)\neq 0$ or $D_{\Delta_2}(\pi) \neq 0$ in Corollary \ref{cor commut derivative} will make the statement fail in general (e.g. considering some derivatives on a Speh representation).

\begin{corollary} \label{cor commut derivative}
Let $\pi \in \mathrm{Irr}_{\rho}$. Let $\Delta_1$ and $\Delta_2$ be linked segments with $\Delta_1<\Delta_2$. Suppose $D_{\Delta_1}(\pi)\neq 0$ and $D_{\Delta_2}(\pi)\neq 0$. If $D_{\Delta_1 \cup \Delta_2}(\pi)=0$, then 
\[ D_{\Delta_1}\circ D_{\Delta_2}(\pi) \cong D_{\Delta_2}\circ D_{\Delta_1}(\pi) .  \] 
\end{corollary}

\begin{proof}
This is a special case of Proposition \ref{prop general commutativity}.
\end{proof}

\subsection{Commutations in another form}

As mentioned before, Corollary \ref{cor commut derivative} requires the assumption that $D_{\Delta_1}(\pi)\neq 0$ and $D_{\Delta_2}(\pi)\neq 0$. It is not convenient for the purpose of some applications. We now prove another version of commutativity, and one may compare with the proof of Corollary \ref{cor commut derivative}.

\begin{lemma} \label{lem derivative transfer large}
Let $\pi \in \mathrm{Irr}_{\rho}$. Let $c \in \mathbb Z$. Let $\tau =D_{\mathfrak{mxpt}^a(\pi, c)}(\pi)$. Let $d>c$ be an integer. Let $\mathfrak n \in \mathrm{Mult}_{\rho,d}^a$. If $D_{\mathfrak n}(\tau)\neq 0$, then $D_{\mathfrak n}(\pi) \neq 0$. 
\end{lemma}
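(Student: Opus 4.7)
The plan is to combine Frobenius reciprocity (Lemma \ref{lem reformulate frobenius}) with Zelevinsky's cosocle/socle analysis for standard products, using the maximality of $\mathfrak{mxpt}^a(\pi, c)$ to eliminate a kernel scenario. Writing $\mathfrak m := \mathfrak{mxpt}^a(\pi, c)$, I would first apply Lemma \ref{lem reformulate frobenius} twice: from $D_{\mathfrak m}(\pi) = \tau \neq 0$ one gets $\pi \hookrightarrow \tau \times \mathrm{St}(\mathfrak m)$, and from $D_{\mathfrak n}(\tau) \neq 0$ one gets $\tau \hookrightarrow D_{\mathfrak n}(\tau) \times \mathrm{St}(\mathfrak n)$. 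Composing,
\[ \pi \hookrightarrow D_{\mathfrak n}(\tau) \times \mathrm{St}(\mathfrak n) \times \mathrm{St}(\mathfrak m). \]
The goal is to produce an embedding of the form $\pi \hookrightarrow \omega \times \mathrm{St}(\mathfrak n)$, equivalent to $D_{\mathfrak n}(\pi) \neq 0$ by Lemma \ref{lem reformulate frobenius} in reverse.

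Next I would exploit the left-point ordering: every $\Xi \in \mathfrak n$ starts at $\nu^d\rho$ and every $\Xi' \in \mathfrak m$ at $\nu^c\rho$ with $c<d$, so any linked pair with $\Xi' \in \mathfrak m$, $\Xi \in \mathfrak n$ automatically satisfies $\Xi' < \Xi$ in the sense of Section \ref{ss ordering}. Hence $\mathrm{St}(\mathfrak n) \times \mathrm{St}(\mathfrak m)$ coincides with $\lambda(\mathfrak n + \mathfrak m)$ in the notation of Section \ref{ss zel langlands class}, and by \cite{Ze80} has unique cosocle $\mathrm{St}(\mathfrak n + \mathfrak m)$; dually $\mathrm{St}(\mathfrak m) \times \mathrm{St}(\mathfrak n)$ has $\mathrm{St}(\mathfrak n + \mathfrak m)$ as its unique socle. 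Composing the embedding of $\pi$ with the cosocle surjection produces a map
\[ \pi \longrightarrow D_{\mathfrak n}(\tau) \times \mathrm{St}(\mathfrak n + \mathfrak m). \]
If this map is nonzero, irreducibility of $\pi$ makes it an embedding, and composing further with $\mathrm{St}(\mathfrak n + \mathfrak m) \hookrightarrow \mathrm{St}(\mathfrak m) \times \mathrm{St}(\mathfrak n)$ gives $\pi \hookrightarrow (D_{\mathfrak n}(\tau) \times \mathrm{St}(\mathfrak m)) \times \mathrm{St}(\mathfrak n)$, which by Lemma \ref{lem reformulate frobenius} forces $D_{\mathfrak n}(\pi) \neq 0$.

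The hardest step is ruling out that the composition above vanishes. Were it zero, $\pi$ would embed into $D_{\mathfrak n}(\tau) \times K$, where $K$ is the kernel of the cosocle surjection, and every composition factor of $K$ has the form $\mathrm{St}(\mathfrak a)$ for some $\mathfrak a <_Z \mathfrak n + \mathfrak m$. Each such $\mathfrak a$ arises from iterated elementary intersection-union operations, each of which swaps a linked pair $\{[c, b_k], [d, b']\}$ for $\{[c, b'], [d, b_k]\}$ and thereby introduces a new $c$-starting segment $[c, b']$ with $b' > b_k$. Consequently the submultisegment of $c$-starting segments of $\mathfrak a$ strictly exceeds $\mathfrak m$ in the $\leq^a_c$-ordering on $\mathrm{Mult}_{\rho, c}^a$. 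By decomposing $\mathrm{St}(\mathfrak a)$ as a submodule of a Zelevinsky standard product that isolates its $c$-starting segments on the right and combining with $\pi \hookrightarrow D_{\mathfrak n}(\tau) \times \mathrm{St}(\mathfrak a)$, I plan to extract a derivative of $\pi$ at the left point $\nu^c\rho$ whose underlying multisegment strictly exceeds $\mathfrak m$, contradicting the $\leq^a_c$-maximality of $\mathfrak{mxpt}^a(\pi, c)$ via Proposition \ref{prop smaller derivative}, and completing the proof.
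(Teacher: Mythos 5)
There is a genuine gap at the final step, and it is precisely the step that makes this lemma non-trivial. Your setup is fine: the composite embedding $\pi \hookrightarrow D_{\mathfrak n}(\tau)\times \mathrm{St}(\mathfrak n)\times \mathrm{St}(\mathfrak m)$, the identification $\mathrm{St}(\mathfrak n)\times \mathrm{St}(\mathfrak m)=\lambda(\mathfrak n+\mathfrak m)$ (valid here since no segment of $\mathfrak n$ is $<$ a segment of $\mathfrak m$), the reduction to some composition factor $\mathrm{St}(\mathfrak a)$ of the kernel, and the combinatorial observation that every elementary intersection--union on $\mathfrak n+\mathfrak m$ replaces a segment $[c,b_1]_\rho$ by a strictly longer $[c,b_2]_\rho$, so that the $c$-starting part $\mathfrak a[c]$ strictly exceeds $\mathfrak m$ under $\leq_c^a$. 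The problem is the asserted existence of a factorization of $\mathrm{St}(\mathfrak a)$ that ``isolates its $c$-starting segments on the right.'' It does not exist in general. For a linked pair $\Delta'=[c,b']_\rho\in\mathfrak a[c]$ and $\Delta''=[d,b'']_\rho\in\mathfrak a[d]$, necessarily $\Delta'<\Delta''$; hence the ascending ordering of the segments of $\mathfrak a$ forces the $c$-starting segments to come \emph{first}, so the unique-submodule factorization reads $\mathrm{St}(\mathfrak a)\hookrightarrow \mathrm{St}(\mathfrak a[c])\times \mathrm{St}(\mathfrak a[d])$ with $\mathfrak a[c]$ on the \emph{left} (and dually $\mathrm{St}(\mathfrak a)$ is the unique quotient of $\mathrm{St}(\mathfrak a[d])\times\mathrm{St}(\mathfrak a[c])$, which again does not give what you need through the embedding $\pi \hookrightarrow D_{\mathfrak n}(\tau)\times \mathrm{St}(\mathfrak a)$). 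Concretely, take $\mathfrak m=\{[c,2]_\rho\}$ and $\mathfrak n=\{[d,5]_\rho,[d,10]_\rho\}$ with $d=c+1$, and $\mathfrak a=\{[c,5]_\rho,[d,2]_\rho,[d,10]_\rho\}$: the lone $c$-segment $[c,5]_\rho$ is linked to $[d,10]_\rho$, so one cannot commute it to the right of $\mathrm{St}(\mathfrak a[d])$, and there is no embedding $\pi\hookrightarrow Y\times\mathrm{St}(\mathfrak a[c])$ reachable from your data.

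This is exactly the obstruction that the paper's proof is designed to sidestep. Rather than passing to the Langlands cosocle of the whole product $\mathrm{St}(\mathfrak n)\times\mathrm{St}(\mathfrak m)$ at once and then trying to re-extract the $c$-part from an arbitrary $\mathrm{St}(\mathfrak a)$, the paper performs the swaps one segment at a time and keeps the embedding in the shape $\tau'\times A_{ij}\times\mathrm{St}(\overline{\Delta}_j)\times B_{ij}$ throughout. At each single swap the problematic (generic) composition factor $R=\mathrm{St}(\overline{\Delta}_{j^*}\cup\Delta_{i^*-1})\times\mathrm{St}(\overline{\Delta}_{j^*}\cap\Delta_{i^*-1})$ produces a $c$-starting segment $\underline{\Delta}=\overline{\Delta}_{j^*}\cup\Delta_{i^*-1}$ which, by the careful bookkeeping of which $\Delta_x$ and $\overline{\Delta}_y$ are still to the right, \emph{is} unlinked to everything it needs to commute past; that is what lets $\mathrm{St}(\underline{\Delta})$ and the remaining $\mathrm{St}(\Delta_x)$ be pushed all the way to the right, at which point Frobenius reciprocity produces the forbidden multisegment at $\nu^c\rho$ and contradicts Lemma \ref{lem uniqueness max multi}. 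Your all-at-once version loses this nesting structure, and no such commutation is available. To repair the argument you would have to either recover the paper's inductive rearrangement, or replace the last step with a careful Jacquet-module and cuspidal-support analysis; the current sketch does not supply either.
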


\begin{proof}

Let $\tau'=D_{\mathfrak n}(\tau)$. By Lemma \ref{lem reformulate frobenius}, we have embeddings:
\[   \pi \hookrightarrow \tau \times \mathrm{St}(\mathfrak{mxpt}^a(\pi, c)),\quad  \tau \hookrightarrow \tau' \times \mathrm{St}(\mathfrak n) .\]
Hence,
\[ \pi \hookrightarrow \tau'\times \mathrm{St}(\mathfrak n)\times \mathrm{St}(\mathfrak{mxpt}^a(\pi, c)) .
\]
The lemma will follow from the following claim. The main idea in proving the following claim is to switch a pair of segments respectively in $\mathfrak n$ and $\mathfrak{mxpt}^a(\pi,c)$ each time by using the maximality of $\mathfrak{mxpt}^a(\pi,c)$.
\\

\noindent
{\it Claim:} $\pi \hookrightarrow \tau' \times \mathrm{St}(\mathfrak{mxpt}^a(\pi, c))\times \mathrm{St}(\mathfrak n)$. \\
\noindent
{\it Proof of Claim:} We shall write the segments in $\mathfrak{mxpt}^a(\pi, c)$ as:
\begin{align} \label{eqn order in proof comm}
  \Delta_k \leq_c^a \ldots \leq_c^a \Delta_1 ,
\end{align}
and write the segments in $\mathfrak n$ as:
\[  \overline{\Delta}_1 \leq_d^a \ldots \leq_d^a \overline{\Delta}_l . 
\]
Note that the order is opposite to (\ref{eqn order in proof comm}).

We shall inductively show that:
\[ \pi \hookrightarrow \tau' \times A_{ij} \times \mathrm{St}(\overline{\Delta}_j)\times B_{ij} ,
\]
where 
\[  A_{ij} =(\mathrm{St}(\overline{\Delta}_l)\times \ldots \times \mathrm{St}(\overline{\Delta}_{j+1}))\times (\mathrm{St}(\Delta_{k})\times \ldots \times \mathrm{St}(\Delta_i))
\]
and
\[  B_{ij} =\mathrm{St}(\Delta_{i-1})\times \ldots \times \mathrm{St}(\Delta_1)\times \mathrm{St}(\overline{\Delta}_{j-1})\times \ldots \times \mathrm{St}(\overline{\Delta}_1) .
\]
The basic case has been given before the claim. Suppose the case is proved for $i=i^*$ and $j=j^*$. To prove the case that $i=i^*-1$ and $j=j^*$ (if $i^*=1$, then we proceed $i=k+1$ and $j=j^*+1$ and the argument is similar), we consider two cases:
\begin{enumerate}
\item $\overline{\Delta}_{j^*}\subset \Delta_{i^*-1}$. Then it follows from $(i^*, j^*)$ case and the fact that $\mathrm{St}(\overline{\Delta}_{j^*})\times \mathrm{St}(\Delta_{i^*-1})\cong \mathrm{St}(\Delta_{i^*-1})\times \mathrm{St}(\overline{\Delta}_{j^*})$. 
\item $\overline{\Delta}_{j^*} \not\subset \Delta_{i^*-1}$. Then there are exactly two simple composition factors in $\mathrm{St}(\overline{\Delta}_{j^*})\times \mathrm{St}(\Delta_{i^*-1})$, and those are 
\[  R= \mathrm{St}(\overline{\Delta}_{j^*}\cup \Delta_{i^*-1}) \times \mathrm{St}(\overline{\Delta}_{j^*}\cap \Delta_{i^*-1}) ,
\]
and a non-generic factor denoted by $S$. 
\end{enumerate}

Now, by induction hypothesis, 
\[  \pi \hookrightarrow \tau' \times A_{i^*, j^*} \times \mathrm{St}(\overline{\Delta}_{j^*})\times \mathrm{St}(\Delta_{i^*-1}) \times B_{i^*-1, j^*} \]
and so the above discussion implies that
\begin{align}\label{eqn generic case}
\pi \hookrightarrow \tau' \times A_{i^*, j^*}\times R \times B_{i^*-1, j^*} ,
\end{align}
or 
\begin{align} \label{eqn nongeneric case}
 \pi \hookrightarrow \tau' \times A_{i^*, j^*} \times S \times B_{i^*-1,j^*} .
\end{align}

We first prove the former case (\ref{eqn generic case}) is impossible. Suppose the former case happens. We write $\underline{\Delta}=\overline{\Delta}_{j^*} \cup \Delta_{i^*-1}$. We choose all the segments $\Delta_1, \ldots, \Delta_{p}$ in $\mathfrak{mxpt}^a(\pi, c)$ such that those $ \underline{\Delta} \subset \Delta_x$ ($x=1, \ldots, p$). Then, by the ordering above, we also have
\[  \overline{\Delta}_{j^*-1}\subset \ldots \subset \overline{\Delta}_1 \subset \underline{\Delta} \subset \Delta_p\subset \ldots \subset \Delta_1 .
\]
We also further have that $\underline{\Delta}$ is unlinked to $\Delta_y$ for any $y$. Thus, using (\ref{eqn product commut st}) several times, we have that: 
\[ A_{i^*, j^*}\times R \times B_{i^*-1,j^*} \cong A_{i^*, j^*} \times \widetilde{R}  \times \mathrm{St}(\underline{\Delta}) \times \mathrm{St}(\Delta_{p})\times \ldots \times \mathrm{St}(\Delta_1) ,
\]
where $\widetilde{R}$ is the product of those $\mathrm{St}(\Delta')$s for the remaining segments.

By Frobenius reciprocity, we have that
\[   D_{\left\{\overline{\Delta}, \Delta_1, \ldots, \Delta_p\right\}}(\pi) \neq 0 .
\]
However, due to the extra $\overline{\Delta}$, $\left\{\overline{\Delta}, \Delta_1, \ldots, \Delta_p\right\} \not\leq^a_c \mathfrak{mxpt}^a(\pi,c)$, contradicting the maximality of $\mathfrak{mxpt}^a(\pi,c)$.

%\[  \left\{ \underline{\Delta}, \Delta_1, \ldots , \Delta_{p} \right\} \leq_c^a \mathfrak{mxpt}^a(\pi, c) ,
%\]
%but this gives a contradiction to the uniqueness of the maximality in Lemma \ref{lem uniqueness max multi}.

Thus we must lie in the case of (\ref{eqn nongeneric case}). Now combining with
\[  S \hookrightarrow \mathrm{St}(\Delta_{i^*-1}) \times \mathrm{St}(\overline{\Delta}_{j^*}) ,
\]
we obtain the case that $i=i^*-1$ and $j=j^*$, as desired.
\end{proof}

\section{Highest derivative multisegments} \label{s highest der multisegment}

In this section, we construct the highest derivative of an irreducible representation by a sequence of St-derivatives. One may compare with the construction using $\rho$-derivatives in Proposition \ref{prop construction highest from rho}. The two situations give two extreme cases: the minimal (shown in \cite{Ch22+d}) and maximal one (Proposition \ref{prop construction highest from rho}) in the set $\mathcal S(\pi, \pi^-)$ under $\leq_Z$.

\subsection{A computation on maximal multisegments at a point}

\begin{lemma} \label{lem max multisegment equal}
Let $\pi \in \mathrm{Irr}_{\rho}$. Let $c \in \mathbb Z$ with $\varepsilon_c(\pi)\neq 0$. Then, for any $d >c$, 
\[ \mathfrak{mxpt}^a(D_{\mathfrak{mxpt}^a(\pi, c)}(\pi),d)=\mathfrak{mxpt}^a(\pi, d) . \]
\end{lemma}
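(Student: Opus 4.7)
Set $\mathfrak m'=\mathfrak{mxpt}^a(\pi,c)$ (nonempty by the hypothesis $\varepsilon_c(\pi)\neq 0$ and \prref{relation epsilon and mx}), so that $\tau=D_{\mathfrak m'}(\pi)$. Put $\mathfrak n_\pi=\mathfrak{mxpt}^a(\pi,d)$, $\mathfrak n_\tau=\mathfrak{mxpt}^a(\tau,d)$, and $i=l_a(\mathfrak n_\pi)$. The plan is to prove the equality $\mathfrak n_\pi=\mathfrak n_\tau$ by establishing the two inequalities with respect to $\leq_d^a$.

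The inequality $\mathfrak n_\tau\leq_d^a\mathfrak n_\pi$ is the easy direction and follows immediately from \leref{derivative transfer large}: since $d>c$ and $D_{\mathfrak n_\tau}(\tau)\neq 0$, that lemma gives $D_{\mathfrak n_\tau}(\pi)\neq 0$, so the maximality defining $\mathfrak n_\pi$ in $\mathrm{Mult}^a_{\rho,d}$ forces $\mathfrak n_\tau\leq_d^a\mathfrak n_\pi$. The bulk of the work is the reverse inequality, for which it suffices to show $D_{\mathfrak n_\pi}(\tau)\neq 0$ and then invoke maximality of $\mathfrak n_\tau$. I would start from the two embeddings $\pi\hookrightarrow\tau\times\mathrm{St}(\mathfrak m')$ (from \leref{reformulate frobenius}) and $D_{\mathfrak n_\pi}(\pi)\boxtimes\mathrm{St}(\mathfrak n_\pi)\hookrightarrow\pi_{N_i}$ (from the definition of $D_{\mathfrak n_\pi}$), which together yield
\[
D_{\mathfrak n_\pi}(\pi)\boxtimes\mathrm{St}(\mathfrak n_\pi)\hookrightarrow(\tau\times\mathrm{St}(\mathfrak m'))_{N_i}.
\]

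The next step is to apply the geometric lemma of \S\ref{ss geo lemma} to the right-hand side, filtering it by layers indexed by pairs $(j_1,j_2)$ with $j_1+j_2=i$. The key cuspidal-support observation is that for any $j_2>0$, iterating (\ref{eqn steinberg jacquet}) through the geometric lemma applied to the product $\mathrm{St}(\Delta_1)\times\cdots\times\mathrm{St}(\Delta_r)$ (with $\Delta_s\in\mathfrak m'$ all starting at $\nu^c\rho$) shows that every composition factor of $\mathrm{St}(\mathfrak m')_{N_{j_2}}$ has its $G_{j_2}$-component carrying $\nu^c\rho$ in its cuspidal support. Since $\mathrm{St}(\mathfrak n_\pi)$ has cuspidal support entirely among $\nu^{d'}\rho$ with $d'\geq d>c$, no layer with $j_2>0$ can contain a composition factor with second tensor factor $\mathrm{St}(\mathfrak n_\pi)$. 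Because an irreducible submodule of a filtered module meets exactly one subquotient of the filtration, this forces the embedding to factor through the $j_2=0$ layer.

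The $j_2=0$ layer is the Levi-induction of $\tau_{N_i}\boxtimes\mathrm{St}(\mathfrak m')$, in which the $G_i$-action is inherited purely from $\tau_{N_i}$ (as $\mathrm{St}(\mathfrak m')$ lives on a separate factor of the Levi). Applying $\mathrm{Hom}_{G_i}(\mathrm{St}(\mathfrak n_\pi),-)$ to the embedding into this layer, and using that $\mathrm{Hom}_{G_i}$ commutes with parabolic induction in the $G_{n-i}$ direction, one obtains $\mathrm{Hom}_{G_i}(\mathrm{St}(\mathfrak n_\pi),\tau_{N_i})\neq 0$; by \prref{sub quo Jacquet} combined with \reref{unique derivative}, and using that $\mathrm{St}(\mathfrak n_\pi)$ is generic (all segments in $\mathfrak n_\pi$ share the left endpoint $\nu^d\rho$ and are therefore pairwise unlinked), this non-vanishing promotes to an actual submodule of the desired form, yielding $D_{\mathfrak n_\pi}(\tau)\neq 0$. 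The main obstacle I expect is the final extraction step: tracking that the irreducible submodule inside the twisted Levi-induction produces a submodule of $\tau_{N_i}$ rather than merely a composition factor requires careful use of the genericity of $\mathrm{St}(\mathfrak n_\pi)$ together with the fact that parabolic induction does not disturb the $G_i$-action in the $j_2=0$ layer.
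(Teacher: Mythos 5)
Your proposal is correct and follows the same route as the paper: both embed $\pi$ into $D_{\mathfrak{mxpt}^a(\pi,c)}(\pi)\times\mathrm{St}(\mathfrak{mxpt}^a(\pi,c))$, apply the geometric lemma, use the cuspidal-support constraint at $\nu^c\rho$ (via (\ref{eqn steinberg jacquet})) to isolate the layer in which the whole derivative falls on the first factor, conclude $D_{\mathfrak{mxpt}^a(\pi,d)}\circ D_{\mathfrak{mxpt}^a(\pi,c)}(\pi)\neq 0$, and obtain the converse inequality from Lemmas \ref{lem derivative transfer large} and \ref{lem uniqueness max multi}. The ``extraction'' step you flag at the end is exactly what the paper packages as Lemma \ref{lem embedding via geom}, a Frobenius-reciprocity argument that handles the transition from the twisted Levi-induction to a genuine submodule of $\tau_{N_i}$ without even needing the genericity of $\mathrm{St}(\mathfrak n_{\pi})$.
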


\begin{proof}
Let $\mathfrak h_c=\mathfrak{mxpt}^a(\pi,c)$ and let $\mathfrak h_d=\mathfrak{mxpt}^a(\pi,d)$. Let  $n_1=l_{abs}(\mathfrak m_c)$ and $n_2=l_{abs}(\mathfrak m_d)$. By (\ref{eqn embedding derivatives induction form}),
\[   \pi \hookrightarrow  D_{\mathfrak h_c}(\pi) \times \mathrm{St}(\mathfrak h_c) .
\]
This implies that
\[  D_{\mathfrak h_d}(\pi) \boxtimes \mathrm{St}(\mathfrak h_d) \hookrightarrow (D_{\mathfrak h_c}(\pi) \times \mathrm{St}(\mathfrak h_c))_{N_{n_2}}.
\]

Note that, by (\ref{eqn steinberg jacquet}) and the geometric lemma, for $k>0$, any simple composition factor of $\mathrm{St}(\mathfrak h_c)_{N_k}\neq 0$ with the form $\tau_1\boxtimes \tau_2$ must satisfy that $\nu^c\rho \in \mathrm{csupp}(\tau_2)$. Now, by the geometric lemma and comparing cuspidal support at $\nu^c\rho$ , $D_{\mathfrak h_d}(\pi) \boxtimes \mathrm{St}(\mathfrak h_d)$ can only appear in the layer of the form
\[  \mathrm{Ind}^{G_{n-n_2+n_1}\times G_{n_2}}_{P_{n-n_2, n_1}\times G_{n_2}}  (D_{\mathfrak h_c}(\pi)_{N_{n_2}} \boxtimes \mathrm{St}(\mathfrak h_c))^{\phi} ,
\]
where $n=\mathrm{deg}(\rho)$ and $\phi$ twists a $G_{n-n_2}\times G_{n_2}\times G_{n_1}$-representation to a $G_{n-n_2}\times G_{n_1}\times G_{n_2}$-representation. This implies that $D_{\mathfrak h_d}\circ D_{\mathfrak h_c}(\pi)\neq 0$. Hence, by Proposition \ref{prop smaller derivative},
 \[ \mathfrak h_d \leq_d^a \mathfrak{mxpt}^a(D_{\mathfrak{mxpt}^a(\pi, c)}(\pi),d) .\] 

Now the opposite inequality follows from Lemma \ref{lem derivative transfer large} and Proposition \ref{prop uniqueness max multi}, and hence we are done.
\end{proof}

\subsection{Counting cuspidal representations from derivatives of an ascending sequence of segments}

We now prove a result on how an admissible sequence of derivatives of segments is controlled by the multiplicity of the endpoints of segments in the Zelevinsky multisegments. The main idea is to combine Proposition \ref{prop unique as segments} (construction of submodules of BZ derivatives from sequences of $D_{\Delta}$) and Lemma \ref{lem socle cosocle coarse} (Zelevinsky multisegments for submodules of BZ derivatives).

Recall that the notion $\mathrm{mult}^b(\pi, c)$ is defined in (\ref{eqn mult b values}).

\begin{lemma} \label{lem bound on ascending seq}
Let $\pi \in \mathrm{Irr}_{\rho}$. Let $\Delta_1, \ldots, \Delta_r$ in $\mathrm{Seg}_{\rho}$ be an ascending sequence of segments such that $D_{\Delta_r}\circ \ldots \circ D_{\Delta_1}(\pi)\neq 0$. Then, for any integer $c$,
\[  |\left\{i \in \left\{1, \ldots, r\right\} :  \nu^c\rho \in \Delta_i \right\}|\leq \mathrm{mult}^b(\pi, c) .
\] 
\end{lemma}

\begin{proof}
Let $\tau =D_{\Delta_r}\circ \ldots \circ D_{\Delta_1}(\pi)$. By the embedding
\[  \pi \hookrightarrow \tau \times \mathrm{St}(\Delta_r)\times \ldots \times \mathrm{St}(\Delta_1) ,
\]
\begin{align} \label{eqn cuspidal support two sides}
  \mathrm{csupp}(\pi) = \mathrm{csupp}(\tau) \cup \Delta_1 \cup \ldots \cup \Delta_r
\end{align}
as multisets.

On the other hand, by Proposition \ref{prop unique as segments}, $\tau$ is a simple submodule of $\pi^{(i)}$, where $i=\sum_{k=1}^r l_{abs}(\Delta_k)$. Now, let $\mathfrak m=\left\{\Delta_1^{\pi}, \ldots, \Delta_p^{\pi} \right\}$ be the Zelevinsky multisegment for $\pi$ i.e. $\pi \cong \langle \mathfrak m \rangle$. By Lemma \ref{lem socle cosocle coarse}, $\tau \cong \langle \mathfrak n \rangle$ for some $\mathfrak n \in \mathfrak m^{(i)}$ and so, in particular, 
\[  \mathfrak n = \Delta_1'+\ldots +\Delta_p'
\]
with each $\Delta_k'=\Delta_k^{\pi}$ or $(\Delta_k^{\pi})^-$. 

Now the formula $\mathrm{csupp}(\mathfrak n)=\mathrm{csupp}(\tau)$ with (\ref{eqn cuspidal support two sides}) implies that, as multisets,
\begin{align} \label{eqn multiplicity bound}
  \Delta_1\cup \ldots \cup \Delta_r= \bigcup_{k} b(\Delta_k^{\pi}),
\end{align}
where $k$ runs for all numbers from $1$ to $p$ such that $\Delta_k'=(\Delta_k^{\pi})^-$.

Moreover,
\begin{align} \label{eqn multiplicity bound 2}
  \bigcup_{k=1,\ldots p,\ \Delta_k'=(\Delta_k^{\pi})^-} b(\Delta_k^{\pi} ) \subset \bigcup_{k=1}^p b(\Delta_k^{\pi} ) .
\end{align}
Now the multiplicity of $\nu^c\rho$ on the LHS of (\ref{eqn multiplicity bound}) is 
\[  |\left\{i \in \left\{1, \ldots, r\right\} :  \nu^c\rho \in \Delta_i \right\}|, \]
 and the multiplicity of $\nu^c\rho$ on the RHS of (\ref{eqn multiplicity bound 2}) is $\mathrm{mult}^b(\pi,c)$. One combines the equations to obtain the lemma.
\end{proof}

\subsection{Highest derivatives by St-derivatives}

Recall that from Proposition \ref{prop unique as segments} and Lemma \ref{lem dual property}, we have that derivatives of an ascending sequence of segments can be used to construct a simple quotient of BZ derivatives. On the other hand, the highest derivative of an irreducible representation is known to be irreducible. Thus, the strategy of a proof of the following result is to show an ascending sequence of segments has the sum of the absolute lengths of segments equal to the level of that representation.

\begin{theorem} \label{thm highest derivative multiseg}
Let $\mathfrak m \in \mathrm{Mult}_{\rho}$ and let $\pi=\langle \mathfrak m \rangle$. Let 
\[   c=\mathrm{min}\left\{ e \in \mathbb Z : \nu^e\rho \cong b(\Delta) \quad \mbox{for some $\Delta$ in $\mathfrak m$ } \right\},
\]
and let 
\[ d =\mathrm{max}\left\{ f \in \mathbb Z: \nu^f\rho \cong b(\Delta)\quad \mbox{for some $\Delta$ in $\mathfrak m$} \right\}.\]
 Then 
\[ D_{\mathfrak{mxpt}^a(\pi, d)}\circ \ldots \circ D_{\mathfrak{mxpt}^a(\pi, c)}(\pi) \cong \pi^-. \]

\end{theorem}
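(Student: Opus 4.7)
The plan is to realize the iterated St-derivative as a non-zero simple submodule of the top Bernstein--Zelevinsky derivative $\pi^{(i^*)}$, which, by Zelevinsky's irreducibility of the highest derivative, coincides with $\pi^-$.

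First, I verify non-zeroness iteratively via Lemma \ref{lem max multisegment equal}: setting $\tau_c = D_{\mathfrak{mxpt}^a(\pi,c)}(\pi)$, one has $\mathfrak{mxpt}^a(\tau_c, e) = \mathfrak{mxpt}^a(\pi, e)$ for every $e > c$, so each subsequent derivative $D_{\mathfrak{mxpt}^a(\pi, e)}$ is well-defined and non-zero by the very definition of the max multisegment at a point. Next, concatenating the segments of $\mathfrak{mxpt}^a(\pi, c), \mathfrak{mxpt}^a(\pi, c+1), \ldots, \mathfrak{mxpt}^a(\pi, d)$ in that order yields an ascending sequence: within each block, all segments share a left endpoint and are therefore pairwise unlinked (Lemma \ref{lem comm derivative 1} justifies any internal ordering); between two blocks at levels $e_1 < e_2$, any linked pair must have its right endpoint strictly larger on the later block, which is compatible with the ascending convention. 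Proposition \ref{prop unique as segments}(3) then identifies the iterated derivative $\tau$ as a simple submodule of $\pi^{(I)}$, with $I := \sum_{e=c}^{d} l_a(\mathfrak{mxpt}^a(\pi, e))$.

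The crux is to show that $I$ equals the level $i^* = |\mathfrak m|\, n(\rho)$ of $\pi$. The inequality $I \leq i^*$ is immediate from Lemma \ref{lem bound on ascending seq}, which bounds the number of segments of the ascending sequence containing a given $\nu^{e'}\rho$ by $\mathrm{mult}_b(\pi, e')$. For the opposite inequality I would proceed by induction on $d - c$. The base case $d = c$ reduces to Proposition \ref{prop construction highest from rho} together with Lemma \ref{lem simple compute epsilon factor}: the latter yields $\varepsilon_c(\pi) = |\mathfrak m|$, forcing $\mathfrak{mxpt}^a(\pi, c) = \{[c]_\rho, \ldots, [c]_\rho\}$ with $|\mathfrak m|$ copies, and the former then gives $D_{[c]_\rho}^{|\mathfrak m|}(\pi) \cong \pi^-$. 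For the inductive step, Lemma \ref{lem max multisegment equal} rewrites the remaining composition as the iterated max-multisegment derivative on $\tau_c$ ranging over $e \in [c+1, d]$, so the inductive hypothesis produces $\tau = \tau_c^-$. The main technical obstacle is then the identification $\tau_c^- \cong \pi^-$, which I would establish by comparing the $\rho$-derivative constructions (Proposition \ref{prop construction highest from rho}) of both sides, matching the total reduction of cuspidal support across the right endpoints of the segments of $\mathfrak m$.

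Once $I = i^*$ is established, $\pi^{(I)} = \pi^{(i^*)} = \pi^-$ is irreducible, and the simple submodule $\tau$ must coincide with $\pi^-$, completing the proof.
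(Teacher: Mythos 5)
Your overall reduction — first show the composite is a simple submodule of $\pi^{(I)}$ with $I = \sum_e l_a(\mathfrak{mxpt}^a(\pi,e))$ via Lemma \ref{lem max multisegment equal}, the ascending-order check, and Proposition \ref{prop unique as segments}, and then deduce the theorem from the numerical identity $I = i^*$ — is the same as the paper's Step~1, and the inequality $I \le i^*$ via Lemma \ref{lem bound on ascending seq} and the base case $d=c$ are both fine. The gap is in the inductive step for the reverse inequality.

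You set $\tau_c := D_{\mathfrak{mxpt}^a(\pi,c)}(\pi)$ and assert that applying the remaining derivatives $D_{\mathfrak{mxpt}^a(\pi,e)}$ for $e \in [c+1,d]$ to $\tau_c$ is the theorem's composition for $\tau_c$, and hence equals $\tau_c^-$. But the theorem applied to $\tau_c$ ranges over $e \in [c',d']$ where $c'$ and $d'$ are determined by the multisegment of $\tau_c$, and $c'$ is generically \emph{below} $c$: the truncation performed by $D_{\mathfrak{mxpt}^a(\pi,c)}$ creates new right endpoints at levels $\le c$. For $\mathfrak m = \{[0,1]_{\rho},[1,2]_{\rho}\}$ one has $c=1$, $d=2$, $\mathfrak{mxpt}^a(\pi,1)=\{[1]_{\rho}\}$, and $\tau_1 = \langle\{[0]_{\rho},[1,2]_{\rho}\}\rangle$, whose extreme right endpoints are $\nu^0\rho$ and $\nu^2\rho$, so $d'-c'=2 > d-c=1$ and your induction on $d-c$ does not decrease. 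More seriously, the ``technical obstacle'' $\tau_c^- \cong \pi^-$ you flag as remaining is simply false in this example: $\tau_1^- = \langle\{[1]_{\rho}\}\rangle$ is a representation of $G_{n(\rho)}$, whereas $\pi^- = \langle\{[0]_{\rho},[1]_{\rho}\}\rangle$ is a representation of $G_{2n(\rho)}$; they cannot be isomorphic. (The correct outcome of your composite is $D_{[2]_{\rho}}(\tau_1)=\langle\{[0]_{\rho},[1]_{\rho}\}\rangle=\pi^-$, not $\tau_1^-$, precisely because the factors $D_{\mathfrak{mxpt}^a(\tau_c,e)}$ for $e\in[c',c]$ that your argument drops are genuinely nontrivial.)

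The paper's Step~2 avoids any comparison of $\tau_c^-$ with $\pi^-$. It instead proves the level identity directly by a cuspidal-support count: with $x_e$ the number of segments of $\mathfrak h_c + \cdots + \mathfrak h_{e-1}$ containing $\nu^e\rho$, it establishes inductively that $x_e + |\mathfrak{mxpt}^a(\pi,e)| = \mathrm{mult}_b(\pi,e)$, using Lemma \ref{lem simple compute epsilon factor}, Lemma \ref{lem max multisegment equal}, and a cuspidal-support comparison via Lemma \ref{lem socle cosocle coarse} and Lemma \ref{lem bound on ascending seq}, and then sums over $e$ to get $I=i^*$. To salvage your approach you would need an identification of the composite in terms of the iterate over the correct range $[c',d']$ for $\tau_c$, together with a separate argument controlling the extra factors at $e\in[c',c]$; as the example shows, those factors are not identities, so the naive recursion $\tau = \tau_c^-$ must be abandoned.
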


%\begin{remark}
%In the statement of Theorem \ref{thm highest derivative multiseg} above, $\mathfrak{mxpt}^a(\pi, d)$ can be an empty set and we only pick a certain range to guarantee the sequence of derivatives gives the highest derivative of $\pi$.
%\end{remark}

\begin{proof}
For simplicity, let $\mathfrak h_e=\mathfrak{mxpt}^a(\pi, e)$. \\

\noindent
{\bf Step 1:} {\it Claim:} Suppose the followings:
\begin{itemize}
\item $D_{\mathfrak h_d}\circ \ldots \circ D_{\mathfrak h_c}(\pi)\neq 0$; and
\item $l_{abs}(\mathfrak h_c) +l_{abs}(\mathfrak h_{c+1})+\ldots+ l_{abs}(\mathfrak h_d) $ is equal to the level of $\pi$;
\end{itemize}
hold. Then the theorem also holds.  \\

\noindent
{\it Proof of the claim:} Let 
\[  \tau = D_{\mathfrak h_d}\circ \ldots \circ D_{\mathfrak h_c}(\pi) .\]
The first bullet and Lemma \ref{lem reformulate frobenius} give that 
\[   \pi \hookrightarrow \tau \times \mathrm{St}(\mathfrak h_d)\times \ldots \times \mathrm{St}(\mathfrak h_c), \]
and so, by Frobenius reciprocity again,
\[  \pi_{N_k} \hookrightarrow \tau \boxtimes (\mathrm{St}(\mathfrak h_d)\times \ldots \times \mathrm{St}(\mathfrak h_c) ),
\]
where $k$ is the sum of the absolute lengths of all multisegments $\mathfrak h_d, \ldots, \mathfrak h_c$. By Propositions \ref{prop sub quo Jacquet} and \ref{prop unique as segments}, $\tau$ is a submodule of $\pi^{(k)}$. By the second bullet, $\pi^{(k)}$ is the highest derivative and so it is irreducible. Thus $\tau \cong \pi^{(k)}$.  \\

\noindent
{\bf Step 2:} We first prove the first bullet in the claim. Indeed, by Lemma \ref{lem max multisegment equal} several times, we have: for any $e\geq c$,
\begin{align*}
 &\mathfrak{mxpt}^a(D_{\mathfrak{mxpt}^a(\pi,e-1)}\circ \ldots \circ D_{\mathfrak{mxpt}^a(\pi, c)}(\pi), e) \\
=& \mathfrak{mxpt}^a(D_{\mathfrak{mxpt}^a(\pi,e-2)}\circ \ldots \circ D_{\mathfrak{mxpt}^a(\pi,c)}(\pi), e) \\
=&\ldots \\
=&\mathfrak{mxpt}^a(\pi, e) 
\end{align*}
This implies that 
\[  D_{\mathfrak{mxpt}^a(\pi,e)}\circ \ldots \circ D_{\mathfrak{mxpt}^a(\pi,c)}(\pi)\neq 0 .
\]
Inductively, we prove the first bullet.

\ \\
{\bf Step 3:} We now prove the second bullet in the claim. Let $x_e$ be the total number of segments in $\mathfrak h_{c}+\ldots +\mathfrak h_{e-1}$  containing $\nu^{e}\rho$. Recall that $\mathrm{mult}^b(\pi,e)=|\mathfrak m\langle e \rangle|$.

{\it Claim 2:} For all $e$, 
\begin{align}\label{eqn key equation}
\quad  x_e+\mbox{ number of segments in $\mathfrak h_e$ = $\mathrm{mult}^b(\pi, e)$ }  .
\end{align}
\ \\

Suppose Claim 2 holds in the meanwhile. Note that the number in LHS of (\ref{eqn key equation}) is indeed the multiplicity of $\nu^e\rho$ appearing in $\cup_{\Delta \in \mathfrak m} \Delta$, where the union is taken as multisets. Hence, Claim 2 would imply that $l_{abs}(\mathfrak h_c)+\ldots +l_{abs}(\mathfrak h_d)$ is precisely
\[   (\mathrm{mult}^b(\pi,c)+\ldots +\mathrm{mult}^b(\pi,d))\cdot \mathrm{deg}(\rho) .
\]
On the other hand, by \cite[Theorem 8.1]{Ze80}, the last number is equal to the level of $\pi$. This proves the second bullet.  \\

\ \\

Now we proceed to prove Claim 2. When $e=c$, one can compute quite directly by Lemma \ref{lem simple compute epsilon factor}. Now again let
\[  \tau := D_{\mathfrak h_{e-1}} \circ \ldots \circ D_{\mathfrak h_c}(\pi) .
\]
By Proposition \ref{prop unique as segments}, 
\[ \tau \hookrightarrow \pi^{(j)}, \]
where 
\[ j=l_{abs}(\mathfrak h_c)+\ldots +l_{abs}(\mathfrak h_{e-1}). \]

Now, by  Lemma \ref{lem socle cosocle coarse}, $\tau \cong \langle \mathfrak n \rangle$ for some $\mathfrak n \in \mathfrak m^{(j)}$. By a comparison of cuspidal support,  
\[  \mathrm{csupp}(\mathfrak n) \cup \bigcup_{i=c}^{e-1}\mathrm{csupp}(\mathfrak h_i) = \mathrm{csupp}(\pi)
\]
The induction hypothesis implies that $\mathrm{mult}^b(\pi,e-1)$ (i.e. all) segments in $\mathfrak m$ with $b(\Delta) \cong \nu^{e-1}\rho$ are truncated to obtain $\mathfrak n$. In other words, 
\[  \mathrm{mult}^b(\tau,e-1)=0. \] 

By Lemma \ref{lem simple compute epsilon factor} again, 
\[  \{ \overbrace{\nu^e\rho, \ldots , \nu^e\rho }^{\mathrm{mult}^b(\tau,e) \mbox{ times}} \} \leq^a_{e}  \mathfrak{mxpt}^a(\tau, e) . 
\]
Since $D_{\mathfrak{mxpt}^a(\tau,e)}(\tau)\neq 0$, Lemma \ref{lem bound on ascending seq} implies that $\mathfrak{mxpt}^a(\tau,e)$ contains precisely $\mathrm{mult}^b(\tau,e)$-segments. Now, by Lemma \ref{lem max multisegment equal}, $\mathfrak h_e$ also contains exactly $\mathrm{mult}^b(\tau, e)$-number of segments.

Now, by  Lemma \ref{lem socle cosocle coarse} again, $D_{\mathfrak h_e}(\tau)\cong \langle \mathfrak n' \rangle$ for some $\mathfrak n' \in \mathfrak m^{(j')}$, where $j'=l_{abs}(\mathfrak h_c)+\ldots +l_{abs}(\mathfrak h_e)$. We again have:
\[  \mathrm{csupp}(D_{\mathfrak h_e}(\tau))\cup \bigcup_{i=c}^e \mathrm{csupp}(\mathfrak h_e) = \mathrm{csupp}(\pi) ,
\]
and so discussion in the previous paragraph and comparing the cuspidal support at $\nu^e\rho$, we have:
\[  x_e+|\mathfrak h_e|=\mathrm{mult}^b(\pi, e) 
\]
as desired.
\end{proof}

\section{Combinatorial derivatives via removal process} \label{sec effect hd}

This section defines the removal process and studies its various properties.

\subsection{Highest derivative multisegment $\mathfrak{hd}$} \label{ss define highest derivative}

For $\pi \in \mathrm{Irr}_{\rho}$ and  for $c \in \mathbb Z$, recall that $\mathfrak{mxpt}^a(\pi, c)$ is the $\leq^a_c$-maximal multisegment $\mathfrak m$ at $\nu^c\rho$ such that $D_{\mathfrak m}(\pi)\neq 0$ (see Proposition \ref{prop define mxpt a}). Define $\mathfrak{hd}(\pi)$ to be the multisegment
\begin{align} \label{eqn define highest derivative mult}
  \mathfrak{hd}(\pi) = \sum_{c \in \mathbb Z}  \mathfrak{mxpt}^a(\pi, c) ,
\end{align}
which is called the {\it highest derivative multisegment} for $\pi$. Note that there are only finitely many integers $c$ such that $\mathfrak{mxpt}^a(\pi, c) \neq \emptyset$. 

By definitions, we have that 
\[  \mathfrak{hd}(\pi)[c]=\mathfrak{mxpt}^a(\pi, c) .
\]
However, $\mathfrak{hd}(\pi)\langle c \rangle$ is not necessarily equal to $\mathfrak{mxpt}^b(\pi, c)$ (see Corollary \ref{cor right multi} for a precise description of $\mathfrak{mxpt}^b(\pi, c)$ in terms of $\mathfrak{hd}(\pi)$).

%A basic property is the following commutation relation (c.f. Lemma \ref{lem commut derivative}): 

%\begin{lemma}
%Let $\Delta_1$ and $\Delta_2$ be a two segments in $\varepsilon_{hd}(\pi)$. Then 
%\[  D_{\Delta_2}\circ D_{\Delta_1}(\pi) =D_{\Delta_1}\circ D_{\Delta_2}(\pi) \]
%\end{lemma}

\subsection{Removal process} 

\begin{definition}
Given a multisegment $\mathfrak h \in \mathrm{Mult}_{\rho}$, a segment $\Delta=[a,b]_{\rho}$ is said to be {\it admissible} to $\mathfrak h$ if there exists a segment in $\mathfrak h$ of the form $[a,c]_{\rho}$ for some $c\geq b$.
\end{definition}

Suppose $\mathfrak h=\mathfrak{hd}(\pi)$ for some $\pi \in \mathrm{Irr}_{\rho}$. Then $\Delta$ is admissible to $\mathfrak h$ if and only if $D_{\Delta}(\pi)\neq 0$. This explains the above terminology of admissibility.

%\begin{definition}(Max-min segment)
%Let $\mathfrak m \in \mathrm{Mult}_{\rho}$. Let $\Delta$ be a segment in $\mathfrak m$. Fix $b$ such that $\nu^b\rho \in \Delta$. 

%\end{definition}

\begin{definition} \label{def removal process}
Let $\mathfrak h \in \mathrm{Mult}_{\rho}$. Let $\Delta=[a,b]_{\rho}$ be a segment admissible to $\mathfrak h$. The {\it removal process} on $\mathfrak h$ by $\Delta$ is a way to obtain a new multisegment, denoted by $\mathfrak r(\Delta, \mathfrak h)$, which is given by the following steps:
\begin{enumerate}
\item Choose a segment $\Delta_1$ in $\mathfrak h$ which has the shortest absolute length among all segments of the form $[a,b']_{\rho}$ for some $b'\geq b$. (In particular, $\nu^b\rho \in \Delta_1$.)
\item (Minimality condition and nesting condition) For $i\geq 2$, choose recursively segments $\Delta_i=[a_i,b_i]_{\rho}$ such that $[a_i,b_i]_{\rho}$ is the $\prec^L$-minimal segment (see Section \ref{ss ordering}) in $\mathfrak h$ satisfying $a_{i-1}< a_i$ and $b_{i-1}>b_i$. This step terminates when no further such segment can be found. Let $r$ be the index such that $\Delta_r$ is the last segment in the process. 
\item Obtain new segments $\Delta_1^{tr}, \ldots, \Delta_r^{tr}$ defined as:
\begin{itemize}
 \item for $1 \leq i \leq r-1$, $\Delta_i^{tr}=[a_{i+1}, b_i]_{\rho}$;
 \item $\Delta_r^{tr}=[b+1,b_r]_{\rho}$ (possibly an empty set).
\end{itemize} 
\item The new multisegment $\mathfrak r( \Delta, \mathfrak h)$ is defined as:
\begin{align} \label{eqn removal seq def}
 \mathfrak r(\Delta, \mathfrak h)=\mathfrak h -\sum_{i=1}^r \Delta_i+\sum_{i=1}^r\Delta_i^{tr} .
\end{align}
\end{enumerate} 

\end{definition}

\begin{remark}
\begin{enumerate}
\item[(a)] $\Delta_1$ in Step (1) above is guaranteed to exist by the assumption that $\Delta_1$ is admissible to $\mathfrak h$.
%\item[(b)] In (2), the condition that $b(\Delta_i)\cong b(\Delta_{i+1})$ indeed can be dropped, and the resulting multisegment defined in such way is the same as in the way defined in Definition \ref{def removal process}. Imposing such condition is more convenient for the proofs.
\item[(b)] If a segment $\Delta$ is not admissible to a multisegment $\mathfrak h$, we simply set $\mathfrak r(\Delta, \mathfrak h)=\infty$, where $\infty$ is just a symbol called the infinity multisegment to represent a non-admissibility situation. We also set $\mathfrak r(\Delta, \infty)=\infty$. 
\item[(c)] We also set $\mathfrak r(\emptyset, \mathfrak h)=\mathfrak h$ for any $\mathfrak h \in \mathrm{Mult}_{\rho}$. 
\end{enumerate}
\end{remark}

\begin{definition}
\begin{enumerate}
\item In the notation of Definition \ref{def removal process}, we shall call that $ \Delta_1, \ldots, \Delta_r$ form a {\it removal sequence} for $(\Delta, \mathfrak h)$. The {\it nesting condition} refers to the condition that $\Delta_{i} \subsetneq \Delta_{i-1}$ for any $i$. The {\it minimality condition} refers to the minimal choice of $\Delta_i$ in Step (2). %We shall call $\Delta_1^{tr}, \ldots, \Delta_r^{tr}$ to be the truncations of the removal sequence for $(\Delta, \mathfrak h)$.
\item Define $\Upsilon(\Delta, \mathfrak h)=\Delta_1$, the first segment in the removal sequence in Step (1) of Definition \ref{def removal process}.
\end{enumerate}
%Those $\Delta_i$ is called a {\it removal segment}. We shall call a removal segment $\Delta_i$ {\it involves} $\nu^c\rho$ if $\nu^c\rho \in [a_i,a_{i+1}]_{\rho}$. Here we define $a_{k+1}=b$. Note that those involved cuspidals are 'removed' from the corresponding removal segments and this explains our terminology.
\end{definition}

\begin{example}
Let $\mathfrak h=\left\{ [0,4], [2,5], [2,3], [2] \right\}$. (The blue points in the graph represent those 'removed' to give $\mathfrak r(\Delta, \mathfrak h)$.)  
\begin{enumerate}
\item $\mathfrak r([0,2], \mathfrak h)=\left\{ [2,4], [2,5], [2,3] \right\}$; 
\[ \xymatrix{   &     & \stackrel{2}{{\color{blue} \bullet}}  &   &  &   \\
 &     & \stackrel{2}{\bullet} \ar@{-}[r]  & \stackrel{3}{\bullet}   &    &   \\
     &    & \stackrel{2}{\bullet} \ar@{-}[r]  & \stackrel{3}{\bullet} \ar@{-}[r]  & \stackrel{4}{\bullet} \ar@{-}[r]  & \stackrel{5}{\bullet}       \\
 \stackrel{0}{{\color{blue} \bullet}} \ar@{-}[r] & \stackrel{1}{{\color{blue}{\bullet}}} \ar@{-}[r]  & \stackrel{2}{\bullet} \ar@{-}[r]  & \stackrel{3}{\bullet} \ar@{-}[r]  & \stackrel{4}{\bullet} &   }
\]
\item $\mathfrak r([0,3], \mathfrak h)=\left\{ [2,4],[2,5], [2] \right\}$;
\[ \xymatrix{   &     & \stackrel{2}{\bullet}  &   &  &   \\
 &     & \stackrel{2}{{\color{blue} \bullet}} \ar@{-}[r]  & \stackrel{3}{{\color{blue} \bullet}}   &    &   \\
     &    & \stackrel{2}{\bullet} \ar@{-}[r]  & \stackrel{3}{\bullet} \ar@{-}[r]  & \stackrel{4}{\bullet} \ar@{-}[r]  & \stackrel{5}{\bullet}       \\
 \stackrel{0}{{\color{blue} \bullet}} \ar@{-}[r] & \stackrel{1}{{\color{blue}{\bullet}}} \ar@{-}[r]  & \stackrel{2}{\bullet} \ar@{-}[r]  & \stackrel{3}{\bullet} \ar@{-}[r]  & \stackrel{4}{\bullet} &   }
\]
\item $\mathfrak r([0,5], \mathfrak h)=\infty$ since $[0,5]$ is not admissible to $\mathfrak h$.
\end{enumerate}
\end{example}

\begin{example} 
\begin{enumerate}
\item Let $\mathfrak h=\left\{ [0,7], [1,4], [1,6] \right\}$. Let $\Delta=[0,5]$ and let $\Delta'=[1,4]$. The removal sequence for $(\Delta, \mathfrak h)$ is $[0,7], [1,6]$. The removal sequence for $(\Delta', \mathfrak h)$ is $[1,4]$. 
\[ \xymatrix{   
     & \stackrel{1}{\bullet} \ar@{-}[r]    & \stackrel{2}{\bullet} \ar@{-}[r]  & \stackrel{3}{\bullet} \ar@{-}[r]  & \stackrel{4}{\bullet}         &           &    \\
     & \stackrel{1}{{\color{blue} \bullet}} \ar@{-}[r]    & \stackrel{2}{{\color{blue} \bullet}} \ar@{-}[r]  & \stackrel{3}{{\color{blue} \bullet}} \ar@{-}[r]  & \stackrel{4}{{\color{blue} \bullet}}  \ar@{-}[r]      &     \stackrel{5}{{\color{blue} \bullet}}  \ar@{-}[r]     &   \stackrel{6}{\bullet}  \\
 \stackrel{0}{{\color{blue} \bullet}} \ar@{-}[r] & \stackrel{1}{ \bullet} \ar@{-}[r]  & \stackrel{2}{\bullet} \ar@{-}[r]  & \stackrel{3}{\bullet} \ar@{-}[r]  & \stackrel{4}{\bullet}  \ar@{-}[r]&  \stackrel{5}{\bullet} \ar@{-}[r] &  \stackrel{6}{\bullet} \ar@{-}[r] &  \stackrel{7}{\bullet}  }
\]
The blue points represent the points removed for $\mathfrak r(\Delta, \mathfrak h)$ and hence give the corresponding removal sequence for $(\Delta, \mathfrak h)$. In this case, we have $\mathfrak r(\Delta, \mathfrak h)=\left\{[1,4],[1,7],[6]\right\}$.
\item Let $\mathfrak h=\left\{ [0,7], [1,5], [1,6] \right\}$. Let $\Delta=[0,5]$ and let $\Delta'=[1,4]$. The removal sequence for $(\Delta, \mathfrak h)$ is $[0,7], [1,5]$. The removal sequence for $(\Delta', \mathfrak h)$ is $[1,5]$.  
\item Let $\mathfrak h=\left\{ [0,7],[1,5],[1,8] \right\}$. Let $\Delta=[0,5]$ and let $\Delta'=[1,4]$. The removal sequence for $(\Delta, \mathfrak h)$ is $[0,7], [1,5]$, and the removal sequence for $(\Delta', \mathfrak h)$ is $[1,5]$. 
\end{enumerate}
\end{example}

\subsection{Properties of removal process}

A simple but useful computation is the following:

\begin{lemma} \label{lem step step compute} (Removal of a cuspidal point at one time)
Let $\mathfrak h \in \mathrm{Mult}_{\rho}$. Let $\Delta$ be a non-empty segment admissible to $\mathfrak h$. Let
\[ \mathfrak h^*=\mathfrak h-\left\{  \Upsilon(\Delta, \mathfrak h) \right\}+\left\{ {}^-\Upsilon(\Delta, \mathfrak h) \right\}.\] Then 
\[  \mathfrak r(\Delta, \mathfrak h) = \mathfrak r({}^-\Delta, \mathfrak h^*) .
\]
%(As a convention, $\mathfrak r(\emptyset, \mathfrak h^*)=\mathfrak h^*$.)
\end{lemma}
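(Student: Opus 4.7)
Write $\Delta=[a,b]_{\rho}$ and $\Upsilon(\Delta,\mathfrak h)=[a,b_1]_{\rho}$, so that $\mathfrak h^{*}=\mathfrak h-\{[a,b_1]_{\rho}\}+\{[a+1,b_1]_{\rho}\}$. Denote the full removal sequence for $(\Delta,\mathfrak h)$ by $\Delta_1=\Upsilon,\Delta_2,\ldots,\Delta_r$ with $\Delta_i=[a_i,b_i]_{\rho}$ and corresponding truncations $\Delta_i^{tr}$. The plan is to identify explicitly the removal sequence $\tilde\Delta_1,\ldots,\tilde\Delta_{\tilde r}$ for $({}^-\Delta,\mathfrak h^{*})$ in terms of the original sequence, and then check that the defining formulas $\mathfrak h-\sum_i\Delta_i+\sum_i\Delta_i^{tr}$ and $\mathfrak h^{*}-\sum_i\tilde\Delta_i+\sum_i\tilde\Delta_i^{tr}$ yield the same multisegment. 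The key combinatorial observation is that $\Upsilon({}^-\Delta,\mathfrak h^{*})$ is, by definition, the shortest segment in $\mathfrak h^{*}$ of the form $[a+1,c]_{\rho}$ with $c\geq b$, and the candidates are precisely (i) the newly inserted segment $[a+1,b_1]_{\rho}$ and (ii) any preexisting $[a+1,c]_{\rho}\in\mathfrak h$ with $c\geq b$.

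The argument splits into two cases. In \emph{Case A}, the original $\Delta_2$ exists with $a_2=a+1$ (so $\Delta_2=[a+1,b_2]_{\rho}$ for some $b_2<b_1$ with $b_2\geq b$, by the nesting and minimality conditions). Since $b_2<b_1$, the segment $\Delta_2$ is strictly shorter than $[a+1,b_1]_{\rho}$, so $\tilde\Delta_1=\Upsilon({}^-\Delta,\mathfrak h^{*})=\Delta_2$. An inductive argument on $i$, using that segments $[a',b']_{\rho}$ with $a'>a+1$ are unaffected by the passage from $\mathfrak h$ to $\mathfrak h^{*}$, then shows $\tilde\Delta_i=\Delta_{i+1}$ for all $1\leq i\leq r-1$, so $\tilde r=r-1$. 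In \emph{Case B} no such $\Delta_2$ exists (either $r=1$, or $\Delta_2$ exists with $a_2>a+1$); then the shortest candidate in $\mathfrak h^{*}$ is the inserted $[a+1,b_1]_{\rho}$, hence $\tilde\Delta_1=[a+1,b_1]_{\rho}$, and the same inductive argument yields $\tilde\Delta_i=\Delta_i$ for all $i\geq 2$, with $\tilde r=r$.

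In each case the equality $\mathfrak r(\Delta,\mathfrak h)=\mathfrak r({}^-\Delta,\mathfrak h^{*})$ follows by direct bookkeeping. In Case A, the original truncation $\Delta_1^{tr}=[a_2,b_1]_{\rho}=[a+1,b_1]_{\rho}$ exactly cancels the inserted segment $[a+1,b_1]_{\rho}\in\mathfrak h^{*}$, while the remaining $\Delta_i$, $\Delta_i^{tr}$ ($i\geq 2$) match $\tilde\Delta_{i-1}$, $\tilde\Delta_{i-1}^{tr}$ under the index shift, with both final truncations equal to $[b+1,b_r]_{\rho}$. In Case B, the truncations coincide term by term (since $\tilde a_2=a_2$ for the first truncation $[a_2,b_1]_{\rho}$), and the inserted $[a+1,b_1]_{\rho}$ is itself removed as $\tilde\Delta_1$, leaving only the substitution $[a,b_1]_{\rho}\rightsquigarrow[a+1,b_1]_{\rho}$ in $\mathfrak h^{*}$ to cancel with the subtraction of $\Delta_1=[a,b_1]_{\rho}$ on the other side. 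The degenerate situation $a=b$ (so ${}^-\Delta=\emptyset$ and the right-hand side reduces to $\mathfrak h^{*}$ by convention) falls into Case A: the cascade must terminate immediately after $\Delta_1$ or admit at most one further segment $\Delta_2=[a+1,b_2]_{\rho}$ whose truncation $\Delta_2^{tr}=[a+1,b_2]_{\rho}=\Delta_2$ cancels, leaving exactly $\mathfrak h^{*}$.

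The main obstacle is the index-shift bookkeeping in Case A and verifying that the $\prec^L$-minimal choice at each recursive step is undisturbed by the modification $\mathfrak h\to\mathfrak h^{*}$. The latter reduces to a clean observation: the only new candidate segment in $\mathfrak h^{*}$ has left endpoint $a+1$, and its length is $b_1-a$; consequently it can influence the minimality only at the very first step of the new removal, and only against candidates with left endpoint $a+1$, which is exactly the dichotomy captured by Cases A and B.
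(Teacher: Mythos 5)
Your proposal is correct and takes essentially the same route as the paper's own proof: both identify $\Upsilon({}^-\Delta,\mathfrak h^{*})$ by asking whether $\mathfrak h$ already contains a competitor $[a+1,c]_{\rho}$ with $b\le c<b_1$, then track how the rest of the nested cascade is unaffected by the one-segment modification $\mathfrak h\rightsquigarrow\mathfrak h^{*}$. Your Case~A coincides with the paper's Case~(1) (competitor exists and is strictly shorter), and your Case~B merges the paper's base case (no competitor at all) with its Case~(2) (competitor at least as long as $[a+1,b_1]_{\rho}$); the index-shift bookkeeping you carry out is the verification the paper leaves implicit. One small imprecision: in the degenerate case $a=b$ your phrase ``admit at most one further segment $\Delta_2=[a+1,b_2]_{\rho}$'' describes a scenario that cannot actually occur once one accounts for the implicit constraint (used throughout the paper, and needed for the formula $\Delta_r^{tr}=[b+1,b_r]_{\rho}$ to make sense) that every $\Delta_i$ in the removal sequence satisfies $\nu^b\rho\in\Delta_i$; with $a=b$ this forces $r=1$ so the cascade always stops after $\Delta_1$. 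Your conclusion in that case is nonetheless correct. Relatedly, when you justify $b_2\ge b$ ``by the nesting and minimality conditions,'' you are really invoking that same implicit constraint rather than deriving it from nesting or $\prec^L$-minimality alone; it would be cleaner to state it as a standing hypothesis on the removal sequence.
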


\begin{proof}
Let $\widetilde{\Delta}=\Upsilon(\Delta, \mathfrak h)$. Write $\Delta=[a,b]_{\rho}$. Suppose $\mathfrak h[a+1]$ has no segment $\overline{\Delta}$ satisfying $b(\Delta) \leq b(\overline{\Delta})$. Then $\Upsilon({}^-\Delta, \mathfrak h^*)={}^-\widetilde{\Delta}_1$. Since the only difference between $\mathfrak h$ and $\mathfrak h^*$ is on that one segment, one checks that the remaining segments in the sequences are picked in the same way by the minimality and nesting conditions. This gives $\mathfrak r({}^-\Delta, \mathfrak h^*)=\mathfrak r(\Delta, \mathfrak h)$.

Suppose $\mathfrak h[a+1]$ has some segments $\overline{\Delta}$ satisfying $b(\Delta)\leq b(\overline{\Delta})$, and let $\overline{\Delta}^*$ be a shortest such segment. We further divide into two cases:
\begin{itemize}
\item Case 1: Suppose $\overline{\Delta}^* \subsetneq {}^-\widetilde{\Delta}$. We have that $\Upsilon({}^-\Delta, \mathfrak h^*)$ is $\overline{\Delta}^*$, coinciding with the second segment in the removal sequence for $(\Delta, \mathfrak h)$. By the minimality and nesting conditions, the subsequent segments in the removal sequence for $({}^-\Delta, \mathfrak h^*)$ are the same as those starting from the third one in the removal sequence for $(\Delta, \mathfrak h)$. Thus, from (\ref{eqn removal seq def}), $\mathfrak r(\Delta, \mathfrak h)=\mathfrak r({}^-\Delta, \mathfrak h^*)$. 
\item Case 2: Suppose $\overline{\Delta}^* \not\subset {}^-\widetilde{\Delta}$ or $\overline{\Delta}^*={}^-\widetilde{\Delta}$.  $\Upsilon({}^-\Delta ,\mathfrak h^*)={}^-\widetilde{\Delta}_1$, and, by the minimality and nesting conditions, the segments in the removal sequence for $({}^-\Delta, \mathfrak h^*)$ are those starting from the second one in the removal sequence for $(\Delta, \mathfrak h)$. Thus, from (\ref{eqn removal seq def}), $\mathfrak r(\Delta, \mathfrak h)=\mathfrak r({}^-\Delta, \mathfrak h^*)$. 
\end{itemize}
\end{proof}

We prove some further properties in Lemmas \ref{lem does not remove lower segment} to \ref{lem independent of order}. One may compare with properties in the derivative side such as Lemmas \ref{lem comm derivative 1} and \ref{lem non-zero composite}.

\begin{lemma} \label{lem does not remove lower segment} (No effect on previous segments)
Let $\mathfrak h \in \mathrm{Mult}_{\rho}$. Let $\Delta=[a,b]_{\rho} \in \mathrm{Seg}_{\rho}$ be a non-empty segment admissible to $\mathfrak h$. Then for any $a'<a$, $\mathfrak h[a']=\mathfrak r(\Delta, \mathfrak h)[a']$. 
\end{lemma}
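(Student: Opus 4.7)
The plan is to read off the claim directly from the definition of the removal process, observing that every segment involved in the process has left endpoint at least $a$.

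More concretely, I would first unpack the removal sequence $\Delta_1,\ldots,\Delta_r$ chosen in Definition \ref{def removal process}. By construction $\Delta_1=[a,b_1]_{\rho}$ with $b_1\geq b$, so $a(\Delta_1)\cong \nu^a\rho$. The recursive selection in Step (2) imposes $a_{i-1}<a_i$, so the left endpoints form a strictly increasing chain $a=a_1<a_2<\cdots<a_r$. In particular, every segment removed from $\mathfrak h$ during the process satisfies $a(\Delta_i)\cong \nu^{a_i}\rho$ with $a_i\geq a$.

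Next I would examine the truncations. For $1\leq i\leq r-1$, we have $\Delta_i^{tr}=[a_{i+1},b_i]_{\rho}$ with $a_{i+1}>a$ (or $\Delta_i^{tr}=\emptyset$, which contributes nothing to any $\mathfrak h[a']$). For the last truncation $\Delta_r^{tr}=[b+1,b_r]_{\rho}$ the left endpoint is $b+1>a$ since $b\geq a$ by admissibility. Thus every non-empty truncation added back to form $\mathfrak r(\Delta,\mathfrak h)$ also has left endpoint strictly greater than $a$.

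Combining these two observations, the map $\mathfrak h\mapsto \mathfrak r(\Delta,\mathfrak h)$ only modifies the submultisegments $\mathfrak h[a'']$ for $a''\geq a$; the submultisegments indexed by $a'<a$ are untouched. Hence $\mathfrak h[a']=\mathfrak r(\Delta,\mathfrak h)[a']$ for all $a'<a$. I do not foresee any real obstacle here: the entire content of the lemma is contained in the inequality $a_1<a_2<\cdots<a_r$ together with $b+1>a$, so the proof should amount to a clean one-paragraph bookkeeping argument using the definitions.
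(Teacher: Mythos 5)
Your proof is correct and follows the same line as the paper's (one-sentence) proof: observing directly from Definition \ref{def removal process} that every segment in the removal sequence has left endpoint $\geq a$ (since $a_1=a$ and the $a_i$ strictly increase) and every non-empty truncation has left endpoint $>a$, so no segment with left endpoint $<a$ is removed or added. You have simply made explicit the bookkeeping that the paper leaves as an exercise.
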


\begin{proof}
This follows directly form Definition \ref{def removal process} since those segments do not involve in the removal process.  
\end{proof}

\begin{lemma} (Removing a whole segment in $\mathfrak h$) \label{lem remove whole seg}
Let $\mathfrak h \in \mathrm{Mult}_{\rho}$. Let $\Delta \in \mathfrak h$. Then
\[  \mathfrak r(\Delta, \mathfrak h) =\mathfrak h - \Delta  .
\]
\end{lemma}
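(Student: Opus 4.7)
The plan is to induct on the relative length $\ell = l_r(\Delta)$ and reduce the problem to iterated applications of Lemma \ref{lem step step compute}, which expresses the removal process as a sequence of single-cuspidal-point removals. The guiding observation is that when $\Delta = [a,b]_\rho$ already belongs to $\mathfrak h$, the segment $\Delta$ itself has the minimum possible relative length $b-a+1$ among segments of $\mathfrak h$ of the form $[a, b']_\rho$ with $b' \geq b$. Hence I am free to choose $\Upsilon(\Delta, \mathfrak h) = \Delta$, and then ${}^-\Upsilon(\Delta, \mathfrak h) = {}^-\Delta = [a+1,b]_\rho$.

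With that choice fixed, the base case $\ell = 1$ is immediate: $\Delta = [a]_\rho$ and ${}^-\Delta = \emptyset$, so Lemma \ref{lem step step compute} together with the convention $\mathfrak r(\emptyset, -) = \mathrm{id}$ gives $\mathfrak r(\Delta, \mathfrak h) = \mathfrak h - \Delta$. For the inductive step, suppose the conclusion holds for all admissible segments of relative length strictly smaller than $\ell$, and let $\Delta = [a,b]_\rho \in \mathfrak h$ with $\ell \geq 2$. Lemma \ref{lem step step compute} yields
\[
  \mathfrak r(\Delta, \mathfrak h) = \mathfrak r({}^-\Delta, \mathfrak h^\ast), \qquad \mathfrak h^\ast := \mathfrak h - \Delta + {}^-\Delta.
\]
Now ${}^-\Delta$ lies in $\mathfrak h^\ast$ and has relative length $\ell - 1$, so the inductive hypothesis applies and gives $\mathfrak r({}^-\Delta, \mathfrak h^\ast) = \mathfrak h^\ast - {}^-\Delta = \mathfrak h - \Delta$, which is the desired identity.

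The one point that will require care is the tie-breaking in the choice of $\Upsilon$: if $\mathfrak h$ contains several segments of the form $[a, b']_\rho$ realising the minimum relative length (for instance, several copies of $\Delta$, or a copy of $\Delta$ alongside another segment of the same length), Definition \ref{def removal process} only prescribes picking some minimiser. The argument above requires specifically that the distinguished copy of $\Delta$ be available as $\Upsilon(\Delta, \mathfrak h)$, and, after passing to $\mathfrak h^\ast$, that the freshly introduced copy of ${}^-\Delta$ be available as $\Upsilon({}^-\Delta, \mathfrak h^\ast)$. Both are permissible choices, and Lemma \ref{lem step step compute} guarantees that the outcome does not depend on which minimiser is taken. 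Consequently no analysis of the downstream segments $\Delta_2, \ldots, \Delta_r$ in the removal sequence is needed, and this multiplicity bookkeeping is the only mild obstacle in the proof.
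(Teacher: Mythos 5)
Your proof is correct, but it takes a genuinely different route from the paper's. The paper's argument is a one-line direct unwinding of Definition~\ref{def removal process}: since $\Delta=[a,b]_\rho$ already lies in $\mathfrak h$, Step~(1) forces $\Upsilon(\Delta,\mathfrak h)=\Delta$, and then the nesting condition in Step~(2) (together with the constraint keeping $\nu^b\rho$ inside each $\Delta_i$) leaves no room for a second segment $\Delta_2$, so the removal sequence is just $\left\{\Delta\right\}$ and Step~(4) immediately yields $\mathfrak r(\Delta,\mathfrak h)=\mathfrak h-\Delta$. You instead induct on the relative length of $\Delta$ and repeatedly apply Lemma~\ref{lem step step compute}, observing at each stage that $\Upsilon({}^-\Delta,\mathfrak h^*)={}^-\Upsilon(\Delta,\mathfrak h)$ because the freshly created copy of ${}^-\Delta$ belongs to $\mathfrak h^*$. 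This is a more modular argument: it leans on an already-proved reduction lemma rather than on the fine print of Definition~\ref{def removal process}, and it avoids any discussion of whether the removal sequence can have length $\geq 2$. The trade-off is that the paper's version exposes the reason the removal sequence degenerates, whereas yours packages that away inside Lemma~\ref{lem step step compute}. One small remark on your closing caveat: the tie-breaking worry is a non-issue here, because any two minimisers in Step~(1) of the form $[a,b]_\rho$ are literally the same segment (possibly occurring with multiplicity in the multiset), so $\Upsilon(\Delta,\mathfrak h)$ is canonically $\Delta$ and no appeal to Lemma~\ref{lem step step compute} for well-definedness is needed.
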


\begin{proof}
Write $\Delta=[a,b]_{\rho}$. Note that $\Upsilon(\Delta, \mathfrak h)=\Delta$. The nesting property guarantees that there is no other segment in the removal sequence for $(\Delta, \mathfrak h)$.
\end{proof}

\begin{lemma} \label{lem first segment commute} (First segments of removal sequences from two segments of the same beginning point)
Let $\mathfrak h \in \mathrm{Mult}_{\rho}$. Let $\Delta, \Delta' \in \mathrm{Seg}_{\rho}$ with $a(\Delta) \cong a(\Delta')$. Then 
\[  \left\{ \Upsilon(\Delta, \mathfrak h) + \Upsilon(\Delta', \mathfrak r(\Delta, \mathfrak h)) \right\} = \left\{ \Upsilon(\Delta', \mathfrak h)+\Upsilon(\Delta, \mathfrak r(\Delta', \mathfrak h)) \right\} .
\]
\end{lemma}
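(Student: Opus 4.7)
The plan is to reduce the equality to a direct comparison of two pairs of endpoints in the submultiset $\mathfrak h[a]$ of segments of $\mathfrak h$ starting at the common left endpoint $\nu^a\rho$ of $\Delta$ and $\Delta'$. Write $\Delta=[a,b]_{\rho}$ and $\Delta'=[a,b']_{\rho}$, and, swapping $\Delta$ and $\Delta'$ if necessary, assume $b\leq b'$. List the endpoints of the segments in $\mathfrak h[a]$, with multiplicity, as $c_1\leq c_2\leq\cdots\leq c_k$. By Step (1) of Definition~\ref{def removal process}, if $i$ is the least index with $c_i\geq b$ and $j$ is the least index with $c_j\geq b'$, then $\Upsilon(\Delta,\mathfrak h)=[a,c_i]_{\rho}$ and $\Upsilon(\Delta',\mathfrak h)=[a,c_j]_{\rho}$, and $i\leq j$.

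The central observation is that the map $\mathfrak h\mapsto \mathfrak r(\Delta,\mathfrak h)$ changes $\mathfrak h[a]$ only by deleting the single segment $\Upsilon(\Delta,\mathfrak h)$. Indeed, the subsequent segments $\Delta_2,\ldots,\Delta_r$ in the removal sequence for $(\Delta,\mathfrak h)$ satisfy $a(\Delta_t)>a$ for all $t\geq 2$ because of the strict inequality $a_{t-1}<a_t$ imposed in Step (2), while each truncation $\Delta_t^{tr}$ begins at $a_{t+1}>a$ for $t<r$ or at $b+1>a$ for $t=r$. Hence $\mathfrak r(\Delta,\mathfrak h)[a]$ is obtained from $\mathfrak h[a]$ by deleting $[a,c_i]_{\rho}$, and analogously $\mathfrak r(\Delta',\mathfrak h)[a]$ is obtained by deleting $[a,c_j]_{\rho}$.

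Given this, the lemma reduces to a short case distinction. If $i<j$, then $c_i<b'\leq c_j$, so the two segments $[a,c_i]_{\rho}$ and $[a,c_j]_{\rho}$ are distinct and each survives when the other is deleted; the minimality condition of Step (1) then yields $\Upsilon(\Delta',\mathfrak r(\Delta,\mathfrak h))=[a,c_j]_{\rho}$ and $\Upsilon(\Delta,\mathfrak r(\Delta',\mathfrak h))=[a,c_i]_{\rho}$, so both sides of the claimed equality equal the multiset $\{[a,c_i]_{\rho},[a,c_j]_{\rho}\}$. If $i=j$, both $\Upsilon(\Delta,\mathfrak h)$ and $\Upsilon(\Delta',\mathfrak h)$ equal $[a,c_i]_{\rho}$; after deleting this segment, the new smallest endpoint at least $b$ and the new smallest endpoint at least $b'$ coincide, being $c_{i+1}$ in each case (since $c_{i+1}\geq c_i\geq b'$), so $\Upsilon(\Delta',\mathfrak r(\Delta,\mathfrak h))$ and $\Upsilon(\Delta,\mathfrak r(\Delta',\mathfrak h))$ are both equal to $[a,c_{i+1}]_{\rho}$, and both sides of the equality become $\{[a,c_i]_{\rho},[a,c_{i+1}]_{\rho}\}$.

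No serious obstacle appears; the one point requiring a brief comment is the boundary case $i=j=k$, where there is no $c_{i+1}$ and the second $\Upsilon$ on each side is undefined. Since this situation is symmetric in $\Delta$ and $\Delta'$, it occurs simultaneously on both sides, so the equality still holds under the convention that an unadmissible segment produces the formal value $\infty$ as in the remark following Definition~\ref{def removal process}.
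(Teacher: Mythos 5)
Your proof is correct. The paper omits the argument here (remarking only that the lemma ``is a straightforward exercise from definitions''), and your verification is exactly the one intended: the crucial observation that $\mathfrak r(\Delta,\mathfrak h)[a]=\mathfrak h[a]-\Upsilon(\Delta,\mathfrak h)$, because every $\Delta_t$ with $t\geq 2$ in the removal sequence satisfies $a(\Delta_t)>a$ by Step~(2) of Definition~\ref{def removal process} and every truncation begins at $a_{t+1}>a$ or $b+1>a$, reduces the claim to the $i<j$ and $i=j$ case split together with the symmetric handling of the inadmissible boundary, all of which you carry out correctly.
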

The above lemma is straightforward from definitions and we omit a proof.

\begin{lemma}(Removal sequence involving the largest end point) \label{lem removal seq largest end pt}
Let $\mathfrak h \in \mathrm{Mult}_{\rho}$. Let $\Delta \in \mathrm{Seg}_{\rho}$ be non-empty and admissible to $\mathfrak h$. Let $c$ be the largest integer such that $\mathfrak h\langle c \rangle \neq 0$. If one of the segments in the removal sequence for $(\mathfrak n,\mathfrak h)$ is in $\mathfrak h\langle c \rangle$, then $\Upsilon(\Delta, \mathfrak h) \in \mathfrak h\langle c \rangle$. 
\end{lemma}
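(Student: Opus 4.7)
The plan is to exploit the strict nesting built into Step (2) of Definition \ref{def removal process}. Writing the removal sequence as $\Delta_1, \ldots, \Delta_r$ with $\Delta_i = [a_i, b_i]_{\rho}$, the defining condition in Step (2) requires $a_{i-1} < a_i$ and $b_{i-1} > b_i$ for each $i \geq 2$. In particular, the right endpoints satisfy the strict chain
\[ b_1 > b_2 > \cdots > b_r. \]

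Now suppose some $\Delta_j$ in the removal sequence lies in $\mathfrak h\langle c \rangle$, i.e.\ $b_j = c$. Since $\Delta_1 = \Upsilon(\Delta, \mathfrak h)$ is itself a segment of $\mathfrak h$, its right endpoint $b_1$ satisfies $b_1 \leq c$ by the maximality of $c$ among right endpoints of segments in $\mathfrak h$. Combined with the strict chain above, $b_1 \geq b_j = c$, so $b_1 = c$ and $j = 1$ by strict decrease. Hence $\Upsilon(\Delta, \mathfrak h) = \Delta_1 \in \mathfrak h\langle c \rangle$, as claimed.

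There is essentially no obstacle here; the argument is a one-line consequence of the nesting condition, and I included it only to isolate a fact that will presumably be invoked repeatedly in the later inductions on the largest right endpoint.
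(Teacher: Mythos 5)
Your proof is correct and is essentially the same argument as the paper's, which simply says the claim follows from the nesting property; you have merely spelled out the strict chain $b_1 > b_2 > \cdots > b_r$ and the maximality of $c$ that the paper leaves implicit.
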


\begin{proof}
This follows from the nesting property in the removal sequence.
\end{proof}

\begin{lemma} \label{lem independent of order} (Commutativity for unlinked segments)
Let $\mathfrak h \in \mathrm{Mult}_{\rho}$. Let $\Delta, \Delta' \in \mathrm{Seg}_{\rho}$ be unlinked segments. Suppose $\mathfrak r(\Delta, \mathfrak h) \neq \infty$ and $\mathfrak r(\Delta', \mathfrak r(\Delta, \mathfrak h)) \neq \infty$. Then 
\[  \mathfrak r(\Delta', \mathfrak r(\Delta, \mathfrak h)) = \mathfrak r(\Delta, \mathfrak r(\Delta', \mathfrak h)) .
\]
\end{lemma}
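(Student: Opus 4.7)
The plan is to proceed by induction on $n := l_r(\Delta) + l_r(\Delta')$. The base case, where one of $\Delta, \Delta'$ is empty, is immediate from $\mathfrak r(\emptyset, \cdot) = \mathrm{id}$. For the inductive step, I would apply Lemma \ref{lem step step compute} to reduce the leftmost removal by $\Delta$ to an atomic replacement: setting $\widetilde{\mathfrak h} := \mathfrak h - \Upsilon(\Delta, \mathfrak h) + {}^-\Upsilon(\Delta, \mathfrak h)$, we have $\mathfrak r(\Delta, \mathfrak h) = \mathfrak r({}^-\Delta, \widetilde{\mathfrak h})$. Invoking this on both sides of the desired identity and applying the inductive hypothesis to the pair $({}^-\Delta, \Delta')$ (after verifying routinely that ${}^-\Delta, \Delta'$ remain unlinked, by a case check on the relative positions of $\Delta$ and $\Delta'$), the problem reduces to the compatibility
\[ \mathfrak r(\Delta', \widetilde{\mathfrak h}) \;=\; \mathfrak r(\Delta', \mathfrak h) \;-\; \Upsilon(\Delta, \mathfrak r(\Delta', \mathfrak h)) \;+\; {}^-\Upsilon(\Delta, \mathfrak r(\Delta', \mathfrak h)). \]

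Next I would establish this compatibility by a case analysis on $a(\Delta)$ versus $a(\Delta')$. When $a(\Delta) + 1 < a(\Delta')$, Lemma \ref{lem does not remove lower segment} shows that segments with left endpoint at most $a(\Delta)+1$ are untouched by $\mathfrak r(\Delta', \cdot)$; in particular $\Upsilon(\Delta, \mathfrak h) = \Upsilon(\Delta, \mathfrak r(\Delta', \mathfrak h))$ and its ${}^-$-shift lies outside the range of $\Delta'$'s process, so the compatibility is immediate. When $a(\Delta) = a(\Delta')$, Lemma \ref{lem first segment commute} provides the multiset equality of first segments picked in the two orderings, from which the compatibility follows by direct comparison of the atomic steps (and an inductive reduction after shrinking both $\Delta, \Delta'$).

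The hard part will be the borderline sub-case $a(\Delta) + 1 = a(\Delta')$, which under the unlinked hypothesis forces $\Delta' \subsetneq \Delta$. Here ${}^-\Upsilon(\Delta, \mathfrak h)$ now has left endpoint exactly $a(\Delta')$, so it becomes a fresh candidate for the removal process of $\Delta'$, and the interaction is genuinely non-trivial. I plan to handle this by a side-by-side analysis of the removal sequences for $(\Delta', \mathfrak h)$ and $(\Delta', \widetilde{\mathfrak h})$ using the minimality and nesting conditions together with $\Delta' \subsetneq \Delta$: inserting ${}^-\Upsilon(\Delta, \mathfrak h)$ as a new shorter candidate at left endpoint $a(\Delta')$ perturbs the removal sequence for $\Delta'$ by replacing exactly one of its segments (or leaves it unchanged), and this perturbation is mirrored precisely by the replacement on the right-hand side of the compatibility. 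Once this bookkeeping is done, the inductive step closes.
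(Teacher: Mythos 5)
Your overall strategy mirrors the paper's: induct, use Lemma \ref{lem step step compute} to isolate a single atomic replacement, and split into cases according to the relative position of $a(\Delta)$ and $a(\Delta')$. However, there is a genuine gap in the key preliminary step, and it is not the "routine case check" you call it.

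You claim that if $\Delta, \Delta'$ are unlinked then ${}^-\Delta$ and $\Delta'$ remain unlinked. This fails precisely when $a(\Delta) \cong a(\Delta')$ and $\Delta' \subsetneq \Delta$. Write $\Delta = [a,b]_\rho$ and $\Delta' = [a,b']_\rho$ with $b' < b$ (nested, hence unlinked). Then ${}^-\Delta = [a+1,b]_\rho$, and ${}^-\Delta \cup \Delta' = [a,b]_\rho$ is a segment with neither ${}^-\Delta \subset \Delta'$ nor $\Delta' \subset {}^-\Delta$; so ${}^-\Delta$ and $\Delta'$ are \emph{linked}. Consequently the inductive hypothesis for the pair $({}^-\Delta,\Delta')$ cannot be invoked in this case, and your reduction to the "compatibility" identity
\[ \mathfrak r(\Delta', \widetilde{\mathfrak h}) \;=\; \mathfrak r(\Delta', \mathfrak h) \;-\; \Upsilon(\Delta, \mathfrak r(\Delta', \mathfrak h)) \;+\; {}^-\Upsilon(\Delta, \mathfrak r(\Delta', \mathfrak h)) \]
never takes place. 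Your subsequent case analysis on $a(\Delta)$ versus $a(\Delta')$ therefore cannot rescue things, since in that plan it is meant to prove the compatibility \emph{after} it has been derived. The paper's proof sidesteps exactly this: in its Case 2 ($a(\Delta) \cong a(\Delta')$) it does \emph{not} follow the unilateral "reduce $\Delta$, then apply IH" template, but instead splits according to whether $\Upsilon(\Delta,\mathfrak h)$ and $\Upsilon(\Delta',\mathfrak h)$ coincide (and their multiplicities), shrinking both $\Delta$ and $\Delta'$ in a single step to ${}^-\Delta, {}^-\Delta'$ and comparing the resulting replacements carefully via the minimality of the removal sequence.

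The gap can be repaired, but it changes the shape of your argument: in the nested, same-left-endpoint subcase you must apply Lemma \ref{lem step step compute} to the \emph{inner} segment (here $\Delta'$), since ${}^-\Delta' = [a+1,b']_\rho$ is nested in $\Delta = [a,b]_\rho$ and hence remains unlinked, and then rebuild the reduction with the roles exchanged; alternatively, handle $a(\Delta)\cong a(\Delta')$ by simultaneous shrinking as the paper does. Either way you should state this explicitly rather than defer it, because under your stated convention of always reducing $\Delta$ first, the step as written simply fails in that case.
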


\begin{proof}

We shall prove by an induction on the sum of absolute lengths of all segments in $\mathfrak h$. If one of $\Delta, \Delta'$ is an empty set, it is straightforward from definitions. We now assume both are non-empty sets. By switching the labellings if necessary, we may and shall assume that $a(\Delta')\geq a(\Delta)$. Let $\widetilde{\Delta}_1=\Upsilon(\Delta, \mathfrak h)$. 

{\bf Case 1:} $a(\Delta)\not\cong a(\Delta'), \nu^{-1}a(\Delta')$. Hence $a(\Delta')>a(\Delta)$. Now we consider 
\[  \mathfrak h^*=\mathfrak h-\left\{ \widetilde{\Delta}_1 \right\}+\left\{{}^-\widetilde{\Delta}_1 \right\} .
\]
and so, by Lemma \ref{lem step step compute}, $\mathfrak r(\Delta, \mathfrak h)=\mathfrak r({}^-\Delta , \mathfrak h^*)$. Thus, 
\begin{align} \label{eq removal two 1}
 \quad \mathfrak r(\Delta', \mathfrak r( \Delta, \mathfrak h))=\mathfrak r(\Delta', \mathfrak r({}^-\Delta, \mathfrak h^*)) .
\end{align}
Now using Lemma \ref{lem does not remove lower segment} and the assumption in this specific case, we still have $\Upsilon(\Delta, \mathfrak r(\Delta', \mathfrak h))=\widetilde{\Delta}_1$. Hence, we have that: by Lemma \ref{lem step step compute} again,
\begin{align} \label{eqn segment equals}
  \mathfrak r(\Delta, \mathfrak r(\Delta',\mathfrak h))=\mathfrak r({}^-\Delta, \mathfrak r(\Delta', \mathfrak h^*)) ,
\end{align}
where we also use $\mathfrak r(\Delta', \mathfrak h)-\left\{ \widetilde{\Delta}_1 \right\}+\left\{ {}^-\widetilde{\Delta}_1\right\} =\mathfrak r(\Delta', \mathfrak h^*)$ by Lemma \ref{lem does not remove lower segment}.

Now,
\[  \mathfrak r(\Delta', \mathfrak r(\Delta, \mathfrak h))=\mathfrak r(\Delta', \mathfrak r( {}^-\Delta, \mathfrak h^*)) =\mathfrak r({}^-\Delta, \mathfrak r( \Delta', \mathfrak h^*)) =\mathfrak r(\Delta, \mathfrak r(\Delta', \mathfrak h)),
\]
where the middle equality follows from the inductive case. Hence, we are done.

{\bf Case 2:} $a(\Delta) \cong a(\Delta')$. Let
\[  \mathfrak h^{**}=\mathfrak h-\Upsilon(\Delta, \mathfrak h)-\Upsilon(\Delta',\mathfrak r(\Delta, \mathfrak h))+{}^-\Upsilon(\Delta, \mathfrak h)+{}^-\Upsilon(\Delta',\mathfrak r(\Delta, \mathfrak h)).
\]
We use Lemma \ref{lem step step compute} twice and combine with Lemma \ref{lem first segment commute} to obtain:
\[  \mathfrak r(\Delta', \mathfrak r(\Delta, \mathfrak h))=\mathfrak r({}^-\Delta', \mathfrak r({}^-\Delta, \mathfrak h^{**}))
\]
and
\[  \mathfrak r(\Delta, \mathfrak r(\Delta', \mathfrak h))=\mathfrak r({}^-\Delta, \mathfrak r({}^-\Delta', \mathfrak h^{**})) .
\]
Then the equality follows from the induction.

{\bf Case 3:} $a(\Delta)\cong \nu^{-1} a(\Delta')$. We further divide into two more cases:
\begin{enumerate}
\item There is a segment $\widehat{\Delta}$ in $\mathfrak h$ such that $a(\widehat{\Delta})\cong \nu a(\Delta)$ and $\Delta' \subset \widehat{\Delta} \subsetneq {}^-\widetilde{\Delta}_1$. In such case, one has the equalities (\ref{eq removal two 1}) and (\ref{eqn segment equals}) as in Case 1.

%In view of the argument (and notations) in Case 1, it suffices to show that
%\[  \mathfrak r({}^-\Delta, \mathfrak r(\Delta', \mathfrak h)-\left\{ \widetilde{\Delta}_1\right\}+\left\{ {}^-\widetilde{\Delta}_1\right\})=\mathfrak r({}^-\Delta, \mathfrak r(\Delta', \mathfrak h^*)).\]
%We shall denote $\mathfrak l^*=\mathfrak r(\Delta', \mathfrak h)-\left\{ \widetilde{\Delta}_1\right\}+\left\{ {}^-\widetilde{\Delta}_1\right\}$.

\item There is no such segment in the above case. Let $\overline{\Delta}_1=\Upsilon(\Delta', \mathfrak h)$. Let $\mathfrak h^*=\mathfrak h-\left\{ \overline{\Delta}_1 \right\}+\left\{{}^-\overline{\Delta}_1 \right\}$. Then, by Lemma \ref{lem step step compute}, we have that
\[  \mathfrak r(\Delta', \mathfrak h)=\mathfrak r({}^-\Delta', \mathfrak h^*) .
\]
Hence, 
\begin{align} \label{eqn removal truncate first}
  \mathfrak r(\Delta, \mathfrak r(\Delta', \mathfrak h))=\mathfrak r(\Delta, \mathfrak r({}^-\Delta', \mathfrak h^*)) . 
\end{align}
On the other hand, by the nesting property for the removal sequence for $(\Delta, \mathfrak h)$ and the assumption in this case, $\overline{\Delta}_1$ cannot be involved in the removal sequence for $(\Delta, \mathfrak h)$. Hence,
\[  \mathfrak r(\Delta, \mathfrak h)-\left\{\overline{\Delta}_1\right\}+\left\{{}^-\overline{\Delta}_1\right\}= \mathfrak r(\Delta, \mathfrak h^*) .
\]
By using the assumption in this case, we still have that 
\[ \Upsilon(\Delta', \mathfrak r(\Delta, \mathfrak h^*))=\overline{\Delta}_1 .\]
 Thus, by Lemma \ref{lem step step compute} again,
\begin{align} \label{eqn removal truncate later}
  \mathfrak r(\Delta', \mathfrak r(\Delta, \mathfrak h))=\mathfrak r({}^-\Delta', \mathfrak r(\Delta, \mathfrak h^*)). 
\end{align}
Now, using (\ref{eqn removal truncate first}) and (\ref{eqn removal truncate later}) and the inductive case, we have that:
\[   \mathfrak r(\Delta', \mathfrak r(\Delta, \mathfrak h))=\mathfrak r(\Delta, \mathfrak r(\Delta', \mathfrak h)) .
\]

%\begin{itemize}
%\item 
%the first segment in the removal sequence for $(\Delta', )$ (resp. $(\Delta, \mathfrak r(\Delta', \mathfrak h^*))$) is ${}^-\widetilde{\Delta}_1$ (resp. $\widetilde{\Delta}_1'$). We have proved in Step 2 that
%\[  \mathfrak r({}^-\Delta, \mathfrak r(\Delta', \mathfrak h^*))=\mathfrak r({}^{--}\Delta, \mathfrak r({}^-\Delta', \mathfrak h^{**})) ,\]
%where 
%\[ \mathfrak h^{**}=\mathfrak h^*-\left\{{}^-\widetilde{\Delta}_1, \overline{\Delta}_1\right\}+\left\{{}^{--}\widetilde{\Delta}_1, {}^-\overline{\Delta}_1 \right\} .\]
%\item the first segment in the removal sequence for $({}^-\Delta, \mathfrak l^*)$ is ${}^-\widetilde{\Delta}_1$. By Lemma \ref{lem step step compute} twice, this gives that 
%\[  \mathfrak r({}^-\Delta, \mathfrak l^*)=\mathfrak r({}^{--}\Delta, \mathfrak l^{**}) ,\]
%where 
%\[  \mathfrak l^{**}=\mathfrak r(\Delta', \mathfrak h)-\left\{ \widetilde{\Delta}_1 \right\}+\left\{ {}^{--}\widetilde{\Delta}_1 \right\} =\mathfrak r({}^-\Delta', \mathfrak h')-\left\{ \widetilde{\Delta}_1 \right\}+\left\{ {}^{--}\widetilde{\Delta}_1 \right\} \]
%and $\mathfrak h'=\mathfrak h-\left\{ \overline{\Delta}_1 \right\}+\left\{ {}^-\overline{\Delta}_1 \right\}$. 
%\end{itemize}
%Now one proceeds inductively to show $\mathfrak r({}^{--}\Delta, \mathfrak r({}^-\Delta', \mathfrak h^{**}))= \mathfrak r({}^{--}\Delta, \mathfrak l^{**})$.
\end{enumerate}

\end{proof}

\subsection{Derivative resultant multisegments}

\begin{definition} \label{def admissible multiseg}Let $\mathfrak h \in \mathrm{Mult}_{\rho}$. Let $\mathfrak n=\left\{ \Delta_1, \ldots, \Delta_k \right\} \in \mathrm{Mult}_{\rho}$ with segments in an ascending order.
\begin{enumerate}
%\item A sequence of segments $\Delta_1, \ldots, \Delta_k$ is said to be in an {\it ascending order} if for $i<j$, $\Delta_i \not >\Delta_j$.
\item  We say that $\mathfrak n$ is {\it admissible} to $\mathfrak h$, if $\Delta_i$ is admissible to 
\[  \mathfrak r(\Delta_{i-1}, \ldots \mathfrak r(\Delta_1, \mathfrak h)\ldots )
\]
for all $i=1, \ldots, k$. By Lemma \ref{lem independent of order}, it is independent of the choice of an ascending order. We also consider $\emptyset$ to be admissible to any $\mathfrak h \in \mathrm{Mult}_{\rho}$.
\item Define
\[ \mathfrak r(\mathfrak n, \mathfrak h)= \mathfrak r(\left\{\Delta_1, \ldots, \Delta_k \right\}, \mathfrak h) := \mathfrak r(\Delta_k, \mathfrak r(\Delta_{k-1}, \ldots \mathfrak r(\Delta_1, \mathfrak h)\ldots). \]
\end{enumerate}
For any multisegment $\mathfrak n$ admissible to $\mathfrak h$, we say $\mathfrak r(\mathfrak n, \mathfrak h)$ to be a {\it derivative resultant multisegment} for $\mathfrak h$.
\end{definition}

\subsection{Constructing some derivative resultant multisegments}

%The following lemma is more technical, but the original motivation comes from the representation-theoretic counterpart for the removal process e.g. Theorem \ref{thm isomorphic derivatives}.
% One should also compare with some reduction techniques in \cite{LM22, Ch22+, Ch22+d}.

The following technical lemma constructs some derivative resultant multisegments from some known ones, which is an essential part in the proof of Theorem \ref{thm isomorphic derivatives} below.

\begin{lemma} \label{lem shrinking mutli 0} 
Let $\mathfrak h \in \mathrm{Mult}_{\rho}$.  Let $\mathfrak n \in \mathrm{Mult}_{\rho}$ be admissible to $\mathfrak h$. Let $c \in \mathbb Z$ such that, for any $i\geq 1$, $\nu^{c+i}\rho$ is not in any segment of $\mathfrak n$. Let $\mathfrak s=\mathfrak r(\mathfrak n, \mathfrak h)$.  Recall that $\mathfrak h\langle c\rangle$ (resp. $\mathfrak s\langle c\rangle$) is the submultisegment of $\mathfrak h$ (resp. $\mathfrak s$) containing all the segments $\Delta$ in $\mathfrak h$ (resp. $\mathfrak s$) satisfying $b(\Delta)\cong \nu^c \rho$. We further assume that 
\[\mathfrak s\langle e \rangle =\mathfrak h\langle e \rangle\]
 for any $e\geq c+1$. Then 
\[  \mathfrak s - \mathfrak s\langle c \rangle + \mathfrak h\langle c\rangle
\]
is also a derivative resultant multisegment for $\mathfrak h$. 
\end{lemma}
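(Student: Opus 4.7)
The aim is to exhibit an $\mathfrak n'\in\mathrm{Mult}_{\rho}$, admissible to $\mathfrak h$, with $\mathfrak r(\mathfrak n',\mathfrak h) = \mathfrak s - \mathfrak s\langle c\rangle + \mathfrak h\langle c\rangle$. I will first use the hypothesis to reduce to the case when $\mathfrak h$ has no segment of $b$-endpoint above $\nu^c\rho$, and then produce $\mathfrak n'$ by induction.

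The first step is a rigidity consequence of the hypothesis: while computing $\mathfrak r(\mathfrak n,\mathfrak h)$, no segment of $\mathfrak h$ with $b$-endpoint $>c$ is ever drawn into a removal sequence. Indeed, every truncated segment $\Delta_i^{tr}$ in Definition \ref{def removal process} has the same $b$-endpoint as the corresponding $\Delta_i$, and each touched segment has its $a$-endpoint strictly increased; the equality of multisets $\mathfrak h\langle e\rangle=\mathfrak s\langle e\rangle$ for all $e\geq c+1$ therefore forces every segment with $b$-endpoint $>c$ to remain untouched. Splitting $\mathfrak h=\mathfrak h^{>c}+\mathfrak h^{\leq c}$ according to the $b$-endpoint then gives $\mathfrak r(\mathfrak n,\mathfrak h)=\mathfrak r(\mathfrak n,\mathfrak h^{\leq c})+\mathfrak h^{>c}$, and the same decomposition will hold for any candidate $\mathfrak n'$ we produce. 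It therefore suffices to prove the lemma under the additional assumption that every segment of $\mathfrak h$ has $b$-endpoint $\leq c$.

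Under this reduction, I induct on $l_r(\mathfrak n)$. The base case $l_r(\mathfrak n)=0$ is trivial since $\mathfrak s=\mathfrak h$ forces $\mathfrak s-\mathfrak s\langle c\rangle+\mathfrak h\langle c\rangle=\mathfrak h=\mathfrak r(\emptyset,\mathfrak h)$. For the inductive step, choose a segment $\Delta=[a,b]_\rho$ of $\mathfrak n$ with smallest $a$-endpoint; by Lemma \ref{lem independent of order} we may assume $\Delta$ is processed first. Applying Lemma \ref{lem step step compute} rewrites $\mathfrak r(\Delta,\mathfrak h)=\mathfrak r({}^-\Delta,\mathfrak h^*)$ where $\mathfrak h^*$ differs from $\mathfrak h$ only in replacing $\Upsilon(\Delta,\mathfrak h)$ by ${}^-\Upsilon(\Delta,\mathfrak h)$. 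If $b(\Upsilon(\Delta,\mathfrak h))<c$, then the pair $(\mathfrak n-\Delta+{}^-\Delta,\mathfrak h^*)$ still satisfies the hypothesis with the same $c$, so the induction hypothesis yields an $\mathfrak n''$ with $\mathfrak r(\mathfrak n'',\mathfrak h^*)$ equal to the expected target, and running Lemma \ref{lem step step compute} backwards converts it into an $\mathfrak n'$ for $\mathfrak h$. If $b(\Upsilon(\Delta,\mathfrak h))=c$, the touched segment lies in $\mathfrak h\langle c\rangle$ and we ``skip'' this cuspidal-point removal by applying the induction hypothesis to $(\mathfrak n-\Delta+{}^-\Delta,\mathfrak h)$ instead of $\mathfrak h^*$; the resulting $\mathfrak n''$ serves as the desired $\mathfrak n'$.

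The main obstacle is verifying that in the skip branch the pair $(\mathfrak n-\Delta+{}^-\Delta,\mathfrak h)$ still satisfies the hypothesis relative to the same $c$, so that the induction actually applies. This requires tracking that the subsequent removal sequences, when run against the still-intact $\Upsilon(\Delta,\mathfrak h)\in\mathfrak h\langle c\rangle$, continue to avoid any segment of $b$-endpoint $>c$. I expect to handle this by invoking Lemma \ref{lem removal seq largest end pt}, which pins down that any sequence touching a segment of maximal $b$-endpoint must have it as the very first segment $\Upsilon$, together with Lemmas \ref{lem first segment commute} and \ref{lem independent of order} to decouple the top-level interactions from the lower ones. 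An iterated application of the skip then reverts exactly the changes in $\mathfrak s\langle c\rangle$ while leaving the lower-endpoint portion of $\mathfrak s$ intact, matching the prescribed target.
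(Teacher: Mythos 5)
The broad outline you adopt – reduce to the case where $\mathfrak h$ is bounded by $c$, then induct via Lemma~\ref{lem step step compute}, splitting on whether $\Upsilon(\Delta,\mathfrak h)$ lands in $\mathfrak h\langle c\rangle$ – is the same mechanism the paper uses (its induction variable is $l_a(\mathfrak h)$ rather than $l_r(\mathfrak n)$, but the one-cuspidal-point reduction $\mathfrak h\leadsto\mathfrak h^*$ is identical). However, there are genuine gaps in your skip branch and the proposal is not complete as written.

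The decisive issue is that in the skip branch you invoke the inductive hypothesis for the pair $(\mathfrak n-\Delta+{}^-\Delta,\mathfrak h)$ with $\mathfrak h$ left unchanged. But $\mathfrak n-\Delta+{}^-\Delta$ need not be admissible to $\mathfrak h$. For instance, take $\mathfrak h=\{[0,3],[2,2]\}$, $\mathfrak n=\{[0,1]\}$, so $c=3$, $\Upsilon([0,1],\mathfrak h)=[0,3]$ has $b$-endpoint $c$; then ${}^-\Delta=[1,1]$ but $\mathfrak h[1]=\emptyset$, so $\mathfrak r([1,1],\mathfrak h)=\infty$ and the inductive hypothesis simply cannot be applied. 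Precisely this is why the paper \emph{always} passes to $\mathfrak h^*=\mathfrak h-\Upsilon+{}^-\Upsilon$ even in the $\langle c\rangle$-case: Lemma~\ref{lem step step compute} guarantees $\mathfrak r({}^-\Delta,\mathfrak h^*)=\mathfrak r(\Delta,\mathfrak h)$, so admissibility is automatic. And even in instances where admissibility does survive, you still need the equation $\mathfrak r(\mathfrak n-\Delta+{}^-\Delta,\mathfrak h)-\mathfrak r(\mathfrak n-\Delta+{}^-\Delta,\mathfrak h)\langle c\rangle=\mathfrak s-\mathfrak s\langle c\rangle$, so that the target of the smaller problem coincides with the original target; your final paragraph gestures at this but does not give an argument, and it is a nontrivial comparison of two removal computations run against different input multisegments.

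The paper handles the $\langle c\rangle$-case differently. It replaces $\mathfrak h$ with $\mathfrak h^*$, replaces $\mathfrak n$ with $\mathfrak n^*=\mathfrak n-\Delta_1+{}^-\Delta_1$ (noting $\mathfrak r(\mathfrak n,\mathfrak h)=\mathfrak r(\mathfrak n^*,\mathfrak h^*)$), applies induction to get $\widetilde{\mathfrak n}$ with $\mathfrak r(\widetilde{\mathfrak n},\mathfrak h^*)=\mathfrak s-\mathfrak s\langle c\rangle+\mathfrak h^*\langle c\rangle$, and then proves the key auxiliary claim $\mathfrak r(\widetilde{\mathfrak n},\mathfrak h^*)[a]=\mathfrak h^*[a]$ (relying crucially on $\Delta_1$ being $\prec^L$-\emph{minimal} in $\mathfrak n$, not merely having minimal $a$-endpoint), from which $\widetilde{\mathfrak n}[d]=\emptyset$ for $d\leq a$ and hence the removal sequences for $(\widetilde{\mathfrak n},\mathfrak h^*)$ never touch ${}^-\widetilde{\Delta}$. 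That untouchedness is what lets one conclude $\mathfrak r(\widetilde{\mathfrak n},\mathfrak h)=\mathfrak r(\widetilde{\mathfrak n},\mathfrak h^*)+\widetilde{\Delta}-{}^-\widetilde{\Delta}$, which is the correct target since $\widetilde{\Delta}\in\mathfrak h\langle c\rangle$. Without this untouchedness claim (and without the $\prec^L$-minimal tie-breaking it depends on) the back-transport from $\mathfrak h^*$ to $\mathfrak h$ — which you also need to carry out in your non-skip branch under ``running Lemma~\ref{lem step step compute} backwards'' — is not established.
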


\begin{proof}
We shall prove by an induction on $l_{abs}(\mathfrak h)$. We write the segments in $\mathfrak n$ in the following ascending order (see Section \ref{ss ordering} for the notion $\preceq^L$): 
\begin{align} \label{eqn order seg in shrink}
\quad \Delta_1 \preceq^L \ldots \preceq^L \Delta_p. 
\end{align}
 When $l_{abs}(\mathfrak h)=0$, it is clear.

Let $\widetilde{\Delta}=\Upsilon(\Delta_1, \mathfrak h)$. We first consider the case that $\widetilde{\Delta} \notin \mathfrak h\langle c \rangle$. Note that, by the definition of $\mathfrak r$,
\[  \mathfrak r(\mathfrak n-\Delta_1, \mathfrak r(\Delta_1, \mathfrak h)) =\mathfrak r(\mathfrak n, \mathfrak h) .
\]
Let $\mathfrak h'=\mathfrak r(\Delta_1, \mathfrak h)$. The assumptions in the lemma imply that $\widetilde{\Delta} \in \mathfrak h\langle e \rangle$ for some $e<c$ and then the nesting property with $\widetilde{\Delta} \notin \mathfrak h\langle c \rangle$ imply that $\mathfrak h\langle c\rangle =\mathfrak h'\langle c\rangle$.  Now the induction hypothesis with (\ref{eqn order seg in shrink}) gives that
\[ \mathfrak s-\mathfrak s\langle c\rangle +\mathfrak h\langle c\rangle=\mathfrak s -\mathfrak s\langle c \rangle +\mathfrak h'\langle c\rangle 
\]
is still a derivative resultant multisegment for $\mathfrak h'$ i.e. for some multisegment $\widetilde{\mathfrak n}$,
\[  \mathfrak r(\widetilde{\mathfrak n}, \mathfrak h')=\mathfrak s-\mathfrak s\langle c\rangle +\mathfrak h\langle c\rangle .
\]
It remains to observe from (\ref{eqn order seg in shrink}) that we still have
\[  \mathfrak r(\widetilde{\mathfrak n}+\Delta_1, \mathfrak h)=\mathfrak r(\widetilde{\mathfrak n}, \mathfrak h') 
\]
since $\Delta_1$ can still be the first segment for an ascending order for $\widetilde{\mathfrak n}+\Delta_1$ and so $\mathfrak r(\widetilde{\mathfrak n}+\Delta_1, \mathfrak h)=\mathfrak r(\widetilde{\mathfrak n}, \mathfrak r(\Delta_1, \mathfrak h))$. 

We now consider the case that $\widetilde{\Delta} \in \mathfrak h\langle c \rangle$. Let 
\[  \mathfrak h^*=\mathfrak h-\left\{ \widetilde{\Delta} \right\}+\left\{ {}^-\widetilde{\Delta} \right\} ,
\] 
\[ \mathfrak n^* =\mathfrak n-\left\{ \Delta_1 \right\}+\left\{ {}^-\Delta_1 \right\} .\]
By Lemma \ref{lem step step compute},
\[  \mathfrak r(\Delta_1, \mathfrak h)=\mathfrak r({}^-\Delta_1, \mathfrak h^*)
\]
and so, by Lemma \ref{lem independent of order}, 
\[   \mathfrak r(\mathfrak n, \mathfrak h)=\mathfrak r(\mathfrak n^*, \mathfrak h^*) .
\]

 Now, by the induction hypothesis, we have that $\mathfrak s- \mathfrak s\langle c \rangle +\mathfrak h^*\langle c \rangle$ is a derivative resultant multisegment for $\mathfrak h^*$, say there exists a multisegment $\widetilde{\mathfrak n}$ such that 
\begin{align}\label{eqn induction shrinking derivative}
  \mathfrak r(\widetilde{\mathfrak n}, \mathfrak h^*)=\mathfrak s-\mathfrak s\langle c \rangle+\mathfrak h^*\langle c \rangle .
\end{align}

We now claim: \\
{\it Claim 1:}  $\mathfrak r(\widetilde{\mathfrak n}, \mathfrak h^*)[a]=\mathfrak h^*[a]$, where $a$ is the integer such that $a(\widetilde{\Delta})\cong \nu^a\rho$. Moreover $\widetilde{\mathfrak n}[a]=\emptyset$.  \\
{\it Proof of claim 1:} In the removal sequences for $(\widetilde{\mathfrak n}, \mathfrak h^*)$, all the first segments involved are $\widetilde{\Delta}$ by the minimality in the removal process and the assumption that $\mathfrak s\langle e\rangle=\mathfrak h\langle e\rangle$ for all $e \geq c+1$. Thus, we have that,
\[ \mathfrak s=\mathfrak r(\mathfrak n, \mathfrak h)=\mathfrak r(\mathfrak n-\mathfrak n[a]+{}^-(\mathfrak n[a]), \mathfrak h-\left\{\widetilde{\Delta}, \ldots, \widetilde{\Delta} \right\}+\left\{ {}^-\widetilde{\Delta}, \ldots, {}^-\widetilde{\Delta} \right\}) ,
\]
where both $\widetilde{\Delta}$ and ${}^-\widetilde{\Delta}$ appear $|\mathfrak n[a]|$-times. Thus, by Lemma \ref{lem does not remove lower segment}, 
\[ \mathfrak s[a]=\mathfrak h[a]-\overbrace{\{ \widetilde{\Delta}, \ldots, \widetilde{\Delta} \}}^{\mbox{$|\mathfrak n[a]|$-times}}=\mathfrak h^*[a]-\overbrace{\{ \widetilde{\Delta}, \ldots, \widetilde{\Delta}\} }^{\mbox{$|\mathfrak n[a]|-1$ times} }.
\]
Hence, $\mathfrak s[a]-\mathfrak s[a]\langle c \rangle=\mathfrak h[a]-\mathfrak h[a]\langle c\rangle=\mathfrak h^*[a]-\mathfrak h^*[a]\langle c \rangle$ and so $(\mathfrak s-\mathfrak s\langle c \rangle +\mathfrak h^*\langle c \rangle)[a]=\mathfrak h^*[a]$. This proves the claim by (\ref{eqn induction shrinking derivative}). This proves the first assertion in the claim.

For the second assertion, since $\Delta_1$ is minimal under $\preceq^L$, all the segments in $\mathfrak n$ do not contain $\nu^d\rho$ for some $d < a$. Thus, all the segments in $\widetilde{\mathfrak n}$ do not contain $\nu^d\rho$ for some $d<a$. Now, combining with the first assertion, we have the second assertion.  \\

\noindent	
{\it Claim 2:} Any segment in the removal sequence for $(\widetilde{\mathfrak n}, \mathfrak h^*)$ does not involve ${}^-\widetilde{\Delta}$. \\

\noindent
{\it Proof of Claim 2:} This follows from (\ref{eqn induction shrinking derivative}) that $\mathfrak r(\widetilde{\mathfrak n}, \mathfrak h^*)\langle c \rangle =\mathfrak h^*\langle c\rangle$. Now the claim follows from the definition of the removal process. \\

% We must have that
% \[ (**) \quad \widetilde{\mathfrak n}[d]=\emptyset , \mbox{ for any integer $d\leq a$}. \]
\noindent
We now return to the proof. By Lemma \ref{lem does not remove lower segment} and Claim 1, any segment in the removal sequence for $(\widetilde{\mathfrak n}, \mathfrak h)$ does not involve $\widetilde{\Delta}$. Thus, with Claim 2, we now have that 
\[ \mathfrak r(\widetilde{\mathfrak n}, \mathfrak h)=\mathfrak r(\widetilde{\mathfrak n}, \mathfrak h^*)+\left\{ \widetilde{\Delta} \right\}-\left\{{}^- \widetilde{\Delta} \right\} . \]
Hence, by (\ref{eqn induction shrinking derivative}),
\[  \mathfrak r(\widetilde{\mathfrak n}, \mathfrak h)=\mathfrak s-\mathfrak{s}\langle c \rangle +\mathfrak h^*\langle c \rangle+\left\{ \widetilde{\Delta} \right\}-\left\{ {}^-\widetilde{\Delta} \right\}=\mathfrak s-\mathfrak s\langle c \rangle+\mathfrak h\langle c \rangle
\]
is still a derivative resultant multisegment as desired.
\end{proof}

\begin{example}
Let $\mathfrak h=\left\{ [1,5], [2,4], [4,5] \right\}$.
\begin{enumerate}
\item Let $\Delta=[1,3]$. Then $\mathfrak r(\Delta, \mathfrak h)=\left\{ [2,5], [4], [4,5] \right\}$. Then $\left\{ [1,5], [4],[4,5] \right\}$ is also a derivative resultant multisegment by Lemma \ref{lem shrinking mutli 0}. One can also check that $\mathfrak r([2,3],\mathfrak h)=\left\{ [1,5], [4],[4,5] \right\}$. This illustrates Lemma \ref{lem shrinking mutli 0}  for $c=5$.
\[ \xymatrix{  
 &     &   &  & \stackrel{4}{\bullet}  \ar@{-}[r]  & \stackrel{5}{\bullet}     \\
     &    & \stackrel{2}{{\color{blue}\bullet}} \ar@{-}[r]  & \stackrel{3}{{\color{blue} \bullet}} \ar@{-}[r]  & \stackrel{4}{\bullet}   &       \\
  & \stackrel{1}{{\color{red}{\bullet}}} \ar@{-}[r]  & \stackrel{2}{\bullet} \ar@{-}[r]  & \stackrel{3}{\bullet} \ar@{-}[r]  & \stackrel{4}{\bullet} \ar@{-}[r] &  \stackrel{5}{\bullet}  }
\]
Here those red and blue points are removed to obtain $\mathfrak r([1,3], \mathfrak h)$, and the red point is added to obtain a new derivative resultant multisegment, which come from $\mathfrak r([2,3],\mathfrak h)$.
\item Let $\mathfrak n=\left\{ [1,3], [2] \right\}$. Then $\mathfrak r(\mathfrak n, \mathfrak h)=\left\{ [3,5], [4], [4,5] \right\}$. Then $\left\{ [1,5], [4], [4,5] \right\}$ is a derivative resultant multisegment.  This again illustrates Lemma \ref{lem shrinking mutli 0}  for $c=5$.
\end{enumerate}
\end{example}

%One may compare the following lemma with Proposition \ref{prop non ext submodule}, while it is not an exact analogue.

\begin{lemma} \label{lem shrinking muliseg} 
We use notations in the previous lemma. Let $\mathfrak n' \in \mathrm{Mult}_{\rho}$ such that 
\[  \mathfrak r(\mathfrak n', \mathfrak h)=\mathfrak s-\mathfrak s\langle c\rangle +\mathfrak h\langle c \rangle.\]
Then 
\[  \mathfrak r(\mathfrak h\langle c \rangle+\mathfrak n', \mathfrak h)= \mathfrak s - \mathfrak s\langle c \rangle .
\]
\end{lemma}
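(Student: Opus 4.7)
The plan is to reduce the computation of $\mathfrak r(\mathfrak h\langle c\rangle + \mathfrak n', \mathfrak h)$ to first applying $\mathfrak n'$ in an ascending order and then applying the segments of $\mathfrak h\langle c\rangle$. Concretely, the target identity is
\[
\mathfrak r(\mathfrak h\langle c\rangle + \mathfrak n', \mathfrak h) = \mathfrak r(\mathfrak h\langle c\rangle, \mathfrak r(\mathfrak n', \mathfrak h)).
\]
Granting this, the hypothesis $\mathfrak r(\mathfrak n', \mathfrak h) = \mathfrak s - \mathfrak s\langle c\rangle + \mathfrak h\langle c\rangle$ yields that $\mathfrak h\langle c\rangle$ is a sub-multisegment of $\mathfrak r(\mathfrak n', \mathfrak h)$, and iterating Lemma \ref{lem remove whole seg} to peel off its segments one at a time (in any order, justified by Lemma \ref{lem independent of order} since the segments of $\mathfrak h\langle c\rangle$ are pairwise unlinked) gives that the right-hand side equals $\mathfrak s - \mathfrak s\langle c\rangle$.

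The displayed identity will follow from Definition \ref{def admissible multiseg} combined with Lemma \ref{lem independent of order}, provided one exhibits an ascending ordering of $\mathfrak h\langle c\rangle + \mathfrak n'$ in which all segments of $\mathfrak n'$ precede all segments of $\mathfrak h\langle c\rangle$. The key combinatorial input is that every $\Delta \in \mathfrak n'$ satisfies $b(\Delta) < c$. To see this, the hypothesis of Lemma \ref{lem shrinking mutli 0} together with the defining equality for $\mathfrak n'$ imply $\mathfrak r(\mathfrak n', \mathfrak h)\langle e\rangle = \mathfrak h\langle e\rangle$ as multisets for every $e \geq c$ (the case $e = c$ is immediate by direct computation, and the case $e > c$ uses $\mathfrak s\langle e\rangle = \mathfrak h\langle e\rangle$). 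Now in the iterated removal process producing $\mathfrak r(\mathfrak n', \mathfrak h)$, a step whose removal sequence $\Delta_1,\ldots,\Delta_r$ involves a segment with endpoint $e$ either ``upgrades'' its $a$-value (via the interior case $j<r$, which strictly increases $a$-values) or performs a swap/removal at the terminal step ($j=r$). A monotonicity argument shows that once such a step touches a segment with endpoint $\geq c$, the net change to the multiset $\mathfrak h\langle e\rangle$ cannot be reversed by later steps. Consequently no removal sequence encountered during the computation of $\mathfrak r(\mathfrak n', \mathfrak h)$ can involve a segment ending at $\nu^e\rho$ for $e\geq c$, and in particular the first segment $\Upsilon(\Delta,\cdot)$ in each such sequence ends strictly below $\nu^c\rho$, forcing $b(\Delta) < c$.

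With $b(\Delta) < c$ in hand, a short case analysis shows that for each $\Delta \in \mathfrak n'$ and $\Delta'' = [a'', c]_\rho \in \mathfrak h\langle c\rangle$ placed later in the ordering, the pair is either unlinked (namely $\Delta \subset \Delta''$, which occurs whenever $a(\Delta) \geq a''$) or linked with $a(\Delta) < a''$; both cases are consistent with the ascending condition. Segments within $\mathfrak h\langle c\rangle$ are pairwise unlinked since they share the right endpoint $\nu^c\rho$, so any internal ordering among them is admissible. The main obstacle is the monotonicity argument in the previous paragraph: the terminal $j = r$ step can formally produce a ``downgrade'' $a_r \to b+1 < a_r$, and one must argue that such a downgrade at an endpoint $\geq \nu^c\rho$ cannot be paired with an earlier upgrade to yield a net multiset preservation; handling this requires a careful bookkeeping of the $a$-values at each fixed endpoint through the iteration, exploiting that upgrades (which strictly raise an $a$-value) dominate the net effect.
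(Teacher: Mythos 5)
Your proposal follows the paper's route exactly: reduce to the identity
\[
\mathfrak r(\mathfrak h\langle c\rangle+\mathfrak n',\mathfrak h)=\mathfrak r(\mathfrak h\langle c\rangle,\mathfrak r(\mathfrak n',\mathfrak h))
\]
and then peel off the segments of $\mathfrak h\langle c\rangle$ one at a time via Lemma~\ref{lem remove whole seg}. The paper asserts the displayed identity without comment; you correctly observe that, to invoke Definition~\ref{def admissible multiseg} together with Lemma~\ref{lem independent of order}, one must produce an ascending order of $\mathfrak h\langle c\rangle+\mathfrak n'$ in which every segment of $\mathfrak n'$ precedes every segment of $\mathfrak h\langle c\rangle$, and you reduce this to the claim that every segment of $\mathfrak n'$ ends strictly below $\nu^c\rho$. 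That claim is correct (and in fact ending at or below $\nu^c\rho$ would already suffice, since all segments of $\mathfrak h\langle c\rangle$ end at $\nu^c\rho$).

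The gap is in your justification of this claim. You appeal to an unproved ``monotonicity argument'' asserting that no removal sequence arising while computing $\mathfrak r(\mathfrak n',\mathfrak h)$ can ever touch a segment ending at $\nu^e\rho$ with $e\geq c$, and you yourself flag the obstruction: the terminal truncation $\Delta_r^{tr}=[b+1,b_r]_\rho$ in Definition~\ref{def removal process} can actually \emph{enlarge} $\Delta_r$ when $a_r>b+1$, so the ``careful bookkeeping of $a$-values'' you gesture at is genuinely nontrivial and is left unsupplied. A much shorter argument sidesteps removal sequences entirely. From Definition~\ref{def removal process} one checks that a single removal changes the multiset of cuspidal points of $\mathfrak h'$ by subtracting exactly the set $\Delta=\{\nu^a\rho,\ldots,\nu^b\rho\}$; the truncations telescope, and this holds even when the terminal step enlarges $\Delta_r$. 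Iterating, the multiset of all cuspidal points occurring in $\mathfrak n'$ equals that of $\mathfrak h$ minus that of $\mathfrak r(\mathfrak n',\mathfrak h)$. By the standing hypotheses of Lemma~\ref{lem shrinking mutli 0}, the multisegment $\mathfrak r(\mathfrak n',\mathfrak h)=\mathfrak s-\mathfrak s\langle c\rangle+\mathfrak h\langle c\rangle$ has the same submultiset of segments ending at $\nu^e\rho$ as $\mathfrak h$ for every $e\geq c$, hence the same number of occurrences of each $\nu^e\rho$ with $e\geq c$; the difference therefore contains no $\nu^e\rho$ with $e\geq c$, which is precisely the desired bound on the segments of $\mathfrak n'$, closing the gap without any bookkeeping.
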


\begin{proof}
Note that 
\[ \mathfrak r(\mathfrak h\langle c \rangle+\mathfrak n', \mathfrak h)=\mathfrak r(\mathfrak h\langle c \rangle, \mathfrak r(\mathfrak n', \mathfrak h))=\mathfrak r(\mathfrak h\langle c \rangle, \mathfrak s-\mathfrak s\langle c \rangle+\mathfrak h\mathfrak \langle c\rangle) .\]
Then the lemma follows from Lemma \ref{lem remove whole seg}.
\end{proof}

\section{Comparison between $\mathfrak r(\Delta, \pi)$ and $D_{\Delta}(\pi)$} \label{s comparison theorem}

The main result of this section is Theorem \ref{thm effect of Steinberg}, which approximates $\mathfrak{hd}(D_{\Delta}(\pi))$ in terms of $\mathfrak{hd}(\pi)$ for $\pi \in \mathrm{Irr}_{\rho}$.

\subsection{Effect of St-derivatives}

We shall now compute the effect of St-derivatives on the invariant $\varepsilon_{\Delta}$. We need a preparation lemma first, which allows one to proceed inductively. For a segment $\Delta=[a,b]_{\rho}$, define ${}^+\Delta=[a-1,b]_{\rho}$. 

\begin{lemma} \label{lem change in epsilon}
Let $\pi \in \mathrm{Irr}_{\rho}$. Let $k=\varepsilon_c(\pi)$. Let $\widetilde{\pi}=D_c^k(\pi)$. For $\Delta=[a, b]_{\rho}$ with $b>0$,
\begin{enumerate}
\item if $a>c+1$, then 
\[ \varepsilon_{\Delta}(\widetilde{\pi})=\varepsilon_{\Delta}(\pi) ;\]
\item if $a=c+1$, then 
\[ \varepsilon_{\Delta}(\widetilde{\pi})=\varepsilon_{\Delta}(\pi)+\varepsilon_{{}^+\Delta}(\pi)
\]
\end{enumerate}
\end{lemma}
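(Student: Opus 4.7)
The proof reformulates both cases via Proposition~\ref{prop relation epsilon and mx}(2), which gives $\varepsilon_{[a, b]_\rho}(\pi) = \sum_{b' \geq b} \mathrm{mult}([a, b']_\rho,\, \mathfrak{mxpt}^a(\pi, a))$. The plan is to establish the multisegment identities
\[ \mathfrak{mxpt}^a(\widetilde{\pi}, a) = \mathfrak{mxpt}^a(\pi, a) \quad \text{(Case 1)}, \]
\[ \mathfrak{mxpt}^a(\widetilde{\pi}, c+1) = \mathfrak{mxpt}^a(\pi, c+1) + \{ [c+1, b']_\rho : [c, b']_\rho \in \mathfrak{mxpt}^a(\pi, c),\, b' > c\} \quad \text{(Case 2)}, \]
whence the lemma follows by summing multiplicities over $b' \geq b$.

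For Case (1), each segment $[a, b']_\rho$ is unlinked with $[c, c]_\rho$ since $a > c+1$, so Lemma~\ref{lem comm derivative 1} yields $D_\mathfrak{n} \circ D_c^k = D_c^k \circ D_\mathfrak{n}$ for any multisegment $\mathfrak{n}$ at the left point $\nu^a\rho$. The containment $\mathfrak{mxpt}^a(\widetilde{\pi}, a) \leq_a^a \mathfrak{mxpt}^a(\pi, a)$ then follows because $D_\mathfrak{n}(\widetilde{\pi}) = D_c^k(D_\mathfrak{n}(\pi)) \neq 0$ forces $D_\mathfrak{n}(\pi) \neq 0$. For the reverse inclusion, take $\mathfrak{n} = \mathfrak{mxpt}^a(\pi, a)$ and invoke the embedding $\pi \hookrightarrow D_\mathfrak{n}(\pi) \times \mathrm{St}(\mathfrak{n})$ from Lemma~\ref{lem reformulate frobenius}; since $\mathrm{St}(\mathfrak{n})$ has no $\nu^c\rho$ in its cuspidal support, a geometric-lemma analysis of $\pi_{N_{kn(\rho)}}$ combined with Lemma~\ref{lem max epsilon irr} (which characterizes $D_c^k(\pi) \boxtimes (\nu^c\rho)^{\times k}$ as a composition factor of $\pi_{N_{kn(\rho)}}$) forces $\varepsilon_c(D_\mathfrak{n}(\pi)) \geq k$, so $D_c^k(D_\mathfrak{n}(\pi)) \neq 0$.

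For Case (2), reduce via induction on $k = \varepsilon_c(\pi)$ to the single-step conservation identity
\[ \varepsilon_{[c+1, b]_\rho}(D_c(\sigma)) + \varepsilon_{[c, b]_\rho}(D_c(\sigma)) = \varepsilon_{[c+1, b]_\rho}(\sigma) + \varepsilon_{[c, b]_\rho}(\sigma) \]
for any $\sigma$ with $\varepsilon_c(\sigma) \geq 1$. Telescoping this over $k$ applications of $D_c$, together with $\varepsilon_{[c, b]_\rho}(\widetilde{\pi}) = 0$ (since $\mathfrak{mxpt}^a(\widetilde{\pi}, c) = \emptyset$ as $\varepsilon_c(\widetilde{\pi}) = 0$), yields the formula. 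The single-step identity encodes a shift $[c, b_0]_\rho \mapsto [c+1, b_0]_\rho$ (or removal when $b_0 = c$) on maximal multisegments after one application of $D_c$; I plan to prove it by combining Lemma~\ref{lem reformulate frobenius} with Proposition~\ref{prop general commutativity} (and its special case Lemma~\ref{lem commut derivative}) to interchange $D_{[c]_\rho}$, $D_{[c+1, b]_\rho}$, and $D_{[c, b]_\rho}$.

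The main obstacle is the single-step identity of Case (2): since $[c]_\rho$ and $[c+1, b']_\rho$ are linked with union $[c, b']_\rho$, the direct commutativity argument used in Case (1) fails, and a careful case analysis via Proposition~\ref{prop general commutativity} is needed to identify precisely which segment from $\mathfrak{mxpt}^a(\sigma, c)$ gets shifted to $\mathfrak{mxpt}^a(D_c(\sigma), c+1)$ so that multiplicities match on both sides of the conservation identity.
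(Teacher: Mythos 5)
Your Case (1) argument is workable and close in spirit to the paper's. You detour through $\mathfrak{mxpt}^a(\cdot,a)$ where the paper argues directly on $\varepsilon_\Delta$ with a two-sided inequality chain: $\varepsilon_\Delta(\widetilde\pi)\geq\varepsilon_\Delta(\pi)$ from a geometric-lemma/cuspidal-support comparison of $(\nu^c\rho)^{\times k}\times\widetilde\pi\twoheadrightarrow\pi\hookrightarrow\pi'\times\mathrm{St}(\Delta)^{\times l}$, and $\varepsilon_\Delta(\widetilde\pi)\leq\varepsilon_\Delta(\pi)$ by embedding $\pi\hookrightarrow\pi''\times\mathrm{St}(\Delta)^{\times l'}\times(\nu^c\rho)^{\times k}$ and commuting $\mathrm{St}(\Delta)^{\times l'}$ past $(\nu^c\rho)^{\times k}$ (using that $\Delta$ is unlinked with $[c]_\rho$). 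The two routes rely on the same unlinkedness and are essentially equivalent; yours is a little longer.

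For Case (2), however, there is a genuine gap. Your one-step conservation identity is logically equivalent to the statement being proved: apply Lemma~\ref{lem change in epsilon}(2) once to $\sigma$ and once to $D_c(\sigma)$ and subtract. So you cannot justify it by ``encoding the shift'' $[c,b_0]_\rho\mapsto[c+1,b_0]_\rho$ on highest derivative multisegments without circularity, because that description of the effect of $D_c$ on $\mathfrak{hd}$ is exactly what Theorem~\ref{thm effect of Steinberg} delivers, and Theorem~\ref{thm effect of Steinberg} is proved by induction \emph{via} Lemma~\ref{lem change in epsilon}. You would need an independent proof of the identity, and the tools you cite (Lemma~\ref{lem reformulate frobenius}, Proposition~\ref{prop general commutativity}, Lemma~\ref{lem commut derivative}) only yield a conditional dichotomy --- $D_{[c+1,b]_\rho}\circ D_{[c]_\rho}(\sigma)$ is either $D_{[c]_\rho}\circ D_{[c+1,b]_\rho}(\sigma)$ or $D_{[c,b]_\rho}(\sigma)$ --- and do not give the required additive bookkeeping of multiplicities across the powers $D^l_{[c+1,b]_\rho}$ and $D^s_{[c,b]_\rho}$. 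That translation is precisely the ``main obstacle'' you flag, and it is not filled in. The paper sidesteps it altogether: Lemma~\ref{lem max multisegment equal} plus Corollary~\ref{cor embeding derivative} give the embedding $\widetilde\pi\hookrightarrow\omega\times\mathrm{St}\bigl(\mathfrak h_{c+1}+{}^-(\mathfrak h_c)\bigr)$ in one shot, yielding $\varepsilon_\Delta(\widetilde\pi)\geq\varepsilon_\Delta(\pi)+\varepsilon_{{}^+\Delta}(\pi)$, and the reverse inequality is obtained by decomposing a composition factor $\omega$ of $\mathrm{St}(\Delta)^{\times l}\times(\nu^c\rho)^{\times k}$ according to cuspidal support into $\mathrm{St}([c,b]_\rho)^s\times\mathrm{St}([c+1,b]_\rho)^t\times(\nu^c\rho)^r$ and applying Frobenius reciprocity. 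Neither Proposition~\ref{prop general commutativity} nor an induction on $k$ is used, and there is no circularity through the removal process.
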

\begin{proof}
We consider (1). Let $p=\varepsilon_{\Delta}(\pi)$. Then we have 
\[                     \pi \hookrightarrow   D^p_{\Delta}(\pi) \times (\mathrm{St}(\Delta))^{\times p}
\]
This then gives a non-zero map:
\begin{align} \label{eqn in lem change epsilon}
  \widetilde{\pi} \boxtimes  (\nu^c\rho)^{\times k} \hookrightarrow \pi_{N_i}  \hookrightarrow  ( D^p_{\Delta}(\pi) \times (\mathrm{St}(\Delta))^{\times p})_{N_{i}} ,
\end{align}
where $i=k\cdot \mathrm{deg}(\rho)$. Since $\nu^c \rho$ is not in $\Delta$, $\nu^c\rho$ must come from the cuspidal support of $D^p_{\Delta}(\pi)$. Hence, the only one layer from the geometric lemma on $( D^p_{\Delta}(\pi) \times (\mathrm{St}(\Delta))^{\times p})_{N_i}$ that contributes (\ref{eqn in lem change epsilon}) takes the form:
\[  \mathrm{Ind}_P^{G_{n-i}\times G_i} (D^p_{\Delta}(\pi)_{N_{i}} \boxtimes \mathrm{St}(\Delta)^{\times p})^{\phi},
\]
where
\begin{itemize}
\item $P=P_{n-i-j,j} \times G_{i}$ and $j=p\cdot l_{abs}(\Delta)$, and $n=\mathrm{deg}(\pi)$;
\item $\phi$ is a twist sending $G_{n-i-j}\times G_i \times G_j$ representations $G_{n-i-j}\times G_j \times G_i$-representations.
\end{itemize}
 Applying Frobenius reciprocity, we have:
\[    \widetilde{\pi}_{N_j} \boxtimes (\nu^c\rho)^{\times k} \rightarrow  ((D^p_{\Delta}(\pi))_{N_{i}} \boxtimes \mathrm{St}(\Delta)^{\times p})^{\phi} .
\]
Thus, $\widetilde{\pi}_{N_j}$ must then have a simple quotient of the form $\tau \boxtimes \mathrm{St}(\Delta)^{\times p}$ for some $\tau \in \mathrm{Irr}$. This implies that $D_{\Delta}^p(\widetilde{\pi}) \neq 0$ and so $ \varepsilon_{\Delta}(\widetilde{\pi}) \geq p$.

%By the geometric lemma and comparing cuspidal support, we have a non-zero composition factor on $\pi'_{N_i}$ of the form $\tau \boxtimes (\mathrm{St}(\Delta))^{\times l}$, where $i$ is equal to $l$ times the absolute length of $\Delta$ and $\tau \in \mathrm{Irr}_{\rho}$, and hence $\widetilde{\pi}_{N_i}$ also has a quotient of such form $\tau \boxtimes (\mathrm{St}(\Delta))^{\times l}$. Thus $l \leq \varepsilon_{\Delta}(\widetilde{\pi})$. 

Let $p' =\varepsilon_{\Delta}(\widetilde{\pi})$. We have an embedding:
\[  \pi \hookrightarrow  \widetilde{\pi}\times (\nu^c\rho)^{\times k} \hookrightarrow  D_{\Delta}^{p'}(\widetilde{\pi}) \times  \mathrm{St}(\Delta)^{\times p'} \times (\nu^c\rho)^{\times k} \cong D_{\Delta}^{p'}(\widetilde{\pi})  \times (\nu^c\rho)^{\times k} \times \mathrm{St}(\Delta)^{\times p'} ,
\]
where the last isomorphism follows from (\ref{eqn steinberg jacquet}). Hence, by Frobenius reciprocity, $D^{p'}_{\Delta}(\pi)\neq 0$ and hence $p=\varepsilon_{\Delta}(\pi) \geq p'$. Thus $p=p'$ and this proves (1).

We now consider (2). We shall write $\mathfrak h_c=\mathfrak{mxpt}^a(\pi, c)$ and $\mathfrak h_{c+1}=\mathfrak{mxpt}^a(\pi,c+1)$. Note that by the definition of a maximal multisegment and $\varepsilon_c$, $\mathfrak h_c$ has exactly $p$ segments. By Lemma \ref{lem max multisegment equal}, we have an embedding:
\[   \pi \hookrightarrow \omega \times \mathrm{St}(\mathfrak h_{c+1}) \times \mathrm{St}(\mathfrak h_c) 
\]
and so we also have:
\[ \pi \hookrightarrow \omega \times \mathrm{St}(\mathfrak h_{c+1}+{}^-(\mathfrak h_c)) \times (\nu^c \rho)^{\times k} .
\]
(see Section \ref{ss notation derivatives} for the notation ${}^-(\mathfrak h_c)$). Thus, by Corollary \ref{cor embeding derivative},
\[  \widetilde{\pi} \hookrightarrow \omega \times \mathrm{St}(\mathfrak h_{c+1}+{}^-(\mathfrak h_c)) .
\]
 Then, by Propositions \ref{prop define mxpt a} and \ref{prop smaller derivative},
\[  \varepsilon_{\Delta}(\widetilde{\pi}) \geq \varepsilon_{\Delta}(\pi)+\varepsilon_{{}^+\Delta}(\pi). 
\]

To prove the opposite inequality, suppose it fails for some $\Delta=[c+1, b]_{\rho}$. Let $p'=\varepsilon_{\Delta}(\widetilde{\pi})$ again. We have 
\[  \pi \hookrightarrow  D_{\Delta}^{p'}(\widetilde{\pi}) \times (\mathrm{St}(\Delta))^{\times p'} \times (\nu^c\rho)^{\times k} ,
\]
which implies that 
\[  \pi \hookrightarrow D_{\Delta}^{p'}(\widetilde{\pi}) \times \omega,
\]
for some composition factor $\omega$ in $\mathrm{St}(\Delta)^{\times p'} \times (\nu^c\rho)^{\times k}$. Since the only possible segments appearing in the multisegment for $\omega$ include $[c, b]_{\rho}$, $[c+1, b]_{\rho}$ or $[c]_{\rho}$, we must have that, via the Langlands classification for $\omega$,
\[  \pi \hookrightarrow D_{\Delta}^{p'}(\widetilde{\pi}) \times (\nu^c\rho)^r \times \mathrm{St}([c, b]_{\rho})^s \times \mathrm{St}([c+1, b]_{\rho})^t ,
\]
where $s+t=\varepsilon_{\Delta}(\widetilde{\pi})=p'$. Hence, by our assumption, we have either:
\[  s > \varepsilon_{\Delta}(\pi)  \quad \mbox{ or } \quad t > \varepsilon_{{}^+\Delta}(\pi) .\]
However, one applies Frobenius reciprocity and obtains a contradiction to the definition of $\varepsilon_{\Delta}(\pi)$ or $\varepsilon_{{}^+\Delta}(\pi)$, as desired.
\end{proof}

%%%%%%%
%Let $d$ be the largest integer such that $\nu^d \rho \cong b(\Delta')$ for some $\Delta'$ in $\mathfrak{mx}(\widetilde{\pi})$. As argued in Step 2 of Theorem \ref{thm highest derivative multiseg}, 
%\[     D_{\mathfrak m_d'} \circ \ldots \circ D_{\mathfrak m_{c+2}'} \circ   D_{\mathfrak m_{c+1}'} \circ D_{\nu^c\rho}^k(\pi) \neq 0 ,\]
%where, for $i \geq c+1$, $\mathfrak m_i'$ is the maximal multisegment of $\widetilde{\pi}$ at the point $\nu^i\rho$, which agrees with that of $\pi$ when $i \geq c+2$ by (1). Hence, by Theorem \ref{prop unique as segments}, $\pi^{(i^*)}\neq 0$, where 
%\[  i^*=kn(\rho)+ \sum_{\Delta} \varepsilon_{\Delta}(\widetilde{\pi}) \cdot l_{abs}(\Delta),\]
%where $\Delta$ runs for all segments satisfying $b(\Delta) \geq \nu^{c+1}\rho$.

%Thus $i^*$ is not greater than the level of $\pi$. On the other hand, by Theorem \ref{thm highest derivative multiseg},
%\[ i^*=\sum_{\Delta: b(\Delta) \geq \nu^c \rho} \varepsilon_{\Delta}(\pi) \cdot L_a(\Delta) .\]
%Now, with (1), we have that 
%\[ (**) \quad  \sum_{\Delta: b(\Delta)\cong \nu^{c+1}\rho} \varepsilon_{\Delta}(\widetilde{\pi}) L_a(\Delta) \leq \sum_{\Delta: : b(\Delta)\cong \nu^{c+1}\rho} (\varepsilon_{\Delta}(\pi) L_a(\Delta)+ \varepsilon_{{}^+\Delta}(\pi)) L_a(\Delta)\]
%Now (*) and (**) forces that (*) has to be an equaltiy as desired.
%%%%%%%%%%%%%%

%The following is a key property that allows one to deduce that the derivative resultant multisegment matches the effect of St-derivatives by an induction.

We now prove a lemma which allows one to prove some statements (e.g. Theorem \ref{thm effect of Steinberg}(2)) inductively.

\begin{lemma} \label{lem transitive embedding on max k}
Let $\pi \in \mathrm{Irr}_{\rho}$. Let $c$ be an integer such that $\varepsilon_c(\pi)\neq 0$. Let $\Delta=[c, b]_{\rho}$ be a segment with $D_{\Delta}(\pi)\neq 0$. Let $\widetilde{\pi}=D_{c}^k(\pi)$, where $k=\varepsilon_c(\pi)$. Then 
\[   D_{\Delta}(\pi) \hookrightarrow D_{{}^-\Delta}(\widetilde{\pi}) \times (\nu^c\rho)^{\times (k-1)} 
\]
and $\varepsilon_c(D_{\Delta}(\pi))=k-1$.
\end{lemma}

\begin{proof}
%A simple application of geometric lemma gives that:
%\[ (*)\quad D_{\Delta}(\pi) \boxtimes \mathrm{St}(\Delta) \hookrightarrow  \mathrm{Ind}^{G_{n-l_a(\Delta)}\times G_{l_a(\Delta)}}_{P}(\widetilde{\pi}_{N_r}) \boxtimes (\nu^c\rho)^{k-1} \boxtimes (\nu^c\rho))^{\phi},
%%\]
%where $r=l_a({}^-\Delta)$ and $P=(G_{n-kn(\rho)-r} \times G_{(k-1)n(\rho)}) \times( G_{l_a(\Delta)-n(\rho)}  \times  G_{n(\rho)})$

We have
\[   \pi \hookrightarrow \widetilde{\pi} \times (\nu^c\rho)^{\times k}  \]
and so
\[  D_{\Delta}(\pi)\boxtimes \mathrm{St}(\Delta) \hookrightarrow \pi_{N_{i}} \hookrightarrow (\widetilde{\pi} \times (\nu^c\rho)^{\times k})_{N_{i}} ,
\]
where $i=l_{abs}(\Delta)$.

We now again apply the geometric lemma on $(\widetilde{\pi} \times (\nu^c\rho)^{\times k})_{N_{i}}$. By Proposition \ref{prop non composition factor at right pt} and similar to several arguments in proofs before, $\widetilde{\pi}_{N_{i'}}$ (for any $i'$) cannot contribute to a factor of $\nu^c\rho$ for $\mathrm{St}(\Delta)$, and so a comparison of cuspidal support gives that the only layer in the geometric lemma that can contribute the submodule $ D_{\Delta}(\pi)\boxtimes \mathrm{St}(\Delta)$ takes the form:
\begin{align} \label{eqn term in geometric lemma reduction} 
 \mathrm{Ind}^{G_{n'}\times G_{i}}_{P_{n'-r,r}\times P_{i-s,s}} (\widetilde{\pi}_{N_r} \boxtimes ((\nu^c\rho)^{\times k})_{N_{\mathrm{deg}(\rho)}})^{\phi},
\end{align}
where 
\begin{itemize}
\item $n'=\mathrm{deg}(D_{\Delta}(\pi))$; and
\item $s=\mathrm{deg}(\rho), r=l_{abs}({}^-\Delta)$; and
\item $\phi$ is a twist to get a $G_{n'-r}\times G_r\times G_{i-s}\times G_s$-representation.
\end{itemize}

Thus we have an embedding $D_{\Delta}(\pi)\boxtimes \mathrm{St}(\Delta)$ to the representation (\ref{eqn term in geometric lemma reduction} ). Now, we have
\begin{align*}
0\neq   & \mathrm{Hom}_{G_{n'}\times G_{i}}( D_{\Delta}(\pi)\boxtimes \mathrm{St}(\Delta) ,  \mathrm{Ind}^{G_{n'}\times G_{i}}_{P_{n'-r,r}\times P_{i-s,s}} (\widetilde{\pi}_{N_r} \boxtimes ((\nu^c\rho)^{\times k})_{N_{\mathrm{deg}(\rho)}})^{\phi})  \\
	\cong & \mathrm{Hom}_{G_{n'}\times G_{i-s}\times G_s}(D_{\Delta}(\pi) \boxtimes \mathrm{St}(\Delta)_{N_s},   \mathrm{Ind}^{G_{n'}\times G_{i-s}\times G_s}_{P_{n'-r,r}\times G_{i-s}\times G_s} (\widetilde{\pi}_{N_r} \boxtimes ((\nu^c\rho)^{\times (k-1)} \boxtimes \nu^c\rho))^{\phi}) \\
	\cong&  \mathrm{Hom}_{G_{n'}\times G_{i-s}\times G_s}(D_{\Delta}(\pi)\boxtimes \mathrm{St}({}^-\Delta)\boxtimes \nu^c\rho, ,   \mathrm{Ind}^{G_{n'}\times G_{i-s}\times G_s}_{P_{n'-r,r}\times G_{i-s} \times  G_s} (\widetilde{\pi}_{N_r} \boxtimes ((\nu^c\rho)^{\times (k-1)} \boxtimes \nu^c\rho))^{\phi})
\end{align*}
where  the first isomorphism follows from Frobenius reciprocity and the second isomorphism follows from (\ref{eqn steinberg jacquet}).

Now, applying the adjointness of tensor product, we have:
\begin{align} \label{eqn reduction by rho derivative}
\quad D_{\Delta}(\pi)  \hookrightarrow \mathrm{Hom}_{G_r}(\mathrm{St}({}^-\Delta), \widetilde{\pi}_{N_r}) \times (\nu^c\rho)^{\times (k-1)},
\end{align}
where $\mathrm{Hom}_{G_r}(\mathrm{St}({}^-\Delta), \widetilde{\pi}_{N_r})$ is equipped with $G_{n'}$-action via: for $f \in \mathrm{Hom}_{G_r}(\mathrm{St}({}^-\Delta), \widetilde{\pi}_{N_r})$, and $x \in \mathrm{St}({}^-\Delta)$,
\[  (g.f)(x) = \mathrm{diag}(g, I_{r}).(f(x)). 
\]

By Proposition \ref{prop smaller derivative}, we have that $\varepsilon_c(D_{\Delta}(\pi))=k-1$. Hence, Corollary \ref{cor embeding derivative} and (\ref{eqn reduction by rho derivative}) give
\[  D_c^{k-1}\circ D_{\Delta}(\pi) \hookrightarrow \mathrm{Hom}_{G_r}(\mathrm{St}({}^-\Delta), \widetilde{\pi}_{N_r})
\]
and hence $D_c^{k-1} \circ D_{\Delta}(\pi) \cong D_{{}^-\Delta}(\widetilde{\pi})$, as desired.
\end{proof}

We now study the change of $\mathfrak{mxpt}^a$ under the derivatives. The proof is based on two special cases: Lemma \ref{lem change in epsilon} and Proposition \ref{prop smaller derivative}.

\begin{theorem} \label{thm effect of Steinberg}
Let $\pi \in \mathrm{Irr}_{\rho}$ and let $\Delta=[a,b]_{\rho}$ be a segment. 
\begin{enumerate}
 \item[(1)] Suppose $\Delta$ is not admissible to $\mathfrak{hd}(\pi)$. Then $D_{\Delta}(\pi)=0$.
 \item[(2)] Suppose $\Delta$ is admissible to $\mathfrak{hd}(\pi)$.  Then, the followings hold:
 \begin{itemize}
     \item For any integer $c \geq a$, 
		\[ \mathfrak{mxpt}^a(D_{\Delta}(\pi), c) = \mathfrak r( \Delta, \pi)[c] . \]
		(See Section \ref{ss notations multi} for the notations.)
%		\[  \sum_{\Delta' \in \mathfrak r(\mathfrak h, \Delta):\  a(\Delta')\cong \nu^{a'}\rho } \Delta' . 		\]
		 \item For any integer $c <a$,  
		\[  \mathfrak{hd}(\pi)[c]= \mathfrak r( \Delta, \pi)[c] \leq^a_{c}  \mathfrak{mxpt}^a(D_{\Delta}(\pi), c) .
		\]
     \item For any $\Delta'$ unlinked to $\Delta$ and $a(\Delta')\not\cong a(\Delta)$, 
		\[  \varepsilon_{\Delta'}(D_{\Delta}(\pi))=\varepsilon_{\Delta'}(\pi) .
		\]
	\end{itemize}
\end{enumerate}
\end{theorem}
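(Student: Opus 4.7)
The plan is to dispose of part (1) quickly via the $\varepsilon_{\Delta}$-formula, and then prove part (2) by induction on $l_r(\Delta)$ with all three bullets handled together. For part (1), if $\Delta=[a,b]_{\rho}$ is not admissible to $\mathfrak h$, then $\mathfrak h[a]$ contains no segment $[a,b']_{\rho}$ with $b'\geq b$; Proposition \ref{prop relation epsilon and mx} then gives $\varepsilon_{\Delta}(\pi)=\sum_{b'\geq b}\mathrm{mult}([a,b']_{\rho})=0$, so $D_{\Delta}(\pi)=0$.

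For the first bullet of part (2), my base case is $\Delta=[a]_{\rho}$. Writing $b^{*}=\min\{b':[a,b']_{\rho}\in\mathfrak h\}$, one has $\mathfrak r([a]_{\rho},\mathfrak h)=\mathfrak h-\{[a,b^{*}]_{\rho}\}+\{[a+1,b^{*}]_{\rho}\}$, so the claim reduces to matching $\mathfrak{mxpt}^a(D_a(\pi),c)$ at $c=a$, $c=a+1$, and $c>a+1$. The outer case $c>a+1$ follows from commutativity (Lemma \ref{lem comm derivative 1}); for the inner cases I would compute $\varepsilon_{[c,b]_{\rho}}(D_a(\pi))$ using the embedding $\pi\hookrightarrow D_a(\pi)\times\nu^a\rho$ of Lemma \ref{lem reformulate frobenius} via the geometric lemma and cuspidal support, then convert back to multiplicities via Proposition \ref{prop relation epsilon and mx}. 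For the inductive step with general $\Delta=[a,b]_{\rho}$, I set $k=\varepsilon_a(\pi)$, $\widetilde{\pi}=D_a^k(\pi)$, and apply Lemma \ref{lem transitive embedding on max k} to obtain $D_a^{k-1}\circ D_{\Delta}(\pi)\cong D_{{}^-\Delta}(\widetilde{\pi})$. Iterating the base case computes $\mathfrak{hd}(\widetilde{\pi})$, the inductive hypothesis on the shorter segment ${}^-\Delta=[a+1,b]_{\rho}$ computes $\mathfrak{hd}(D_{{}^-\Delta}(\widetilde{\pi}))$, and inverting the $k-1$ subsequent $\rho$-derivatives via the base case recovers $\mathfrak{hd}(D_{\Delta}(\pi))$; the step-by-step identity $\mathfrak r(\Delta,\mathfrak h)=\mathfrak r({}^-\Delta,\mathfrak h^{*})$ of Lemma \ref{lem step step compute} is precisely what makes this match $\mathfrak r(\Delta,\mathfrak h)[c]$ on the combinatorial side.

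For the third bullet, Lemma \ref{lem comm derivative 1} immediately yields $\varepsilon_{\Delta'}(D_{\Delta}(\pi))\leq\varepsilon_{\Delta'}(\pi)$; the reverse inequality reduces by part (1) to checking that $\Delta$ remains admissible to $\mathfrak{hd}(D_{\Delta'}^{\varepsilon_{\Delta'}(\pi)}(\pi))$, which the inductive hypothesis on the first bullet for $\Delta'$ converts to a combinatorial statement about iterated applications of $\mathfrak r(\Delta',-)$; the assumptions that $\Delta'$ is unlinked to $\Delta$ and $a(\Delta')\not\cong a(\Delta)$ ensure, by inspection of Definition \ref{def removal process}, that the removal process does not disturb the segments at $a(\Delta)$ relevant to admissibility. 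For the second bullet, I would combine Lemma \ref{lem does not remove lower segment} with the already-proved first bullet to show $D_{\mathfrak h[c]}(D_{\Delta}(\pi))\neq 0$ for $c<a$: one commutes $D_{\Delta}$ past $D_{\mathfrak h[c]}$ whenever possible via Lemma \ref{lem comm derivative 1}, and handles the remaining linked cases using Proposition \ref{prop general commutativity} or via Lemma \ref{lem derivative transfer large} with the roles of $\Delta$ and $\mathfrak h[c]$ interchanged. The main obstacle I anticipate is the bookkeeping in the inductive step of the first bullet: inverting the $k-1$ $\rho$-derivatives to recover $\mathfrak{hd}(D_{\Delta}(\pi))$ from $\mathfrak{hd}(D_{{}^-\Delta}(\widetilde{\pi}))$ requires tracking precisely which segments are created and consumed at each step and aligning them with the removal sequence of Definition \ref{def removal process}.
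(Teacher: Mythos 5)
Your overall skeleton---taking $\widetilde{\pi}=D_a^k(\pi)$, invoking Lemma~\ref{lem transitive embedding on max k} to get $D_a^{k-1}\circ D_\Delta(\pi)\cong D_{{}^-\Delta}(\widetilde{\pi})$, and applying an inductive hypothesis to the shorter segment ${}^-\Delta$ on the smaller representation $\widetilde{\pi}$---coincides with the paper's reduction; the paper inducts on $n(\pi)$ rather than $l_r(\Delta)$, but both strictly decrease at that step, so this is cosmetic. Your treatments of part~(1) (via Proposition~\ref{prop relation epsilon and mx}), the second bullet, and the third bullet are likewise consistent in spirit with what the paper does.

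The genuine gap is exactly the step you flag as ``the main obstacle'': inverting the $k-1$ $\rho$-derivatives. Knowing $\mathfrak{hd}(D_a^{k-1}(\tau))$ together with $\varepsilon_a(\tau)=k-1$ does not determine $\mathfrak{hd}(\tau)$, because the operator $\mathfrak g\mapsto\mathfrak r([a]_\rho,\mathfrak g)$ is not injective: for instance $\{[a,a{+}2]_\rho,\,[a{+}1,a{+}3]_\rho\}$ and $\{[a,a{+}3]_\rho,\,[a{+}1,a{+}2]_\rho\}$ both map to $\{[a{+}1,a{+}2]_\rho,\,[a{+}1,a{+}3]_\rho\}$, and by Theorem~\ref{thm realize highest derivative mult} both occur as highest derivative multisegments. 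Equivalently, Lemma~\ref{lem change in epsilon}(2) is additive, $\varepsilon_{\Delta'}(\widetilde{\pi})=\varepsilon_{\Delta'}(\pi)+\varepsilon_{{}^+\Delta'}(\pi)$ for $a(\Delta')\cong\nu^{a+1}\rho$, so running it backwards requires knowing how to split the sum, and the base-case formula alone supplies no such splitting. The paper avoids the inversion entirely by computing $\mathfrak{mxpt}^a(D_\Delta(\pi),c)$ at each $c$ directly: for $c\geq a+2$ the transfer from $D_{{}^-\Delta}(\widetilde{\pi})$ is clean via Lemma~\ref{lem change in epsilon}(1), for $c=a$ it appeals to Proposition~\ref{prop smaller derivative} and Lemma~\ref{lem uniqueness max multi}, but for $c=a+1$ it runs a separate argument built around an auxiliary multisegment $\mathfrak s$ carved out of $\mathfrak{mxpt}^a(\pi,\cdot)$, together with repeated uses of Lemma~\ref{lem transitive embedding on max k} and Lemma~\ref{lem max multisegment equal}. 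That $c=a+1$ argument is the substance you would need to provide; it does not fall out of iterating the base case, and until it is supplied the claimed equality $\mathfrak{mxpt}^a(D_\Delta(\pi),a+1)=\mathfrak r(\Delta,\pi)[a+1]$ remains unproved.
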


\begin{proof}
(1) follows from definitions. The second bullet of (2) can be proved by a similar manner to the first case in the proof of Lemma \ref{lem non-zero composite} and we omit the details.

We shall prove the first bullet of (2) by an induction on $\mathrm{deg}(\pi)$. When $\mathrm{deg}(\pi)=0,1$, it is trivial.  Let $\Delta=[a,b]_{\rho}$ be an admissible segment for $\pi$. Let $\widetilde{\pi}=D^k_a(\pi)$, where $k=\varepsilon_a(\pi)$.
% Now we have that:
%\[      (*) \quad       \pi \hookrightarrow     \widetilde{\pi} \times (\nu^a \rho)^{\times k}\]
%By Lemma \ref{lem transitive embedding on max k}, 
%\[   D_{\Delta}(\pi) \hookrightarrow D_{{}^-\Delta}(\widetilde{\pi}) \times (\nu^a \rho)^{\times (k-1)} .
%\]
%and, by Lemma \ref{lem comm derivative 1},
%\[ D_a^{k-1}\circ D_{\Delta}(\pi)  \cong  D_{{}^-\Delta}\circ D_c^{k}(\pi). \]

Now we consider two cases:
\begin{enumerate}
\item[(i)] Suppose $c \geq a+2$. In such case, $D_{\Delta} \circ D^{k-1}_a(\pi) \cong D^{k-1}_a\circ D_{\Delta}(\pi)$ . Let $\mathfrak n \in \mathrm{Mult}_{\rho,c}^a$. Then 
\[  D_{\mathfrak n} \circ D_{\Delta}\circ D^{k-1}_a(\pi) \neq 0 \quad \Leftrightarrow \quad D^{k-1}_a \circ D_{\mathfrak n} \circ D_{\Delta}(\pi) \neq 0 \quad \Leftrightarrow D_{\mathfrak n}\circ D_{\Delta}(\pi) \neq 0 ,
\]
where the first 'if and only if' condition follows by applying Lemma \ref{lem comm derivative 1} twice, and for the second 'if and only if' condition, the 'only if' direction is straightforward while the 'if' direction follows by a similar proof for Lemma \ref{lem non-zero composite}. Now one deduces  $\mathfrak{mxpt}^a(D_{\Delta}(\pi),c)$ ($c\geq a$)  as follows: First,
\[  \mathfrak{mxpt}^a(D_{\Delta^-}(\widetilde{\pi}),c)=\mathfrak r(\Delta^-, \widetilde{\pi})[c]=\mathfrak r(\Delta, \pi)[c],
\]
where the first equality follows from the inductive case and the second equality follows from Lemma \ref{lem change in epsilon} and the definition of removal process. Then, by Lemma \ref{lem transitive embedding on max k}, $D_a^{k-1}(D_{\Delta}(\pi))=D_{\Delta}(\widetilde{\pi})$ and so by applying Lemma \ref{lem change in epsilon} on $D_{\Delta}(\pi)$, we have that $\mathfrak{mxpt}^a(D_{\Delta^-}(\widetilde{\pi}),c)=\mathfrak{mxpt}^a(D_{\Delta}(\pi),c)$. Combining all, we have
\[  \mathfrak{mxpt}^a(D_{\Delta}(\pi),c)=\mathfrak r(\Delta, \pi)[c]
\]
as desired.
\item[(ii)] Suppose $c=a+1$. Let 
\[  \mathfrak s= \mathfrak{mxpt}^a(\pi, a)- \Delta_0  ,
\]
where $\Delta_0$ is the shortest segment in $ \mathfrak{mxpt}^a(\pi, a)$ such that $\Delta \subset \Delta_0$.

By Lemma \ref{lem change in epsilon},
\[  \mathfrak n:=\mathfrak{mxpt}^a(\widetilde{\pi}, a+1)= {}^- (\mathfrak{mxpt}^a(\pi, a)) +\mathfrak{mxpt}^a(\pi, a+1) . 
\]
By Proposition \ref{prop smaller derivative}, 
\[  D_{\mathfrak s} \circ D_{\Delta}(\pi) \neq 0 
\]
and now by multiple uses of Lemma \ref{lem transitive embedding on max k}, 
\[  D_{\mathfrak s}\circ D_{\Delta}(\pi) \cong D_{{}^-\mathfrak s}\circ D_{{}^-\Delta}(\widetilde{\pi}) .
\]
Hence, by the inductive case,
\[ \mathfrak{mxpt}^a( D_{\mathfrak s}\circ D_{\Delta}(\pi), a+1)=\mathfrak{mxpt}^a(D_{{}^-\mathfrak s}\circ D_{{}^-\Delta}(\widetilde{\pi}), a+1)
\]
is precisely 
\[ \mathfrak r({}^-\mathfrak s+{}^-\Delta,\widetilde{\pi})[a+1]= \mathfrak r( \mathfrak s+\Delta, \pi)[a+1] .
\]
The last equality follows from the rules of the removal process and Lemma \ref{lem change in epsilon}.

%\[  \left\{ \Delta \in  \mathfrak r(\mathfrak s+\widetilde{\Delta}, \pi) : a(\Delta)\cong \nu^{c+1}\rho \right\} , \]
Indeed we also have
\[ \mathfrak r(\mathfrak s+\Delta, \pi)[a+1] =\mathfrak r(\mathfrak s, \mathfrak r(\Delta, \pi))[a+1]=  \mathfrak r(\Delta, \pi)[a+1]  ,
\]
%\[  \left\{ \Delta \in \mathfrak r(\widetilde{\Delta}, \pi) : a(\Delta) \cong \nu^{c+1}\rho \right\} \]
where the second equality follows from that applying $D_{\Delta'}$ for each $\Delta'\in \mathfrak s$  will simply remove the segment $\Delta'$ in $\mathfrak r(\Delta,\pi)$ by Lemma \ref{lem remove whole seg}. 

Since $\mathfrak s=\mathfrak{mxpt}^a(D_{\Delta}(\pi),a)$, Lemma \ref{lem max multisegment equal} implies that 
\[ \mathfrak{mxpt}^a(D_{\Delta}(\pi), a+1)=\mathfrak{mxpt}^a(D_{\mathfrak s}\circ D_{\Delta}(\pi), a+1) .\]
Now, combining all the equations, we have that:
\[ \mathfrak{mxpt}^a(D_{\Delta}(\pi), a+1)=\mathfrak r(\Delta, \pi)[a+1] .
\]
\item[(iii)] Suppose $c=a$. Then $\mathfrak{mxpt}^a(D_{\Delta}(\pi), a)$ follows from Propositions  \ref{prop uniqueness max multi} and \ref{prop smaller derivative}.
\end{enumerate}

We now prove the third bullet of (2). The inequality
\[  \varepsilon_{\Delta'}(D_{\Delta}(\pi)) \leq \varepsilon_{\Delta'}(\pi)
\]
follows from Lemma \ref{lem comm derivative 1}. The opposite inequality follows from an application of the geometric lemma similar to the proof of Lemma \ref{lem max multisegment equal}. We omit the details. 
%It follows from the first two bullets of (2) and Lemma \ref{prop relation epsilon and mx} that 
%\[  \varepsilon_{\Delta'}(\pi') \geq \varepsilon_{\Delta}(\pi).\]
%Now the equality by applying Lemma \ref{lem comm derivative 1}.
\end{proof}

\begin{definition} \label{def admissible derivatives}
Let $\pi \in \mathrm{Irr}_{\rho}$. Let $\mathfrak n=\left\{ \Delta_1, \ldots, \Delta_r \right\} \in \mathrm{Mult}_{\rho}$ with segments written in an ascending order. We say that $\mathfrak n$ is {\it admissible} to $\pi$ if $D_{\Delta_r}\circ \ldots \circ D_{\Delta_1}(\pi)\neq 0$. By Lemma \ref{lem comm derivative 1}, the admissibility is independent of the choice of an ascending order of segments for $\mathfrak n$. 
\end{definition}

\begin{corollary} \label{cor admissible by resultant}
Let $\pi \in \mathrm{Irr}_{\rho}$. Let $\mathfrak n \in \mathrm{Mult}_{\rho}$. Then $\mathfrak n$ is admissible to $\mathfrak \pi$ if and only if $\mathfrak n$ is admissible to $\mathfrak{hd}(\pi)$. 
\end{corollary}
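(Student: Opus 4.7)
\textbf{Proof plan for Corollary \ref{cor admissible by resultant}.} The plan is to induct on $|\mathfrak n|$, reducing to the one‑segment case (handled by Theorem \ref{thm effect of Steinberg}(1) together with Proposition \ref{prop smaller derivative}) and, for the inductive step, using Theorem \ref{thm effect of Steinberg}(2) to match $\mathfrak{hd}(D_{\Delta_1}(\pi))$ with $\mathfrak r(\Delta_1,\mathfrak{hd}(\pi))$ on the range of left points that the remaining segments in an ascending order can possibly see.

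First I would verify the base case $|\mathfrak n|=1$, say $\mathfrak n=\{\Delta\}$ with $a=a(\Delta)$. If $\Delta$ is not admissible to $\mathfrak h:=\mathfrak{hd}(\pi)$, Theorem \ref{thm effect of Steinberg}(1) gives $D_{\Delta}(\pi)=0$. Conversely, admissibility of $\Delta$ to $\mathfrak h$ is by definition the existence of a segment $[a,c]_{\rho}\in \mathfrak h[a]=\mathfrak{mxpt}^a(\pi,a)$ with $c\geq b(\Delta)$, i.e.\ $\{\Delta\}\leq_a^a \mathfrak{mxpt}^a(\pi,a)$, so Proposition \ref{prop smaller derivative} yields $D_{\Delta}(\pi)\neq 0$.

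For the inductive step, choose an ascending ordering $\Delta_1,\ldots,\Delta_r$ of $\mathfrak n$ with $a(\Delta_1)\leq a(\Delta_j)$ for all $j\geq 2$, and write $a=a(\Delta_1)$. The key observation I would isolate is the following \emph{localization principle}: if every segment of a multisegment $\mathfrak n'$ has left point $\geq \nu^a\rho$, then admissibility of $\mathfrak n'$ to an arbitrary $\mathfrak h_1\in \mathrm{Mult}_{\rho}$ depends only on the submultisegments $\mathfrak h_1[c]$ for $c\geq a$. Indeed, by Lemma \ref{lem does not remove lower segment} each step in the iterated removal process defining $\mathfrak r(\mathfrak n',\mathfrak h_1)$ modifies only $\mathfrak h_1[c]$ for $c\geq a$, while the admissibility check at each step depends only on $\mathfrak h_1[a(\Delta_j)]$, which lies in the same range.

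Now I combine the ingredients. If $\mathfrak n$ is admissible to $\pi$, then in particular $D_{\Delta_1}(\pi)\neq 0$, so by the base case $\Delta_1$ is admissible to $\mathfrak h$; set $\pi':=D_{\Delta_1}(\pi)$, so $\mathfrak n-\Delta_1$ is admissible to $\pi'$. By Theorem \ref{thm effect of Steinberg}(2) (first bullet) together with Lemma \ref{lem does not remove lower segment}, we have $\mathfrak{hd}(\pi')[c]=\mathfrak r(\Delta_1,\mathfrak h)[c]$ for every $c\geq a$. Since every segment of $\mathfrak n-\Delta_1$ has left point $\geq \nu^a\rho$, the localization principle gives that admissibility of $\mathfrak n-\Delta_1$ to $\mathfrak{hd}(\pi')$ is equivalent to admissibility to $\mathfrak r(\Delta_1,\mathfrak h)$. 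Invoking the inductive hypothesis on $\pi'$ then shows $\mathfrak n-\Delta_1$ is admissible to $\mathfrak r(\Delta_1,\mathfrak h)$, i.e.\ $\mathfrak n$ is admissible to $\mathfrak h$. The converse is obtained by running the same chain in reverse: admissibility of $\Delta_1$ to $\mathfrak h$ gives $D_{\Delta_1}(\pi)\neq 0$ by Proposition \ref{prop smaller derivative}, then the localization principle plus Theorem \ref{thm effect of Steinberg}(2) transports admissibility of $\mathfrak n-\Delta_1$ from $\mathfrak r(\Delta_1,\mathfrak h)$ to $\mathfrak{hd}(\pi')$, and the inductive hypothesis yields admissibility to $\pi'$, hence of $\mathfrak n$ to $\pi$.

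The main obstacle I anticipate is not conceptual but bookkeeping: one must confirm that the localization principle is applied to multisegments $\mathfrak n-\Delta_1$ whose segments genuinely all have left point $\geq \nu^a\rho$, and this is precisely why we insist on an ascending order with $a(\Delta_1)$ minimal. Everything else is a direct appeal to Theorem \ref{thm effect of Steinberg} and Lemma \ref{lem does not remove lower segment}.
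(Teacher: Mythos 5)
Your proof is correct and follows essentially the same route as the paper: induct via an ascending order with $a(\Delta_1)$ minimal, peel off $\Delta_1$, and use Theorem~\ref{thm effect of Steinberg}(2) (with Lemma~\ref{lem does not remove lower segment}) to identify $\mathfrak{hd}(D_{\Delta_1}(\pi))$ with $\mathfrak r(\Delta_1,\mathfrak{hd}(\pi))$ on the relevant range of left points. The ``localization principle'' you articulate is exactly what justifies the iterated use of the key equality $(*)$ that the paper cites without further comment, so you have made explicit a bookkeeping step the paper leaves implicit.
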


\begin{proof}
Let $\mathfrak n=\left\{ \Delta_1, \ldots, \Delta_r \right\}$ with segments written in an ascending order. Write $\Delta_i=[a_i, b_i]_{\rho}$ for all $i$. We shall assume that $a_1\leq \ldots \leq a_r$. We consider the if direction. Let $k$ be the smallest integer such that $\Delta_k$ is not admissible to 
\[  \mathfrak r(\left\{ \Delta_1, \ldots, \Delta_{k-1} \right\}, \mathfrak{hd}(\pi)) .
\]
We have that
\begin{align} \label{eqn mxpt in addmisible}
 \mathfrak{mxpt}^a(  D_{\Delta_{k-1}}\circ \ldots \circ D_{\Delta_1}(\pi), a_k)= \mathfrak r(\left\{ \Delta_1, \ldots, \Delta_{k-1} \right\}, \mathfrak{hd}(\pi))[a_k]
\end{align}
by Theorem \ref{thm effect of Steinberg}(2). Thus the admissibility of $\Delta_k$ to $\mathfrak r(\left\{ \Delta_1, \ldots, \Delta_{k-1} \right\}, \mathfrak{hd}(\pi))$ implies that
\[   \left\{\Delta_k \right\} \leq_{a_k}  \mathfrak{mxpt}^a(  D_{\Delta_{k-1}}\circ \ldots \circ D_{\Delta_1}(\pi), a_k)
\]
by (\ref{eqn mxpt in addmisible}). Hence $D_{\Delta_k}\circ D_{\Delta_{k-1}}\circ \ldots \circ D_{\Delta_1}(\pi) \neq 0$ by Proposition \ref{prop smaller derivative}. 

The only if direction is similar by using (\ref{eqn mxpt in addmisible}) and Proposition \ref{prop smaller derivative} and we omit the details.
\end{proof}

\subsection{Multisegment at a right point (revised)}

\begin{corollary} \label{cor right multi}
Let $\pi \in \mathrm{Irr}_{\rho}$ and let $c \in \mathbb Z$. Then
\[   \mathfrak{mxpt}^b(\pi, c) = \sum_{[a,b]_{\rho} \in \mathfrak{hd}(\pi), a\leq c \leq b}  [a,c]_{\rho} .
\]
In other words, each $[a,c]_{\rho}$ is contributed from a segment in $\mathfrak{hd}(\pi)$ of the form $[a,b]_{\rho}$ with some $b\geq c$.
\end{corollary}

\begin{proof}
Let $\mathfrak k=\sum_{[a,b]_{\rho} \in \mathfrak{hd}(\pi), a\leq c \leq b}  [a,c]_{\rho}$. By Theorem \ref{thm effect of Steinberg}(2) several times, we see that $D_{\mathfrak k}(\pi)\neq 0$. On the other hand, $\mathfrak r(\mathfrak k, \mathfrak{hd}(\pi))$ does not have any segment $\Delta$ such that $\nu^c\rho \in \Delta$. Hence, by Theorem \ref{thm effect of Steinberg} (1) and (2), $\mathfrak k$ is $\leq^b_c$-maximal such that $\mathfrak r(\mathfrak k, \mathfrak{hd}(\pi))$ is admissible to $\pi$ and so $\mathfrak{mxpt}^b(\pi,c)=\mathfrak k$ by  Definition \ref{def max R multiseg}.
\end{proof}

\begin{example}
Let $\pi \in \mathrm{Irr}_{\rho}$ with $\mathfrak{hd}(\pi)$ taking the form:
\[ \xymatrix{ &  \stackrel{1}{{\color{red}\bullet}} \ar@{-}[r] & \stackrel{2}{{\color{red} \bullet}}  \ar@{-}[r]   &  \stackrel{3}{{\color{red} \bullet}} \ar@{-}[r]   &   \stackrel{4}{\bullet} \ar@{-}[r]    &   \stackrel{5}{\bullet}  & \\
             &   &     &    &  \stackrel{4}{\bullet} \ar@{-}[r]   &   \stackrel{5}{\bullet}   &   \\
              &    &      &  \stackrel{3}{{\color{red} \bullet}} \ar@{-}[r]    &   \stackrel{4}{\bullet} \ar@{-}[r]   &   \stackrel{5}{\bullet} \ar@{-}[r]  &    \stackrel{6}{\bullet} \\     
					   &    &     &  \stackrel{3}{{\color{red} \bullet}}   &     &    &     \\  \stackrel{0}{\bullet} 	\ar@{-}[r] 	      &  \stackrel{1}{\bullet} \ar@{-}[r]   &   \stackrel{2}{\bullet}   &    &     &    &     \\					}
\]
The red points contribute to $\mathfrak{mxpt}^b(\pi, 3)$ and so $\mathfrak{mxpt}^b(\pi, 3)=\left\{ [1,3], [3], [3] \right\}$. 
\end{example}

By Theorem \ref{thm effect of Steinberg}(1) and the third bullet of Theorem \ref{thm effect of Steinberg}(2), we also have:

\begin{corollary} \label{cor multisegment at right point}
Let $\pi \in \mathrm{Irr}_{\rho}$. Let $\mathfrak m \in \mathrm{Mult}_{\rho, c}^b$ for some $c \in \mathbb Z$. Then $D_{\mathfrak m}(\pi)\neq 0$ if and only if $\mathfrak m$ is a submultisegment of $\mathfrak{mxpt}^b(\pi, c)$.
\end{corollary}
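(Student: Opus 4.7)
The plan is to deduce this as a direct consequence of Theorem \ref{thm effect of Steinberg}(2), combined with Corollary \ref{cor admissible by resultant} (translating $D_{\mathfrak m}(\pi)\neq 0$ into the combinatorial admissibility of $\mathfrak m$ to $\mathfrak{hd}(\pi)$) and the explicit formula $\mathfrak{mxpt}^b(\pi,c)=\sum_{[a,b']_\rho\in\mathfrak{hd}(\pi),\, a\leq c\leq b'} [a,c]_\rho$ from Corollary \ref{cor right multi}. Throughout, I would order the segments of $\mathfrak m$ in an ascending order $\Delta_1=[a_1,c]_\rho,\ldots,\Delta_k=[a_k,c]_\rho$ with $a_1\leq\ldots\leq a_k$, peel off $\Delta_1$, and induct on $|\mathfrak m|$ by passing to $\pi':=D_{\Delta_1}(\pi)$.

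For the ``if'' direction, assume $\mathfrak m$ is a submultisegment of $\mathfrak{mxpt}^b(\pi,c)$. Since $\Delta_1\in\mathfrak{mxpt}^b(\pi,c)$, Corollary \ref{cor right multi} supplies $[a_1,b']_\rho\in\mathfrak{hd}(\pi)$ with $b'\geq c$, so $\Delta_1$ is admissible to $\mathfrak{hd}(\pi)$ and $\pi'\neq 0$. Theorem \ref{thm effect of Steinberg}(2) identifies $\mathfrak{hd}(\pi')[c']$ with $\mathfrak r(\Delta_1,\mathfrak{hd}(\pi))[c']$ for $c'\geq a_1$, and the plan is to verify that $\mathfrak m-\Delta_1$ remains a submultisegment of $\mathfrak{mxpt}^b(\pi',c)$; the inductive hypothesis then yields $D_{\mathfrak m-\Delta_1}(\pi')\neq 0$, hence $D_{\mathfrak m}(\pi)\neq 0$.

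For the ``only if'' direction, assume $D_{\mathfrak m}(\pi)\neq 0$. By Corollary \ref{cor admissible by resultant}, $\mathfrak m$ is admissible to $\mathfrak{hd}(\pi)$; in particular $\Delta_1$ is admissible, producing some $[a_1,b']_\rho\in\mathfrak{hd}(\pi)$ with $b'\geq c$, hence $\Delta_1\in\mathfrak{mxpt}^b(\pi,c)$. Passing to $\pi'$ and using Theorem \ref{thm effect of Steinberg}(2) shows $\mathfrak m-\Delta_1$ is admissible to $\mathfrak{hd}(\pi')$, so the inductive hypothesis gives $\mathfrak m-\Delta_1\subseteq\mathfrak{mxpt}^b(\pi',c)$; the same bookkeeping as in the ``if'' direction lifts this back to $\mathfrak m\subseteq\mathfrak{mxpt}^b(\pi,c)$.

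The main obstacle is the bookkeeping lemma comparing $\mathfrak{mxpt}^b(\pi,c)$ and $\mathfrak{mxpt}^b(\pi',c)$ through one step of the removal process. Concretely, for each left point $a'\geq a_1$, one must count the segments $[a',b']_\rho$ with $b'\geq c$ in $\mathfrak{hd}(\pi)$ and in $\mathfrak r(\Delta_1,\mathfrak{hd}(\pi))$, and check how the count changes through the removal sequence $\Delta_1,\Delta_2,\ldots,\Delta_r$ and its truncations $\Delta_i^{tr}=[a_{i+1},b_i]_\rho$. The delicate point is that some $\Delta_i$ for $i\geq 2$ may have $b_i\geq c$ while others have $b_i<c$, so the contribution to the ``right-point count at $c$'' is not uniform; however the nesting and minimality conditions of Definition \ref{def removal process} force the count to drop by exactly one at $a_1$ and to remain available for every other $a'$ relevant to $\mathfrak m-\Delta_1$, which is precisely what the submultisegment condition requires.
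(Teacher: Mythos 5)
Your overall plan—reduce both directions to statements about $\mathfrak{hd}(\pi)$ via Corollary \ref{cor admissible by resultant} and Corollary \ref{cor right multi}, then induct by peeling off the longest segment $\Delta_1=[a_1,c]_\rho$ and passing to $\pi'=D_{\Delta_1}(\pi)$ via Theorem \ref{thm effect of Steinberg}(2)—is the right skeleton, and matches the tools the paper points to. But the step you call ``the main obstacle'' is not actually carried out, and the way you discuss it reveals a misconception that would make the argument fail as stated. You suppose that in the removal sequence $\Delta_1,\ldots,\Delta_r$ for $(\Delta_1,\mathfrak h)$ some $\Delta_i$ may have $b_i<c$, and then assert that the nesting and minimality conditions nevertheless force the count of $[a',b']_\rho$ with $b'\geq c$ to stay constant for $a'>a_1$. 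If $b_i<c$ were possible for some $i\geq 2$ while $b_{i-1}\geq c$, that assertion would be false: at left point $a_i$ you would be removing $[a_i,b_i]_\rho$ (which does not count towards $\mathfrak{mxpt}^b$) and adding the truncation $[a_i,b_{i-1}]_\rho$ (which does), so the count at $a_i$ would strictly increase, and the ``only if'' inductive step $m_{a'}\leq n_{a'}'$ would no longer imply $m_{a'}\leq n_{a'}$.

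What saves the argument—but what you have not established—is that $b_i\geq c$ must hold for every segment in the removal sequence. This is forced: step (3) of Definition \ref{def removal process} sets $\Delta_r^{tr}=[c+1,b_r]_\rho$, which is only a segment (possibly empty) when $b_r\geq c$; combined with the strict nesting $b_1>b_2>\cdots>b_r$, this gives $b_i\geq c$ for all $i$. (You can double-check this against Lemma \ref{lem step step compute} and the worked example $\mathfrak h=\{[0,7],[1,4],[1,6]\}$, $\Delta=[0,5]$, where the sequence is $[0,7],[1,6]$ rather than $[0,7],[1,4]$.) Once $b_i\geq c$ is in hand, the bookkeeping collapses: at each $a_i$ with $i\geq 2$ the removal process trades one contributing segment for another ($-1+1=0$), at $a_1$ it deletes exactly one contributor, and the truncation $\Delta_r^{tr}$ sits at left point $c+1>c$ so never contributes. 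You should state and prove this as the bookkeeping lemma before running the induction; as written, the key count is asserted but the reasoning offered for it (``some $\Delta_i$ may have $b_i<c$, yet the count works out'') does not withstand scrutiny.
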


\section{Isomorphic simple quotients of Bernstein-Zelevinsky derivatives} \label{ss isomorphic bz derivatives}

\subsection{Complementary sequence of St-derivatives}

For $\pi \in \mathrm{Irr}_{\rho}$ and $\mathfrak n \in \mathrm{Mult}_{\rho}$, define
\begin{align} \label{eqn removal for hd pi}
  \mathfrak r(\mathfrak n, \pi):=\mathfrak r(\mathfrak n, \mathfrak{hd}(\pi)) ,
\end{align}
where the latter term is defined in Definition \ref{def removal process}. We now prove a main property of the derivative resultant multisegment.

%The following result is a double derivative result. One may see it as a generalization of \cite{Ch21}.

\begin{theorem} \label{lem segment derivative}
Let $\pi \in \mathrm{Irr}_{\rho}$. Let $\mathfrak n \in \mathrm{Mult}_{\rho}$ be admissible to $\pi$. Then 
\[  D_{\mathfrak r(\mathfrak n, \pi)}\circ D_{\mathfrak n}(\pi) \cong \pi^- .
\]
\end{theorem}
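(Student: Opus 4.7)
Set $\mathfrak{h} := \mathfrak{hd}(\pi)$ and let $n^\ast$ denote the level of $\pi$, so that $\pi^- = \pi^{(n^\ast)}$ is irreducible by \cite{Ze80}. The strategy is to realize $D_\mathfrak{m}(D_\mathfrak{n}(\pi))$ as a nonzero submodule of $\pi^{(n^\ast)}$ of absolute length $n^\ast$ and to conclude by the irreducibility of $\pi^-$.

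First I would verify the length identity $l_a(\mathfrak{n}) + l_a(\mathfrak{m}) = n^\ast$. A direct telescoping from Definition \ref{def removal process} shows that $l_a(\mathfrak{r}(\Delta,\mathfrak{h}')) = l_a(\mathfrak{h}') - l_a(\Delta)$: writing the removed segments as $\Delta_1,\ldots,\Delta_r$ and their truncations as $\Delta_1^{tr},\ldots,\Delta_r^{tr}$, the total relative lengths telescope to $a_1 - b - 1$, which equals $-l_r(\Delta)$; multiplying by $n(\rho)$ gives the change in absolute length. Iterating over all segments of $\mathfrak{n}$ yields $l_a(\mathfrak{m}) = l_a(\mathfrak{h}) - l_a(\mathfrak{n})$, and since Theorem \ref{thm highest derivative multiseg} identifies $l_a(\mathfrak{h})$ with the level $n^\ast$, the desired identity follows.

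Next I would show $D_\mathfrak{m}(D_\mathfrak{n}(\pi)) \neq 0$ by verifying that $\mathfrak{n}+\mathfrak{m}$ is admissible to $\mathfrak{h}$ and invoking Corollary \ref{cor admissible by resultant}. Formally, using the associativity of the iterated removal process together with the independence of ascending order (Lemma \ref{lem independent of order}), one computes
\[ \mathfrak{r}(\mathfrak{n}+\mathfrak{m},\mathfrak{h}) = \mathfrak{r}(\mathfrak{m},\mathfrak{r}(\mathfrak{n},\mathfrak{h})) = \mathfrak{r}(\mathfrak{m},\mathfrak{m}) = \emptyset, \]
where the last equality is iterated Lemma \ref{lem remove whole seg}: processing $\mathfrak{m}$ in ascending order, each segment still lies in the current multisegment when it is removed. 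The admissibility of $\mathfrak{n}$ is given by hypothesis, and the admissibility of $\mathfrak{m}$ to itself along an ascending ordering is immediate from Lemma \ref{lem remove whole seg}. These combine, via the commutation lemmas of Section \ref{sec commutation of der} (in particular Lemmas \ref{lem comm derivative 1}, \ref{lem first segment commute}, and \ref{lem independent of order}), to produce a genuine admissible ascending ordering of $\mathfrak{n}+\mathfrak{m}$ in the sense of Section \ref{ss main results}.

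Finally, applying Proposition \ref{prop unique as segments}(3), the derivative $D_{\mathfrak{n}+\mathfrak{m}}(\pi) \cong D_\mathfrak{m}(D_\mathfrak{n}(\pi))$ embeds into $\pi^{(l_a(\mathfrak{n}+\mathfrak{m}))} = \pi^{(n^\ast)} = \pi^-$, and since $\pi^-$ is irreducible the two must be isomorphic. The main obstacle is the reconciliation step in the preceding paragraph: while the formal equality $\mathfrak{r}(\mathfrak{n}+\mathfrak{m},\mathfrak{h}) = \emptyset$ is clean, one must carefully transport the two-block admissibility ($\mathfrak{n}$-ascending followed by $\mathfrak{m}$-ascending, which need not itself be ascending in the strong sense) to the ``ascending'' ordering required for $D_{\mathfrak{n}+\mathfrak{m}}$ to be unambiguously defined, particularly when segments from $\mathfrak{n}$ and $\mathfrak{m}$ are linked.
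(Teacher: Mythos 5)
Your high-level strategy is sound and in fact cleaner than the paper's: establish $l_a(\mathfrak{n})+l_a(\mathfrak{m})=n^{\ast}$ by telescoping (correct), realize $D_{\mathfrak{m}}\circ D_{\mathfrak{n}}(\pi)$ as a nonzero submodule of $\pi^{(n^{\ast})}$, and invoke irreducibility of $\pi^-$. However, there is a genuine gap which you flag but do not close, and it is exactly the hard part. Everything hinges on the identification $D_{\mathfrak{n}+\mathfrak{m}}(\pi)\cong D_{\mathfrak{m}}\circ D_{\mathfrak{n}}(\pi)$, i.e.\ on rearranging the ``two-block'' ordering ($\mathfrak{n}$-ascending followed by $\mathfrak{m}$-ascending) into a genuine ascending ordering of $\mathfrak{n}+\mathfrak{m}$. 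This is not just a bookkeeping step. Take $\mathfrak{h}=\{[0,3],[4,6]\}$, $\mathfrak{n}=\{[4,5]\}$; then $\mathfrak{m}=\mathfrak{r}(\mathfrak{n},\mathfrak{h})=\{[0,3],[6]\}$, and $[0,3]\in\mathfrak{m}$ is linked to $[4,5]\in\mathfrak{n}$ with $[0,3]<[4,5]$, so your two-block sequence is not ascending. To reorder you must commute $D_{[0,3]}$ and $D_{[4,5]}$, which requires Lemma~\ref{lem commut derivative} (or Proposition~\ref{prop general commutativity}); the hypothesis $D_{[0,3]\cup[4,5]}=0$ does hold here, but must be verified in general, and this verification --- that the relevant unions have already been ``killed'' by the structure of the removal process --- is precisely the content of the commutation claim inside the paper's own proof.

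The commutation lemmas you cite (Lemmas~\ref{lem comm derivative 1}, \ref{lem first segment commute}, \ref{lem independent of order}) handle only unlinked pairs or segments with the same left endpoint, and none of them covers the linked case with distinct left points that actually arises; Lemma~\ref{lem first segment commute} is in any case a statement about the combinatorial removal process, not about the functors $D_\Delta$. Likewise, the formal computation $\mathfrak{r}(\mathfrak{m},\mathfrak{r}(\mathfrak{n},\mathfrak{h}))=\emptyset$ is carried out in the two-block order; Definition~\ref{def admissible multiseg} requires an ascending order, so this by itself does not establish that $\mathfrak{n}+\mathfrak{m}$ is admissible to $\mathfrak{h}$. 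What is missing is an argument (as in the paper's proof, which proceeds by backward induction on $l_a(\mathfrak{n})$ and inserts one segment of $\mathfrak{m}$ at a time, checking the hypotheses of Lemma~\ref{lem commut derivative} at each stage via the non-admissibility of $\Delta_{\delta(i)}\cup\widetilde{\Delta}$) showing that each needed transposition is permissible. Until that is supplied, the proposal reduces the theorem to itself.
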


\begin{proof}

\noindent
{\bf Step 1: Inductive set-up and the basic case for the highest derivative} \\

Let $\mathfrak n=\left\{ \Delta_1, \ldots, \Delta_k\right\}$ with segments written in an ascending order. We shall prove by a backward induction on the sum of the absolute lengths of all those $\Delta_1, \ldots, \Delta_k$. Since the sequence is admissible, the sum must be not greater than the level of $\pi$ (by Proposition \ref{prop unique as segments}). If the sum is equal to the level of $\pi$, then $\omega \cong \pi^-$ by Proposition \ref{prop unique as segments}(3) and the irreducibility of the highest derivative of $\pi$. In this case, $\mathfrak r(\mathfrak n,\pi)=\emptyset$ and so it is immediate. \\

\noindent
{\bf Step 2: Rearrange unlinked segments according to certain cuspidal representation} \\

Let $c^*$ be the smallest integer such that $\mathfrak r(\left\{ \Delta_1, \ldots, \Delta_k \right\}, \pi)[c^*]\neq 0$. In other words, $\nu^{c^*}\rho$ is isomorphic to a $\leq$-minimal element in 
\[ \left\{ a(\Delta') : \Delta' \in \mathfrak r(\left\{ \Delta_1,\ldots, \Delta_k \right\},\pi) \right\}. \] 

We shall choose the ascending order of the sequence such that 
\[ a(\Delta_1) \leq \ldots \leq a(\Delta_k) .
\]
Let $r \leq k$ such that $\Delta_1, \ldots, \Delta_r$ be all the segments such that $\nu^{c^*} \rho \geq a(\Delta_i)$ for $i=1, \ldots, r$. We rearrange the segments $\Delta_{\delta(r+1)}, \ldots, \Delta_{\delta(k)}$ so that 
\[ b(\Delta_{\delta(k)}) \geq \ldots \geq b(\Delta_{\delta(r+1)}), \]
 where $\delta$ is a permutation on $\left\{ r+1, \ldots , k \right\}$. 

Let $\tau=D_{\Delta_r}\circ \ldots \circ D_{\Delta_1}(\pi)$. Note that the sequence $\Delta_{\delta(r+1)}, \ldots, \Delta_{\delta(k)}$ can be obtained from $\Delta_{r+1}, \ldots, \Delta_k$ by repeatedly switching two adjacent unlinked segments. Hence, by Lemma \ref{lem comm derivative 1},
\begin{align} \label{eqn rearrange segments main thm}
\quad  D_{\Delta_{\delta(k)}} \circ \ldots \circ D_{\Delta_{\delta(r+1)}}(\tau) \cong D_{\Delta_k} \circ \ldots\circ D_{\Delta_{r+1}}(\tau) \cong D_{\mathfrak n}(\pi) .
\end{align}

\ \\
\noindent
{\bf Step 3: Prove the theorem modulo a claim on commutation of derivatives}\\

Now let $\widetilde{\Delta}$ be a longest segment in $\mathfrak r(\left\{ \Delta_1, \ldots, \Delta_k \right\}, \pi)[c^*]$.

\noindent
{\it Claim:} For $i \geq 2$,
\[ D_{\widetilde{\Delta}} \circ D_{\Delta_{\delta(i)}} \circ \ldots \circ D_{\Delta_{\delta(1)}} (\tau) \cong D_{\Delta_{\delta(i)}}\circ D_{\widetilde{\Delta}} \circ D_{\Delta_{\delta(i-1)}} \circ \ldots \circ D_{\Delta_{\delta(1)}}(\tau) ,\]
and for $i=1$,
\[  D_{\widetilde{\Delta}} \circ D_{\Delta_{\delta(1)}}(\tau) \cong D_{\Delta_{\delta(1)}}\circ D_{\widetilde{\Delta}}(\tau)  .\]

Suppose the claim holds for the meanwhile. We have a new ascending sequence of segments, 
\begin{align} \label{eqn commutative sequence}
  \Delta_1, \ldots, \Delta_r, \widetilde{\Delta}, \Delta_{r+1}, \ldots, \Delta_k ,
\end{align}
which is admissible since the composition of their corresponding derivatives is non-zero by (\ref{eqn rearrange segments main thm}) and the claim. Now one applies the induction hypothesis to obtain that 
\[   D_{\mathfrak r(\mathfrak n+\widetilde{\Delta}, \pi)}\circ D_{\mathfrak n+\widetilde{\Delta}}(\pi)\cong \pi^- .
\]
By Lemma \ref{lem does not remove lower segment}, $\mathfrak r(\mathfrak n, \pi)=\mathfrak r(\mathfrak n+\widetilde{\Delta}, \pi)+\widetilde{\Delta}$. Thus, 
\[   D_{\mathfrak r(\mathfrak n, \pi)}\circ D_{\mathfrak n}(\pi) \cong D_{\mathfrak r(\mathfrak n+\widetilde{\Delta}, \pi)}\circ D_{\widetilde{\Delta}}\circ D_{\mathfrak n}(\pi)\cong D_{\mathfrak n+\widetilde{\Delta}}(\pi),
\] 
where the first isomorphism follows from definition and the second isomorphism follows from the claim.

Combining above isomorphisms, we have  $D_{\mathfrak r(\mathfrak n, \pi)}\circ D_{\mathfrak n}(\pi) \cong \pi^-$. \\

\noindent
{\bf Step 4: Prove the claim by Lemma \ref{lem comm derivative 1} or Corollary \ref{cor commut derivative}  } \\

It remains to prove the claim. Indeed, it will follow from Lemma \ref{lem comm derivative 1} or Corollary \ref{cor commut derivative} if we could check those conditions in the corresponding lemma. We use the notations in the claim. If $\widetilde{\Delta}$ and $\Delta_{\delta(i)}$ are unlinked, then we use Lemma \ref{lem comm derivative 1} and we are done. Now suppose $\widetilde{\Delta}$ and $\Delta_{\delta(i)}$ are linked. Note that,  by Lemma \ref{lem does not remove lower segment},
\begin{align} \label{eqn no effect on smallest}
\mathfrak r(\left\{\Delta_1, \ldots, \Delta_k \right\}, \pi)[c^*]=\mathfrak r(\left\{ \Delta_1, \ldots, \Delta_r, \Delta_{\delta(r+1)}, \ldots, \Delta_{\delta(i)}\right\}, \pi)[c^*] .
\end{align}
This implies that, by Theorem \ref{thm effect of Steinberg}(2) (multiple times),
 
\begin{align} \label{eqn main thm condition1}
\quad D_{\widetilde{\Delta}}(\kappa) \neq 0 ,
\end{align}
where 
\[  \kappa =D_{\Delta_{\delta(i-1)}} \circ \ldots \circ D_{\Delta_{\delta(r+1)}}\circ D_{\Delta_r}\circ \ldots \circ D_{\Delta_1}(\pi) .\]
Now let $\Delta'=\Delta_{\delta(i)} \cup \widetilde{\Delta}$ and we have to check that 
$D_{\Delta'}(\kappa)=0$. Note that $b(\Delta') >b(\widetilde{\Delta})$. Thus, we have:
\begin{enumerate}
\item by the longest choice of $\widetilde{\Delta}$ in $\mathfrak r(\left\{ \Delta_1, \ldots, \Delta_k \right\}, \pi)[c^*]$ and (\ref{eqn no effect on smallest}), $\Delta'$ is not admissible to $\mathfrak r(\left\{ \Delta_1, \ldots, \Delta_r \right\}, \pi)$. Hence, Theorem \ref{thm effect of Steinberg}(1) implies that
\[  D_{\Delta'}(\tau) = 0.  \]
\item our arrangement on $\Delta_{\delta(p)}$s gives that $\Delta'$ and $\Delta_{\delta(x)}$ are unlinked for $x=1, \ldots, i-1$. (Here we also use that $a(\Delta_{\delta(x)})>\nu^{c_*}\rho$. See the third paragraph of the proof.)
\end{enumerate}

Hence, (1) and (2) above give
\begin{align} \label{eqn main thm zero after intersect}
\quad  D_{\Delta'}(\kappa) = D_{\Delta'}\circ D_{\Delta_{\delta(i-1)}}\circ \ldots \circ D_{\Delta_{\delta(1)}}(\tau) =D_{\Delta_{\delta(i-1)}}\circ \ldots \circ D_{\Delta_{\delta(1)}}\circ D_{\Delta'}(\tau) =0 ,
\end{align}
where the second equality follows from (2) above with Lemma \ref{lem comm derivative 1} and the last equality follows from (1) above. 

Since $D_{\Delta_k} \circ \ldots \circ D_{\Delta_1}(\pi)\neq 0$, we also have 
\begin{align} \label{eqn main thm condition3}
 D_{\Delta_{\delta(i)}}(\kappa)\neq 0. 
\end{align}

Hence, the conditions $(\ref{eqn main thm condition1}), (\ref{eqn main thm zero after intersect}), (\ref{eqn main thm condition3})$ guarantee conditions in Corollary \ref{cor commut derivative} and this completes the proof of the claim.
\end{proof}

\subsection{Isomorphic simple quotients under St-derivatives} \label{ss isomorphic quotients}

We now prove a main result using the highest derivative multisegment and the removal process to determine when two sequences of St-derivatives give rise to isomorphic simple quotients of a Bernstein-Zelevinsky derivative. The strategy for the 'if' direction below of Theorem \ref{thm isomorphic derivatives} below is that we use Theorem \ref{lem segment derivative} to construct isomorphic modules by taking the same sequence of St-derivatives. The strategy for the 'only if' direction is to find some St-derivatives that kill one, but not another one. However, in order to do so, we need to do it on some other derivatives via some constructions.

\begin{theorem} \label{thm isomorphic derivatives}
Let $\pi \in \mathrm{Irr}_{\rho}$. Let $\Delta_1, \ldots , \Delta_k$ and $\Delta_1', \ldots, \Delta_l'$ be two ascending sequences of segments that are admissible to $\pi$. Then 
\[   D_{\Delta_k}\circ \ldots \circ D_{\Delta_1}(\pi) \cong D_{\Delta_l'}\circ \ldots \circ D_{\Delta_1'}(\pi)
\]
if and only if 
\[  \mathfrak r(\left\{ \Delta_1, \ldots, \Delta_k \right\}, \pi) = \mathfrak r(\left\{ \Delta_1', \ldots, \Delta_l' \right\}, \pi) .
\]
\end{theorem}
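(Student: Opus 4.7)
The central tool for both directions is Theorem \ref{lem segment derivative}, which asserts $D_{\mathfrak r(\mathfrak n, \pi)} \circ D_{\mathfrak n}(\pi) \cong \pi^-$ whenever $\mathfrak n$ is admissible to $\pi$.

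For the \emph{if} direction, set $\mathfrak m := \mathfrak r(\mathfrak n_1, \pi) = \mathfrak r(\mathfrak n_2, \pi)$ and $\tau_i := D_{\mathfrak n_i}(\pi)$. Theorem \ref{lem segment derivative} gives $D_{\mathfrak m}(\tau_i) \cong \pi^-$, and matching the level of $\pi$ against $l_a(\mathfrak n_i) + l_a(\mathfrak m)$ forces $l_a(\mathfrak n_1) = l_a(\mathfrak n_2) =: j$. Thus both $\tau_1$ and $\tau_2$ are simple submodules of $\pi^{(j)}$, which has multiplicity-free socle by Lemma \ref{lem property derivative}. Iterating Lemma \ref{lem reformulate frobenius} along an ascending enumeration $\overline{\Delta}_1, \ldots, \overline{\Delta}_s$ of $\mathfrak m$ embeds each $\tau_i$ into the common module
\[
X := \pi^- \times \mathrm{St}(\overline{\Delta}_s) \times \cdots \times \mathrm{St}(\overline{\Delta}_1).
\]
I would then argue that any simple submodule $\kappa$ of $\pi^{(j)}$ admitting an embedding into $X$ with $D_{\mathfrak m}(\kappa) \cong \pi^-$ is unique, using the generic structure of $\mathrm{St}(\mathfrak m)$ combined with Lemma \ref{lem socle cosocle ladder} to pin down a distinguished socle constituent and then transferring the uniqueness back to $\pi^{(j)}$ via the multiplicity-freeness. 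This forces $\tau_1 \cong \tau_2$.

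For the \emph{only if} direction, assume $\tau := D_{\mathfrak n_1}(\pi) \cong D_{\mathfrak n_2}(\pi)$; I verify $\mathfrak r(\mathfrak n_1, \pi)[c] = \mathfrak r(\mathfrak n_2, \pi)[c]$ at every integer $c$. For $c$ at least the maximum left endpoint appearing in $\mathfrak n_1 \cup \mathfrak n_2$, the iterated application of Theorem \ref{thm effect of Steinberg}(2) yields $\mathfrak r(\mathfrak n_i, \pi)[c] = \mathfrak{mxpt}^a(\tau, c)$, so both sides agree. For $c$ strictly smaller than every left endpoint in $\mathfrak n_1 \cup \mathfrak n_2$, Lemma \ref{lem does not remove lower segment} gives $\mathfrak r(\mathfrak n_i, \pi)[c] = \mathfrak{hd}(\pi)[c]$, again independent of $i$. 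For the intermediate range I would proceed by downward induction on the maximum left endpoint in $\mathfrak n_1 \cup \mathfrak n_2$: Lemma \ref{lem shrinking muliseg} strips off the top layer from each $\mathfrak n_i$, Lemma \ref{lem independent of order} re-arranges the surviving segments into ascending form, and the inductive hypothesis handles the reduced pair. Corollary \ref{cor admissible by resultant} and Theorem \ref{thm effect of Steinberg} ensure the resulting derivatives remain isomorphic so that the induction is applicable.

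The main obstacle will be the uniqueness step in the \emph{if} direction: the module $X$ can have several non-isomorphic simple submodules in general, so the common embedding of $\tau_1$ and $\tau_2$ into $X$ does not immediately force $\tau_1 \cong \tau_2$. Overcoming this requires combining the multiplicity-one input from Lemma \ref{lem property derivative} with careful control of which simple submodules of $X$ can actually arise as the image of an iterated derivative starting from $\pi$, likely along the lines of the argument in Proposition \ref{prop non ext submodule}. The intermediate-range step in the \emph{only if} direction is the secondary obstacle, since the shrinking and commutation manipulations must be performed compatibly at both $\mathfrak n_1$ and $\mathfrak n_2$.
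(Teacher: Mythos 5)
Your \emph{if} direction is set up right but then takes an unnecessary and ultimately unfinished detour. After Theorem \ref{lem segment derivative} gives $D_{\mathfrak m}(\tau_1)\cong D_{\mathfrak m}(\tau_2)\cong\pi^-$ with $\mathfrak m=\mathfrak r(\mathfrak n_1,\pi)=\mathfrak r(\mathfrak n_2,\pi)$, you do not need to reason about which simple submodules of $X$ can occur as derivatives of $\pi$. The correct and much shorter observation, which is what the paper uses, is that $D_{\overline\Delta}$ is \emph{invertible} on irreducibles wherever it is non-zero: by Lemma \ref{lem reformulate frobenius} combined with Lemma \ref{lem socle cosocle ladder}, if $D_{\overline\Delta}(\sigma)\neq 0$ then $\sigma$ is recovered as $I_{\overline\Delta}(D_{\overline\Delta}(\sigma))$, the unique irreducible submodule of $D_{\overline\Delta}(\sigma)\times\mathrm{St}(\overline\Delta)$. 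Running this recovery step by step up the ascending chain $\overline\Delta_1,\ldots,\overline\Delta_s$ of $\mathfrak m$ shows both $\tau_1$ and $\tau_2$ equal $I_{\overline\Delta_1}\circ\cdots\circ I_{\overline\Delta_s}(\pi^-)$, deterministically built from $\pi^-$. Your statement that the ``uniqueness step'' requires careful control of submodules of $X$ is precisely the gap: $X$ can have several non-isomorphic simple submodules, and the paper's proof never invokes such uniqueness; it leans on the invertibility of the single-segment derivative, which makes the embedding into $X$ superfluous.

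Your \emph{only if} direction is also at the correct scale, but the sketch is too vague at the decisive point and works with the wrong end of the segments. You argue at each integer $c$ using $\mathfrak r(\mathfrak n_i,\pi)[c]$ (left endpoints), and outside an intermediate range this does match Theorem \ref{thm effect of Steinberg}(2) and Lemma \ref{lem does not remove lower segment}. But the theorem only pins $\mathfrak{mxpt}^a(D_\Delta(\pi),c)$ to a removal resultant for $c\geq a(\Delta)$; for $c<a(\Delta)$ you only get an inequality, and across the intermediate band iterating this simply does not determine $\mathfrak r(\mathfrak n_i,\pi)[c]$ from $\tau$ alone. The paper instead works with right endpoints $\mathfrak r_p\langle c\rangle$, locates the smallest $c^*$ where $\mathfrak r_1\langle c^*\rangle\neq\mathfrak r_2\langle c^*\rangle$, and then runs a concrete downward induction in which representations $\kappa_{p,i}$ are built by alternately applying $D_{\mathfrak r_p\langle c_{i-1}\rangle}$ and the inverse ``shrinking'' step from Lemma \ref{lem shrinking mutli 0}/Lemma \ref{lem shrinking muliseg}; at stage $c^*$ one of $D_{\mathfrak r_2\langle c^*\rangle}(\kappa_{1,c^*})$, $D_{\mathfrak r_1\langle c^*\rangle}(\kappa_{2,c^*})$ vanishes, forcing $\kappa_{1,c^*}\not\cong\kappa_{2,c^*}$ and, after unwinding via the $I$-operators, $\tau_1\not\cong\tau_2$. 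Your plan names some of the same lemmas but neither carries out the construction of the auxiliary representations nor explains what contradiction is reached when the resultants differ, so as written the intermediate range is an unresolved gap rather than a worked induction step.
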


\begin{proof}

Let 
\[  \omega_1 =D_{\Delta_k} \circ \ldots \circ D_{\Delta_1}(\pi), \quad \omega_2=D_{\Delta_l'}\circ \ldots \circ D_{\Delta_1'}(\pi) .
\]

\ \\

\noindent
{\bf Step 1: Prove the if direction} \\

For the if direction, we write $\widetilde{\Delta}_1, \ldots, \widetilde{\Delta}_r$ to be all the segments in $\mathfrak r(\left\{ \Delta_1, \ldots, \Delta_k\right\}, \pi)$ in an ascending order. It follows from Theorem \ref{lem segment derivative} that 
\[  D_{\widetilde{\Delta}_r} \circ \ldots \circ D_{\widetilde{\Delta}_1}(\omega_1)\cong D_{\widetilde{\Delta}_r} \circ \ldots \circ D_{\widetilde{\Delta}_1}(\omega_2) \cong \pi^- .
\]
 Hence, both $\omega_1$ and $\omega_2$ are isomorphic to 
\[     I_{\widetilde{\Delta}_1}\circ \ldots \circ I_{\widetilde{\Delta}_r}(\pi^-) ,
\]
where $I_{\widetilde{\Delta}_s}(\tau)$ (for $\tau \in \mathrm{Irr}$ and $s=1,\ldots, r$) denotes the unique irreducible submodule of $\tau \times \mathrm{St}(\widetilde{\Delta}_s)$ (see Lemma \ref{lem socle cosocle ladder}, and for more discussions on 'integrals', see \cite{LM16, Ch22+, Ch22+c}). In particular, 
\[  \omega_1 \cong \omega_2 .
\]

\noindent
{\bf Step 2: Define the representations $\omega_{1,i}$ and $\omega_{2,i}$ for the only if direction }

We now consider the only if direction. We denote by 
\[  \mathfrak r_1=\mathfrak r( \left\{\Delta_1, \ldots, \Delta_k \right\}, \pi), \quad \mathfrak r_2=\mathfrak r( \left\{\Delta_1', \ldots, \Delta_l' \right\}, \pi) .
\]
For $p=1,2$, recall that $\mathfrak r_{p}\langle c \rangle$ is the sub-multisegment of $\mathfrak r_p$ exactly containing all the segments $\Delta$ satisfying $b(\Delta) \cong \nu^c \rho$. Let $c^*$ be the largest integer such that 
\[  \mathfrak r_{1}\langle c^*\rangle \neq \mathfrak r_{2}\langle c^*\rangle .
\]

Let $\overline{c}$ be the largest integer such that $\nu^{\overline{c}}\rho \in \mathrm{csupp}(\pi)$. Let $c_i=\overline{c}-i$ for $i \geq 0$, and let $z$ be the integer such that $c_z=c^*$. Set
\begin{align} \label{eqn derivative multi on pi}
 \omega_{1,0}=D_{\Delta_k}\circ \ldots \circ D_{\Delta_1}(\pi), \quad \omega_{2,0}=D_{\Delta_l'} \circ \ldots \circ D_{\Delta_1'}(\pi).
\end{align}
For $p=1, 2$, we inductively, for each $c_i$ ($i=0, \ldots, z$), define representations $\omega_{1,i}$ and $\omega_{2,i}$ as follows. First, define
\begin{align} \label{eqn kappa omega derivative}
 \kappa_{1,i}= D_{\mathfrak p_{i-1}} ( \omega_{1,i-1}) ,\quad \kappa_{2,i}= D_{\mathfrak p_{i-1}}(\omega_{2,i-1}) ,
\end{align}
where $\mathfrak p_{i-1}=\mathfrak r_1\langle c_{i-1} \rangle =\mathfrak r_2\langle c_{i-1} \rangle$ (possibly empty and the equality $\mathfrak r_1\langle c_{i-1} \rangle =\mathfrak r_2\langle c_{i-1} \rangle$ follows from our choice of $c^*$).

Suppose the following conditions hold in the meanwhile:
\begin{enumerate}
\item[(1)] $\omega_{1,i-1} \cong D_{\mathfrak n_{1,i-1}}(\pi)$ and $\omega_{2,i-1} \cong D_{\mathfrak n_{2,i-1}}(\pi)$ for some $\mathfrak n_{1,i-1}, \mathfrak n_{2,i-1} \in \mathrm{Mult}_{\rho}$ admissible to $\pi$, and 
\item[(2)] for any $e \geq c_{i-1}+1$, 
\[  \mathfrak r(\mathfrak n_{1,i-1}, \pi)\langle e \rangle = \mathfrak{hd}(\pi)\langle e \rangle, \quad \mathfrak r(\mathfrak n_{1,i-1}, \pi)\langle e \rangle =\mathfrak{hd}(\pi)\langle e \rangle ,
\]
and
\item[(3)] for any $e \leq c_{i-1}$, 
\[ \mathfrak r(\mathfrak n_{1,i-1}, \pi)\langle e \rangle =\mathfrak r_1\langle e \rangle, \quad \mathfrak r(\mathfrak n_{2,i-1}, \pi)\langle e \rangle = \mathfrak r_2\langle e \rangle. 
\]
\end{enumerate}

The conditions (1) and (2) above guarantee that one can apply Lemma \ref{lem shrinking mutli 0}. With (3), that means, there exist multisegments $\mathfrak m_{1,i-1}$ and $\mathfrak m_{2,i-1}$ such that 
\begin{align} \label{eqn inductive construct condition 2}
  \mathfrak r(\mathfrak m_{1,i-1}, \pi)=\mathfrak r(\mathfrak n_{1,i-1}+\mathfrak p_{i-1}, \pi)+\mathfrak q_{i-1}, \quad \mathfrak r(\mathfrak m_{2,i-1}, \pi)=\mathfrak r(\mathfrak n_{2,i-1}+\mathfrak p_{i-1}, \pi) +\mathfrak q_{i-1} ,
\end{align}
where $\mathfrak q_{i-1}=\mathfrak{hd}(\pi)\langle c_{i-1}\rangle$. (Note that $\mathfrak r(\mathfrak n_{1,i-1}+\mathfrak p_{i-1}, \pi)=\emptyset$ and $\mathfrak r(\mathfrak n_{2,i-1}+\mathfrak p_{i-1}, \pi)=\emptyset$ and so the above terms from  Lemma \ref{lem shrinking mutli 0} are dropped.)

Now we can define $\omega_{1,i}$ and $\omega_{2,i}$:
\begin{align} \label{eqn inductive construct condition 1}
  \omega_{1,i}=D_{\mathfrak m_{1,i-1}}(\pi), \quad \omega_{2,i}=D_{\mathfrak m_{2,i-1}}(\pi) 
\end{align}

\ \\

\noindent
{\bf Step 3: Check the conditions (1), (2) and (3) in Step 2} \\

To check the well-definedness of $\omega_{1,i}$ and $\omega_{2,i}$, we have to check the conditions (1), (2) and (3) above and we will again do inductively. Note that, (1) for $\omega_{1,0}$ and $\omega_{2,0}$ is automatic from (\ref{eqn derivative multi on pi}); (2) for $\omega_{1,0}$ and $\omega_{2,0}$ is automatic from our choice of $\bar{c}$ and (3) for $\omega_{1,0}$ and $\omega_{2,0}$ is automatic from definitions. These give the basis case. 

Now, (1) for general $\omega_{1,i}$ follows directly from (\ref{eqn inductive construct condition 1}). For (2) for general $\omega_{1,i}$, the induction hypothesis (specially using (3)) and Lemma \ref{lem remove whole seg} imply that
\begin{align} \label{eqn inductive formula 1}
 \mathfrak r(\mathfrak n_{1,i-1}+\mathfrak p_{i-1}, \pi)\langle e \rangle=\mathfrak r(\mathfrak n_{1,i-1}, \pi)\langle e\rangle
\end{align}
for any $e \neq c_{i-1}=c_i+1$, and so (\ref{eqn inductive formula 1}) with the formula (\ref{eqn inductive construct condition 2}) implies that (3) holds; and (2) holds for the cases $e\geq c_{i-1}+1=c_{i-1}+2$. For the remaining case of (2), the induction hypothesis (specifically using (2)) gives that
\[  \mathfrak r(\mathfrak n_{1,i-1}+\mathfrak p_{i-1}, \pi)\langle c_{i}+1\rangle =\emptyset
\]
and so the formula (\ref{eqn inductive construct condition 2}) also gives that
\[  \mathfrak r(\mathfrak m_{1,i-1}, \pi)\langle c_{i-1} \rangle = \mathfrak q_{i-1}=\mathfrak{hd}(\pi)\langle c_{i-1}\rangle , 
\]
as desired. The argument for $\omega_{2,i}$ is identical.
\ \\

%, and 
%\[  \omega_{1,i}, \quad \omega_{2,i}\] 
%are the representations with the derivative resultant multisegment obtained in Lemma \ref{lem shrinking mutli 0} (we are in the special case that $\mathfrak s\langle c\rangle=0$ in the notation of Lemma \ref{lem shrinking mutli 0}) i.e. by Lemma \ref{lem shrinking muliseg} and the (proved) if direction of this theorem, $\omega_{1,i}$ and $\omega_{2,i}$ are those satisfying that 

\noindent
{\bf Step 4: Conclude the proof by applying $D_{\mathfrak r_1\langle c^*\rangle}$ or $D_{\mathfrak r_2\langle c^*\rangle}$}

Now, by (\ref{eqn kappa omega derivative}) and the (proved) if direction of this theorem,  
\[   \kappa_{1,i}\cong D_{\mathfrak q_{i-1}}(\omega_{1,i}), \quad \kappa_{2,i} \cong D_{\mathfrak q_{i-1}}(\omega_{2,i}) .\]
In other words,
\begin{align} \label{eqn kappa omega integral}
\omega_{1,i} \cong I_{\mathfrak q_{i-1}}(\kappa_{1,i}), \quad \omega_{2,i} \cong I_{\mathfrak q_{i-1}}(\kappa_{2,i}) .
\end{align}
Here $I_{\mathfrak q_{i-1}}(\kappa_{p,i})$ $(p=1,2$) is the unique simple submodule of $\kappa_{p,i} \times \mathrm{St}(\mathfrak q_{i-1})$ (see Lemma \ref{lem socle cosocle ladder}).

%Then, applying Proposition \ref{prop non ext submodule}, one obtains modules by applying the same sequence of induction of Steinberg modules with those modules having the multisegment  $\mathfrak n_1$ and $\mathfrak n_2$ obtained from $\mathfrak r_{p}[c^*]$ by replacing the segments with $b(\widetilde{\Delta}) \cong \nu^b \rho$ in $\mathfrak r_{p}[c^*]$ by those with $b(\widetilde{\Delta}) \cong \nu^b \rho$. However, since $\mathfrak r_{1}[c^*] \neq \mathfrak r_{2}[c^*]$, we have either

Since $\mathfrak r_1\langle c^* \rangle \neq \mathfrak r_2\langle c^* \rangle$, we have either
\[  D_{\mathfrak r_{2}\langle c^*\rangle}(\omega_{1,c^*}) =0, \quad \mbox{ or } D_{\mathfrak r_{1}\langle c^*\rangle}(\omega_{2,c^*})=0 .
\]
But $D_{\mathfrak r_2\langle c^*\rangle}(\omega_{2,c^*})\neq 0$ and $D_{\mathfrak r_2\langle c^* \rangle}(\omega_{1,c^*})\neq 0$. This implies that $\omega_{1,c^*} \not\cong \omega_{2,c^*}$. 

On the other hand, (\ref{eqn kappa omega derivative}) and (\ref{eqn kappa omega integral}) give that, for $p=1,2$,
\[ I_{\mathfrak q_{z-1}}\circ D_{\mathfrak p_{z-1}}\circ \ldots \circ  I_{\mathfrak q_0}\circ D_{\mathfrak p_0}( \omega_{p,0})  \cong\omega_{p,c^*} .
\]
This implies that $\omega_{1,0} \not\cong \omega_{2,0}$. 
\end{proof}

\subsection{Comparing with $\rho$-derivatives}

We give a simple quotient of a Bernstein-Zelevinsky derivative that cannot be obtained from an ascending sequence of derivatives from cuspidal representations.

\begin{example} \label{example rho not BZ}
Let $\mathfrak m=\left\{ [0,1]_{\rho}, [1]_{\rho}, [1,2]_{\rho} \right\}$ and let $\pi=\langle \mathfrak m \rangle$. Then $\mathfrak{hd}(\pi)=\left\{ [1]_{\rho}, [1,2]_{\rho} \right\}$. Hence, there is a simple quotient of $\pi^{(2)}$ obtained by applying $D_{[1,2]_{\rho}}$ and its derivative resultant multisegment is $\mathfrak r([1,2]_{\rho}, \pi)=\left\{ [1]_{\rho} \right\}$. But $D_{[2]_{\rho}}\circ D_{[1]_{\rho}}(\pi)=0$ and so $D_{[1,2]_{\rho}}(\pi)\neq D_{[2]_{\rho}}\circ D_{[1]_{\rho}}(\pi)$. 
\end{example}

\section{Examples of highest derivative multisegments} \label{ss examples}

\subsection{Generic representations}

An irreducible representation $\pi$ of $G_n$ is said to be generic if $\pi^{(n)}\neq 0$. According to \cite{Ze80}, for $\mathfrak m \in \mathrm{Mult}_{\rho}$, $\langle \mathfrak m \rangle$ is generic if and only if all the segments in $\mathfrak m$ are singletons. Equivalently, $\langle \mathfrak m \rangle \cong \mathrm{St}(\mathfrak n)$ for a multisegment $\mathfrak n$ whose all segments are unlinked. One can compute $\mathfrak n$, for example, by the M\oe glin-Waldspurger algorithm \cite{MW86}. In this case, $\mathfrak{hd}(\pi) =\mathfrak n$ (e.g. use (\ref{eqn product commut st})).

\subsection{Highest derivative multisegments for a product of representations}

\begin{proposition} \label{prop prod highest derivative 1}
Let $\pi_1, \ldots, \pi_r \in \mathrm{Irr}_{\rho}$. Suppose $\pi_1 \times \ldots \times \pi_r$ is still irreducible. Then 
\[  \mathfrak{hd}(\pi_1\times \ldots \times \pi_r) = \mathfrak{hd}(\pi_1)+\ldots +\mathfrak{hd}(\pi_r) .
\]
\end{proposition}

\begin{proof}

Note that one can recover the highest derivative multisegment $\mathfrak{hd}$ from all $\mathfrak{mxpt}^b$ by Corollary \ref{cor right multi}. Now it is a simple counting to obtain $\mathfrak{hd}(\pi_1\times \ldots \times \pi_r) = \mathfrak{hd}(\pi_1)+\ldots +\mathfrak{hd}(\pi_r)$ by Proposition \ref{prop max b under product}.
\end{proof}

\subsection{Arthur representations} \label{ss hd arthur rep}

We write 
\[  \Delta_{\rho}(d)=[-(d-1)/2, (d-1)/2]_{\rho}. 
\]
Let 
\[ u_{\rho}(d,m)=\langle \left\{ \nu^{(m-1)/2}\Delta_{\rho}(d), \ldots, \nu^{-(m-1)/2}\Delta_{\rho}(d) \right\} \rangle . \]
Let $Y(u_{\rho}(d,m))=\nu^{(d-m)/2}\rho$. The representations $u_{\rho}(d,m)$ are so-called Speh representations. To each Speh representation $u_{\rho}(d,m)$, one can attach a segment 
\[  \Delta(u_{\rho}(d,m)):= [(d-m)/2, (d+m-2)/2]_{\rho} .
\]
It follows from \cite[Theorem 14]{LM14} (also see \cite[Corollary 7.2]{CS19}) that 
\begin{align} \label{eqn highest derivative speh}
  \mathfrak{hd}(u_{\rho}(d,m))= \left\{ \Delta(u_{\rho}(d,m)) \right\}.
\end{align}

\begin{proposition} \label{prop hd arthur rep}
Let $\pi$ be an Arthur type representation in $\mathrm{Irr}_{\rho}$ i.e. 
\[  \pi = \pi_1 \times \ldots \times \pi_r ,
\]
for some Speh representations $\pi_1, \ldots, \pi_r$ (see e.g. \cite{GGP20} for discussions on Arthur type representations). Then
\[ \mathfrak{hd}(\pi) =  \Delta(\pi_1)+\ldots +\Delta(\pi_r) .
\]
\end{proposition}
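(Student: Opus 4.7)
The plan is to exhibit $\mathfrak h := \Delta(\pi_1) + \cdots + \Delta(\pi_r)$ as admissible to $\pi$ with total absolute length equal to the level of $\pi$, and then identify $\mathfrak h[c] = \mathfrak{mxpt}^a(\pi, c)$ for every integer $c$. Iterating the geometric lemma on $\pi = \pi_1 \times \cdots \times \pi_r$, one sees that $\pi^{(i)}$ is filtered by factors $\pi_1^{(i_1)} \times \cdots \times \pi_r^{(i_r)}$ with $\sum_a i_a = i$, so the level of $\pi$ equals $\sum_a$(level of $\pi_a$), and at the top level only $\pi_1^- \times \cdots \times \pi_r^-$ survives. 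This latter is again an Arthur-type parabolic induction of Speh representations (each $\pi_a^-$ is a Speh), hence irreducible, giving $\pi^- \cong \pi_1^- \times \cdots \times \pi_r^-$. Since the level of each $\pi_a$ is $l_a(\Delta(\pi_a))$ (from $\mathfrak{hd}(\pi_a) = \{\Delta(\pi_a)\}$ and Theorem \ref{thm highest derivative multi}), we obtain $l_a(\mathfrak h) = $ level of $\pi$.

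The next step is to show $\mathfrak h$ is admissible to $\pi$. After reindexing so that $\Delta(\pi_1), \ldots, \Delta(\pi_r)$ is in ascending order, I would show inductively in $a$ that
\[
D_{\Delta(\pi_a)} \circ \cdots \circ D_{\Delta(\pi_1)}(\pi) \;\cong\; \pi_1^- \times \cdots \times \pi_a^- \times \pi_{a+1} \times \cdots \times \pi_r.
\]
The inductive step reduces to establishing $D_{\Delta(\pi_a)}(\sigma \times \pi_a \times \sigma') \cong \sigma \times \pi_a^- \times \sigma'$, where $\sigma = \pi_1^- \times \cdots \times \pi_{a-1}^-$ and $\sigma' = \pi_{a+1} \times \cdots \times \pi_r$. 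Starting from $\pi_a \hookrightarrow \pi_a^- \times \mathrm{St}(\Delta(\pi_a))$ (Lemma \ref{lem reformulate frobenius}) and commuting $\mathrm{St}(\Delta(\pi_a))$ past $\sigma'$ (each intermediate product with a Speh factor is irreducible by Arthur-type irreducibility of unitary parabolic induction), one obtains an embedding $\sigma \times \pi_a \times \sigma' \hookrightarrow (\sigma \times \pi_a^- \times \sigma') \times \mathrm{St}(\Delta(\pi_a))$. Frobenius reciprocity together with the uniqueness in Lemma \ref{lem socle cosocle ladder} (applied to the ladder factor $\mathrm{St}(\Delta(\pi_a))$) then identifies $D_{\Delta(\pi_a)}(\sigma \times \pi_a \times \sigma')$ with $\sigma \times \pi_a^- \times \sigma'$. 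At $a = r$ this yields $D_\mathfrak h(\pi) \cong \pi_1^- \times \cdots \times \pi_r^- \cong \pi^-$.

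Finally, Corollary \ref{cor admissible by resultant} and admissibility of $\mathfrak h$ give $\mathfrak h[c] \leq_c^a \mathfrak{mxpt}^a(\pi, c)$ for every integer $c$. Theorem \ref{thm highest derivative multi} yields $l_a(\mathfrak{hd}(\pi)) = $ level of $\pi = l_a(\mathfrak h)$; summing the inequalities over $c$ and using that $\mathfrak m_1 \leq_c^a \mathfrak m_2$ together with $l_a(\mathfrak m_1) = l_a(\mathfrak m_2)$ forces $\mathfrak m_1 = \mathfrak m_2$, we conclude $\mathfrak h[c] = \mathfrak{mxpt}^a(\pi, c)$ for every $c$, and hence $\mathfrak h = \mathfrak{hd}(\pi)$. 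The main obstacle I expect is the commutation inside the inductive derivative computation: justifying that $\mathrm{St}(\Delta(\pi_a))$ commutes past the Arthur-type factor $\pi_a^- \times \sigma'$ rests on the irreducibility of products $\mathrm{St}(\Delta(\pi_a)) \times \pi_b$ and $\mathrm{St}(\Delta(\pi_a)) \times \pi_b^-$ for $b > a$. This is standard in the Tadic-style classification of Arthur-type representations, but requires care to set up without circular invocation. An alternative route, avoiding commutations altogether, is a direct geometric-lemma identification of the unique layer of $\pi_{N_{l_a(\Delta(\pi_a))}}$ whose right tensor factor carries $\mathrm{St}(\Delta(\pi_a))$, ruling out the other layers via cuspidal-support comparison using the explicit Speh Jacquet module formula of \cite{LM14}.
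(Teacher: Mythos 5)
Your overall strategy mirrors the paper's: establish a lower bound for $\mathfrak{hd}(\pi)$ and then force equality by comparing absolute lengths against the level of $\pi$ via Theorem \ref{thm highest derivative multiseg}. However, your primary route to admissibility has a genuine gap. To prove by induction that $D_{\Delta(\pi_a)}\circ\cdots\circ D_{\Delta(\pi_1)}(\pi) \cong \pi_1^-\times\cdots\times\pi_a^-\times\pi_{a+1}\times\cdots\times\pi_r$, you must commute $\mathrm{St}(\Delta(\pi_a))$ across $\sigma'=\pi_{a+1}\times\cdots\times\pi_r$. But $\mathrm{St}(\Delta(\pi_a))$ is a \emph{non-unitary} twist of a discrete series whenever $m_a>1$ (since $\Delta(\pi_a)=[(d_a-m_a)/2,(d_a+m_a-2)/2]_\rho$ is not centered at zero), so Tadi\'c-type irreducibility of unitary parabolic induction does not apply, and there is no blanket reason that $\mathrm{St}(\Delta(\pi_a))\times\pi_b$ is irreducible for $b>a$, even after reordering into ascending order. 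You flag this yourself as ``the main obstacle,'' but as presented it is a hole rather than a detail: the commutation you need is precisely the crux. You also tacitly use irreducibility of every intermediate product $\pi_1^-\times\cdots\times\pi_a^-\times\pi_{a+1}\times\cdots\times\pi_r$; since each $\pi_j^-$ is only the essentially Speh $\nu^{-1/2}u_\rho(d_j-1,m_j)$, hence not unitary, this is not covered by the assertion that ``each $\pi_a^-$ is a Speh'' and needs its own argument.

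The paper avoids both commutativity issues by working at a single left point at a time. For a fixed integer $c$, one rearranges the Speh factors so that $\pi_1,\ldots,\pi_k$ all satisfy $Y(\pi_j)=\nu^c\rho$ and $\pi_{k+1},\ldots,\pi_r$ do not. Since those $\Delta(\pi_j)$ $(j\leq k)$ share the same left point they are nested, hence pairwise unlinked, so $\mathrm{St}(\Delta(\pi_1))\times\cdots\times\mathrm{St}(\Delta(\pi_k))$ is irreducible for free by (\ref{eqn product commut st}). A single application of the geometric lemma, feeding in $\mathfrak{hd}(\pi_j)=\{\Delta(\pi_j)\}$ for each Speh, then exhibits a submodule $\tau\boxtimes(\mathrm{St}(\Delta(\pi_1))\times\cdots\times\mathrm{St}(\Delta(\pi_k)))$ of $\pi_{N_i}$, giving $\{\Delta(\pi_1),\ldots,\Delta(\pi_k)\}\leq_c^a\mathfrak{mxpt}^a(\pi,c)$ with no tempered-past-Speh commutation ever invoked; the length count then finishes as you describe. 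Your alternative route via cuspidal-support comparison of geometric-lemma layers is in the right spirit, but to actually work it should be organized by shared left point as in the paper rather than processing one $\Delta(\pi_a)$ at a time.
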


\begin{proof}
It is known from \cite[Theorem 7.5]{Ta86} that $\pi$ is irreducible. The statement then follows from (\ref{eqn highest derivative speh}) and Proposition \ref{prop prod highest derivative 1}.
\end{proof}

%To compute the lower bound of the multisegment at a point $\nu^c\rho$ (see Proposition \ref{prop smaller derivative}), we rearrange \cite{Ta86} the Speh representations such that 
%\[   \pi_1 \times \ldots \times \pi_k \times \pi_{k+1} \times \ldots \times \pi_r\]
%satisfying that
%$Y(\pi_{1}) \cong \ldots \cong Y(\pi_k) \cong \nu^{c} \rho$ and, for $i=k+1, \ldots, r$, $Y(\pi_i) \not\cong \nu^{c} \rho$. 

%One has that a composition factor of the form $(\pi_1^- \times \ldots \times \pi_k^-) \boxtimes (\mathrm{St}(\Delta(\pi_1))\times \ldots \times \mathrm{St}(\Delta(\pi_k))$ is in
%\[  (\pi_1 \times \ldots \times \pi_k)_{N_i}  .\]

%Now, using the geometric lemma, 
%\[ ((\pi_1 \times \ldots \times \pi_k)_{N_i} \times( \pi_{k+1}\times \ldots \times \pi_r ))^{\phi} \hookrightarrow  (\pi_1 \times \ldots \times \pi_r)_{N_i} ,\]
%where $\phi$ is a twist so that the resulting representation is a $G_{n-i}\times G_i$-representation. Here $n=n(\pi)$. Combining with the previous paragraph, we have that 
%\[ \tau \boxtimes (\mathrm{St}(\Delta(\pi_1))\times \ldots \times \mathrm{St}(\Delta(\pi_k)) \hookrightarrow \pi_{N_i} .\]
%Hence, the maximal multisegment of $\pi$ at the point $\nu^{(d-m)/2}\rho$ is 
%\[ {}_{(d-m)/2}^a\geq \Delta(\pi_1)+\ldots +\Delta(\pi_k). \] 

%We obtain the lower bound for each multisegment at each point. We conclude that the lower bound is also the upper bound by the level of $\pi$ and using Theorem \ref{thm highest derivative multiseg}.

\subsection{Ladder representations}

As we saw above, the highest derivative multisegment for a Speh representation is simply a segment. Let $\pi \in \mathrm{Irr}_{\rho}$ be a ladder representation. Then its associated multisegment $\mathfrak m=\left\{ \Delta_1, \ldots, \Delta_r\right\} \in \mathrm{Mult}_{\rho}$ satisfies the property:
\[  a(\Delta_1)<\ldots <a(\Delta_r), \quad b(\Delta_1)<\ldots <b(\Delta_r) .
\]
Note that there is a unique multisegment $\mathfrak n$ such that $\cup_{\Delta \in \mathfrak n} \Delta =\left\{ b(\Delta_1), \ldots, b(\Delta_r) \right\}$ and the segments in $\mathfrak n$ are mutually unlinked. We have that $\mathfrak{hd}(\pi)=\mathfrak n$.

\subsection{$\square$-irreducible representations}

An irreducible representation $\pi$ of $G_n$ is said to be {\it $\square$-irreducible} if $\pi \times \pi$ is still irreducible \cite{LM18}. For progress on characterizing such classes of modules, see, for example \cite{Le03, GLS11, KKKO18, LM18} and references therein.

\begin{proposition}
Let $\pi \in \mathrm{Irr}_{\rho}$. Suppose $\pi$ is $\square$-irreducible. Then, for any positive integer $k$,
\[ \mathfrak{hd}(\overbrace{\pi \times \ldots \times  \pi}^{k \mbox{ times}})=\overbrace{\mathfrak{hd}(\pi)+ \ldots +\mathfrak{hd}(\pi)}^{k \mbox{ times}} .\]
\end{proposition}

\begin{proof}
By \cite[Corollary 2.7]{LM18}, $\overbrace{\pi \times \ldots \times  \pi}^{k \mbox{ times}}$ is still irreducible. Then the proposition is a direct consequence of Proposition \ref{prop prod highest derivative 1}.
\end{proof}

\section{Appendix: Bernstein-Zelevinsky derivatives for affine Hecke algebras} \label{ss bz derivatives aha}

\subsection{BZ functor} \label{ss bz functor}
In this section, we explain how the results in this paper can be formulated in the affine Hecke algebra setting and we first give some background. We mainly follow \cite{CS19}, but we remark that we only need the Iwahori case to transfer results to the affine Hecke algebras using the Borel-Casselman's equivalence \cite{Bo76}, in which earlier work of Barbasch-Moy and Reeder \cite{BM94, Re02} shows that generic irreducible representations correspond to modules containing the sign module of the finite Hecke algebra, and is later used to study the unitary dual problem by Barbasch-Ciubotaru \cite{BC08}. Using the idea of finite Hecke algebra modules in characterizing modules also goes back to earlier work of Rogawski \cite{Ro85}.

A key to formulate the Bernstein-Zelevinsky derivative in \cite{CS19} is using an explicit affine Hecke algebra structure of the Iwahori component of the Gelfand-Graev representation in \cite{CS18}, and such expression is also obtained in Brubaker-Buciumas-Bump- Friedberg \cite{BBBF18}. Our realization of the Gelfand-Graev representation is obtained via viewing the affine Hecke algebra as the convolution algebra on Iwahori-biinvariant functions of $G_n$, and there is an alternate approach of describing affine Hecke algebra in terms of an endomorphism algebra due to Heiermann \cite{He11}. For Hecke algebras arising from other Bernstein components, see the work of Waldspurger \cite{Wa86} and Bushnell-Kutzko \cite{BK93}, and also the work of S\'echerre-Stevens \cite{SS12} for inner forms of general linear groups.

Let $q \in \mathbb{C}^{\times}$. Assume that $q$ is not a root of unity. The affine Hecke algebra $\mathcal H_n:= \mathcal H_n(q)$ is defined as an associative algebra over $\mathbb C$ with the generators $T_1, \ldots, T_{n-1}$ and $\theta_1, \ldots, \theta_n$ satisfying the following relations:
\begin{enumerate}
\item $T_kT_{k+1}T_k=T_{k+1}T_kT_{k+1}$ for $k=1, \ldots, n-2$;
\item $(T_k+1)(T_k-q)=0$ for all $k$;
\item $\theta_k\theta_l=\theta_l\theta_k$ for all $k,l$;
\item $T_k\theta_k-\theta_{k+1}T_k=(q-1)\theta_k$ for all $k$;
\item $T_k\theta_l=\theta_lT_k$ if $l\neq k, k+1$. 
\end{enumerate}

The subalgebra, denoted $\mathcal H_{S_n}$, generated by $T_1, \ldots, T_{n-1}$ is isomorphic to the finite Hecke algebra attached to the symmetric group $S_n$. For $k=1, \ldots, n-1$, define $s_k$ to be the transposition between $k$ and $k-1$. For $w \in S_n$ with a reduced expression $w=s_{k_1}\ldots s_{k_r}$, define $T_w=T_{k_1}\ldots T_{k_r}$. It is well-known that $T_w$ is independent of a choice of a reduced expression of $w$.

We now define the analogous Bernstein-Zelevinsky functor for $\mathcal H_n$ \cite{CS19}. There is a natural embedding from $\mathcal H_{n-i}\otimes \mathcal H_i$ to $\mathcal H_n$ explicitly given by: for $k=1, \ldots, n-i-1$, $m(T_k\otimes 1)=T_k$; for $k=1, \ldots, i$, $m(1\otimes T_k)=T_{n-i+k}$; for $k=1, \ldots, n-i$, $m(\theta_k\otimes 1)=\theta_k$; for $k=1, \ldots, i$, $m(1\otimes \theta_k)=\theta_{n-i+k}$. Define the sign projector:
\[ \mathbf{sgn}_i= \frac{1}{\sum_{w \in S_i} (1/q)^{l(w)}}\sum_{w \in S_n}(-1/q)^{l(w)}T_w \in \mathcal H_{S_i},
\]
Let $\mathbf{S}^n_i=m(1\otimes \mathbf{sgn}_i)$. The $i$-th Bernstein-Zelevinsky derivative for an $\mathcal H_n$-module $\sigma$ is defined as:
\[ \mathbf{BZ}_i(\sigma)=\mathbf{S}^n_i(\sigma) .
\]

The following result for $i=1$ is covered by \cite{GV01} by using $\rho$-derivatives. We remark that \cite{GV01} covers other cases such as the works of Kleshchev and Brundan \cite{Kl95} and \cite{Br98} for some positive characteristic algebras. See a survey of Kleshchev \cite{Kl10} for an overview of this problem. We remark that the branching law has deep connections with the theory of crystal bases. For instance, the decomposition matrix for restriction coincides with the coefficients of crystal bases in certain way, see the work of Lascoux-Leclerc-Thibon \cite{LLT96} and Ariki \cite{Ar96} and even the development for other classical types by Enomoto-Kashiwara, Miemietz, Varagnolo-Vasserot, Shan-Varagnolo-Vasserot \cite{EK08, Mie08, VV11, SVV11}. The following result generalizes part of \cite{GV01} and opens up some possibilities of connections with crystal theory:

%For the decomposition matrix of the restriction problem, one also further has 
%that there are also analogous results for affine Hecke algebras of other classical types by Enomoto-Kashiwara, Miemietz, Varagnolo-Vasserot, Shan-Varagnolo-Vasserot \cite{EK08, Mi09, VV11, SVV11}. T connections to the crystal operator theory shown by Ariki \cite{Ar96}

\begin{theorem} \label{thm transfer multiplicity free}
Let $\sigma$ be an irreducible $\mathcal H_n$-module. Then the socle and cosocle of $\mathbf{BZ}_i(\sigma)$ are multiplicity-free.
\end{theorem}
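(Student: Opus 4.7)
The plan is to transfer the multiplicity-one statement from the $p$-adic side (Lemma \ref{lem property derivative}) to the affine Hecke algebra side by means of the Borel--Casselman equivalence together with the explicit realization of $\mathbf{BZ}_i$ established in \cite{CS19}. Concretely, first choose a non-Archimedean local field $F$ whose residual cardinality equals the parameter $q$ of $\mathcal H_n(q)$. Then an irreducible $\mathcal H_n$-module $\sigma$ corresponds, via Borel--Casselman, to an irreducible representation $\pi \in \mathrm{Irr}(G_n)$ generated by its Iwahori-fixed vectors, with $\pi^I \cong \sigma$ as $\mathcal H_n$-modules.

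The next step, which is the key compatibility input, is to identify $\mathbf{BZ}_i(\pi^I)$ with $(\pi^{(i)})^{I'}$, where $I'$ is the Iwahori subgroup of $G_{n-i}$. This is precisely the content of the explicit realization in \cite{CS19}: the sign projector $\mathbf{S}^n_i$ on the Iwahori component corresponds to the twisted Jacquet functor defining $\pi^{(i)}$, because the $(U_i,\psi)$-twisted coinvariants, when restricted to Iwahori-generated vectors, are computed precisely by the composition of the parabolic embedding $\mathcal H_{n-i}\otimes \mathcal H_i \hookrightarrow \mathcal H_n$ with the sign idempotent in $\mathcal H_i$. I would recall (or reprove by a direct matrix-coefficient calculation on the Iwahori-biinvariant Gelfand--Graev representation) that taking Iwahori-fixed vectors intertwines the two functors; this is essentially the main result of \cite{CS19} and I would simply invoke it.

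Granting the intertwining, the transfer is now formal. If $\mathrm{soc}(\mathbf{BZ}_i(\sigma))$ contained a simple $\mathcal H_{n-i}$-module $\tau$ with multiplicity at least two, then passing back to the $G_{n-i}$-side and using that the Iwahori-fixed vector functor is exact and faithful on the Iwahori block (Borel--Casselman), we would obtain an embedding $\tau_{\mathrm{gp}} \oplus \tau_{\mathrm{gp}} \hookrightarrow \pi^{(i)}$ where $\tau_{\mathrm{gp}}$ is the irreducible representation of $G_{n-i}$ corresponding to $\tau$. This contradicts Lemma \ref{lem property derivative}. The cosocle statement follows by the same argument, or alternatively by dualizing via the self-dual property of the BZ derivative (Lemma \ref{lem dual property}) which itself transfers to $\mathcal H_n$ through the Iwahori-fixed vector functor.

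The main obstacle is the compatibility statement $\mathbf{BZ}_i(\pi^I)\cong (\pi^{(i)})^{I'}$. Although it is built into the definition of $\mathbf{BZ}_i$ adopted from \cite{CS19}, one has to be careful that the embedding $\mathcal H_{n-i}\otimes \mathcal H_i \hookrightarrow \mathcal H_n$ and the sign projector $\mathbf{sgn}_i$ match the parabolic induction/restriction and the Whittaker-type projection on the $G_n$-side up to the correct modular character twist appearing in (\ref{eqn def derivative}). Once this bookkeeping is verified (and is done in \cite{CS19}), the rest of the argument is a direct transport of Lemma \ref{lem property derivative}. Indeed, essentially all results in Part~I of this article can be transferred to $\mathcal H_n(q)$ by the same mechanism; Theorem \ref{thm transfer multiplicity free} is the particular instance needed to make the representation-theoretic arguments of the earlier sections available in the Hecke algebra setting.
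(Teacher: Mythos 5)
The core of your argument — transfer Lemma \ref{lem property derivative} via the Borel--Casselman equivalence and the compatibility $\mathbf{BZ}_i(\pi^I)\cong(\pi^{(i)})^{I'}$ established in \cite{CS19} — is exactly the right first step and matches the first paragraph of the paper's proof. However, there is a genuine gap: you write ``choose a non-Archimedean local field $F$ whose residual cardinality equals the parameter $q$'', which presupposes that $q$ is a prime power. The theorem, as well as the section setup, only assumes $q\in\mathbb{C}^\times$ is not a root of unity, so $q$ may fail to be a prime power (or even a positive real), in which case no such $F$ exists and the transfer breaks down at the outset.

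The paper resolves this by a second layer of reductions that your proposal omits entirely. After obtaining the result for $q$ a prime power via the $p$-adic realization, the paper passes through Lusztig's second reduction \cite{Lu89} to deduce the analogous multiplicity-one statement for the graded Hecke algebra functor $\mathbf{gBZ}_i$ on modules with real central character; the crucial point is that the graded Hecke algebra $\mathbb H_n$ (after rescaling) has a presentation independent of $q$, so a statement established there is valid for all parameters once. Then, for arbitrary $q$ not a root of unity, one uses Lusztig's first reduction together with \cite[Theorem 5.3]{CS19} to reduce an arbitrary simple $\mathcal H_n(q)$-module to a product $\mathcal H_{n_1}\otimes\cdots\otimes\mathcal H_{n_k}$, applies the second reduction to land in the graded setting, and invokes the parameter-independent graded statement. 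To complete your proof you need to add these two Lusztig reductions and verify their compatibility with $\mathbf{BZ}_i$ (done in \cite[Sections 5--6]{CS19}); without them the argument only covers the prime-power case.
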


\begin{proof}

We first assume that $q$ is a prime power. We choose $F$ to be a $p$-adic field with $|\mathcal O/\omega\mathcal O|=q$, where $\mathcal O$ is the ring of integers in $F$ and $\omega$ is the uniformizer. Then, by \cite[Theorem 4.2]{CS19} and Lemma \ref{lem property derivative}, we have the multiplicity-free result in such case. Note that in the case that $\sigma$ has a real central character (see \cite[Section 5.2]{CS19}, also see \cite[Section 2]{OS10}), we can further obtain that for the corresponding graded Hecke algebra $\mathbb H_n$, its analogous Bernstein-Zelevinsky derivative $\mathbf{gBZ}_i(\widetilde{\sigma})$ also has multiplicity-free socle and cosocle, where $\widetilde{\sigma}$ is the corresponding module under Lusztig's second reduction \cite{Lu89} (see \cite[Theorem 6.3]{CS19}). Here $\mathbf{gBZ}_i$ is defined in \cite[Section 6.3]{CS19}. Thus this implies that $\mathbf{gBZ}_i(\widetilde{\sigma}')$ has multiplicity-free socle and cosocle for any simple module $\widetilde{\sigma}'$ of $\mathbb H_n$ of real central character. (We remark that, by a rescaling argument, $\mathbb H_n$ can be defined as an associative algebra over $\mathbb C$ generated by the group algebra $\mathbb C[S_n]$ and the polynomial ring $S(\mathbb C^n)$ subject to some relations independent of $q$.)

We now consider arbitrary $q$ (which is not of root of unity). In such case, by using Lusztig's first reduction \cite{Lu89} (see \cite[Section 5.2]{CS19}) and \cite[Theorem 5.3]{CS19}, we can transfer to the problem of some affine Hecke algebra modules $\mathcal H_{n_1}\otimes \ldots \otimes \mathcal H_{n_k}$ with $n_1+\ldots +n_k=n$, and in which, we can apply Lusztig's second reduction. Now the result follows from the graded Hecke algebra case in previous paragraph.
\end{proof}

%We finally remark that in \cite[Proposition 4.1]{CS19}, the restriction functor from $\mathcal H_n$ to $\mathcal H_{n-i}\otimes \mathcal H_i$ indeed corresponds to the opposite Jacquet functor $\pi \mapsto \pi_{N^-}$. The results in \cite[Sections 5 and 6]{CS19} have to be formulated in terms of left derivatives (see notations in Section \ref{ss prop derivatives}) while this does not affect the proof here.  

For a given segment $\Delta=[a,b]$ for $b-a \in \mathbb Z_{\geq 0}$, the Steinberg module $\mathrm{St}_{\mathcal H}(\Delta)$ of $\mathcal H_{b-a+1}$ is the $1$-dimensional module $\mathbb Cv$ determined by: for all $k$,
\[  T_k.v=-v,\quad \theta_k.v=q^{a+k-1}v . \]

For the St-derivatives, for an $\mathcal H_n$-module $\sigma$ and a given segment $\Delta$, one defines $D_{\Delta}(\sigma)$ to be either zero or the unique $\mathcal H_{n-i}$-module $\tau$ such that 
\[ \tau \boxtimes \mathrm{St}_{\mathcal H}(\Delta) \hookrightarrow \sigma|_{\mathcal H_{n-i}\otimes \mathcal H_i} .\]
Now one can define analogously the terminology of highest derivative segments, derivative resultant segments to formulate and prove the corresponding statements. 

\subsection{Left BZ functor}

Define $\zeta=\zeta_n$ on $\mathcal H_n$ determined by:
\[   \zeta(\theta_k)=\theta_{n-k+1}^{-1}, \quad \zeta(T_k)=T_{n-k}
\]
for any $k$. Note that $\zeta$ will send the relation (4) for the affine Hecke algebra to 
\[   T_{n-k}\theta_{n-k+1}^{-1}-\theta_{n-k}^{-1}T_{n-k}=(q-1)\theta_{n-k+1}^{-1} ,
\]
which is equivalent to $\theta_{n-k}T_{n-k}-T_{n-k}\theta_{n-k+1}=(q-1)\theta_{n-k}$. With the braid relation (2) for $\mathcal H_n$, one deduces the relation (4) for $\mathcal H_n$. It is straightforward to check that $\zeta$ preserves other relations of $\mathcal H_n$. Hence, $\zeta$ defines an automorphism on $\mathcal H_n$.

The left Bernstein-Zelevinsky functor ${}_i\mathbf{BZ}$ in the spirit of \cite{CS21, Ch21} is defined as:
\[  {}_i\mathbf{BZ}(\sigma)= \zeta_{n-i}(\mathbf{BZ}_i(\zeta_n(\sigma))).
\]

For any $\mathcal H_n$-module $\sigma$ and $s \in \mathbb C$, we can define $\chi^s \otimes \sigma$ as 
\[   T_k\cdot_{\chi^s\otimes \sigma}v=T_k \cdot_{\sigma}v , \quad \theta_k\cdot_{\chi^s\otimes\sigma}v=q^{s}\theta_k\cdot_{\sigma}v .
\]
We define the shifted Bernstein-Zelevinsky functors as:
\[  \mathbf{BZ}_{[i]}(\sigma) =\chi^{-1/2} \otimes\mathbf{BZ}_{i}(\sigma) ,
\]
\[  {}_{[i]}\mathbf{BZ}(\sigma) =\chi^{1/2}\otimes {}_i\mathbf{BZ}(\sigma).
\]

As shown in \cite[Theorem 6.2]{Ch21} and the argument in the proof of Theorem \ref{thm transfer multiplicity free}, we have the following asymmetry property:
\begin{theorem}
Let $\sigma$ be an irreducible $\mathcal H_n$-module. Suppose $i$ is not the level of $\sigma$ i.e. not the largest integer such that $\mathbf{BZ}_i(\sigma)\neq 0$. If $\mathbf{BZ}_{[i]}(\sigma)\neq 0$ and ${}_{[i]}\mathbf{BZ}(\sigma)\neq 0$, then any simple quotient (resp. submodule) of $\mathbf{BZ}_{[i]}(\sigma)$ is not isomorphic to that of ${}_{[i]}\mathbf{BZ}(\sigma)$.
\end{theorem}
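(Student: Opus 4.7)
The plan is to mirror the bootstrapping argument used in the proof of Theorem \ref{thm transfer multiplicity free}, transferring the asymmetry property for $p$-adic $\mathrm{GL}_n(F)$ established in \cite{Ch21} (see Section \ref{ss prop derivatives}) step by step to arbitrary affine Hecke algebras $\mathcal H_n(q)$ with $q$ not a root of unity. The representation-theoretic content lives entirely on the $p$-adic side: if $i$ is not the level of $\pi \in \mathrm{Irr}(G_n)$ and both $\pi^{(i)}$ and ${}^{(i)}\pi$ are non-zero, then no simple quotient (resp.\ submodule) of $\pi^{(i)}$ is isomorphic to one of ${}^{(i)}\pi$; this is the asymmetry already recorded in \cite{Ch21} and follows quickly from Lemma \ref{lem property derivative} combined with Lemma \ref{lem dual property}, since a common simple constituent would force $\mathrm{soc}(\pi^{(i)}) = \mathrm{cosoc}({}^{(i)}\pi)$ to see the level of $\pi$ on the wrong side of the Bernstein–Zelevinsky filtration.

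First I would treat the case when $q$ is a prime power. Choose a $p$-adic field $F$ with residue cardinality $q$ and invoke Borel–Casselman's equivalence between $\mathcal H_n(q)$-modules and Iwahori-fixed representations of $\mathrm{GL}_n(F)$. Under \cite[Theorem 4.2]{CS19}, the right BZ functor $\mathbf{BZ}_{[i]}$ corresponds to the $p$-adic right derivative $\pi \mapsto \pi^{(i)}$; symmetrically, using the involution $\zeta$ from Section \ref{sec left derivative}, the left functor ${}_{[i]}\mathbf{BZ}$ corresponds to ${}^{(i)}\pi$ (the shifts by $\chi^{\pm 1/2}$ are precisely those which make the correspondence exact, without modular twists). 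Now invoke the $p$-adic asymmetry statement recalled above to conclude.

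Next I would remove the prime-power hypothesis. For a module $\sigma$ with real central character, Lusztig's second reduction \cite{Lu89} identifies $\sigma$ with a module $\widetilde{\sigma}$ for a graded Hecke algebra $\mathbb H_n$, and \cite[Theorem 6.3]{CS19} (used in the proof of Theorem \ref{thm transfer multiplicity free}) matches $\mathbf{BZ}_{[i]}$ with the graded analogue $\mathbf{gBZ}_i$; the parallel statement for the left functor ${}_{[i]}\mathbf{BZ}$ follows by conjugating the reduction by the algebra automorphism $\zeta$, which simply relabels the polynomial generators. Since $\mathbb H_n$ is defined independently of $q$, the asymmetry established in the prime-power case for real central characters transfers to all $q$ via a rescaling/deformation argument. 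For general $\sigma$, Lusztig's first reduction splits the problem along the blocks indexed by $\mathrm{Irr}^c$-orbits and reduces to a product $\mathcal H_{n_1} \otimes \cdots \otimes \mathcal H_{n_k}$ with real central character, at which point the previous step applies factor by factor (the geometric-lemma decomposition of Section \ref{ss reduction to cuspidal case} distributes the derivative compatibly).

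The main obstacle will be the bookkeeping required to check that the left BZ functor ${}_{[i]}\mathbf{BZ}$ really does correspond to ${}^{(i)}\pi$ under Borel–Casselman, and that the involution $\zeta$ intertwines the two Lusztig reductions on the Hecke-algebra side with the Gelfand–Kazhdan involution $\theta$ on the $p$-adic side. Once this compatibility is in place the transfer of asymmetry is formal, because both the vanishing hypothesis (``$i$ is not the level'') and the non-vanishing hypotheses are preserved by each of the equivalences in play.
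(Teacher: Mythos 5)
Your proposal takes exactly the route the paper does: invoke the $p$-adic asymmetry property of \cite{Ch21} and run the same prime-power case plus Lusztig first/second reduction transfer that proves Theorem \ref{thm transfer multiplicity free}. Be aware that your parenthetical micro-sketch of the $p$-adic asymmetry --- deducing it from Lemma \ref{lem property derivative} together with Lemma \ref{lem dual property} because a common constituent would ``see the level on the wrong side'' --- is too thin to be a proof (those two lemmas constrain $\pi^{(i)}$ and ${}^{(i)}\pi$ individually but say nothing comparing the two functors; the actual argument in \cite{Ch21} runs through the Bernstein--Zelevinsky filtration of $\pi|_{G_{n-1}}$), but since you also cite \cite{Ch21} as the source of the $p$-adic result this imprecision does not affect the correctness of the transfer.
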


\end{document}